\newtheoremstyle{theoremstyle}
  {10pt}      
  {5pt}       
  {\itshape}  
  {}          
  {\bfseries} 
  {:}         
  {.5em}      
  {}          
\newtheoremstyle{examplestyle}
  {10pt}      
  {5pt}       
  {}          
  {}          
  {\bfseries} 
  {:}         
  {.5em}      
  {}          
\theoremstyle{theoremstyle}
\newtheorem{theorem}{Theorem}[section]
\newtheorem*{theorem*}{Theorem}
\newtheorem{lemma}[theorem]{Lemma}
\newtheorem*{lemma*}{Lemma}
\newtheorem{proposition}[theorem]{Proposition}
\newtheorem*{proposition*}{Proposition}
\newtheorem{corollary}[theorem]{Corollary}
\newtheorem*{corollary*}{Corollary}
\newtheorem{convention}[theorem]{Convention}
\theoremstyle{examplestyle}
\newtheorem{example}[theorem]{Example}
\newtheorem{definition}[theorem]{Definition}
\newtheorem{definition*}{Definition}
\newtheorem{remark}[theorem]{Remark}
\newtheorem{remark*}{Remark}
\newcommand{\comment}[1]{}
\newcommand{\sh}[1]{\mathcal{#1}}
\newcommand{\msh}[1]{$\sh{#1}$}
\newcommand{\spec}[1]{\operatorname{spec}(#1)}
\newcommand{\dual}[1]{{\check{#1}}}
\newcommand{\orb}[1]{\operatorname{orb}(#1)}
\newcommand{\rk}{\operatorname{rk}}
\newcommand{\ob}{\operatorname{Ob}}
\newcommand{\mor}{\operatorname{Mor}}
\newcommand{\Hom}{\operatorname{Hom}}
\newcommand{\ksm}{{\K[\sigma_M]}}
\newcommand{\ilim}{\varprojlim}
\newcommand{\colim}{\varinjlim}
\newcommand{\kernel}{\operatorname{ker}}
\newcommand{\lcm}{\operatorname{lcm}}
\newcommand{\op}{{\operatorname{op}}}
\newcommand{\zip}{\operatorname{zip}}
\newcommand{\lcmzip}{\operatorname{zip}_{\lcm}}
\newcommand{\gcdzip}{\operatorname{zip}_{\gcd}}
\newcommand{\unzip}{\operatorname{unzip}}
\newcommand{\lcmunzip}{\operatorname{unzip}_{\lcm}}
\newcommand{\gcdunzip}{\operatorname{unzip}_{\gcd}}
\newcommand{\A}{\mathbf{A}}
\newcommand{\E}{\mathbf{E}}
\newcommand{\K}{\mathbb{K}}
\newcommand{\Z}{\mathbb{Z}}
\newcommand{\R}{\mathbb{R}}
\newcommand{\N}{\mathbb{N}}
\newcommand{\p}{\mathcal{P}}
\newcommand{\cL}{\mathcal{L}}
\newcommand{\cG}{\mathcal{G}}
\newcommand{\mH}{\mathcal{H}}
\newcommand{\n}{{\underline{n}}}
\newcommand{\uc}{{\underline{c}}}
\newcommand{\ud}{{\underline{d}}}
\newcommand{\on}{{[n]}}
\newcommand{\Zbar}{\bar{\mathbb{Z}}}
\newcommand{\msim}{\mathord{\sim}}
\newcommand{\one}{\mathbf{1}}
\newcommand{\zsMod}{\text{$\Z^n$-$S$-$\operatorname{Mod}$}}
\newcommand{\zsModf}{\text{$\Z^n$-$S$-$\operatorname{Mod}_f$}}
\newcommand{\zsmod}{\text{$\Z^n$-$S$-$\operatorname{mod}$}}
\newcommand{\mksmMod}{\text{$M$-$\ksm$-$\operatorname{Mod}$}}
\newcommand{\mksmModf}{\text{$M$-$\ksm$-$\operatorname{Mod}_f$}}
\newcommand{\mksmmod}{\text{$M$-$\ksm$-$\operatorname{mod}$}}
\title{Resolutions and Cohomologies of Toric Sheaves.\\
The affine case}
\date{June 2011}
\author{Markus Perling\footnote{Fakult\"at f\"ur Mathematik Ruhr-Universit\"at Bochum
Universit\"atsstra\ss e 150 44780 Bochum, Germany, {\tt Markus.Perling@rub.de}}}
\begin{document}

\maketitle

\begin{center}
\begin{small}
\emph{Dedicated to G\"unther Trautmann on the occasion of his 70th birthday.}
\end{small}
\end{center}

\

\begin{abstract}
We study equivariant resolutions and local cohomologies of toric sheaves for affine toric
varieties, where our focus is on the construction of new examples of indecomposable
maximal Cohen-Macaulay modules of higher rank. A result of Klyachko states that the
category of reflexive toric sheaves is equivalent to the category of vector spaces
together with a certain family of filtrations. Within this setting, we develop machinery
which facilitates the construction of minimal free resolutions for the smooth case as
well as resolutions which are acyclic with respect to local cohomology functors for the
general case. We give two main applications. First, over the polynomial ring, we determine
in explicit combinatorial terms the $\Z^n$-graded Betti numbers and local cohomology of
reflexive modules whose associated filtrations form a hyperplane arrangement. Second,
for the non-smooth, simplicial case in dimension $d \geq 3$, we construct new examples of
indecomposable maximal Cohen-Macaulay modules of rank $d - 1$.
\end{abstract}

\tableofcontents

\section{Introduction}

In this article we want to study equivariant resolutions and local cohomologies of toric
sheaves for affine toric varieties. Our aim is to identify good invariants and to set up
the machinery which allows to determine explicitly Betti and Bass numbers and local
cohomologies in many interesting cases, as well as to construct new examples of
indecomposable maximal Cohen-Macaulay modules of rank greater than one.
Let $M$ and $N$ denote the character and co-character group of a $d$-dimensional
algebraic torus over some field
$\K$, and $\sigma_M \subseteq M$ a subsemigroup corresponding to a strictly convex rational
polyhedral cone $\sigma \subseteq N \otimes_\Z \R$ such that $\ksm$ is the coordinate ring of a normal affine toric variety $U_\sigma = \spec{\ksm}$. Equivariant sheaves on $U_\sigma$
then are equivalent to $M$-graded $\ksm$-modules.
Our main goal is to study $M$-graded local cohomologies of such modules
in terms of $M$-graded acyclic resolutions. For this, we proceed in two steps, where
we start with the case of free and injective $\Z^n$-graded resolutions over the polynomial
ring $S = \K[x_1, \dots, x_n]$ (our results are related to earlier work on injective
resolutions such as \cite{Miller00}, \cite{helmmiller2}, \cite{helmmiller}).
For the second step, we can assume by a construction of Cox that $\ksm$ is a subring of
some $S$,
where $n$ denotes the number of one-dimensional cones in $\sigma$. More precisely,
we can identify $\ksm$ with the degree-zero part of $S$ with respect to its grading by
a certain abelian group $A$ (see subsection \ref{homogeneouscoordinates}). The $A$-grading
on $S$ is compatible with its natural $\Z^n$-grading in the sense that by taking degree
zero with respect to the $A$-grading, we obtain an essentially surjective functor from
the category of $\Z^n$-graded $S$-modules to the category $M$-graded $\ksm$-modules
(see \cite{CoxBat} Prop. 4.17). In particular, $\Z^n$-graded injective resolutions
descend to $M$-graded resolutions which are acyclic with respect to local cohomology with
monomial support.

The subject of $M$-graded $\ksm$-modules has been well studied. See the book
\cite{SturmfelsMiller} for a general overview on the subject and the articles
\cite{Roemer01}, \cite{Yanagawa01}, \cite{Yanagawa03}, \cite{Tchernev07} for
works which share
some common features with ours. The new aspect we bring into this subject is the
following. It was observed by Klyachko \cite{Kly90}, \cite{Kly91} that the class
of $M$-graded $\ksm$-modules is a natural extension of toric geometry in terms of
linear algebra. Therefore these modules exhibit a rich combinatorial content, as
is made explicit by the following theorem for the case of reflexive modules.

\begin{theorem}[\cite{Kly90}, \cite{Kly91}, see also \cite{perling1}]\label{klythm}
The category of finitely-generated, $M$-graded reflexive $\ksm$-modules is equivalent
to the category of vector spaces $\E$ endowed with $n$ filtrations
$0 \subseteq \cdots \subseteq E^k(i) \subseteq E^k(i + 1) \subseteq \cdots \subseteq \E$
for $k \in \on$ which are full in the sense that $E^k(i) = 0$ for $i << 0$ and $E^k(i)
= \E$ for $i >> 0$.
\end{theorem}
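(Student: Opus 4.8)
The plan is to construct the two functors explicitly and check they are mutually quasi-inverse. In one direction, given a finitely generated $M$-graded reflexive $\ksm$-module $F$, I would first pass to the generic point of the torus: the torus $T = \spec{\K[M]}$ sits inside $U_\sigma$ as an open dense orbit, and localizing $F$ at the corresponding multiplicative set yields an $M$-graded $\K[M]$-module. Since $\K[M]$ is the group algebra and $F$ is reflexive (hence torsion-free), this localization is free, and its degree-zero part is a finite-dimensional $\K$-vector space $\E$; concretely $\E = F \otimes_{\ksm} \K[M]$ in degree $0$, or equivalently $\E$ is the common "underlying space" onto which all graded pieces map. For each ray $\rho_k$ of $\sigma$ (indexed by $k \in \on$) there is a corresponding one-dimensional valuation, equivalently a linear functional $u_k \in M$ is paired against the primitive generator $v_k \in N$; I would define the filtration step $E^k(i) \subseteq \E$ to be the image in $\E$ of the $\ksm$-submodule of $F$ spanned by homogeneous elements whose degree $m$ satisfies $\langle m, v_k\rangle \le i$ (up to a sign convention). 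Fullness on the left, $E^k(i) = 0$ for $i \ll 0$, holds because $\sigma_M$ is contained in a half-space; fullness on the right, $E^k(i) = \E$ for $i \gg 0$, holds because $F$ is finitely generated, so only finitely many generators occur and each contributes to all sufficiently large steps.

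In the reverse direction, given $(\E, \{E^k(i)\})$, I would define a module $F$ by declaring its degree-$m$ component to be the intersection $F_m := \bigcap_{k \in \on} E^k(\langle m, v_k\rangle) \subseteq \E$, with the $\ksm$-action given by: multiplication by the monomial $\chi^{m'}$ sends $F_m$ into $F_{m + m'}$ via the inclusion $F_m \hookrightarrow \E$ followed by the inclusion $F_{m+m'} \hookrightarrow \E$ — this is well-defined precisely because $\langle m, v_k\rangle \le \langle m + m', v_k\rangle$ for all $k$ whenever $m' \in \sigma_M$, so each filtration is respected. One then checks that $F$ is finitely generated (using fullness and that $\sigma_M$ is a finitely generated semigroup), and that it is reflexive: reflexivity amounts to $F = \bigcap_{k} F_{(\rho_k)}$, where $F_{(\rho_k)}$ is the localization at the height-one prime corresponding to $\rho_k$, and this is built into the definition of $F_m$ as an intersection over the rays, since codimension-one information on a normal variety is detected exactly along the divisorial valuations $v_k$.

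The remaining work is to verify that the two constructions are inverse up to natural isomorphism. Starting from $F$, forming $(\E, \{E^k(i)\})$, and rebuilding gives back a module whose degree-$m$ part is $\bigcap_k E^k(\langle m, v_k\rangle)$; the content here is that a reflexive module is determined by its divisorial data, i.e. $F_m = \bigcap_k (F_{(\rho_k)})_m$, which is the algebraic statement that a reflexive sheaf on a normal variety equals the intersection of its localizations at codimension-one points — this is standard (Serre's criterion / the theory of reflexive sheaves) but needs to be set up carefully in the $M$-graded toric language. Conversely, starting from a filtered vector space, the localization-at-the-generic-point of the reconstructed $F$ recovers $\E$, and unwinding definitions recovers each $E^k(i)$. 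Morphisms correspond on both sides to $\K$-linear maps $\E \to \E'$ compatible with all filtrations, which is immediate once the objects are matched.

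The main obstacle I anticipate is the reflexivity bookkeeping: making precise, in the $M$-graded setting, the equivalence between "reflexive," "determined by codimension-one data," and "degree-$m$ part equals an intersection of filtration steps indexed by rays," and checking that finite generation on the module side corresponds exactly to the fullness condition on the filtration side. The torus-equivariant structure forces all the relevant localizations to be graded-free, which trivializes what would otherwise be delicate homological algebra, but one still has to confirm that no higher-codimension phenomena intervene — precisely the point where normality of $U_\sigma$ and the reflexivity hypothesis are both essential.
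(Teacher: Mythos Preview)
Your proposal is correct and matches the approach the paper sketches (and attributes to Klyachko). The paper does not give its own proof of this statement---it is cited from \cite{Kly90}, \cite{Kly91}, \cite{perling1}---but it does record exactly the two constructions you describe: the limit vector space $\E$ is obtained as the colimit of the $\sigma$-family (equivalently, $(E \otimes_{\ksm} \K[M])_m$ for any $m$, see Proposition~\ref{directlimitproperties}), and the inverse reconstruction is the intersection formula $E_m = \bigcap_{k=1}^n E^k\big(l_k(m)\big)$ stated just after Theorem~\ref{klythm}. Your identification of the key verification---that reflexivity is precisely what makes a module recoverable from its codimension-one localizations along the rays---is the substantive point, and is the content of the cited references.
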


Note that here the set $\on = \{1, \dots, n\}$ is identified with the set of rays of the
cone $\sigma$ (see also our notations and conventions below). For the case of
$\Z^n$-graded $S$-modules, a standard method to extract combinatorial content from such
a module is to consider certain finite, adapted subsets of $\Z^n$, so-called
$\lcm$-lattices. Here, we consider $\Z^n$ as a poset by setting $(c_1, \dots, c_n) \leq
(c_1', \dots, c_n')$ iff $c_k \leq c_k'$ for every $k \in \on$. The $\lcm$ of any two
elements $\uc, \uc' \in \Z^n$ is defined by taking the componentwise maximum. Originally,
$\lcm$-lattices have been introduced
in \cite{GPW99} for monomial ideals. A definition for $\lcm$-lattices of general
$\Z^n$-graded $S$-modules has been proposed in \cite{CharalambousTchernev03}. In
general, one could consider the defining properties for an (admissible) $\lcm$-lattice
of a graded module $E$ that it is closed under taking $\lcm$s in $\Z^n$ and that it
contains all possible degrees of nonzero graded Betti numbers of $E$.
Theorem \ref{klythm} connects to this by giving rise to a very nice interpretation for
the $\lcm$-lattice of a reflexive module. Namely, let $E$ be such a module
with associated filtrations as in Theorem \ref{klythm} and and denote $\mathcal{V}$
the vector space arrangement in $\mathbf{E}$ generated by the intersections of
the vector spaces $E^k(i)$. Then we observe that the mapping $\mathcal{V} \rightarrow
\Z^n$ given by
\begin{equation*}
X \mapsto \underline{i}^X = (i_1^X, \dots, i_n^X),
\end{equation*}
where $i_k^X = \min \{i \mid X \subseteq E^k(i)\}$ embeds $\mathcal{V}$ as a poset
into $\Z^n$. The image of this map then indeed is a minimal admissible $\lcm$-lattice for
$E$ (see Proposition \ref{reflexivelcmlattice}). Now, let a minimal free graded free resolution
$0 \rightarrow F_t \rightarrow \cdots \rightarrow F_0 \rightarrow E \rightarrow 0$
be given, then the
modules $F_i$ as well as the syzygy modules are reflexive as well and to each of these we
have associated filtrations. Now, the most important structural
observation is that we can transport the notion of free resolutions over to the setting
of vector spaces arrangements. Forgetting about the modules and only considering the
induced maps of the underlying vector space arrangements, we obtain an exact sequence
of vector space arrangements:
\begin{equation*}
0 \longrightarrow \mathcal{F}_t \longrightarrow \cdots \longrightarrow \mathcal{F}_0
\longrightarrow \mathcal{V} \longrightarrow 0.
\end{equation*}
Here, the $\mathcal{F}_i$ denotes the coordinate arrangements associated to the free
modules $F_i$.
Cutting this sequence in short exact pieces $0 \rightarrow \mathcal{V}_{i + 1} \rightarrow
\mathcal{F}_i \rightarrow \mathcal{V}_i \rightarrow 0$ (with $\mathcal{V} = \mathcal{V}_0$),
we obtain an iterative construction of $\mathcal{V}_i$ as the $i$-th {\em syzygy arrangement}
of $\mathcal{V}$. As we will see in subsection \ref{reflres}, the construction of syzygy arrangements
can be done intrinsically in the category of vector space arrangements. Indeed, for reflexive
modules with equivalent underlying vector space arrangements, we obtain equivalent
free resolutions in the sense that we can identify their nonzero graded Betti numbers
via a poset isomorphism between the sets of nonzero graded Betti numbers.

\begin{theorem*}[\ref{invariants}]
Let $E$ be a $\Z^n$-graded, finitely generated, reflexive $S$-module. Then the poset of
nonzero graded Betti numbers is determined by the embedding of the poset given by the
underlying vector space arrangement $\mathcal{V}$ into $\Z^n$
For given $X \in \mathcal{V}$, the corresponding Betti number depends only on
$\mathcal{V}$.
\end{theorem*}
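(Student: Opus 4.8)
The plan is to transport the classical existence-and-uniqueness theory of minimal free resolutions into the category of vector space arrangements, using Klyachko's equivalence (Theorem~\ref{klythm}) as the translation device. First I would record the dictionary: by Theorem~\ref{klythm} the reflexive module $E$ corresponds to $\E$ together with its $n$ full filtrations, and by Proposition~\ref{reflexivelcmlattice} the arrangement $\mathcal{V}$ generated by the subspaces $E^k(i)$ embeds as a poset into $\Z^n$ via $X \mapsto \underline{i}^X$, with image a minimal admissible $\lcm$-lattice for $E$. In particular every multidegree $\uc$ carrying a nonzero graded Betti number $\dim_\K \Tor_j^S(E,\K)_\uc$ of $E$ lies in $\mathcal{V} \subseteq \Z^n$, so the poset of nonzero graded Betti numbers is already contained in the poset $\mathcal{V} \hookrightarrow \Z^n$.

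Next I would fix a minimal $\Z^n$-graded free resolution $0 \to F_t \to \cdots \to F_0 \to E \to 0$. Each $F_j$ is free, hence reflexive, and, as recalled above, every syzygy module of $E$ is again reflexive; applying the Klyachko equivalence termwise and passing to the underlying vector space arrangements therefore yields an exact sequence of vector space arrangements $0 \to \mathcal{F}_t \to \cdots \to \mathcal{F}_0 \to \mathcal{V} \to 0$, in which $\mathcal{F}_j$ is the coordinate arrangement of $F_j$. Cutting this into short exact pieces $0 \to \mathcal{V}_{j+1} \to \mathcal{F}_j \to \mathcal{V}_j \to 0$ with $\mathcal{V}_0 = \mathcal{V}$ produces the syzygy arrangements $\mathcal{V}_j$ and reduces the theorem to the claim that each $\mathcal{F}_j$ is determined by $\mathcal{V}$: indeed $\dim_\K \Tor_j^S(E,\K)_\uc$ is the number of summands $S(-\uc)$ of $F_j$, which can be read off from $\mathcal{F}_j$, so once the $\mathcal{F}_j$ are pinned down the graded Betti numbers and their poset are too.

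The heart of the argument, which I expect to carry out in subsection~\ref{reflres}, is to show that a minimal free cover $F_0 \to E$ corresponds under Klyachko to a canonically defined ``minimal coordinate cover'' $\mathcal{F}_0 \to \mathcal{V}$, and that this cover together with its kernel $\mathcal{V}_1 \subseteq \mathcal{F}_0$ can be reconstructed from $\mathcal{V}$ alone, purely within the category of arrangements --- the combinatorial incarnation of Nakayama's lemma and of the uniqueness of minimal free covers. Concretely I would identify the generating multidegrees of $\mathcal{V}$ intrinsically, as those $X$ at which $\mathcal{V}$ grows strictly beyond the subarrangement generated by the proper predecessors of $X$, take $\mathcal{F}_0$ to be the associated coordinate arrangement, and exhibit $\mathcal{V}_1$, together with its canonical embedding into $\Z^n$, as a syzygy arrangement. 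Since $\mathcal{V}_1$ is again the arrangement of a reflexive module, one proceeds by induction on $j$ and concludes that every $\mathcal{F}_j$, hence every $\dim_\K \Tor_j^S(E,\K)_\uc$, is determined by $\mathcal{V}$ together with its embedding into $\Z^n$; moreover the numerical value at a given $X$ --- the dimension of the stalk of $\mathcal{F}_j$ over $X$ --- depends only on the abstract arrangement $\mathcal{V}$, since the embedding serves only to record the multidegree $\uc = \underline{i}^X$ and the homological degree. For the poset statement one checks in addition that the induced maps $\mathcal{F}_{j+1} \to \mathcal{F}_j$ are compatible with the $\Z^n$-embeddings, so that the comparison of Betti multidegrees across consecutive homological degrees is inherited from the partial order on $\Z^n$.

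The main obstacle is precisely this last step: formulating and proving the minimal-cover statement inside the category of vector space arrangements --- existence and uniqueness, up to the appropriate notion of equivalence, of the minimal coordinate cover of an admissible arrangement; the fact that its kernel is again an admissible arrangement carrying a canonically induced $\Z^n$-embedding; and the functoriality needed to iterate the construction. The accompanying bookkeeping subtlety is to keep carefully separated which data are governed by the abstract arrangement $\mathcal{V}$, namely the ranks and hence the numerical Betti numbers, and which genuinely require the embedding $\mathcal{V} \hookrightarrow \Z^n$, namely the multidegrees and the homological degrees in which they appear.
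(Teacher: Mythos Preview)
Your proposal is correct and follows essentially the same route as the paper: pass from $E$ to the arrangement $\mathcal{V}_E$ via Proposition~\ref{reflexivelcmlattice}, build the first syzygy sequence $0 \to \mathcal{V}_{E^1} \to \mathcal{V}_F \to \mathcal{V}_E \to 0$ intrinsically in arrangements, and iterate. The paper's execution differs only in packaging: rather than invoking Nakayama and uniqueness of minimal free covers directly, it uses the $\zip^\cL/\unzip^\cL$ equivalence of Proposition~\ref{admissibleequivalence} to transport the problem to the category of representations of the finite poset $\cL = \mathcal{V}_E$, where minimal projective covers are given by the explicit algorithm of subsection~\ref{resolutions} (split each $F_x$ as $F_{x,\leq} \oplus F_{x,>}$). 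This makes your ``minimal coordinate cover'' step mechanical. The one technical point you leave implicit but which the paper isolates as Proposition~\ref{coordinatearrangement} is that $\mathcal{V}_F$ and $\mathcal{V}_E$ coincide \emph{as subsets of $\Z^n$}; together with the observation that $\mathcal{V}_{E^1}$ is a proper subposet of $\mathcal{V}_E$, this guarantees that the whole iteration stays inside the fixed poset $\mathcal{V}_E \hookrightarrow \Z^n$, which is exactly what separates the data determined by the embedding (the multidegrees) from the data determined by the abstract arrangement (the numerical Betti values).
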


The remarkable implication of this theorem is that by our approach we obtain new invariants
of $\Z^n$-graded modules.
Most of this paper will be devoted to setting up machinery to make above construction work
and to utilize it for local cohomology computations both over $S$ and over $\ksm$. This will
in particular be done by adapting results of Miller \cite{Miller00} to our setting (see subsections
\ref{duality} and \ref{loccohomcomp}).

To show the versatility of our methods we will present two important applications. The
first will
be the explicit computation of graded Betti numbers and local cohomology of reflexive
$S$-modules whose associated filtrations form hyperplane arrangements. Assume that
$E$ is a reflexive $\Z^n$-graded $S$-module whose associated filtrations form an
essential hyperplane arrangement $\mH$. We denote $X \mapsto \uc^X$ the
poset embedding of $\mH$ into $\Z^n$. Then the graded Betti numbers are given by
$\beta_i(\uc^X)$ for $i \geq 0$ and $X \in \mH$. We get:

\begin{theorem*}[\ref{hyperplanebettinumbers}]
Let $E$ be a reflexive $\Z^n$-graded $S$-module whose associated filtrations form an
essential hyperplane arrangement $\mH$ .
Then for any $X \in \mH$ the associated graded Betti numbers $\beta_i(\uc^X)$
are zero unless $\dim X = i + 1$. If $\dim X = i + 1$, then $\beta_i(\uc^X)$
coincides with the beta invariant of $X$.
\end{theorem*}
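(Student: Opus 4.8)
\emph{The plan} is to compute $\beta_i(\uc^X)=\dim_\K\Tor_i^S(E,\K)_{\uc^X}$ by evaluating the Koszul complex $K_\bullet=K_\bullet(x_1,\dots,x_n;E)$ of $x_1,\dots,x_n$ with coefficients in $E$ in multidegree $\uc^X$, and recognising the resulting complex combinatorially. Since $K_\bullet$ resolves $\K$ over $S$ we have $\Tor_i^S(E,\K)=H_i(K_\bullet)$, and this is $\Z^n$-graded. By Theorem~\ref{klythm} the piece of a reflexive module in multidegree $\uc$ is $E_\uc=\bigcap_{k\in\on}E^k(c_k)\subseteq\E$, with multiplication by $x_k$ acting as the inclusion $E_\uc\hookrightarrow E_{\uc+e_k}$; hence the $\uc^X$-strand of $K_\bullet$ is the complex $C_\bullet$ with $C_i=\bigoplus_{|T|=i}E_{\uc^X-e_T}$ whose differential is the alternating sum of inclusions $E_{\uc^X-e_T}\hookrightarrow E_{\uc^X-e_{T\setminus\{k\}}}$. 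Writing $S_X=\{k\in\on\mid X\subseteq H_k\}$, so that $X=\bigcap_{k\in S_X}H_k$, a direct reading of $c_k^X=\min\{i\mid X\subseteq E^k(i)\}$ shows that $E_{\uc^X-e_T}=0$ as soon as $T\cap S_X\neq\emptyset$, while $E_{\uc^X-e_T}=\bigcap_{k\in T}(X\cap H_k)$ when $T\subseteq\on\setminus S_X$. Thus $C_\bullet$ is isomorphic to the complex $C_\bullet(\mH^X)$ formed in the same way from the family $\mH^X=(X\cap H_k)_{k\notin S_X}$ of hyperplanes of $X$; this family is essential in $X\cong\K^{\dim X}$, since $\bigcap_{k\notin S_X}(X\cap H_k)=\bigcap_{k\in\on}H_k=0$ by essentiality of $\mH$. (Theorem~\ref{invariants} already guarantees that $\beta_i(\uc^X)$ depends only on the arrangement; the above makes the dependence on the local data at $X$ explicit.)

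It remains to prove the combinatorial lemma: for an essential family $\mathcal{A}=(A_1,\dots,A_m)$ of hyperplanes in $\K^\ell$ (repetitions and the degenerate ``hyperplane'' $\K^\ell$ allowed), the complex $C_\bullet(\mathcal{A})$ with $C_i=\bigoplus_{|T|=i}\bigcap_{k\in T}A_k$ and inclusion differentials has homology concentrated in homological degree $\ell-1$, and $\dim_\K H_{\ell-1}(C_\bullet(\mathcal{A}))$ equals the Crapo beta invariant $\beta(\mathcal{A})$ of the underlying matroid. I would argue by induction on $m$. If some $A_k=\K^\ell$, then $C_\bullet(\mathcal{A})$ is the cone on the identity map of $C_\bullet(\mathcal{A}\setminus A_k)$, hence acyclic, which matches $\beta(\mathcal{A})=0$. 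If every $A_k$ is a proper hyperplane and every one is a coloop, then $m=\ell$ and, after a linear change of coordinates, $\mathcal{A}$ is the coordinate arrangement, for which $C_\bullet(\mathcal{A})$ splits as a direct sum of Koszul complexes on unit sequences and the claim is immediate. Otherwise choose a proper non-coloop hyperplane $A$; the deletion--restriction short exact sequence of complexes
\begin{equation*}
0\longrightarrow C_\bullet(\mathcal{A}\setminus A)\longrightarrow C_\bullet(\mathcal{A})\longrightarrow C_{\bullet-1}(\mathcal{A}^{A})\longrightarrow 0,
\end{equation*}
in which $\mathcal{A}^{A}$ is the restriction of $\mathcal{A}$ to $A$ (a family of $m-1$ hyperplanes of $A\cong\K^{\ell-1}$, again essential), places, by the inductive hypothesis, the homology of $C_\bullet(\mathcal{A}\setminus A)$ and of $C_{\bullet-1}(\mathcal{A}^{A})$ both in degree $\ell-1$. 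The long exact homology sequence then forces $H_i(C_\bullet(\mathcal{A}))=0$ for $i\neq\ell-1$ and $\dim_\K H_{\ell-1}(C_\bullet(\mathcal{A}))=\beta(\mathcal{A}\setminus A)+\beta(\mathcal{A}^{A})$, which equals $\beta(\mathcal{A})$ by Crapo's recursion $\beta(M)=\beta(M\setminus e)+\beta(M/e)$ for $e$ neither a loop nor a coloop of $M$.

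Putting the pieces together, $\beta_i(\uc^X)=\dim_\K H_i(C_\bullet(\mH^X))$ is zero unless $i=\dim X-1$, and for $i=\dim X-1$ it equals $\beta(\mH^X)$, which is precisely the beta invariant of the flat $X$ (equivalently, of the interval $[X,\E]$ in the intersection lattice of $\mH$). The main obstacle is the lemma in the preceding paragraph: keeping the combinatorics honest once the restriction $\mathcal{A}^{A}$ acquires repeated or degenerate hyperplanes, and checking that the deletion--restriction sequence is exact with exactly the inclusion differentials claimed. As a consistency check, and to extract the value of the nonzero homology once concentration is known, one can use the Euler-characteristic identity $\sum_i(-1)^i\dim_\K C_i(\mathcal{A})=(-1)^{\ell-1}\beta(\mathcal{A})$, immediate from $\dim_\K\bigcap_{k\in T}A_k=\ell-\rk(T)$ together with $\beta(M)=(-1)^{\rk(M)}\sum_{T}(-1)^{|T|}\rk(T)$.
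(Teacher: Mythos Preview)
Your argument is correct and takes a genuinely different route from the paper. The paper proves Theorem~\ref{hyperplanebettinumbers} as a corollary of Theorem~\ref{res1}, which builds the entire minimal free resolution of $E_\mH$ by induction on the number of hyperplanes: starting from a coordinate arrangement and, at each step, splicing the existing resolution of $\mH$ with the resolution of the restricted arrangement $\mH' = \{H \cap X \mid X \in \mH\}$ inside the newly added hyperplane $H$. The Betti numbers are then read off from the explicit resolution and identified with beta invariants via a chain-counting formula for the M\"obius function (Lemma~\ref{betachainlemma}).

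You instead compute $\beta_i(\uc^X) = \dim \Tor_i^S(E,\K)_{\uc^X}$ directly as the homology of the Koszul strand, reduce this to the complex $C_\bullet(\mH^X)$ for the arrangement localized at $X$, and establish the concentration-plus-beta statement for that complex by deletion--restriction and Crapo's recursion. Both arguments are inductions of deletion--restriction type, but at different levels: the paper performs it globally on the free resolution, you perform it locally on a single graded piece of $\Tor$. Your approach is cleaner and more self-contained if one only wants the Betti numbers, and it makes the link to Crapo's beta invariant transparent; the paper's approach yields more, namely the explicit shape of the minimal resolution (the differentials, not just the ranks), which is then reused for the local cohomology computation in Theorem~\ref{basstheorem}. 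One minor bookkeeping point: in the paper's setup there may be ``trivial'' coordinates $k \notin \{k_1,\dots,k_t\}$, for which $E^k(c_k^X - 1) = 0$ as well; these should be absorbed into your $S_X$, but this does not affect the argument.
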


Note that the beta invariant is a combinatorial invariant of a ranked poset which
can completely be expressed in terms of its M\"obius function (see subsection \ref{combinatorics}
for details). For a given $X \in \mH$, the beta invariant is the one associated to the
hyperplane arrangement $\{X \cap Y \mid Y \in \mH\}$ in $X$.

Now consider the local cohomology $H^i_x E$ of such modules with respect to the maximal
homogeneous ideal of $S$ (equivalently, with respect to the fixed point $x = 0$ of the
standard torus action on the affine space $\A^n_\K$). Given the degrees $\uc^X$ as above,
then by duality (see subsections \ref{duality} and \ref{loccohomcomp}), the graded pieces
of $H^i_x E$ are determined by the {\em $\gcd$-lattice} $\mathcal{G}$ (see subsection
\ref{lcmlattices}) generated by the Bass numbers of $E$, which are given by elements
$\{\ud^X \mid X \in \mH\}$, where $\ud^X = \uc^X - (1, \dots, 1)$ for every $X \in \mH$.
An element $\uc \in \Z^n$ is {\em adjacent} to $\ud \in \Z^n$ if $(H^i_x E)_\uc$ is
determined by (and therefore coincides with) $(H^i_x E)_\ud$. The following result shows
that the local cohomology of $E$ encodes the whole spectrum of beta invariants
of subarrangements of $\mH$.

\begin{theorem*}[\ref{basstheorem}]
Let $E$ be  a reflexive $\Z^n$-graded $S$-module whose associated filtrations form an
essential hyperplane arrangement $\mH$ in $V$.
Denote $\cG$ the $\gcd$-lattice generated by the degrees of the Bass numbers of
$E$. For every $\ud \in \cG$ denote $\mH^\ud \subset \mH$ the hyperplane
arrangement generated by those hyperplanes $H \in \mH$ with $\ud \nleq \ud^H$,
$r^\ud$ its rank and $\beta^\ud$ its beta invariant.
Then for any $\ud \in \cG$ and any $\uc \in \Z^n$ adjacent to $\ud$ we have
\begin{equation*}
\dim (H_x^i E)_\uc =
\begin{cases}
\beta^\ud & \text{ if \ } i = n - r^\ud + 1\\
0 & \text{ else}.
\end{cases}
\end{equation*}
\end{theorem*}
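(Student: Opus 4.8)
My plan is to reduce, via the duality results of subsections~\ref{duality} and~\ref{loccohomcomp} together with Theorem~\ref{hyperplanebettinumbers}, to computing the cohomology of an explicit complex of $\K$-vector spaces built from the beta invariants $\beta(X)$, and then to evaluate that cohomology combinatorially using the M\"obius function of $L(\mH)$. First I would set up the duality: since $S$ is Gorenstein with graded canonical module $S(-\one)$, graded local duality gives $(H^i_x E)_\uc \cong \bigl(\Ext^{n-i}_S(E,S(-\one))_{-\uc}\bigr)^\ast$ for every $\uc\in\Z^n$. By Theorem~\ref{hyperplanebettinumbers} the minimal free resolution $F_\bullet$ of $E$ has $i$-th term $\bigoplus_{X\in\mH,\ \dim X=i+1}S(-\uc^X)^{\beta(X)}$, where $\beta(X)$ is the beta invariant of the restricted arrangement $\{X\cap H\mid H\in\mH,\ X\not\subseteq H\}$ in $X$; hence $\Hom_S(F_i,S(-\one))=\bigoplus_{\dim X=i+1}S(\ud^X)^{\beta(X)}$ with $\ud^X:=\uc^X-\one$, and passing to the degree-$(-\uc)$ component shows that $(H^i_x E)_\uc$ is the dimension of the cohomology in position $n-i$ of the cochain complex $C^\bullet_\uc$ with
\begin{equation*}
C^j_\uc \;=\; \bigoplus_{\substack{X\in\mH,\ \dim X=j+1\\ \ud^X\geq\uc}}\K^{\beta(X)} ,
\end{equation*}
whose differentials are the scalars induced in degree $-\uc$ by the maps of the associated vector space arrangements. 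By Theorem~\ref{invariants} and the syzygy--arrangement construction of subsection~\ref{reflres}, these differentials, and not merely the ranks $\beta(X)$, are determined by $\mathcal V$ together with its embedding into $\Z^n$, so $C^\bullet_\uc$ is a genuinely combinatorial object.

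Next I would reduce to $\uc=\ud\in\cG$. The complex $C^\bullet_\uc$ depends only on the set of flats visible from $\uc$, namely $\{X\in\mH\mid\ud^X\geq\uc\}$; if this set is nonempty its componentwise minimum lies in $\cG$, dominates $\uc$, and cuts out the same set of flats, so one may assume $\uc=\ud\in\cG$ (if it is empty then $C^\bullet_\uc=0$). Writing $\mathcal F_\ud:=\{X\in\mH\mid\ud^X\geq\ud\}$, I would use that each filtration $E^k(\bullet)$ of a hyperplane-arrangement module has the two-step shape $0\subsetneq H_k\subsetneq V$, so that the $k$-th coordinate of $\uc^X$ is the lower jump of $E^k$ when $X\subseteq H_k$ and its upper jump otherwise; comparing with the definition of $\mH^\ud$ then identifies $\mathcal F_\ud$ with an order ideal of the intersection lattice $L(\mH)$ directly governed by $\mH^\ud$ (concretely, the flats not contained in any hyperplane of $\mH^\ud$, so that the hyperplanes it contains are exactly those of the deletion $\mH\setminus\mH^\ud$). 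It remains to prove that $H^j(C^\bullet_\ud)$ vanishes for $j\neq r^\ud-1$ and that $\dim_\K H^{r^\ud-1}(C^\bullet_\ud)=\beta^\ud$, whereupon $n-i=r^\ud-1$, i.e.\ $i=n-r^\ud+1$, yields the statement.

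For this last point I would first compute the Euler characteristic $\sum_j(-1)^j\dim C^j_\ud=\sum_{X\in\mathcal F_\ud}(-1)^{\dim X-1}\beta(X)$, which by M\"obius inversion in $L(\mH)$ and the expression of the beta invariant through the reduced characteristic polynomial (subsection~\ref{combinatorics}) should collapse to $\pm\beta^\ud$; here the flats $X$ whose associated matroid carries a coloop or parallel class not coming from $\mH^\ud$ contribute $\beta(X)=0$ and drop out, which is what lets the telescoping close. For the concentration in the single degree $r^\ud-1$ I would argue by induction on $\lvert\mH^\ud\rvert$: choosing a hyperplane $H_0\in\mH^\ud$ that is neither a loop nor a coloop, the split of $\mathcal F_\ud$ through the deletion--restriction of $\mH^\ud$ at $H_0$ should produce a short exact sequence of complexes with outer terms the analogous complexes for $\mH^\ud\setminus H_0$ and (shifted) for $\mH^\ud/H_0$, whose long exact sequence, combined with $\beta^\ud=\beta(\mH^\ud\setminus H_0)+\beta(\mH^\ud/H_0)$ and the inductive hypothesis, forces the cohomology into degree $r^\ud-1$ with dimension $\beta^\ud$; the loop and coloop base cases are immediate, one term then vanishing. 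An equivalent route is to peel off the contractible ``Boolean'' factor coming from the coordinates $k$ with $H_k\notin\mH^\ud$ and to recognize the rest, up to a shift, as the appropriately reweighted order complex of the proper part of $L(\mH^\ud)$, whose shellability gives the concentration at once. The hard part is precisely this concentration: the Euler-characteristic identity is routine bookkeeping with the M\"obius function, but excluding cohomology in degrees below $r^\ud-1$ forces one either to verify carefully that the deletion--restriction sequence of complexes is exact with the correct differentials (which leans on controlling the maps of $C^\bullet_\ud$ through the functor of subsection~\ref{reflres}) or to match the beta-invariant weights against the classical homology of geometric lattices with the right truncation, all the while keeping track of the extra coordinates with $H_k\notin\mH^\ud$, whose contribution is only rendered harmless by the vanishing of $\beta$ on coloops and parallel classes.
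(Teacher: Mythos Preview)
Your reduction via local duality to the cochain complex $C^\bullet_\ud$ with $C^j_\ud=\bigoplus_{X\in\mathcal F_\ud,\ \dim X=j+1}\K^{\beta(X)}$ is correct and matches the paper's setup (Proposition~\ref{cohomologyshift} and the discussion after Lemma~\ref{partitionlemma}); your identification $\mathcal F_\ud=\{X\in\mH\mid X\not\subseteq H\ \text{for all}\ H\in\mH^\ud\}$ is exactly Lemma~\ref{partitionlemma}. The route, however, diverges at the concentration step. The paper works on the complementary side, the complex $B^{\bullet,\ud}$ indexed by flats that \emph{are} contained in some $H\in\mH^\ud$, and proves concentration (Proposition~\ref{lcohomprop}) by reusing the inductive construction of Theorem~\ref{res1}: starting from a minimal coordinate resolution of the subarrangement $\mH^\ud$ and successively adjoining the hyperplanes of $\mH_\ud$ one at a time, it checks that each such adjunction leaves the cohomology unchanged, so $B^{\bullet,\ud}$ has the same cohomology as the resolution of $\mH^\ud$ itself, namely the top syzygy of dimension $\beta^\ud$ sitting in the predicted degree. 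This induction is on $|\mH_\ud|$, not on $|\mH^\ud|$, and it works precisely because the machinery of \S\ref{resolutionofhyperplanemodules} already tells you explicitly what happens to the resolution when a single hyperplane is added.

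Your proposed induction by deletion--restriction on $\mH^\ud$ has a genuine difficulty that you flag but do not resolve. Deleting $H_0$ from $\mH^\ud$ enlarges $\mathcal F_\ud$ to $\mathcal F_{\ud'}$ and does give an inclusion $C^\bullet_\ud\hookrightarrow C^\bullet_{\ud'}$ of complexes, but the cokernel $Q^\bullet$ carries the weights $\beta(X)$ computed in the \emph{full} arrangement $\mH$, not in $\mH^\ud$ or its restriction. To identify $Q^\bullet$ with the analogous complex for the pair $(\mH|_{H_0},\,\mH^\ud/H_0)$ you need that $\beta(X)$ in $\mH$ agrees with $\beta(X)$ in $\mH|_{H_0}$ for $X\subseteq H_0$ (this is true, but requires an argument), that the induced differentials on $Q^\bullet$ match those coming from a resolution of $\mH|_{H_0}$ (not automatic: you must trace through the maps of \S\ref{reflres}), and that repeated hyperplanes arising in $\mH|_{H_0}$ do not spoil the bookkeeping. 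None of this is insurmountable, but it is not the ``should'' you wrote; in effect you would be rebuilding the content of Proposition~\ref{lcohomprop} along a different induction. The paper's choice to induct on $|\mH_\ud|$ sidesteps all of this because the explicit step of Theorem~\ref{res1} is already an ``add one hyperplane'' statement, so no new compatibility of differentials needs to be checked.
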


Our second application will be the explicit construction of new examples of $M$-graded
maximal Cohen-Macaulay (MCM) modules over $\ksm$ for the case that $\sigma$ is simplicial but
not regular. It is well known that there is --- up to degree-shift --- a one-to-one correspondence
between MCM modules of rank one and Weil divisor classes on $\spec{\ksm}$.
In \cite{AuslanderReiten89} Auslander and Reiten show that the toric ring
$\K[[X, Y, X]]^{\Z / 2\Z}$ is MCM-finite and that indeed there exists an MCM module
of rank two over this ring (see also \cite{Yoshino90} \S 16).
The main result of section \ref{mcmmodules} will be that this example fits into a general
class of examples of MCM modules over simplicial toric rings.

\begin{theorem*}[\ref{oneelement}]
Let $\sigma$ be a simplicial and non-regular cone of dimension $d \geq 3$.
Then there exists an
indecomposable $M$-graded MCM module of rank $d - 1$.
\end{theorem*}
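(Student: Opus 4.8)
The approach is to realize the desired MCM module explicitly through the Klyachko description (Theorem \ref{klythm}): build a vector space $\E$ with $n$ full filtrations whose associated reflexive $\ksm$-module has rank $d-1$, is indecomposable, and is maximal Cohen-Macaulay. Since $\sigma$ is simplicial of dimension $d$, we have $n = d$ rays, so we must specify $d$ filtrations on a $(d-1)$-dimensional space $\E$. The natural candidate is to take $\E = N_\R$-like: more precisely, let $\E$ be the cokernel of the diagonal-type embedding associated to the primitive ray generators, i.e. start from the short exact sequence $0 \to M \to \Z^d \to A \to 0$ (the Cox sequence, with $A$ the class group, finite since $\sigma$ is simplicial) — dualizing and tensoring with $\K$, one gets a $(d-1)$-dimensional space together with $d$ distinguished lines (or hyperplanes) coming from the coordinate directions of $\Z^d$. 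Place each such line (resp.\ its complementary hyperplane) as the single jump of the $k$-th filtration, with the jump located at the position dictated by the non-regularity of $\sigma$: because $\sigma$ is not regular, $A \neq 0$, and this torsion is exactly what forces the jumps to be positioned so that the module cannot split off a rank-one piece.

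The key steps, in order: (1) Write down the filtrations precisely and identify the resulting reflexive module $E$ via Theorem \ref{klythm}; show $\rk E = \dim \E = d-1$. (2) Verify indecomposability: a direct sum decomposition of $E$ corresponds to a simultaneous decomposition $\E = \E' \oplus \E''$ respecting all $d$ filtrations; using the genericity of the $d$ lines/hyperplanes (they are in "general position" subject only to the single linear relation coming from $M \hookrightarrow \Z^d$, which is where non-regularity enters), show no such nontrivial splitting exists — any subspace respecting all filtrations is $0$ or all of $\E$. This is essentially a general-position argument on $d$ points in $\mathbb{P}^{d-2}$. (3) Verify the MCM property: compute $\depth E$ using the local cohomology / resolution machinery developed earlier in the paper (subsections \ref{duality}, \ref{loccohomcomp}), or equivalently check that $H^i_{\mathfrak{m}} E = 0$ for $i < d$. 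Because $E$ is reflexive it already has depth $\geq 2$; one must push this up to $d$. The cleanest route is to exhibit $E$ as a high syzygy: resolve $E$ (or its Cox-homogenized lift over $S = \K[x_1,\dots,x_d]$) and read off from the combinatorial Betti/Bass number description that the relevant local cohomologies vanish in degrees $< d$, so that $E$ is MCM of the stated rank.

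The main obstacle I expect is step (3), the MCM verification: one must be careful that the Cox construction, which makes $\ksm$ the degree-zero part of $S$ under the $A$-grading, interacts correctly with depth — depth over $\ksm$ need not be simply inherited from depth over $S$, and the finite group $A$ acting makes this a question about invariants/isotypic components. The strategy is to first establish the result over $S$ for the homogenized module (where the hyperplane-arrangement Betti number computation, Theorem \ref{hyperplanebettinumbers}, and the Bass number computation, Theorem \ref{basstheorem}, apply directly, since our $d$ lines/hyperplanes genuinely form an essential arrangement), and then descend: local cohomology with monomial support descends under the degree-zero functor, so vanishing of $H^i$ for $i<d$ upstairs yields the MCM property downstairs. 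A secondary subtlety is checking that the rank is exactly $d-1$ and not smaller after passing to $\ksm$ — this follows because the generic stalk is just $\E \otimes_\K \K(M)$, of dimension $d-1$. Finally, I would specialize to $d=3$ and $A = \Z/2\Z$ to recover the Auslander–Reiten example of \cite{AuslanderReiten89}, both as a sanity check and to anchor the construction.
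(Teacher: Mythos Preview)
Your overall architecture is right and matches the paper closely: take $\E$ of dimension $d-1$, place $d$ one-dimensional subspaces $V_1,\dots,V_d$ in general position as the single jumps of the $d$ filtrations, and use the local cohomology machinery together with Proposition~\ref{loccohomprop} to test the MCM property downstairs. Your general-position argument for indecomposability is also exactly what the paper does (Proposition~\ref{indec}).

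There is, however, a genuine gap in step~(3). You write that you will ``establish the result over $S$ for the homogenized module \dots\ so vanishing of $H^i$ for $i<d$ upstairs yields the MCM property downstairs.'' But $\hat{E}$ is \emph{never} MCM over $S$: by Example~\ref{einfach} it has projective dimension~$1$, hence depth $d-1$, and Example~\ref{einfachbass} shows explicitly that $(H^{d-1}_{\hat{x}}\hat{E})_{\ud_\emptyset}\cong\K$ is nonzero (with $\ud_\emptyset$ determined by the jump positions). So nothing vanishes upstairs in cohomological degree $d-1$. The actual mechanism is different: Proposition~\ref{loccohomprop} gives $H^i_x E \cong (H^i_{\hat{x}}\hat{E})_{(0)}$, and one must arrange that the finitely many multidegrees supporting $H^{d-1}_{\hat{x}}\hat{E}$ all lie in $\Z^d\setminus L(M)$. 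In the paper's construction the support is the single point $\ud_\emptyset$, and one \emph{chooses} the jump positions so that $\ud_\emptyset$ maps to a nonzero class under $\Z^d\twoheadrightarrow A$; this is possible precisely because $\sigma$ is non-regular, i.e.\ $A\neq 0$ (Theorem~\ref{cuboidtheorem} and the proof of Theorem~\ref{oneelement}). This is also where your description of the role of non-regularity drifts: $A\neq 0$ is what makes $E$ MCM (by allowing $\ud_\emptyset\notin L(M)$), not what makes $E$ indecomposable --- indecomposability is purely the general-position statement and holds regardless of $\sigma$.

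A smaller point: you invoke Theorems~\ref{hyperplanebettinumbers} and~\ref{basstheorem}, but those are for \emph{hyperplane} filtrations, whereas the relevant filtrations here are by lines (one-dimensional $V_k$), which are hyperplanes only when $d=3$. The paper instead uses the elementary computation of Examples~\ref{einfach} and~\ref{einfachbass} (arrangements with pairwise trivial intersections), which gives the needed local cohomology directly; the hyperplane theorems enter only for the canonical dual $\Hom(E,\omega)$ in the closing remark.
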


In general, there is even more than one isomorphism class. More precise statements
will be given in subsection \ref{intersectionfreemcms}.

As a final remark I want to mention that in a late stage of writing this article
Bernd Sturmfels brought to my attention that some very similar ideas have been
developed independently in the context of topological data analysis. The theory
of multi-dimensional persistence essentially parallels the idea of
$\sigma$-families in \cite{perling1} from a topological point of view
(e.g. see \cite{CarlssonZomorodian09}, \cite{Knudson08}). It would be interesting
to see whether our work might lead to interesting applications in this area.

\subsection*{Acknowledgements.}

For conversations which brought new ideas into this project
I want to thank G\"unther Trautmann, Henning Krause, Ezra Miller, and Vic Reiner.

\subsection*{Overview of the paper.}

Section \ref{sectiontwo} contains general facts about poset representations and
graded modules; most of the material is well-known. In section \ref{resolutionsection}
we will develop our machinery for computing resolutions and local cohomologies of
graded modules. The results will be applied in section \ref{hyperplanearrangements} to
compute Betti and Bass numbers and the local cohomology of $\Z$-graded reflexive
$S$-modules whose associated filtrations generate hyperplane arrangements. Finally,
in section \ref{mcmmodules},
we construct examples of indecomposable MCM-modules of higher rank for the simplicial case.

\subsection*{Notation and general conventions.}

Throughout this text $\K$ shall denote a fixed field. The general setting of this
paper is that of finitely generated, normal monoid
rings over $\K$, i.e. rings of the form $\ksm$, where $\sigma_M = \check{\sigma} \cap M$.
Here, $M \cong \Z^d$ is identified with the character group of the $d$-dimensional
torus $T = \mathbb{G}_m^d(\K)$ and $N$ the dual group of cocharacters. Moreover,
$\sigma \subset N_\R = N \otimes_\Z \R$ denotes a strictly convex rational polyhedral
cone and $\check{\sigma} \subset M_\R = M \otimes_\Z \R$ its dual cone with respect to
the standard bilinear pairing $M \times N \rightarrow \Z$, $(m, n) \mapsto m(n)$.
Identifying $N$ with twice its dual, we will usually write $n(m)$ rather than $m(n)$.
Throughout we will assume that a $d$-dimensional cone $\sigma$ will be fixed.
Although we do not assume that $\K$ is algebraically closed, we will make
use of well-known standard facts of toric geometry, such as the correspondence
between the the faces of $\sigma$, $T$-orbits
on the associated affine toric $\K$-scheme $U_\sigma = \spec{\ksm}$ and $M$-graded
prime ideals in $\ksm$. Within the scope of this paper, the relevant results contained
in standard references such as \cite{Oda} and \cite{Fulton} are applicable to our
setting. For some $n \in \N$ we denote by $\on$ the ordered set $\{1 < \cdots < n\}$.
We will assume that $\sigma$ has $n$ rays with primitive
vectors $l_1, \dots, l_n$. By abuse of notation, we will in general not distinguish between
the $l_i$ and the rays they generate. Moreover, we will often identify the set
$\{l_1, \dots, l_n\}$ with $\on$.

We will throughout consider graded modules over graded commutative rings.
For general reference for general categorial properties of graded rings and modules
we refer to \cite{NO2}. Tensor products and $\Hom$ of graded modules will always be considered as graded. In particular, for any commutative ring $R$ graded by some (additive) abelian group $G$ and
some $G$-graded $R$-modules $E, F$, the module $\Hom_R(E, F)$ will be defined as
\begin{equation*}
\Hom_R(E, F) := \bigoplus_{g \in G} \Hom_R(E, F)_g
\end{equation*}
with $\Hom_R(E, F)_g = \Hom_R(E(-g), F)_0 = \Hom_R(E, F(g))_0$. We will also write
$\Hom^G_R(E, F)$ for $\Hom_R(E, F)_0$.
The graded tensor product of $E \otimes_R F$ is considered as graded by setting
$(E \otimes_R F)_g$ the submodule generated by elements $e \otimes f$ with
$e \in E_h$ and $f \in F_{g - h}$ for any $h \in G$. Alternatively, if $R$ is
a $\K$-algebra, $E \otimes F$ can be considered as the quotient of the
$\K$-vector space $E \otimes_\K F$, which is graded by setting $(E \otimes_\K F)_g
= \bigoplus_{h \in G} (E_h \otimes_\K F_{g - h})$, by the subvector space generated
by $re \otimes f - e \otimes rf$ for $e \in E, f \in F, r \in R$. Note that
$E(g) \otimes_R F \cong E \otimes_R F(g) \cong (E \otimes F)(g)$.
We will denote
$G$-$R$-Mod the category of $G$-graded $R$-modules where the morphisms are given
by $\Hom^G_R(E, F)$ for any two $G$-graded modules $E, F$.

\section{Graded modules and poset representations}\label{sectiontwo}

We will start in subsection \ref{posheaves} with some general remarks on sheaves
on posets and preordered sets, respectively. In subsection \ref{sigmafamilies} we
recapitulate material from \cite{perling1} on $\sigma$-families and add some
complementary remarks. In subsections \ref{matlissection} and \ref{minimizing}
we introduce Matlis duality and injective and and projective objects and discuss
resolutions. In subsection \ref{homogeneouscoordinates} we introduce divisorial
and codivisorial modules and resolutions via the homogeneous coordinate ring.

\subsection{Preliminaries on poset representations}
\label{posheaves}

In this work we will make extensive use of $\K$-linear representations of posets as
well as limits and colimits of such representations. Therefore it will be useful to
have several equivalent formulations for these kind of objects at hand. There is an
extensive literature about poset representations and their limits of which only a
very small part is relevant for us. For a recent survey we refer to \cite{Webb07}.
Basic information about limits and colimits can be found e.g. in
\cite{Eisenbud}, Appendix A6.

Let $\p$ be any preordered set with order relation $\leq$.
Recall that a preorder is defined by the same axioms as a partial order, except for the
reflexitivity
axiom, i.e. there may exist elements $x, y \in \p$ such that $x \leq y$ and $y \leq x$,
but $x \neq y$.
Then $\p$ in a natural way forms a category: its
objects are given by the set underlying $\p$ and the morphisms for $x, y \in \ob(\p)$ are:
\begin{equation*}
\mor(x, y) =
\begin{cases}
\text{the pair } (x, y) & \text{ if } x \leq y \\
\emptyset & \text{ else},
\end{cases}
\end{equation*}
with composition given by $(y, z) (x, y) = (x, z)$ whenever $x \leq y \leq z$.
Therefore the pair $(x, x)$ represents the identity morphism for all $x \in \mathcal{P}$.

\begin{definition}
A $\K$-linear representation of $\p$ is a functor from $\mathcal{P}$ to the category
of $\K$-vector spaces.
\end{definition}

Any such functor $E$ associates to $x \in \p$ a $\K$-vector space $E_x$ and to a
morphism $x \leq y$ in $\p$ a $\K$-linear homomorphism $E(x, y) : E_x \rightarrow E_y$.
It is straightforward to see that the $\K$-linear representations of $\p$ together
with their natural transformations form an abelian category.

We can always pass from the preordered set $\p$ to its canonically associated poset
$\p / \msim$, where $\sim$ denotes the equivalence relation on $\p$ given by $x \sim y$
iff $x \leq y$ and $y \leq x$. We get:

\begin{lemma}\label{popreequivrep}
The category of $\K$-linear representations of $\p$ is
equivalent to the category of $\K$-linear representations of $\p / \msim$.
\end{lemma}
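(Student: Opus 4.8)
The plan is to exhibit a quotient functor and its (partial) section, and show they are mutually inverse equivalences. First I would let $\pi : \p \to \p / \msim$ denote the canonical projection functor: on objects it sends $x$ to its equivalence class $[x]$, and on morphisms it sends $(x,y)$ with $x \le y$ to $([x],[y])$, which is a legitimate morphism in $\p/\msim$ since $x \le y$ implies $[x] \le [y]$. This is well-defined and functorial because composition in both categories is forced. Precomposition with $\pi$ then gives a functor $\pi^* : \K\text{-rep}(\p/\msim) \to \K\text{-rep}(\p)$ sending a representation $F$ of $\p/\msim$ to $F \circ \pi$, and sending a natural transformation $\eta$ to $\eta \pi$ (whiskering). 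The claim is that $\pi^*$ is an equivalence of categories, and I would prove this by showing it is fully faithful and essentially surjective.

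Essential surjectivity is where the content lies, so I would handle it carefully. Given a $\K$-linear representation $E$ of $\p$, the key observation is that whenever $x \sim y$, the morphisms $E(x,y) : E_x \to E_y$ and $E(y,x) : E_y \to E_x$ are mutually inverse: indeed $E(y,x) \circ E(x,y) = E(x,x) = \mathrm{id}_{E_x}$ and symmetrically, using functoriality and the fact that $(x,x)$ is the identity morphism. Hence all the vector spaces in one equivalence class are canonically isomorphic, and more importantly the transition maps between different classes are independent of the chosen representatives: if $x \sim x'$, $y \sim y'$, and $x \le y$ (equivalently $x' \le y'$), then $E(x',y') = E(y,y') \circ E(x,y) \circ E(x',x)$, which follows by composing morphisms in $\p$ and using functoriality. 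Therefore I can define a representation $\bar E$ of $\p/\msim$ by picking a representative $x$ in each class, setting $\bar E_{[x]} := E_x$, and setting $\bar E([x],[y]) := E(x,y)$; the above shows this is well-defined and functorial, and by construction $\pi^* \bar E = \bar E \circ \pi$ is naturally isomorphic to $E$ (in fact equal on the chosen representatives, and isomorphic everywhere via the canonical isos above). One should note this uses a choice function on equivalence classes, but the resulting object is unique up to canonical isomorphism, so no real choice-dependence survives.

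For full faithfulness, I would argue that for representations $F, G$ of $\p/\msim$, a natural transformation $\eta : F\circ\pi \to G\circ\pi$ of representations of $\p$ is precisely the same data as a natural transformation $F \to G$: the components $\eta_x$ for $x$ in a fixed class all agree (conjugated by the canonical isomorphisms, which for $F\circ\pi$ and $G\circ\pi$ are identities since $\pi(x)=\pi(x')$), so $\eta$ factors uniquely through $\pi$. Concretely, $\eta_x = \eta_{x'}$ whenever $x \sim x'$ because naturality against the morphism $(x,x')$ forces $G(x,x')\eta_x = \eta_{x'} F(x,x')$ and $F(x,x'), G(x,x')$ are identities on the nose for the pulled-back representations. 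Hence $\Hom(F\circ\pi, G\circ\pi) \cong \Hom(F,G)$, which is fullness and faithfulness simultaneously.

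The main obstacle, such as it is, is purely bookkeeping: verifying that the assignment $\bar E$ is genuinely functorial (respects composition and identities in $\p/\msim$) and that the natural isomorphism $\pi^*\bar E \cong E$ is indeed natural — i.e. commutes with all transition maps $E(x,y)$, not just the ones within a class. Both reduce to the single algebraic fact that $E(x,y)$ and $E(y,x)$ are inverse whenever $x \sim y$, together with functoriality of $E$; no deeper input is needed. I would also remark that this is a special case of the general principle that a category is equivalent to its skeleton, applied here to the (thin) category $\p$ whose skeleton is $\p/\msim$, and that pulling back representations along an equivalence of (small) categories is an equivalence — but since the categories here are so simple, the direct argument above is shortest.
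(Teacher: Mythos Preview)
Your proof is correct and follows essentially the same strategy as the paper: the core observation that $E(x,y)$ and $E(y,x)$ are mutually inverse whenever $x \sim y$ is identical, and both arguments build the quotient representation from this. The one noteworthy difference is technical: where you pick a representative $x$ in each class and set $\bar{E}_{[x]} := E_x$ (flagging the choice-function issue yourself), the paper instead sets $E_{[x]} := \colim_{y \in [x]} E_y$, the colimit over the equivalence class. This buys canonicity without choices, since the colimit is determined up to unique isomorphism; conversely, your version is more elementary and makes the verification of functoriality slightly more transparent. Your remark that this is an instance of the skeleton-equivalence principle is apt and does not appear in the paper's proof.
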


\begin{proof}
Let $E$ be a $\K$-linear representation of $\p$. For any $x \sim y$, the morphism $(x, y)$
is an isomorphism in $\p$ and thus
$E(x, y)$ is an isomorphism of $\K$-vector spaces whose inverse is $E(y, x)$.
In particular, we have $E_x \cong E_y$ for every pair $x \sim y$.
To define a representation of $\p / \msim$, we set $E_{[x]} := \colim E_y$,
where $[x]$ denotes an equivalence class of $\sim$ and the colimit is taken over
all elements $y$ in this equivalence class. Then by the naturality of colimits,
we obtain a morphism $E([x], [y]): E_{[x]} \rightarrow E_{[y]}$ for any pair $(x, y)$.
These morphisms are compatible with composition in $\p$ by the functoriality of
colimits.

For the other direction, we can lift representation $E$ of $\p / \msim$ to a
representation of $\p$ by setting $E_x := E_{[x]}$ and $E(x, y) := E([x], [y])$.
It is straightforward to see that these two functors indeed establish an equivalence
of categories.
\end{proof}

\begin{remark}
The limit $\colim E_y$ in the proof of Lemma \ref{popreequivrep}
should be considered as a ``generic'' representative for the vector spaces $E_y$ with
$y$ in one given
equivalence class which does not depend on some particular choices. In particular,
$\colim E_y$ is isomorphic to any $E_y$.
\end{remark}

\begin{remark}
So, strictly speaking, it does not seem necessary to consider preordered sets
rather than just posets. However, as we will see later on, from the point of view
of toric geometry it will be more natural to first consider preordered sets.
\end{remark}

On $\p$ there is defined a topology which is generated by the basis
\begin{equation*}
U(x) := \{y \geq x\}
\end{equation*}
for all $x \in \mathcal{P}$. Note that the continuous maps between preordered sets then
are precisely the order preserving maps.

\begin{proposition}\label{repsheafequiv}
Let $\p$ be a preordered set. Then there is an equivalence of categories between the
categories of representations of $\p$ and of sheaves of $\K$-vector spaces
on $\p$.
\end{proposition}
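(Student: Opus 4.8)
The plan is to construct explicit quasi-inverse functors between the category of $\K$-linear representations of $\p$ (functors $\p \to \K\text{-Vect}$) and the category $\operatorname{Sh}(\p)$ of sheaves of $\K$-vector spaces on the Alexandrov-type topology generated by the basis $\{U(x)\}_{x \in \p}$. The key point is that the sets $U(x)$ form a basis closed under the relevant operations and that for $x \leq y$ one has $U(y) \subseteq U(x)$, so the restriction maps of a sheaf go ``the same way'' as the morphisms of a representation. First I would fix notation: given a sheaf $\sh{F}$, define a representation $E^{\sh{F}}$ by $E^{\sh{F}}_x := \sh{F}(U(x))$ and, for $x \leq y$, let $E^{\sh{F}}(x,y) : \sh{F}(U(x)) \to \sh{F}(U(y))$ be the restriction map $\operatorname{res}^{U(x)}_{U(y)}$; functoriality is immediate from the transitivity of restriction. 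Conversely, given a representation $E$, I would define a sheaf $\sh{F}^E$ on the basis by $\sh{F}^E(U(x)) := E_x$ with restriction maps $E(x,y)$, and then extend to arbitrary open sets $U$ by the standard limit formula $\sh{F}^E(U) := \varprojlim_{U(x) \subseteq U} E_x$, i.e. the inverse limit over the full subcategory of basis elements contained in $U$ (equivalently, over the $x$ with $U(x)\subseteq U$, which by the definition of the topology just means the $x$ lying in $U$ together with everything above them — one checks $U(x)\subseteq U$ iff $x\in U$).

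Next I would verify that $\sh{F}^E$ is actually a sheaf. The key observation is that for the basis $\{U(x)\}$ the gluing condition need only be checked on basis covers (by the standard fact that a sheaf on a basis extends uniquely to a sheaf), and a cover of $U(x)$ by basis elements $U(x_i)$ necessarily includes some $U(x_i)$ with $x_i \leq x$, indeed with $x \in U(x_i)$, forcing $U(x) \subseteq U(x_i)$ and hence $U(x_i) = U(x)$ for that index; this makes the sheaf axiom on basis covers essentially trivial. Alternatively, and perhaps more cleanly, one notes that every open set $U$ has the property that $U = \bigcup_{x \in U} U(x)$ and the poset of basis opens contained in $U$ is cofiltered enough that the limit formula automatically satisfies descent — I would spell this out by checking directly that $\sh{F}^E$ takes covers to equalizers using that $U(x) \cap U(y) = \bigcup_{z \geq x, z \geq y} U(z)$.

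Then I would check the two composites are naturally isomorphic to the identities. Starting from a representation $E$, we get $E^{\sh{F}^E}_x = \sh{F}^E(U(x)) = \varprojlim_{U(y) \subseteq U(x)} E_y$; since $U(y) \subseteq U(x)$ iff $y \geq x$, and $x$ is the minimum of $U(x)$, the index category has an initial object $x$, so the limit is just $E_x$ — giving a natural isomorphism $E^{\sh{F}^E} \cong E$, compatible with the maps $E(x,y)$. Starting from a sheaf $\sh{F}$, we get $\sh{F}^{E^{\sh{F}}}(U) = \varprojlim_{U(x) \subseteq U} \sh{F}(U(x))$, and the sheaf axiom for $\sh{F}$ applied to the cover $\{U(x)\}_{x \in U}$ of $U$ identifies this limit with $\sh{F}(U)$ naturally; naturality in $U$ and compatibility with restriction maps is then routine. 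Finally I would remark that both functors are clearly additive and exact on the morphism sets (natural transformations of representations correspond to morphisms of sheaves via the component maps at the basis opens), so this is an equivalence of abelian categories, as in Lemma \ref{popreequivrep}.

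I expect the main obstacle to be the bookkeeping around the limit formula and the verification that $\sh{F}^E$ genuinely satisfies the sheaf axiom on all opens rather than just basis opens — in particular being careful that the topology here is the Alexandrov topology where arbitrary intersections of opens are open, so that $U(x)$ is the smallest open containing $x$ and the index category $\{U(y) : U(y)\subseteq U\}$ behaves well. Once the identification $U(y) \subseteq U(x) \iff y \geq x$ and the cofinality of $\{U(x)\}_{x\in U}$ in the opens-contained-in-$U$ poset are pinned down, everything else is formal; I would therefore isolate those two combinatorial facts first and then let the categorical argument run.
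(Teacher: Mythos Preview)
Your proposal is correct and follows essentially the same approach as the paper: both construct the quasi-inverse functors via $E_x = \sh{E}(U(x))$ and $\sh{E}(U) = \varprojlim_{x \in U} E_x$, and both rely on the key observation that any open cover of $U(x)$ must contain $U(x)$ itself (the paper cites EGA I \S 0.3.2.2 for this criterion, whereas you spell it out). Your write-up is in fact more thorough than the paper's sketch, as you explicitly verify both composites are naturally isomorphic to the identity, which the paper leaves implicit.
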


\begin{proof}
A sheaf of $\K$-vector spaces \msh{E} on $\mathcal{P}$ with respect to this topology
automatically induces a $\K$-linear representation of $\mathcal{P}$ by setting
$E_x = \sh{E}\big(U(x)\big)$ for every $x \in \p$ and $E(x, y)$ the restriction
morphism $\sh{E}\big(U(x)\big) \rightarrow \sh{E}\big(U(y)\big)$. On the other hand,
for any representation $E$, following \cite{EGAI} \S 0.3.2,
we obtain a presheaf \msh{E} on $\mathcal{P}$ by setting $\sh{E}\big((U(x)\big)
:= E_x$ for all $x \in \mathcal{P}$ and $\sh{E}(U) := \ilim \sh{E}\big(U(x)\big)$
for some open set $U$, where
the limit runs over all $x \in U$. Note that the stalk $\sh{E}_x$ is isomorphic
to $\sh{E}\big(U(x)\big)$. Observing that for some $U(x)$ every open cover
of $U(x)$ necessarily contains $U(x)$ itself, we can apply the criterion of
\S 0.3.2.2 in \cite{EGAI} from which it follows that our presheaf is a sheaf.
\end{proof}

\subsection{$\sigma$-families}
\label{sigmafamilies}

In this subsection we will recall some material from \cite{perling1}. In \cite{perling1},
the general assumption was used that $\K$ is algebraically closed. However, the results
relevant for us actually do not depend on any properties of $\K$ and therefore will be
stated without any assumptions on $\K$.

We will fix some more notation. Elements of $M$ are denoted $m, m'$ etc. if written
additively and $\chi(m), \chi(m')$ etc. if written multiplicatively, i.e. $\chi(m + m')
= \chi(m)\chi(m')$. Faces of $\sigma$ are denoted by small Greek letters $\rho, \tau$
etc; the face order among faces is denoted $\rho \preceq \tau$. For any $\tau \preceq
\sigma$ we denote $\tau^\bot = \{m \in M_\R \mid \langle m, n \rangle \geq 0$ for all
$n \in \sigma\}$ and $\tau^\bot_M = \tau^\bot \cap M$; note that $\tau^\bot_M$ is the maximal subgroup of $\tau_M$. For $\tau \preceq \sigma$ we will denote $\tau'$
for $\tau$ considered as maximal cone in its $\R$-linear span in $N_\R$; then $\tau'_M$
equals $\sigma_M \cap \tau_M^\bot$. The associated monoid rings correspond
to the orbit $\orb{\tau} = \spec{\K[\tau_M^\bot]}$ associated to $\tau$ and its closure
$V(\tau) = \spec{\K[\tau'_M]}$ in $U_\sigma$. Moreover, $U_\tau$ splits into a product
$U_\tau \cong \orb{\tau} \times U_{\bar{\tau}}$, where $U_{\bar{\tau}} =
\spec{\K[\bar{\tau}_M]}$ and $\bar{\tau}_M$ is the image of $\tau_M$ under the projection
$M \twoheadrightarrow M / \tau_M^\bot$.

The notion of a $\sigma$-family is a simple reformulation of the notion of $M$-graded
$\ksm$-modules which will help us to make the combinatorial content of such modules more
explicit. The basic observation is that, given an $M$-graded $\ksm$-module
\begin{equation*}
E \cong \bigoplus_{m \in M} E_m,
\end{equation*}
its module structure is completely determined by the linear maps among the $E_m$ which are
given by multiplication with monomials, i.e. for any $m \in \sigma_M$, by the $\K$-linear maps
\begin{equation*}
E_{m'} \overset{\cdot \chi(m)} \longrightarrow E_{m + m'}.
\end{equation*}
We define a relation on $M$ by setting $m \leq_\sigma m'$ iff $m' - m \in \sigma_M$. One
checks immediately that $\leq_\sigma$ defines a preorder on $M$ with $m \leq_\sigma m'$
and $m' \leq_\sigma m$ iff $m - m' \in \sigma_M^\bot$. We observe that $\chi(m'' - m')
\chi(m' - m) = \chi(m'' - m)$ whenever $m \leq_\sigma m' \leq_\sigma m''$ and $\chi(m - m) = 1$
for every $m \in M$. This way, we can consider every $M$-graded $\ksm$-module in a natural
way as a representation of the preordered set $(M, \leq_\sigma)$.

\begin{definition}
A {\em $\sigma$-family} is a representation of the preordered set $(M, \leq_\sigma)$.
\end{definition}

This definition of $\sigma$-family is equivalent
to the definition given in \cite{perling1}, Definition 5.2. On the other hand, for
every such representation which maps $m$ to some $\K$-vector space $E_m$, we can
construct a $\ksm$-module $E = \bigoplus_{m \in M} E_m$. We have:

\begin{proposition}[\cite{perling1}, Proposition 5.5]\label{gradedrepequiv}
There is an equivalence of categories between the category $M$-graded $\ksm$-modules
and the category of $\sigma$-families.
\end{proposition}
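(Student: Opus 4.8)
The plan is to construct explicitly a pair of functors between $M$-$\ksm$-Mod and the category of $\sigma$-families and to check they are mutually quasi-inverse. In one direction, given an $M$-graded $\ksm$-module $E = \bigoplus_{m \in M} E_m$, I would assign to it the representation of $(M, \leq_\sigma)$ that sends $m$ to the $\K$-vector space $E_m$ and a relation $m \leq_\sigma m'$ to the multiplication map $\cdot\chi(m' - m) : E_m \to E_{m'}$; this is well-defined and functorial precisely because of the cocycle identities $\chi(m'' - m')\chi(m' - m) = \chi(m'' - m)$ and $\chi(m - m) = 1$ recorded just before the definition, and because a morphism of graded modules commutes with multiplication by monomials, hence induces a natural transformation of the associated representations. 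In the other direction, given a representation $E$ of $(M, \leq_\sigma)$ with vector spaces $E_m$ and structure maps $E(m, m')$, I would set $E := \bigoplus_{m \in M} E_m$ and define the $\ksm$-action by declaring $\chi(m)$ to act on the homogeneous component $E_{m'}$ via $E(m', m + m')$ (using that $m \in \sigma_M$ means $m' \leq_\sigma m + m'$), extended $\K$-linearly; again the functoriality of $E$ guarantees $\chi(m)\chi(\tilde m)$ acts as $\chi(m + \tilde m)$ and $\chi(0)$ acts as the identity, so this is a genuine $M$-graded $\ksm$-module, and a natural transformation of representations is degreewise $\K$-linear and commutes with the structure maps, hence is a morphism of graded modules.

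Next I would verify that these two assignments are inverse up to natural isomorphism. Starting from a module $E$, passing to its $\sigma$-family and back reproduces $\bigoplus_m E_m$ with exactly the original multiplication maps, so one obtains the identity (or a canonical isomorphism via the decomposition $E \cong \bigoplus_m E_m$, which is part of the data of an $M$-graded module). Conversely, starting from a representation, forming the module and re-extracting the representation returns the same vector spaces $E_m$ and the same maps $E(m, m')$ by construction. Both round-trips are natural in the obvious way, so we get an equivalence of categories.

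The only genuinely delicate point — and the one I would treat as the main obstacle — is the compatibility with the preorder rather than a partial order: two characters $m, m'$ with $m - m' \in \sigma_M^\bot$ satisfy $m \leq_\sigma m'$ and $m' \leq_\sigma m$, so in the associated representation $E(m, m')$ and $E(m', m)$ must be mutually inverse isomorphisms, and likewise $\chi(m - m')$ acts invertibly on all homogeneous components of a genuine graded module (its inverse being $\chi(m' - m)$). One must check that the two constructions respect this, which they do automatically: in a graded $\ksm$-module, $\chi$ of a unit $u \in \sigma_M^\bot$ is a unit of $\ksm$, so multiplication by it is an isomorphism $E_{m'} \xrightarrow{\sim} E_{u + m'}$; and conversely a representation of the preorder, by functoriality applied to the identity morphism $m \leq_\sigma m' \leq_\sigma m$, forces $E(m', m) E(m, m') = \mathrm{id}$. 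Invoking Lemma \ref{popreequivrep}, one could even replace $(M, \leq_\sigma)$ by its associated poset $M / \sim$, but it is cleaner here to work directly with the preorder. Since \cite{perling1} already establishes this equivalence for its own (equivalent) definition of $\sigma$-family, and the present Definition of $\sigma$-family has just been noted to coincide with that one, the proof ultimately amounts to unwinding these definitions and citing Proposition 5.5 of \cite{perling1}, with the explicit functors above making the translation transparent.
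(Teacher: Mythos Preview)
Your proposal is correct and in fact gives more than the paper does: the paper provides no proof here at all, merely citing \cite{perling1}, Proposition 5.5, and your explicit construction of the two functors and verification that they are mutually quasi-inverse is exactly the content that citation encapsulates. The only addition worth noting is that the paper has already spelled out both directions informally in the paragraphs preceding the proposition (the passage ``its module structure is completely determined by the linear maps\ldots'' and ``for every such representation\ldots we can construct a $\ksm$-module $E = \bigoplus_{m \in M} E_m$''), so your functors coincide with what the text sets up, and your discussion of the preorder versus partial order issue, while correct, is not needed for the statement as phrased.
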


From now on we will not distinguish between an $M$-graded module and its induced representation of $(M, \leq_\sigma)$. By Proposition \ref{popreequivrep}, the category
of $\sigma$-families is equivalent to the category of representations of
$M / \sigma_M^\bot$ with the induced partial order.
By Proposition \ref{popreequivrep} we get an equivalence of categories between the 
category of $\sigma$-families and the category of $\sigma'$-families. By Proposition
\ref{gradedrepequiv} we get equivalently:

\begin{proposition}\label{localequiv}
There is an equivalence of categories between the category of $M$-graded $\ksm$-modules
and the category of $M / \sigma_M^\bot$-graded $\K[\sigma'_M]$-modules.
\end{proposition}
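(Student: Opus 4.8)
The plan is to chain together the equivalences that have just been established, so that Proposition \ref{localequiv} becomes essentially a dictionary translation of Proposition \ref{popreequivrep} via Proposition \ref{gradedrepequiv}. First I would record the two endpoints of the chain: by Proposition \ref{gradedrepequiv}, the category of $M$-graded $\ksm$-modules is equivalent to the category of $\sigma$-families, i.e.\ representations of the preordered set $(M, \leq_\sigma)$; and, applying the same proposition with $\sigma$ replaced by $\sigma'$ and $M$ replaced by $M / \sigma_M^\bot$, the category of $M/\sigma_M^\bot$-graded $\K[\sigma'_M]$-modules is equivalent to the category of $\sigma'$-families, i.e.\ representations of $(M/\sigma_M^\bot, \leq_{\sigma'})$. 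Note that here I should check that $\sigma'_M$, in the notation of the subsection, is exactly $\K[\sigma'_M]$ viewed over the lattice $M/\sigma_M^\bot$, which is what makes the phrase ``$M/\sigma_M^\bot$-graded $\K[\sigma'_M]$-module'' match the notion of a $\sigma'$-family in the sense required.

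Next I would identify the passage from $(M, \leq_\sigma)$ to $(M/\sigma_M^\bot, \leq_{\sigma'})$ as precisely the passage from a preordered set to its associated poset $\p/\msim$ considered in Lemma \ref{popreequivrep}. The key point, already observed in the text immediately before Definition of $\sigma$-family, is that $m \leq_\sigma m'$ and $m' \leq_\sigma m$ hold simultaneously if and only if $m - m' \in \sigma_M^\bot$; hence the equivalence relation $\sim$ attached to the preorder $\leq_\sigma$ is exactly the relation ``$m - m' \in \sigma_M^\bot$'', so that $(M, \leq_\sigma)/\msim = M/\sigma_M^\bot$ with the induced partial order. One then needs to see that this induced partial order is the one coming from $\leq_{\sigma'}$, i.e.\ that the image of $\sigma_M$ in $M/\sigma_M^\bot$ is $\sigma'_M$; this is the content of the remark ``$\tau'_M$ equals $\sigma_M \cap \tau_M^\bot$'' applied with $\tau = \sigma$ together with the splitting $M \twoheadrightarrow M/\sigma_M^\bot$, and it is really just bookkeeping in toric notation.

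With these identifications in place, Lemma \ref{popreequivrep} gives an equivalence between representations of $(M,\leq_\sigma)$ and representations of $(M/\sigma_M^\bot,\leq_{\sigma'})$, that is, between $\sigma$-families and $\sigma'$-families. Composing the three equivalences
\begin{equation*}
M\text{-}\ksm\text{-Mod} \ \simeq\ \sigma\text{-families} \ \simeq\ \sigma'\text{-families} \ \simeq\ M/\sigma_M^\bot\text{-}\K[\sigma'_M]\text{-Mod}
\end{equation*}
yields the claimed equivalence. (Indeed, this is exactly the two-step argument sketched in the paragraph preceding the statement; the proof only has to make the intermediate identifications explicit.)

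I do not expect a serious obstacle here: the only mild subtlety is the notational one of confirming that the partial order induced on $M/\sigma_M^\bot$ by $\leq_\sigma$ agrees with $\leq_{\sigma'}$ and that $\K[\sigma'_M]$ with its natural $M/\sigma_M^\bot$-grading is genuinely the coordinate ring whose modules are $\sigma'$-families — i.e.\ that ``taking degree zero'' and ``taking the quotient lattice'' are compatible. Once that is spelled out, the statement is immediate from Lemma \ref{popreequivrep} and Proposition \ref{gradedrepequiv}, and the proof can be kept to a few lines.
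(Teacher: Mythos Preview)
Your proposal is correct and follows precisely the approach of the paper: the proposition is stated without a separate proof environment, as an immediate consequence of the paragraph preceding it, which chains Proposition \ref{gradedrepequiv} with Lemma \ref{popreequivrep} (and the identification of the induced poset on $M/\sigma_M^\bot$ with $\leq_{\sigma'}$) exactly as you describe. Your expanded discussion of the notational check that the induced order is $\leq_{\sigma'}$ is a welcome clarification but adds nothing new to the argument.
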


Every $\K$-linear representation of $(M, \leq_\sigma)$ represents a
directed system of $\K$-vector spaces. In \cite{perling1} \S 5.4, colimits
of $\sigma$-families and their relations to the $\tau$-families for $\tau \preceq
\sigma$ have been described. We add some observations which are direct consequences
of the discussion in \cite{perling1} \S 5.4.

\begin{definition}
Let $E$ be an $M$-graded $\ksm$-module. Then we denote $\E$ the colimit
of its associated $\sigma$-family.
\end{definition}

\begin{proposition}\label{directlimitproperties}
\begin{enumerate}[(i)]
\item\label{directlimitpropertiesi} Taking colimits is an exact functor from the
category of $M$-graded $\ksm$-modules to the category of $\K$-vector spaces.
\item\label{directlimitpropertiesii} Let $E$ be an $M$-graded module and $\E$ its
colimit. Then $\dim_k \E = \rk E$.
\end{enumerate}
\end{proposition}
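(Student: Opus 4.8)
The plan is to prove the two statements together by reducing to the local picture and then to the case of a trivial cone. For part~(\ref{directlimitpropertiesi}), recall that a $\sigma$-family is a functor $E : (M, \leq_\sigma) \to \K\text{-}\mathbf{Vect}$, and the colimit $\E$ is the colimit of this functor over the filtered category $(M, \leq_\sigma)$. Since filtered colimits of $\K$-vector spaces are exact (filtered colimits commute with finite limits in any category of modules; see e.g.~\cite{Eisenbud}, Appendix A6), and since by Proposition \ref{gradedrepequiv} short exact sequences of $M$-graded $\ksm$-modules correspond degreewise to short exact sequences of $\sigma$-families, applying the colimit functor yields an exact sequence of $\K$-vector spaces. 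The only point to check is that $(M, \leq_\sigma)$ is indeed a filtered preordered set: given $m, m' \in M$, any $m''$ with $m'' - m \in \sigma_M$ and $m'' - m' \in \sigma_M$ works, and such $m''$ exists because $\sigma_M$ generates $M_\R$ (equivalently, take $m''$ in the interior translate of both $m + \sigma_M$ and $m' + \sigma_M$). Functoriality is immediate from the naturality of colimits.

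For part~(\ref{directlimitpropertiesii}), the strategy is first to reduce to the case $\sigma_M^\bot = 0$, i.e.\ to a $\sigma'$-family over $\K[\sigma'_M]$ with $\sigma'$ of full dimension in its span. Indeed, by Proposition \ref{localequiv} the module $E$ corresponds to an $M/\sigma_M^\bot$-graded $\K[\sigma'_M]$-module $E'$, the rank is unchanged under this equivalence (it is computed after localizing at the generic point, which is the same torus-fixed generic point in both pictures), and the colimit is unchanged since by Lemma \ref{popreequivrep} the colimit over $(M, \leq_\sigma)$ agrees with the colimit over the associated poset $M/\sigma_M^\bot$. So assume $\sigma$ is full-dimensional and strictly convex. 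Then $\ksm$ is a domain with fraction field $\K(M) = \coord{M}$ (the Laurent polynomial ring), and localizing at the generic point, $E \otimes_{\ksm} \K(M)$ has dimension $\rk E$ over $\K(M)$; I would identify this localization, as an $M$-graded object, with the $\sigma$-family $m \mapsto \E$ with all structure maps the identity — that is, with the constant family on $\E$. Concretely, $E \otimes_{\ksm} \K(M)$ in degree $m$ is $\colim_{m \leq_\sigma m'} E_{m'}$ (inverting all $\chi(n)$, $n \in \sigma_M$, means inverting all transition maps of the family), and this colimit is exactly $\E$ by cofinality of $\{m' \mid m \leq_\sigma m'\}$ in $M$. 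Hence $\rk E = \dim_{\K(M)} (E \otimes \K(M)) = \dim_\K \E$, using that a free $\K(M)$-module that is $M$-graded with one-dimensional graded pieces all equal to $\E$ has rank $\dim_\K \E$.

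The main obstacle I anticipate is making the identification of $E \otimes_{\ksm} \K(M)$ with the constant family on $\E$ fully rigorous, in particular checking that localizing the $M$-graded module $E$ at the generic point really inverts precisely the transition maps $\cdot \chi(n)$ of the $\sigma$-family and nothing more subtle, and that the resulting $\K(M)$-module structure is the "obvious" one on $\bigoplus_{m} \E \cdot \chi(m)$. This is essentially bookkeeping with the grading — the key input is that the multiplicative set $\{\chi(n) \mid n \in \sigma_M\}$ generates, up to units of $\ksm$, the multiplicative set of all nonzero homogeneous elements, so inverting it gives the full homogeneous localization, which for a domain is the fraction field. Once that is in place, the dimension count is immediate, and the potentially worrying case where $\E$ is infinite-dimensional simply gives $\rk E = \infty$, consistently with $E$ not being finitely generated (this is compatible with the rest of the paper where finite generation is assumed when finiteness of $\rk E$ is needed). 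Alternatively, if one prefers to avoid localization language, one can argue directly: choose $M$-homogeneous generators, reduce modulo torsion to embed $E$ (after tensoring with $\K(M)$) into a free module, and track degrees — but the localization argument above is cleaner and I would present that.
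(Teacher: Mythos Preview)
Your approach to both parts is essentially the same as the paper's. For (\ref{directlimitpropertiesi}) the paper simply notes that $(M, \leq_\sigma)$ is filtered; for (\ref{directlimitpropertiesii}) the paper observes in one line that $\rk E = \rk(E \otimes_\ksm \K[M])$ and that $\E \cong (E \otimes_\ksm \K[M])_m$ for every $m \in M$.

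Two remarks. First, your reduction to the full-dimensional case via Proposition \ref{localequiv} is unnecessary: the identification $(E \otimes_\ksm \K[M])_m \cong \E$ works verbatim for any $\sigma$, since inverting all monomials $\chi(n)$ with $n \in \sigma_M$ already yields $\K[M]$ (because $\sigma_M$ generates $M$ as a group). Second, there is a genuine slip in your write-up: you identify the Laurent polynomial ring $\K[M]$ with the fraction field of $\ksm$, writing ``fraction field $\K(M) = \coord{M}$''. These are different rings; $\K[M]$ is only a localization of $\ksm$, not its fraction field. Your actual argument (inverting the transition maps $\cdot\chi(n)$) lands in $\K[M]$, not in the fraction field, and that is exactly what the paper uses. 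Since rank is preserved under localization and every $M$-graded $\K[M]$-module has all graded pieces isomorphic to one another via multiplication by the units $\chi(m)$, one gets $\rk E = \dim_\K \E$ without ever passing to the fraction field. So your argument is sound once you replace ``fraction field'' by ``localization at all monomials'' throughout.
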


\begin{proof}
(\ref{directlimitpropertiesi}) Just observe that the poset $(M, \leq_\sigma)$ is
filtered and thus colimits are exact.

(\ref{directlimitpropertiesii}) The rank of $E$ coincides with the rank of $E \otimes_\ksm
\K[M]$ and the statement follows from the observation that $\E \cong (E \otimes_\ksm \K[M])_m$
for any $m \in M$.
\end{proof}

If $E$ is torsion-free, then every homomorphism $E_m \overset{\cdot \chi(m)}{\longrightarrow}
E_{m + m'}$ is injective and thus, by general properties of colimits, the canonical
homomorphisms $E_m \longrightarrow \E$ are injective, too. This makes it possible to consider
any torsion-free, $M$-graded $\ksm$-module as a family of subvector spaces of the limit
vector space $\E$ together with some combinatorial information coming from the poset
$(M, \leq_\sigma)$. As has been observed by Klyachko \cite{Kly91} for the case of finitely
generated torsion-free modules, this data can be organized
in terms of multifiltrations (see also \cite{perling1}, \S 5.5) of $\E$. In this work we will
mostly be interested in the more special case, where $E$ is finitely generated and
reflexive. The corresponding structural interpretation by Klyachko in terms of filtered
vector spaces has been stated in Theorem \ref{klythm}.
The morphisms in the category of filtered vector spaces are precisely those vector space
homomorphisms which are compatible with the filtrations in the obvious sense. Given any
family of filtrations $E^k(i)$ as in the Theorem \ref{klythm}, we can reconstruct the
module $E$ by setting
\begin{equation*}
E_m = \bigcap_{k = 1}^n E^k\big(l_k(m)\big).
\end{equation*}

For later use we state the following facts which are straightforward to check:

\begin{proposition}\label{filtaux}
Let $E$ be a finitely generated $M$-graded reflexive $\ksm$-module.
\begin{enumerate}[(i)]
\item\label{filtauxi} $E$ splits into a direct sum  of $M$-graded reflexive $\ksm$-modules
$F \oplus G$ with filtrations $F^k(i)$ and $G^k(i)$ iff $\mathbf{E} \cong
\mathbf{F} \oplus \mathbf{G}$ such that $E^k(i) = F^k(i) \oplus G^k(i)$
for every $k$, $i$.
\item\label{filtauxii} Choose filtrations $F^k(i)$ of $\mathbf{E}$ such that $E^k(i)
\subseteq F^k(i)$ for all $k$, $i$ and denote $F$ the associated reflexive $\ksm$-module.
Then the identity on $\mathbf{E}$ induces an inclusion $E \hookrightarrow F$.
\end{enumerate}
\end{proposition}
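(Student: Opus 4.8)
The plan is to verify both parts by passing through the equivalence of Theorem \ref{klythm} together with the reconstruction formula $E_m = \bigcap_{k=1}^n E^k(l_k(m))$, so that everything reduces to elementary statements about filtered vector spaces. For part (\ref{filtauxi}), one direction is immediate: if $E \cong F \oplus G$ as $M$-graded modules, then applying the colimit functor (which is exact by Proposition \ref{directlimitproperties}(\ref{directlimitpropertiesi}), hence additive) gives $\mathbf{E} \cong \mathbf{F} \oplus \mathbf{G}$, and since $E_m = F_m \oplus G_m$ for every $m$ while the canonical maps $E_m \hookrightarrow \mathbf{E}$ are injective (torsion-freeness, as noted before Theorem \ref{klythm}), the subspaces $E^k(i)$ decompose compatibly as $F^k(i) \oplus G^k(i)$. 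Conversely, given $\mathbf{E} \cong \mathbf{F} \oplus \mathbf{G}$ with $E^k(i) = F^k(i) \oplus G^k(i)$ for all $k,i$, intersecting over $k$ at fixed $m$ commutes with this fixed direct sum decomposition — because for a fixed pair of complementary subspaces $\mathbf{F}, \mathbf{G}$ of $\mathbf{E}$, a family of subspaces each of which respects the decomposition has intersection again respecting it — so $E_m = F_m \oplus G_m$ for every $m$, and this splitting is compatible with all the multiplication maps $\cdot\chi(m)$ since those are just the inclusions inside $\mathbf{E}$. Hence $E \cong F \oplus G$ as $\sigma$-families, i.e. as $M$-graded $\ksm$-modules.

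For part (\ref{filtauxii}), the identity map $\mathrm{id}_{\mathbf{E}}$ is by hypothesis compatible with the filtrations in the sense that $E^k(i) \subseteq F^k(i)$ for all $k,i$, which is precisely the condition for it to be a morphism in the category of filtered vector spaces; under the equivalence of Theorem \ref{klythm} it therefore corresponds to a morphism $E \to F$ of $M$-graded reflexive $\ksm$-modules. Concretely, in degree $m$ this morphism is the inclusion $\bigcap_k E^k(l_k(m)) \hookrightarrow \bigcap_k F^k(l_k(m))$ induced by $E^k(l_k(m)) \subseteq F^k(l_k(m))$; these inclusions are evidently compatible with the multiplication maps and each is injective, so the resulting morphism of $\sigma$-families is a monomorphism, i.e. $E \hookrightarrow F$.

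I do not expect any serious obstacle here — this is exactly the kind of "straightforward to check" statement the text advertises it as. The only point requiring a line of care is the commutation of intersection with the direct sum in part (\ref{filtauxi}): one should note explicitly that it holds because the decomposition $\mathbf{E} = \mathbf{F} \oplus \mathbf{G}$ is one fixed pair of subspaces (for a single projection $\pi:\mathbf{E}\to\mathbf{F}$, a subspace $X$ satisfies $X = (X\cap\mathbf{F})\oplus(X\cap\mathbf{G})$ iff $\pi(X)\subseteq X$, and this property is closed under arbitrary intersections), rather than invoking any general and false distributivity of intersection over sums in a vector space. With that remark in place, both assertions follow formally from Theorem \ref{klythm}, Proposition \ref{gradedrepequiv}, and the reconstruction formula, and the proof can be written in a few lines.
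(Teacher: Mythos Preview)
Your proposal is correct and is precisely the kind of verification the paper has in mind: the paper does not actually write out a proof but simply declares the statements ``straightforward to check,'' and your argument via Theorem~\ref{klythm}, the reconstruction formula $E_m = \bigcap_k E^k(l_k(m))$, and Proposition~\ref{directlimitproperties} is the natural way to carry this out. Your extra care about why intersection commutes with the fixed direct sum in part~(\ref{filtauxi}) is a welcome clarification of the one genuinely non-formal point.
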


\subsection{Matlis duality, injective and projective modules}\label{matlissection}

We denote $\mksmMod$ the category of $M$-graded $\ksm$-modules and $\mksmModf$
its abelian subcategory consisting of modules whose graded components are
finite-dimensional.
Following \cite{GotoWatanabe} \S II.1 (see also \cite{brunsherzog} \S 3.6 and
\cite{SturmfelsMiller} \S 11.3), there exists a natural endofunctor of $\mksmMod$ which
is called the {\em graded Matlis duality}. Explicitly, an object $E$ in $\mksmMod$
is mapped to
\begin{equation*}
\dual{E} = \Hom_\K(E, \K) = \bigoplus_{m \in M} \Hom_\K(E_m, \K) = \bigoplus_{m \in M}
\Hom^M_\ksm(E(m), \K),
\end{equation*}
such that the $M$-grading on $E$ is given by
\begin{equation*}
(\dual{E})_m = \Hom_\K(E_{-m}, \K).
\end{equation*}
The module $\dual{E}$ is called the {\em graded Matlis dual} of $E$.
Its module structure over $\ksm$ for each monomial $\chi(m) \in \ksm$ in
every degree $m' \in M$ is given by
\begin{equation*}
\Hom_\K(\chi(m), \K) : \Hom_\K(E_{-m'}, \K) \longrightarrow \Hom_\K(E_{-m' - m}, \K).
\end{equation*}
The graded Matlis duality functor is exact and satisfies similar properties as its
non-graded counterpart, in particular it induces an anti-equivalence of categories between
finitely generated $M$-graded $\ksm$-modules and artinian $M$-graded $\ksm$-modules
(see \cite{brunsherzog}, \S 3.6). More generally, if $E_m$ is finite-dimensional for
every $m \in M$, then $\dual{\dual{E}} \cong E$ and Matlis duality induces an
autoequivalence of $\mksmModf$. In particular, we get:
\begin{equation*}
\Hom^M_\ksm(E, F) = \Hom^M_\ksm(\dual{F}, \dual{E})
\end{equation*}
for any $E, F$ in $\mksmModf$.

Now let $\tau \preceq \sigma$ be any face. Then $\tau_M = \sigma_M + \Z m_\tau$,
where $m_\tau$ is an element in the relative interior of $\tau'_M$. We see by the
isomorphism $\K[\tau_M] \cong \ksm_{\chi(m_\tau)}$ that $\K[\tau_M]$ is a flat
$\ksm$-module which is contained in $\mksmModf$. In particular, we obtain a family
${\K[\tau_M]}_{\tau \preceq \sigma}$ of flat $\ksm$-modules in $\mksmModf$.
On the other hand, in it was shown in \cite{GotoWatanabe} Theorem 1.3.3
that the indecomposable injective modules in $\mksmMod$ are of the form
\begin{equation*}
I(\K[\tau_M'])(m) \quad \text{ for } \quad \tau \preceq \sigma \text{ and } m \in M,
\end{equation*}
where $I(E)$ denotes the graded injective hull for any module $E$ in $\mksmMod$.
By Matlis duality, these injective modules can explicitly be described as
\begin{equation*}
I(\K[\tau_M'])(m) = \K[\tau_M]\dual{\ }(m)
\end{equation*}
(see \cite{SturmfelsMiller}, \S 11.4). By observing that
$\Hom^M_\ksm\big(-, I(\K[\tau_M'])(m)\big)$ and $\Hom^M_\ksm\big(\K[\tau_M], -\big)$
restrict to endofunctors of $\mksmModf$ and by
the fact that Matlis duality
is an autoequivalence of $\mksmModf$, it is straightforward to show the following:

\begin{proposition}\label{projectives}
The modules $\K[\tau_M](m)$ for $\tau \preceq \sigma$ and $m \in M$ form a complete
set of irreducible projective objects in $\mksmModf$.
\end{proposition}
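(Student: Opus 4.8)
The plan is to deduce the statement from the classification of the indecomposable injectives of $\mksmMod$ recalled above, by means of the graded Matlis duality functor $\dual{(-)} = \Hom_\K(-,\K)$, which by the preceding discussion is an exact (contravariant) autoequivalence of $\mksmModf$ with $\dual{\dual E} \cong E$. I would first record two facts. \emph{(i)} Since $\Hom^M_\ksm(E,F) = \Hom^M_\ksm(\dual F,\dual E)$ naturally for $E,F \in \mksmModf$, the functor $\Hom^M_\ksm(E,-)$ on $\mksmModf$ is isomorphic to $\Hom^M_\ksm(-,\dual E)$ precomposed with the exact equivalence $\dual{(-)}$; hence $\Hom^M_\ksm(E,-)$ is exact iff $\Hom^M_\ksm(-,\dual E)$ is, i.e. $E$ is projective in $\mksmModf$ iff $\dual E$ is injective in $\mksmModf$, and as $\dual{(-)}$ is an additive equivalence it preserves indecomposability. \emph{(ii)} $\mksmModf$ is a Serre subcategory of $\mksmMod$ --- closed under subobjects, quotients and extensions, with exact inclusion --- so any object of $\mksmModf$ that is injective in $\mksmMod$ is already injective in $\mksmModf$; in particular each $I(\K[\tau'_M])(m) = \K[\tau_M]\dual{\ }(m)$ lies in $\mksmModf$ and is an indecomposable injective object there, and one has $\dual{(\K[\tau_M](m))} = I(\K[\tau'_M])(-m)$ and $\dual{(I(\K[\tau'_M])(m))} = \K[\tau_M](-m)$ by the shift formulas.

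From \emph{(i)} and \emph{(ii)} it is immediate that each $\K[\tau_M](m)$ is a projective object of $\mksmModf$: it lies in $\mksmModf$ (the $\K[\tau_M]$ being flat and contained in $\mksmModf$), and its Matlis dual $I(\K[\tau'_M])(-m)$ is injective in $\mksmModf$; it is indecomposable because its dual is.

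It remains to prove completeness: every indecomposable projective object $P$ of $\mksmModf$ is isomorphic to some $\K[\tau_M](m)$. By \emph{(i)} this is equivalent to showing that every indecomposable injective object $Q$ of $\mksmModf$ is isomorphic to some $I(\K[\tau'_M])(m)$. To this end I would pick $m_0 \in M$ with $Q_{m_0} \neq 0$, giving a nonzero homomorphism $\ksm(-m_0) \to Q$ whose image is a cyclic submodule $C \subseteq Q$ with $C \in \mksmModf$. Since $\ksm$ is Noetherian and $C$ is finitely generated, its injective hull $E$ in $\mksmMod$ is a \emph{finite} direct sum of indecomposable injectives, each of the form $I(\K[\tau'_M])(m')$ (graded Matlis theory, cf. \cite{brunsherzog} \S 3.6, \cite{SturmfelsMiller} \S 11); hence $E \in \mksmModf$, and by \emph{(ii)} $E$ is injective in $\mksmModf$. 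Extending the inclusion $C \hookrightarrow Q$ along the essential monomorphism $C \hookrightarrow E$ yields $E \to Q$, which is injective since its restriction to the essential submodule $C$ is; this monomorphism splits because $E$ is injective in $\mksmModf$, so $E$ is a nonzero direct summand of the indecomposable module $Q$. Therefore $Q \cong E$, which forces $E$ to consist of a single summand, i.e. $Q \cong I(\K[\tau'_M])(m')$ for some $\tau \preceq \sigma$ and $m' \in M$; dualizing, $P \cong \dual{(I(\K[\tau'_M])(m'))} = \K[\tau_M](-m')$. Pairwise non-isomorphism of the $\K[\tau_M](m)$ then follows by applying $\dual{(-)}$ to the corresponding fact for the indecomposable injectives.

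The routine verifications --- exactness of $\dual{(-)}$, that $\mksmModf$ is a Serre subcategory, and the bookkeeping with degree shifts --- I would leave to the reader. The one genuinely delicate point is the completeness step: $\mksmModf$ need not have enough injectives, and the injective hull (formed in $\mksmMod$) of a general object of $\mksmModf$ can fail to lie in $\mksmModf$, so one cannot simply transport the list of injectives of $\mksmMod$ verbatim. Passing through a \emph{finitely generated} cyclic submodule, whose injective hull is a finite --- hence $\mksmModf$-valued --- direct sum of indecomposable injectives, is what makes the argument go through.
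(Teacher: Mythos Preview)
Your proof is correct and follows the same route as the paper: transport the known classification of indecomposable injectives via the Matlis duality autoequivalence of $\mksmModf$. The paper's own proof is a one-sentence sketch (``by observing that $\Hom^M_\ksm\big(-, I(\K[\tau_M'])(m)\big)$ and $\Hom^M_\ksm\big(\K[\tau_M], -\big)$ restrict to endofunctors of $\mksmModf$ and by the fact that Matlis duality is an autoequivalence of $\mksmModf$, it is straightforward\ldots''), whereas you actually supply the completeness argument --- passing to a finitely generated cyclic submodule so that its injective hull in $\mksmMod$ is a \emph{finite} sum of indecomposable injectives and hence lands in $\mksmModf$ --- which the paper leaves implicit; this is a genuine and correct elaboration of the point the paper skips.
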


In particular, the module $\K[M]$ is the unique indecomposable module which is
injective as well as projective in $\mksmModf$.

\begin{remark}
Note that Proposition \ref{projectives} in general is not true for $\mksmMod$.
\end{remark}

\begin{remark}
Note that the category $\mksmModf$ is not a ``good'' category for the construction
of injective or projective resolutions. Example \ref{modfbadexample} below shows
that such resolutions must not necessarily exist in $\mksmModf$. Later on we will restrict
$\mksmModf$ further in order to obtain a ``good'' category which contains finitely
generated modules and their Matlis duals as well as the injective and projective
modules discussed above.
\end{remark}

\begin{example}\label{modfbadexample}
Consider the polynomial ring in one variable $\K[x]$ and the $\Z$-graded module
$E = \bigoplus_{i \in \Z} \K(i)$, where $\K(i)$ denotes the simple module $\K$ shifted
to degree $-i$. Then $E$ does not admit a nontrivial homomorphism from $\K[x, x^{-1}]$
and the first term of a minimal projective resolution of $E$ would necessarily be of
the form
\begin{equation*}
\bigoplus_{i \in \Z} \K[x](i) \longrightarrow E \longrightarrow 0.
\end{equation*}
However, $\bigoplus_{i \in \Z} \K[x](i)$ is not contained in
$\Z$-$\K[x]$-$\operatorname{Mod}_f$.
\end{example}

\subsection{Minimizing projective and injective resolutions}\label{minimizing}

Recall that for a commutative local ring $R$ and any
exact sequence $\cdots \rightarrow F_i \overset{\phi_i} \rightarrow F_{i - 1}$
of free $R$-modules, the presence of a unit element in the matrix representing
$\phi_i$ allows us to do row and column transforms in order to split of one free
summand from $F_i$ and $F_{i - 1}$, respectively. In our setting we consider
a (generalized) projective resolution $P_E$ of $E$ in $\mksmModf$, i.e. a
complex of projective
modules which is everywhere exact except at degree zero and $H_0(P_E) \cong E$.
We assume that $P_{E, i}$ is a finite direct sum of projective modules
for every $i \in \Z$. Then we can write the
differentials $\phi_i : P_{E, i} \rightarrow P_{E, i - 1}$ explicitly as
\begin{equation*}
\bigoplus_{k} \K[\tau_{i, k, M}](m_{i, k}) \overset{\phi_i}{\longrightarrow}
\bigoplus_{j} \K[\tau_{i - 1, j, M}](m_{i - 1, j}),
\end{equation*}
where $\phi_i$ can be represented by a monomial matrix
$\phi_i = \big(\alpha_{ijk} \chi(m_{ijk})\big)_{jk}$ with $\alpha_{ijk} \in \K$.
We have $\alpha_{ijk} = 0$ whenever $\tau_{i - 1, j} \not \preceq \tau_{i, k}$
or $m_{i - 1, j} \not \leq_{\tau_{i, k}} m_{i, k}$. If $\alpha_{ijk} \neq 0$
then $\chi(m_{ijk})$ is uniquely determined up to multiplication by some
$\chi(m)$ with $m \in \tau_{i, k, M}^\bot$.

\begin{lemma}\label{minimizeprojres}
With notation as above assume there are $i, j, k$ with $\alpha{ijk} \neq 0$
and $\tau_{i - 1, j} = \tau_{i, k}$, $ m_{i - 1, j} - m_{i, k} \in \tau_i^\bot$.
Then we can split off $\K[\tau_{i, j, M}](m_{i, j})$ and
$\K[\tau_{i - 1, j, M}](m_{i - 1, j})$ from $P_{E, i}$ and $P_{E, i - 1}$,
respectively, i.e. there is a complex $P_E'$, with $P'_{E, i} \oplus
\K[\tau_{i, j, M}](m_{i, j}) = P_{E, i}$, and $P'_{E, i - 1} \oplus
\K[\tau_{i - 1, j, M}](m_{i - 1, j}) = P_{E, i - 1}$, and $P'_{E, l} = P_{E, l}$
otherwise, such that $P'_E$ is a projective resolution of $E$.
\end{lemma}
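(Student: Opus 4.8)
The plan is to run the standard ``pruning'' argument for a complex of projectives, but one block at a time rather than one matrix entry at a time. First, I would record what the hypothesis buys us: since $\alpha_{ijk}\neq 0$, the $(j,k)$-component of $\phi_i$ is a homomorphism
\begin{equation*}
u\colon A:=\K[\tau_{i,k,M}](m_{i,k})\ \overset{\alpha_{ijk}\chi(m_{ijk})}{\longrightarrow}\ \K[\tau_{i-1,j,M}](m_{i-1,j})=:B
\end{equation*}
between the summands $A\subseteq P_{E,i}$ and $B\subseteq P_{E,i-1}$. The assumptions $\tau_{i-1,j}=\tau_{i,k}$ and $m_{i-1,j}-m_{i,k}\in\tau_i^\bot$ say exactly that the monomial $\chi(m_{ijk})$ has degree in the maximal subgroup $\tau_{i,k,M}^\bot$ of $\tau_{i,k,M}$, hence is a unit of $\K[\tau_{i,k,M}]$; combined with $\alpha_{ijk}\in\K^\times$ this makes $u$ an isomorphism in $\mksmModf$, with monomial inverse $u^{-1}$.

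Next I would perform a block Gaussian elimination along $u$. Write $P_{E,i}=A\oplus Q$ and $P_{E,i-1}=B\oplus R$, where $Q$ and $R$ are again finite direct sums of the standard projective modules, and decompose $\phi_i=\left(\begin{smallmatrix}u & \beta\\ \gamma & \delta\end{smallmatrix}\right)$. Using $u^{-1}$, the unipotent block matrices $\rho=\left(\begin{smallmatrix}1 & -u^{-1}\beta\\ 0 & 1\end{smallmatrix}\right)\in\operatorname{Aut}(P_{E,i})$ and $\lambda=\left(\begin{smallmatrix}1 & 0\\ -\gamma u^{-1} & 1\end{smallmatrix}\right)\in\operatorname{Aut}(P_{E,i-1})$ are automorphisms in $\mksmModf$, and $\lambda\phi_i\rho=\left(\begin{smallmatrix}u & 0\\ 0 & \delta-\gamma u^{-1}\beta\end{smallmatrix}\right)$. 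Replacing $\phi_i$ by $\lambda\phi_i\rho$, $\phi_{i+1}$ by $\rho^{-1}\phi_{i+1}$ and $\phi_{i-1}$ by $\phi_{i-1}\lambda^{-1}$ (and leaving the remaining differentials untouched) produces a complex isomorphic to $P_E$, still exact away from degree zero with $H_0\cong E$. In this transformed complex, injectivity and surjectivity of $u$ force the differential into $A\oplus Q$ to land in $Q$ and the differential out of $B\oplus R$ to vanish on $B$; hence $0\to A\overset{u}{\to}B\to 0$, placed in homological degrees $i$ and $i-1$, is a direct-summand subcomplex, and it is acyclic.

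Finally I would take $P'_E$ to be the complementary quotient complex: $P'_{E,i}=Q$, $P'_{E,i-1}=R$, $P'_{E,l}=P_{E,l}$ for $l\neq i,i-1$, with the induced differentials (so the new degree-$i$ differential is $\delta-\gamma u^{-1}\beta$, and the degree-$(i\pm 1)$ ones are the relevant components of $\rho^{-1}\phi_{i+1}$ and $\phi_{i-1}\lambda^{-1}$). Since the kernel of the quotient map is the acyclic subcomplex above, this quotient map is a quasi-isomorphism, so $P'_E$ is again a complex of finite direct sums of modules $\K[\tau_M](m)$, exact away from degree zero, with $H_0(P'_E)\cong E$. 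As $P'_{E,i}\oplus A=P_{E,i}$ and $P'_{E,i-1}\oplus B=P_{E,i-1}$, this is the asserted resolution. (The split-off summands are $A=\K[\tau_{i,k,M}](m_{i,k})$ and $B=\K[\tau_{i-1,j,M}](m_{i-1,j})$; the statement contains a minor index typo.)

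The one step that genuinely uses the hypothesis, and the one I would check most carefully, is the first: that $u^{-1}$ exists as a degree-zero graded $\ksm$-homomorphism, equivalently that $\chi(m_{ijk})$ is invertible in $\K[\tau_{i,k,M}]$. Once that is in hand, every map appearing --- $u$, $u^{-1}$, $\beta$, $\gamma$, $\delta$, and the elementary matrices built from them --- is a morphism of $\mksmModf$, so the whole construction stays inside this (otherwise delicate) category, and what remains is the routine bookkeeping familiar from minimizing free resolutions over a local ring.
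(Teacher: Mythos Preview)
Your argument is correct and complete. You carry out the classical block Gaussian elimination: the hypotheses make the $(j,k)$-component $u$ an invertible graded homomorphism, the elementary matrices $\rho,\lambda$ diagonalize $\phi_i$ along $u$, and then the acyclic subcomplex $0\to A\overset{u}{\to}B\to 0$ splits off. The paper proves the same lemma with the same underlying idea but phrases it differently: instead of row and column operations, it invokes the correspondence between $M$-graded $\ksm$-modules and representations of $(M,\leq_\sigma)$ and argues degree by degree that the summands $A$ and $B$ each contribute a one-dimensional piece in every relevant degree, which can be simultaneously projected away without destroying exactness. The paper even remarks explicitly, just after the proof, that it has chosen this $\sigma$-family viewpoint \emph{rather than} explicit row and column transformations. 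So the two proofs are really the same argument in different clothing; yours is the more transparent and self-contained version, while the paper's is terser and more in keeping with its poset-representation framework. Your observation about the index typo in the statement is also correct.
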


\begin{proof}
If our conditions are fulfilled, the splittings $P'_{E, i} \oplus
\K[\tau_{i, k, M}](m_{i, k}) = P_{E, i}$ and $P'_{E, i - 1} \oplus
\K[\tau_{i - 1, j, M}](m_{i - 1, j}) = P_{E, i - 1}$ split off for
every degree $m_{i, k} \leq_{\tau_{i, k}} m$ a one-dimensional vector
space from $P_{E, i}$ and $P_{E, i - 1}$, respectively. So, the complex
stays exact when we compose $\phi_{i +1}$ with the projection from $P_{E, i}$
to $P'_{E, i}$, restrict $\phi_{i - 1}$ to $P'_{E, i - 1}$, and replace
$\phi_i$ by its restriction to $P_{E, i}$ composed with the projection
from $P_{E, i - 1}$ to $P'_{E, i - 1}$.
\end{proof}

Note that in the proof of Lemma \ref{minimizeprojres}, rather than to
refer to explicit row and column transformations, we have made use
of the correspondence between $M$-graded $\ksm$-modules and representations
of $(M, \leq_\sigma)$. Now we observe that Matlis duality maps $P_E$
to $\dual{P}_E$, which is a (generalized) injective resolution of
$\dual{E}$, i.e. Matlis duality
induces a correspondence in $\mksmModf$ between projective resolutions of
$E$ and injective resolutions of $\dual{E}$ and vice versa. So, given
any injective resolution $I_E$ of $E$ with differentials
\begin{equation*}
\bigoplus_{k} \K[\tau_{i, k, M}]\dual{\ }(m_{i, k}) \overset{\phi_i}{\longrightarrow}
\bigoplus_{j} \K[\tau_{i + 1, j, M}]\dual{\ }(m_{i + 1, j}),
\end{equation*}
we can represent $\phi_i$ by the transpose of the corresponding
matrix $\dual{\phi}_{-i}$ in $\dual{I}_E$. Using
this, we get the dual statement for injective resolutions.

\begin{lemma}\label{minimizeinjres}
With notation as above and writing $\phi_i = (\alpha_{ijk} \chi(m_{ijk})$
assume there are $i, j, k$ with $\alpha_{ijk} \neq 0$
and $\tau_{i + 1, j} = \tau_{i, k}$, $ m_{i + 1, j} - m_{i, k} \in \tau_i^\bot$.
Then we can split off $\K[\tau_{i, k, M}]\dual{\ }(m_{i, k})$ and
$\K[\tau_{i + 1, j, M}]\dual{\ }(m_{i + 1, j})$ from $I_{E, i}$ and $I_{E, i + 1}$,
respectively, i.e. there is a complex $I_E'$, with $I'_{E, i} \oplus
\K[\tau_{i, k, M}]\dual{\ }(m_{i, k}) = I_{E, i}$, and $I'_{E, i + 1} \oplus
\K[\tau_{i + 1, j, M}]\dual{\ }(m_{i + 1, j}) = I_{E, i + 1}$, and
$I'_{E, l} = I_{E, l}$ otherwise, such that $I'_E$ is an
injective resolution of $E$.
\end{lemma}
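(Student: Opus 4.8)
The plan is to deduce this statement from Lemma \ref{minimizeprojres} by applying graded Matlis duality, exactly as the paragraph preceding the lemma sets up. First I would record the precise dictionary: Matlis duality is an exact autoequivalence of $\mksmModf$ (established in subsection \ref{matlissection}), it sends the injective object $\K[\tau_M]\dual{\ }(m)$ to the projective object $\K[\tau_M](-m)$ and vice versa, and it turns an injective resolution $I_E$ of $E$ into a projective resolution $\dual{I}_E$ of $\dual{E}$, reversing the arrows. Under this correspondence a differential $\phi_i : I_{E,i} \to I_{E,i+1}$ in $I_E$ corresponds to the differential $\dual{\phi}_{-i} : \dual{I}_{E,-i-1} \to \dual{I}_{E,-i}$ in $\dual{I}_E$, represented by the transpose monomial matrix; thus the scalar $\alpha_{ijk}$ attached to the summand pair $(\K[\tau_{i,k,M}]\dual{\ }(m_{i,k}),\ \K[\tau_{i+1,j,M}]\dual{\ }(m_{i+1,j}))$ of $\phi_i$ is, up to a nonzero scalar and up to the ambiguity by $\chi(\tau^\bot_{i,k,M})$, the same scalar attached to the corresponding summand pair of $\dual{\phi}_{-i}$ in the projective resolution $\dual{I}_E$.

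Second, I would translate the hypotheses. The conditions $\alpha_{ijk} \neq 0$, $\tau_{i+1,j} = \tau_{i,k}$, and $m_{i+1,j} - m_{i,k} \in \tau_i^\bot$ are precisely the hypotheses of Lemma \ref{minimizeprojres} applied to the projective resolution $\dual{I}_E$ of $\dual{E}$ at homological position $-i$ (note that here $\tau_i^\bot$ means $\tau_{i,k,M}^\bot$, so the relation says $m_{i+1,j}$ and $m_{i,k}$ are $\leq_{\tau_{i,k}}$-equivalent, i.e. define the same degree-shift for a module of this face type). Therefore Lemma \ref{minimizeprojres} yields a projective resolution $Q$ of $\dual{E}$ with $Q_{-i} \oplus \K[\tau_{i,k,M}](-m_{i,k}) = \dual{I}_{E,-i}$, with $Q_{-i-1} \oplus \K[\tau_{i+1,j,M}](-m_{i+1,j}) = \dual{I}_{E,-i-1}$, and $Q_l = \dual{I}_{E,l}$ otherwise. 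Applying Matlis duality once more — using $\dual{\dual{F}} \cong F$ for $F \in \mksmModf$ — and setting $I'_E := \dual{Q}$ gives a complex of injective modules which is a resolution of $\dual{\dual{E}} \cong E$, with the claimed direct-sum decompositions in degrees $i$ and $i+1$ and unchanged elsewhere, since duality sends $\K[\tau_M](-m)$ back to $\K[\tau_M]\dual{\ }(m)$ and turns the projective-resolution differentials back into the injective-resolution differentials.

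The routine bookkeeping to be careful about — and the only place where anything could go wrong — is the index and sign/shift matching: one must check that the homological degree $-i$ in $\dual{I}_E$ really corresponds to degree $i$ in $I_E$, that the transpose of a monomial matrix is again monomial with the "same" nonzero entries (so that the condition $\alpha_{ijk} \neq 0$ genuinely transfers), and that the face-compatibility relation $\tau_{i-1,j} \not\preceq \tau_{i,k}$-type vanishing in the projective picture dualizes correctly to the $\tau_{i+1,j} \not\succeq \tau_{i,k}$-type vanishing in the injective picture. None of this is deep, but it is the main obstacle in the sense that it is where the argument could be derailed by an off-by-one or a direction-of-inclusion error. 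Alternatively, if one prefers to avoid invoking Matlis duality, the lemma can be proved directly by the same argument as Lemma \ref{minimizeprojres}: the splitting hypotheses let one split off, in each degree $m$ with $m \geq_{\tau_{i,k}} m_{i,k}$, a one-dimensional vector space from $I_{E,i}$ and from $I_{E,i+1}$ simultaneously, and the complex stays exact after composing $\phi_{i-1}$ with the projection onto $I'_{E,i}$, restricting $\phi_{i+1}$, and modifying $\phi_i$ accordingly — reusing verbatim the representation-of-$(M,\leq_\sigma)$ reasoning from the proof of Lemma \ref{minimizeprojres}.
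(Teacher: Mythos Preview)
Your proposal is correct and matches the paper's approach exactly: the paper gives no separate proof for this lemma, deriving it instead from Lemma \ref{minimizeprojres} via the Matlis duality correspondence set up in the preceding paragraph, just as you do. One minor slip in your alternative direct argument: the nonzero degrees of $\K[\tau_{i,k,M}]\dual{\ }(m_{i,k})$ are those $m$ with $m \leq_{\tau_{i,k}} m_{i,k}$, not $\geq$.
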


\begin{definition}
We call a projective (respectively injective) resolution {\em minimal} if we cannot
split off summands as in Lemma \ref{minimizeprojres} (respectively Lemma
\ref{minimizeinjres}).
\end{definition}

Having established minimality of projective and injective resolutions, we now
recall the notions of graded Betti and Bass numbers.

\begin{definition}
Let $E$ be a module in $\mksmModf$ and $P_E = \cdots \rightarrow
P_{E, 1} \rightarrow P_{E, 0}$, $I_E = I_E^0 \rightarrow I_E^1 \rightarrow
\cdots$ minimal projective and injective resolutions of $E$, respectively. Then
we have decompositions
\begin{equation*}
P_{E, i} = \bigoplus_{\tau \preceq \sigma} \bigoplus_{m \in M} \K[\tau_M](-m)^{\beta_i(\tau, m)}
\quad \text{ and } \quad
I_E^i = \bigoplus_{\tau \preceq \sigma} \bigoplus_{m \in M} \K[\tau_M]\dual{\ }(-m)^{b^i(\tau, m)}.
\end{equation*}
We denote $\beta_i(\tau, \uc)$ the $i$-th {\em graded Betti number} of degree
$\uc$ with respect to $\tau$ and
$b^i(\tau, \uc)$ the $i$-th {\em graded Bass number} of degree $\uc$
with respect to $\tau$. For $\tau = \sigma$ we also write $\beta_i(m)$ and
$b^i(m)$ instead of $\beta_i(\tau, m)$ and $b^i(\tau, m)$, respectively.
\end{definition}

\subsection{Homogeneous coordinates and divisorial modules}
\label{homogeneouscoordinates}\label{divisorialmodules}

By a well-known construction due to Cox, every toric variety has a so-called
homogeneous coordinate ring. In our situation we consider the polynomial ring
$S = \K[x_1, \dots, x_n]$ considered as monoid ring over $\N^n$ together
with its natural $\Z^n$-grading. By the following exact sequence
\begin{equation}\label{standardsequence}
M \overset{L}{\longrightarrow} \Z^n \overset{\Phi}{\longrightarrow} A \longrightarrow 0,
\end{equation}
where $L$ is defined by $L(m) = \big(l_1(m), \dots, l_n(m)\big)$, we can endow
$S$ with an $A$-grading by setting $\deg_A x^\uc = \Phi(\uc)$ for every monomial
$x^\uc$ in $S$. The image of $M$ in $\Z^n$ coincides with $M / \sigma_M^\bot$.
In light of Proposition \ref{localequiv} we will assume without loss of generality
that $L$ is injective. Then $\ksm$ (rather that $\K[\sigma_M']$) is realized as
the degree zero subring of $S$ with
respect to this grading. Geometrically, this can be interpreted as representation
of $U_\sigma$ as a good quotient of the affine space $\mathbf{A}^n_\K$
by the diagonalizable group scheme $\spec{\K[A]}$. The irreducible torus
invariant
Weil divisors $D_1, \dots, D_n$ on $U_\sigma$ are in one-to-one correspondence
with $l_1, \dots, l_n$ and we can identify $\Z^n$ with the free group generated
over the $D_i$. Also we can identify $A$ with the Weil divisor class group
$A_{d - 1}(U_\sigma)$ and above sequence states that every Weil divisor class
has a torus invariant representative which is determined up to a principal divisor
associated to an element in
$M$ which we interpret as a semi-invariant rational function on $U_\sigma$.

Any $\Z^n$-graded $S$-module $E$ can also be endowed with a natural $A$-grading
by setting $E_\alpha := \bigoplus_{\uc \in \Phi^{-1}(\alpha)} E_{\uc}$ for every $\alpha
\in A$. To better distinguish these gradings we introduce
the following notation:

\begin{definition}
Let $E$ be a $\Z^n$-graded $S$-module. Then for any $\uc \in \Z^n$ we denote
its $A$-degree $\Phi(\uc)$ by
\begin{equation*}
E_{(\uc)} := E_{\Phi(\uc)} = \bigoplus_{m \in M} E_{\uc + L(m)}.
\end{equation*}
\end{definition}

So, every $\alpha \in A$ the $\K$-vector space $E_\alpha$ has a natural structure of
a $\ksm$-module. In particular, taking degree zero is an exact functor
\begin{equation*}
-_{(0)} : \zsMod \longrightarrow \mksmMod.
\end{equation*}

A particular class of $\ksm$-modules arising this way are those coming from projective
and injective modules in $\zsModf$. For any $I \subset \on$ we denote
$x_I := \prod_{i \in I} x_i$ and $S_I := S_{x_I}$ the localization of $S$ at $x_I$. Then
the projective and injective $S$-modules are given by $S_I(\uc)$ and
$\dual{S}_I(\uc)$ respectively (where degree-shifts are given by $\uc \in \Z^n$).
Explicitly, we have
\begin{equation*}
S_I(\uc)_{(0)} = \bigoplus_{m \in M_I^\uc} \K \chi(m) \quad \text{ and } \quad
\dual{S}_I(\uc)_{(0)} = \bigoplus_{-m \in M_I^{-\uc}} \K \chi(m),
\end{equation*}
where
\begin{equation*}
M^{\underline{d}}_I = \{m \in M \mid l_i(m) \geq -d_i \text{ for all }i \in I\}
\end{equation*}
for any $\underline{d} = (d_1, \dots, d_n)�\in \Z^n$.

\begin{definition}
Let $E$ be any module in $\mksmMod$. If $E$ is isomorphic to some $S_I(\uc)_{(0)}$,
then we call $E$ {\em divisorial}. If $E$ is isomorphic to some, $\dual{S}_I(\uc)_{(0)}$
then we call $E$ {\em codivisorial}.
\end{definition}

From our discussion of subsection \ref{matlissection} it follows that a divisorial
module $S_I(\uc)_{(0)}$ is projective in $\mksmModf$ iff $\{l_i \mid i \in I\}$ generate a face of $\sigma$.
Analogously, a codivisorial module $\dual{S}_I(\uc)_{(0)}$ is injective in $\mksmModf$ iff
$\{l_i \mid i \in I\}$ generate a face of $\sigma$. Hence, the classes of divisorial and codivisorial
modules coincide with the classes of projective and injective modules, respectively, in
$\mksmModf$ iff $U_\sigma$ is smooth. If $U_\sigma$ is not smooth then the (co-)divisorial
modules form a strictly larger class.

In the following lemma we collect some general properties of (co-)divisorial modules whose
straightforward check we leave to the reader:

\begin{lemma}\label{injaux}\label{divisorialproperties}
Let $\uc, \uc' \in \Z^n$ and $I, J \subseteq \on$. Then:
\begin{enumerate}[(i)]
\item\label{injauxi}
\begin{equation*}
S_J(\uc') \otimes_S \dual{S}_I(\uc)
\cong
\begin{cases}
\dual{S}_I(\uc + \uc') & \text{ if } J \subseteq I\\
0 & \text{ else}.
\end{cases}
\end{equation*}
\item\label{injauxii} 
\begin{equation*}
\Hom_S\big(S_I(\uc'), S_J(\uc)\big) \cong
\Hom_S\big(\dual{S}_J(\uc'), \dual{S}_I(\uc)\big) \cong
\begin{cases}
S_I(\uc - \uc') & \text{ if } I \subseteq J\\
0 & \text{ else}.
\end{cases}
\end{equation*}
\item $S_I(\uc)_{(0)}$ and $\dual{S}_I(\uc)_{(0)}$ are indecomposable.
\item $S_I(\uc)_{(0)}$ is finitely generated as a $\ksm$-module iff $I = \on$.
\item\label{divisorialpropertiesii} Let $\tau \preceq \sigma$ and $m \in \sigma_M$
be in the relative interior of $\tau'_M$, then the localization $(S_I(\uc)_{(0)})_{\chi(m)}$
equals $S_{I \cup \tau(1)}(\uc)_{(0)}$.
\item
\begin{equation*}
\Hom_\ksm^M(S_I(\uc)_{(0)}, S_J(\uc')_{(0)}) =
\begin{cases}
\K & \text { if } \uc' \leq \uc \text{ and } I \subseteq J\\
0 & \text{ else.}
\end{cases}
\end{equation*}
\item In particular, $S_I(\uc)_{(0)}$ is reflexive iff $I = \on$.
\item\label{divisorialpropertiesv} Let $E$ be a nontrivial, torsion-free, and
finitely generated $M$-graded $\ksm$-module.
Then $\Hom_\ksm(S_I(\uc)_{(0)}, F) = 0$ iff $I \neq \on$.
\end{enumerate}
\end{lemma}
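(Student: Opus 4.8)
The plan is to establish the two implications of the equivalence separately, working throughout with the explicit description $S_I(\uc)_{(0)} = \bigoplus_{m \in M_I^\uc}\K\chi(m)$ from subsection~\ref{homogeneouscoordinates}, where $M_I^\uc = \{m \in M \mid l_i(m) \geq -c_i \text{ for } i \in I\}$. Two elementary remarks will do the work. (a) Since $l_1,\dots,l_n$ are the extreme rays of $\sigma$, for each $j \notin I$ we have $l_j \notin \mathrm{cone}\{l_i \mid i \in I\}$; as the recession cone of $\{x \in \mr \mid l_i(x) \geq -c_i \text{ for } i \in I\}$ equals $\mathrm{cone}\{l_i \mid i \in I\}^\vee$, the linear form $l_j$ is not bounded below there, hence is unbounded below on $M_I^\uc$. (b) If $E$ is finitely generated with homogeneous generators of degrees $n_1,\dots,n_r$, then $\supp E \subseteq \bigcup_k(n_k + \sigma_M)$, so each $l_j$ is bounded below on $\supp E$.

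For ``$I \neq \on \Rightarrow \Hom_\ksm(S_I(\uc)_{(0)}, E) = 0$'' it suffices to show that every homogeneous homomorphism $\phi : S_I(\uc)_{(0)} \to E$, say of degree $m_0$, vanishes. Put $S = \{m \in M_I^\uc \mid \phi(\chi(m)) \neq 0\}$. Since $E$ is torsion-free, $S$ is closed under adding elements of $\sigma_M$ (from $\phi(\chi(m+p)) = \chi(p)\phi(\chi(m))$) and closed downward under $\leq_\sigma$ within $M_I^\uc$ (from $\phi(\chi(m_1)) = \chi(m_1-m)\phi(\chi(m))$ when $m \leq_\sigma m_1$). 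As $M_I^\uc$ is directed under $\leq_\sigma$ (any two of its elements are dominated by a common deep interior point of $\sigma_M$), these two closure properties force $S = \emptyset$ or $S = M_I^\uc$; if $\phi \neq 0$ then $S = M_I^\uc$, so $m + m_0 \in \supp E$ for all $m \in M_I^\uc$, which by (a) and (b) would bound $l_j$ below on $M_I^\uc$ for any $j \notin I$ --- a contradiction. Hence $\phi = 0$.

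For ``$I = \on \Rightarrow \Hom_\ksm(S_{\on}(\uc)_{(0)}, E) \neq 0$'' --- which uses $E \neq 0$ but not finite generation --- pick a nonzero homogeneous $e \in E_{m_0}$; torsion-freeness makes $\ksm e \subseteq E$ a free graded submodule, generated by $e$, hence isomorphic to $\ksm(-m_0)$. Choose $m_2 \in M$ with $l_i(m_2) \geq c_i + l_i(m_0)$ for all $i$ (e.g.\ $m_2$ deep in the interior of $\sigma_M$). Then for every $m \in M_{\on}^\uc = \{m \mid l_i(m) \geq -c_i \text{ for all } i\}$ we have $l_i(m + m_2 - m_0) \geq 0$, so $m + m_2 - m_0 \in \sigma_M$; hence $\chi(m) \mapsto \chi(m + m_2 - m_0)\,e$ is a well-defined, $\ksm$-linear, homogeneous, and (being injective on monomials) nonzero map $S_{\on}(\uc)_{(0)} \to \ksm e \subseteq E$. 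Thus $\Hom_\ksm(S_{\on}(\uc)_{(0)}, E) \neq 0$.

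I expect the first implication to be the only subtle part: its point is that $S_I(\uc)_{(0)}$ is ``infinitely divisible in the $l_j$-direction'' for $j \notin I$, whereas a finitely generated torsion-free module is not, and the step that genuinely uses both hypotheses is the dichotomy ``$S$ is empty or all of $M_I^\uc$'', which breaks without torsion-freeness of $E$ (one could then map $S_I(\uc)_{(0)}$ onto a skyscraper) or without the directedness of $M_I^\uc$ under $\leq_\sigma$. Everything else is bookkeeping with monomial supports. Alternatively one could run the first implication by passing $\image\phi$ to its reflexive hull --- which is divisorial, of the form $S_{\on}(\ud)_{(0)}$ --- and quoting Lemma~\ref{injaux}(vi), but the argument above avoids that detour.
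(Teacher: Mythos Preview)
The paper does not prove this lemma at all; it explicitly states that the ``straightforward check'' is left to the reader. So there is no argument in the paper to compare against.

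Your proposal addresses only part~(\ref{divisorialpropertiesv}), not the remaining seven items. For that part, your argument is correct. The two preliminary observations (a) and (b) are sound: (a) uses that the $l_j$ are extremal rays of $\sigma$, so $l_j \notin \operatorname{cone}\{l_i \mid i \in I\}$ for $j \notin I$, whence $l_j$ is unbounded below on the recession cone $\{m \mid l_i(m) \geq 0 \text{ for } i \in I\}$ and hence on $M_I^\uc$; (b) is immediate from finite generation. The dichotomy argument for the set $S$ is clean: torsion-freeness of $E$ gives upward closure, the module relation $\phi(\chi(m_1)) = \chi(m_1 - m)\phi(\chi(m))$ gives downward closure within $M_I^\uc$, and directedness of $M_I^\uc$ under $\leq_\sigma$ (via interior points of $\check\sigma$) forces $S$ to be all or nothing. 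The converse direction is a straightforward construction of a nonzero homogeneous map into the cyclic submodule $\ksm \cdot e$.

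Two minor remarks. First, your alternative via the reflexive hull of $\operatorname{im}\phi$ and part~(vi) is a legitimate shortcut, but it presupposes the rank-one classification of reflexive divisorial modules, so your direct argument is more self-contained. Second, if you intend this as a proof of the full lemma, parts (i)--(vii) still need to be supplied; most are genuinely routine (tensor and $\Hom$ computations on monomial supports, indecomposability from the explicit $\sigma$-family description), but you should at least indicate how each one goes.
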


Note that in particular property (\ref{divisorialpropertiesii}) implies that the
sheaves over $U_\sigma$ associated to divisorial modules are quasi-coherent of rank one.

As remarked above, codivisorial modules are not injective in general. However,
the following lemma shows that codivisorial modules are still be useful for computing
local cohomology groups.

\begin{proposition}\label{acycprop}
Let $B \subseteq \ksm$ be a homogeneous ideal, $V \subset U_\sigma$ the corresponding
$T$-invariant closed subscheme of $U_\sigma$, $\uc \in \Z^n$ and $I \subseteq \on$.
Denote $\tau_I$ the minimal face of $\sigma$ such that $I \subseteq \tau_I(1)$.
\begin{enumerate}[(i)]
\item\label{acycpropi}
$\Gamma_V \dual{S}_I(\uc)_{(0)} =
\begin{cases}
\dual{S}_I(\uc)_{(0)} & \text{ if the orbit corresponding to $\tau_I$ is contained in $V$}.\\
0 & \text{ else}.
\end{cases}$
\item\label{acycpropii} The module $\dual{S}_I(\uc)_{(0)}$ is $\Gamma_V$-acyclic.
\item\label{acycpropiii} Consider the preimage $\hat{V} \subset \A^n_\K$ of $V$
under the projection $\A^n_\K \twoheadrightarrow U_\sigma$. Then
$\Gamma_{\hat{V}} \dual{S}_I(\uc) = 0$ iff $\Gamma_V \dual{S}_I(\uc)_{(0)} = 0$.
\end{enumerate}
\end{proposition}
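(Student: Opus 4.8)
The plan is to compute everything explicitly using the combinatorial description of $\dual{S}_I(\uc)_{(0)}$ given above. Recall that $\dual{S}_I(\uc)_{(0)} = \bigoplus_{-m \in M_I^{-\uc}} \K\chi(m)$, i.e.\ it is the $\sigma$-family supported on the set of $m \in M$ with $l_i(m) \leq d_i$ for all $i \in I$ (writing $\uc = (d_1,\dots,d_n)$), with all structure maps $\cdot\chi(m')$ equal to the identity where both source and target are nonzero. For part \eqref{acycpropi}, I would first observe that $\Gamma_V$ applied to an $M$-graded module is again $M$-graded, and in degree $m$ it consists of those elements killed by a power of $B$; since $\dual{S}_I(\uc)_{(0)}$ has one-dimensional graded pieces with identity transition maps, a homogeneous element of degree $m$ is $B$-power-torsion iff $\chi(b)^N \cdot \chi(m) = 0$ for some $N$ and all $b$ generating $B$. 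Translating through Lemma \ref{divisorialproperties}\eqref{divisorialpropertiesii}, localizing $\dual{S}_I(\uc)_{(0)}$ at $\chi(m_\tau)$ for $m_\tau$ in the relative interior of $\tau'_M$ is the Matlis-dual mirror of the statement there, and the localization is nonzero precisely when $\tau$ is a face of $\tau_I$, equivalently when $\orb{\tau_I}$ lies in the closure of $\orb{\tau}$. So $\dual{S}_I(\uc)_{(0)}$ is $B$-torsion-free (i.e.\ $\Gamma_V$ of it vanishes) exactly when $V$ does not contain $\orb{\tau_I}$, and $\Gamma_V$-torsion otherwise; in the latter case the whole module is torsion, giving the stated dichotomy.

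For part \eqref{acycpropii}, the key point is that the higher local cohomology sheaves $H^j_V$ commute with the (exact, degree-zero) functor $-_{(0)}$ and with localization, and that $\dual{S}_I(\uc)$ is itself $\Gamma_{\hat V}$-acyclic over $S$ — indeed $\dual{S}_I(\uc)$ is, up to shift, a direct sum of modules of the form $\dual{S}_J$ with $J \subseteq I$ (it is an injective $S$-module when $I$ ranges over all faces, but more relevantly it is a localization of the injective hull of $S/(x_i : i\in I)$), and injective and flat-localized modules compute local cohomology without higher terms. I would deduce \eqref{acycpropii} from \eqref{acycpropiii} together with this $S$-level acyclicity: descending a $\Gamma_{\hat V}$-acyclic resolution (or rather, using that $\dual{S}_I(\uc)$ has no higher $\Gamma_{\hat V}$) through $-_{(0)}$ shows $H^j_V(\dual{S}_I(\uc)_{(0)}) = \big(H^j_{\hat V}(\dual{S}_I(\uc))\big)_{(0)} = 0$ for $j > 0$.

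Part \eqref{acycpropiii} is then the linchpin and I expect it to be the main obstacle. Here one must compare the $M$-graded torsion of $\dual{S}_I(\uc)_{(0)}$ with respect to $B$ against the full $\Z^n$-graded torsion of $\dual{S}_I(\uc)$ with respect to $\hat B$, the extension of $B$ to $S$. The direction ``$\Gamma_{\hat V}\dual{S}_I(\uc) \neq 0 \Rightarrow \Gamma_V\dual{S}_I(\uc)_{(0)} \neq 0$'' is easy since $-_{(0)}$ is exact and a nonzero $A$-graded torsion piece contains a nonzero $M$-graded torsion element in $\ksm$-degree $0$ after an appropriate twist — but one has to be careful that the nonzero torsion occurs in an $A$-degree meeting the image of $L$, which is where the explicit monomial description of $\dual{S}_I(\uc)$ is needed. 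For the converse, a nonzero $B$-torsion element of $\dual{S}_I(\uc)_{(0)}$ in degree $m$ lifts to the monomial $x^{\uc + L(m)}$-component (for suitable representative) of $\dual{S}_I(\uc)$, and one checks it is annihilated by a power of $\hat B$ because $B$ and $\hat B$ have the same radical locus cut out on the relevant coordinate subspaces. The careful bookkeeping between the $A$-grading, the $\Z^n$-grading, and which faces $\tau_I$ are ``seen'' by $V$ versus $\hat V$ is the delicate part; everything else is formal homological algebra plus the explicit formulas already recorded in Lemma \ref{divisorialproperties}.
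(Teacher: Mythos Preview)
Your overall strategy reverses the paper's logical order, and this reversal introduces a genuine circularity in your argument for (\ref{acycpropii}). You propose to deduce (\ref{acycpropii}) from (\ref{acycpropiii}) together with the $S$-level acyclicity of $\dual{S}_I(\uc)$, via the identity $H^j_V\big(\dual{S}_I(\uc)_{(0)}\big) = \big(H^j_{\hat V}(\dual{S}_I(\uc))\big)_{(0)}$. But this identity is not free: it is exactly Proposition~\ref{loccohomprop} later in the paper, and the proof there \emph{uses} part (\ref{acycpropii}) of the present proposition (one needs that the degree-zero part of an injective resolution over $S$ is $\Gamma_V$-acyclic over $\ksm$ in order to identify the two cohomologies). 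Part (\ref{acycpropiii}) alone, being a statement only about $\Gamma_V$ and $\Gamma_{\hat V}$, does not upgrade to the higher derived functors without further input. A non-circular route in your spirit would be to compute both local cohomologies via the \v{C}ech complex on a common set of homogeneous generators $f_1,\dots,f_r$ of $B \subseteq \ksm$ (which also cut out $\hat V$ in $\A^n_\K$) and observe that $(-)_{(0)}$ commutes with localization at the $f_i$; but you do not invoke this, and the phrase ``descending a $\Gamma_{\hat V}$-acyclic resolution'' does not supply it.

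The paper proceeds in the opposite order: (\ref{acycpropi}) and (\ref{acycpropii}) are established directly by citing \cite{TrungHoa}, Lemma~3.1, for the case where $V$ is the minimal orbit, and leaving the adaptation to general $V$ to the reader. Then (\ref{acycpropiii}) is deduced from (\ref{acycpropi}) in one line: apply (\ref{acycpropi}) to $\A^n_\K$ itself (where every $I\subseteq\on$ spans a face of the positive orthant, so the criterion reads ``$\Gamma_{\hat V}\dual{S}_I(\uc)=0$ iff the coordinate orbit indexed by $I$ is not contained in $\hat V$''), and observe that the torus orbits of $\A^n_\K$ lying in $\hat V$ correspond exactly to those $I$ for which the $\tau_I$-orbit in $U_\sigma$ lies in $V$. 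This bypasses entirely the ``careful bookkeeping between the $A$-grading, the $\Z^n$-grading, and which faces $\tau_I$ are seen by $V$ versus $\hat V$'' that you flag as the delicate part of your approach to (\ref{acycpropiii}).

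Your sketch for (\ref{acycpropi}) is broadly correct in spirit and is essentially what an explicit ``adaptation of \cite{TrungHoa}'' would amount to, though the Matlis-dual analogue of Lemma~\ref{divisorialproperties}~(\ref{divisorialpropertiesii}) that you invoke is not actually recorded in the paper and would need to be verified separately.
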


\begin{proof}
For the case that $V$ is the minimal orbit in $U_\sigma$ statements (\ref{acycpropi})
and (\ref{acycpropii}) were shown in \cite{TrungHoa}, Lemma 3.1. We leave the adaption
of this proof to our slightly more general case to the reader.

Statement (\ref{acycpropiii}) follows from (\ref{acycpropi}) and the observation that
the orbit decomposition of $\hat{V}$ with respect to the toric structure of $\A^n_\K$
corresponds one-to-one to subsets $I$ of $\on$ such that the orbit in $U_\sigma$
corresponding to $\tau_I$ is contained in $V$.
\end{proof}

Later on we will use codivisorial resolutions in order to compute local cohomology
with respect to to $V$ of general $M$-graded $\ksm$-Modules. As a first step, we will
first determine the
local cohomology of divisorial modules. Following \cite{GotoWatanabe} \S 3.2, we
can consider the Grothendieck-Cousin complex of any divisorial module $S_I(\uc)_{(0)}$.
For this, we observe that we can decompose
\begin{equation*}
S_I(\uc) \cong \left(\Big( \bigotimes_{i \in I} \K[x_i, x_i^{-1}] \Big)
\otimes \Big( \bigotimes_{i \in \on \setminus I} \K[x_i](c_i) \Big)\right),
\end{equation*}
where all tensor products are over $\K$, all $\K[x_i, x_i^{-1}]$, $\K[x_i]$
are considered as $\Z$-graded, and $\uc = (c_1, \dots, c_n)$.
Replacing the factors on the right hand side by the quasiisomorphic complexes
$\K[x_i, x_i^{-1}] \rightarrow \K[x_i, x_i^{-1}] / \big(\K[x_i](c_i)\big)$
for every $i \notin I$, we get an isomorphism of complexes
\begin{equation*}
S_I(\uc) \cong \left(\Big( \bigotimes_{i \in I} \K[x_i, x_i^{-1}] \Big)
\otimes \Big( \bigotimes_{i \in \on \setminus I} \K[x_i, x_i^{-1}] \rightarrow
\K[x_i, x_i^{-1}] / \big(\K[x_i](c_i)\big) \Big)\right) =: C^\bullet_{I, \uc}
\end{equation*}
where now the right hand side denotes the total tensor product chain complex.
Its degree-zero part $(C^\bullet_{I, \uc})_{(0)}$ is a codivisorial  --- and by Proposition
\ref{acycprop} therefore a $\Gamma_V$-acyclic --- resolution of $S_I(\uc)_{(0)}$.
Denote $\hat{\sigma}$ the combinatorial simplex on the set $\on$ and for
any $m \in M$ denote $\hat{\sigma}_m$ its full
subsimplex supported on $i \in I$ with $l_i(m) < -c_i$.
By inspection it turns out that the graded piece $\big((C^\bullet_{I, \uc})_{(0)}\big)_m$
coincides with simplicial cochain complex of $\hat{\sigma}_{\underline{b}}$ over $\K$. 
Now, for any torus invariant
closed subset $V \subseteq U_\sigma$, its complement is again a toric variety
described by a subfan $\sigma_V$ of $\sigma$. We denote $\hat{\sigma}_{V, m}$
the simplicial subcomplex of $\hat{\sigma}_m$ such that
$I \in \hat{\sigma}_{V, m}$ implies $\tau_I \preceq \sigma_V$
with the notation of Proposition \ref{acycprop}.
Now we consider the graded
decomposition $H^i_V(S_I(\uc)_{(0)}) \cong \bigoplus_{m \in M} H^i_V(S_I(\uc)_{(0)})_m$.
Applying $\Gamma_V$ to $(C^\bullet_{I, \uc})_{(0)}$ together with
standard arguments involving the long exact cohomology
sequence associated to $\hat{\sigma}_m$ and
$\hat{\sigma}_{V, m}$ imply the following variation of a
well-known standard result:

\begin{proposition}
For $I, \uc$ as above and $i \geq 0$ we have for every $m \in M$
\begin{equation*}
H^i_V(S_I(\uc)_{(0)})_m \cong H^{i - 2}(\hat{\sigma}_{V, m}; \K),
\end{equation*}
where $H^{i - 2}(\hat{\sigma}_{V, m}; \K)$ denotes the $i - 2$-th reduced cohomology
of the simplicial complex $\hat{\sigma}_{V, m}$ with coefficients in $\K$.
\end{proposition}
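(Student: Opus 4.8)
The plan is to feed the explicit $\Gamma_V$-acyclic resolution $(C^\bullet_{I,\uc})_{(0)}$ of $S_I(\uc)_{(0)}$ constructed above into $\Gamma_V$, read off the resulting complex in each $M$-degree as a relative simplicial cochain complex, and finish with the long exact cohomology sequence of a simplicial pair. Concretely: by Proposition~\ref{acycprop}(\ref{acycpropii}) every term of $(C^\bullet_{I,\uc})_{(0)}$ is $\Gamma_V$-acyclic, so $H^i_V(S_I(\uc)_{(0)}) \cong H^i\big(\Gamma_V (C^\bullet_{I,\uc})_{(0)}\big)$; and since $B$ is homogeneous, $V$ is $T$-invariant, and the functor "take the $M$-degree $m$ component" is exact, it suffices to compute the cohomology of $\big(\Gamma_V (C^\bullet_{I,\uc})_{(0)}\big)_m$ for each $m \in M$.

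For that I would first invoke the identification, noted in the paragraph preceding the statement, of $\big((C^\bullet_{I,\uc})_{(0)}\big)_m$ with the reduced simplicial cochain complex of $\hat\sigma_m$: its summand in cohomological degree $p$ is spanned by the $(p-1)$-dimensional faces of $\hat\sigma_m$ (so the empty face contributes in cohomological degree $0$), with differential the simplicial coboundary. Then I would determine the effect of $\Gamma_V$ summand by summand. By Proposition~\ref{acycprop}(\ref{acycpropi}) the codivisorial summand of $(C^\bullet_{I,\uc})_{(0)}$ attached to a face $F$ of $\hat\sigma$ survives $\Gamma_V$ exactly when the orbit $\orb{\tau_F}$ lies in $V$, and is annihilated otherwise; the faces $F$ with $\orb{\tau_F}\not\subseteq V$ are precisely those with $\tau_F\preceq\sigma_V$, i.e. the faces of $\hat\sigma_{V,m}$. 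Because $\sigma_V$ is a subfan of $\sigma$ and $F\mapsto\tau_F$ is order preserving, $\hat\sigma_{V,m}$ is a genuine simplicial subcomplex of $\hat\sigma_m$, so $\big(\Gamma_V (C^\bullet_{I,\uc})_{(0)}\big)_m$ is — up to the same shift in cohomological degree — the reduced relative cochain complex of the pair $(\hat\sigma_m, \hat\sigma_{V,m})$. This gives $H^i_V(S_I(\uc)_{(0)})_m \cong \tilde H^{i-1}(\hat\sigma_m, \hat\sigma_{V,m}; \K)$.

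To finish I would use that $\hat\sigma_m$ is a nonempty simplex, hence contractible, so all of its reduced cohomology vanishes; the long exact cohomology sequence of the pair $(\hat\sigma_m,\hat\sigma_{V,m})$ then collapses to connecting isomorphisms $\tilde H^{i-1}(\hat\sigma_m,\hat\sigma_{V,m};\K)\cong\tilde H^{i-2}(\hat\sigma_{V,m};\K)$ for all $i$, and composing with the previous isomorphism yields exactly the asserted formula. The degenerate case in which $\hat\sigma_m$ has no vertices has to be treated by hand, but there both sides are immediately seen to vanish (resp. behave trivially).

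The only step requiring genuine care is the middle one: correctly matching the combinatorial indexing of the codivisorial summands of $(C^\bullet_{I,\uc})_{(0)}$ with faces $F$ of $\hat\sigma$ and their associated cones $\tau_F$ through Proposition~\ref{acycprop}(\ref{acycpropi}), so that "$\Gamma_V$ annihilates a summand" translates precisely into "$F\in\hat\sigma_{V,m}$", together with the bookkeeping of the two separate shifts in cohomological degree — one from the reduced-cochain indexing of $(C^\bullet_{I,\uc})_{(0)}$, one from the connecting homomorphism of the pair — which combine into the $i-2$ appearing in the statement. Everything else is formal once Proposition~\ref{acycprop} is in hand.
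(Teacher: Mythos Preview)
Your proposal is correct and follows essentially the same approach as the paper. The paper's argument is the one-line sketch immediately preceding the proposition (``Applying $\Gamma_V$ to $(C^\bullet_{I,\uc})_{(0)}$ together with standard arguments involving the long exact cohomology sequence associated to $\hat\sigma_m$ and $\hat\sigma_{V,m}$''), and your proof is precisely an unpacking of those ``standard arguments'': identify $\Gamma_V$ of the acyclic resolution in degree $m$ with the relative reduced cochain complex of the pair $(\hat\sigma_m,\hat\sigma_{V,m})$, then use contractibility of the full simplex $\hat\sigma_m$ in the long exact sequence of the pair to obtain the degree shift by two.
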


\section{Resolutions}\label{resolutionsection}

In this section we will develop a general framework for divisorial and codivisorial
resolutions of $M$-graded $\ksm$-modules. For this we will first consider $\Z^n$-graded
modules over the polynomial ring $S = \K[x_1, \dots, x_n]$ and introduce $\lcm$- and
$\gcd$-lattices for such modules in subsection \ref{lcmlattices}. In subsection
\ref{reflexivelcmlattices} we identify the minimal admissible $\lcm$-lattice of a reflexive
module with the intersection poset of the vector space arrangement generated by its
associated filtrations. In subsection \ref{combinatoriallyfinite} we introduce the category
of combinatorially finite modules. Subsections \ref{resolutions} and \ref{reflres} form
the central parts of this section. In subsetion \ref{resolutions} we describe minimal projective
resolutions of combinatorially finite modules in terms of their associated $\lcm$-lattices.
In subsection \ref{reflres} we characterize these resolutions for reflexive modules as
resolutions of vector space arrangements. In subsection \ref{duality} we give an interpretation
of Miller's results on duality of projective and injective resolutions in terms of an autoequivalence
of the derived category of combinatorially finite modules. Based on this, we discuss the
computation of local cohomologies in subsection \ref{loccohomcomp}.

\subsection{lcm- and gcd-lattices}\label{lcmlattices}

Let $\uc = (c_1, \dots, c_n), \uc' = (c_1', \dots, c_n') \in \Z^n$, then the least
common multiple of $\uc$ and $\uc'$ is defined as the exponent of the least common
multiple of the monomials $x^\uc$ and $x^{\uc'}$, i.e. $\lcm\{\uc, \uc'\} = (\max\{c_1, c_1'\}, \dots, \max\{c_n, c_n'\})$. Analogously, the greatest common divisor is defined
as $\gcd\{\uc, \uc'\} = (\min\{c_1, c_1'\}, \dots, \min\{c_n, c_n'\})$. We set
$\Zbar := \Z \cup \{-\infty, \infty\}$ which is totally ordered by $-\infty < n < \infty$
for all $n \in \Z$ and $-\infty$ denotes an actual sign change, i.e. $-\infty = -(\infty)$.
We naturally extend the notions $\lcm\{\uc, \uc'\}$ and
$\gcd\{\uc, \uc'\}$ to any $\uc, \uc' \in \Zbar^n$. Also, the partial order on
$\Z^n$ extends naturally to a partial order on $\Zbar^n$.

Any object $E$ in $\zsMod$ can be extended to a representation of $(\Zbar^n, \leq)$
by setting for every $\n \in \Zbar^n$
\begin{equation*}
\bar{E}_\n := \underset{\leftarrow}{\lim}\ E_{\n'},
\end{equation*}
where the limit is taken over all $\n' \in \Z^n$ with $\n \leq \n'$. We set $\bar{E}_\n$
to zero if this set is empty. This construction establishes a functor
\begin{equation*}
\bar{\ } : \zsmod \longrightarrow \text{Representations of } (\Zbar^n, \leq),
\quad E \mapsto \bar{E}.
\end{equation*}
Conversely, if we consider $\Z^n$ as a topological subspace of $\Zbar^n$ and denote
the inclusion $\iota : \Z^n \hookrightarrow \Zbar^n$, we obtain by restriction of sheaves
the functor
\begin{equation*}
\iota^{-1} : \text{Representations of } (\Zbar^n, \leq) \longrightarrow \zsmod,
\quad \bar{E} \mapsto \iota^{-1} \bar{E}.
\end{equation*}
By the universal property of limits, the pair of functors $\bar{\ }$ and $\iota^{-1}$
establishes an equivalence of categories between $\zsmod$ and its essential image in
the category of representations of $\Zbar^n$.

The partially ordered set $\Zbar^n$ is a lattice with meet $\wedge$ and join $\vee$
being $\gcd$ and $\lcm$, respectively. Any subset of $\Zbar^n$ which is closed under
joins (respectively meets) is called {\em join sublattice} (respectively {\em meet
sublattice}). However in our context the following terms are more customary.

\begin{definition}
We call {\em lcm-lattice} any subset $\cL$ of $\Zbar^n$ which is closed under taking
$\lcm$ and moreover for any element $(c_1, \dots, c_n)$ in $\cL$ we have
$c_i < \infty$ for all $i$.
We call {\em gcd-lattice} any subset $\cL$ of $\Zbar^n$ which is closed under taking
$\gcd$ and moreover for any element $(c_1, \dots, c_n)$ in $\cL$ we have
$c_i > -\infty$ for all $i$.
\end{definition}

Let $\iota_\p : \p \hookrightarrow \Zbar^n$ be any subposet considered as topological
space by its subspace topology and $E$ in $\zsmod$. Then the restriction of
$\bar{E}$ to $\p$ is given by the sheaf theoretical pullback $\iota_\p^{-1}\bar{E}$.
In terms of poset representations, this simply coincides with the restriction of
$\bar{E}$ to $\p$. For the case that $\p$ is an $\lcm$-lattice we define a functor
which we think of as compression of the relevant information
contained in $\bar{E}$ to a smaller -- possibly finite -- poset.

\begin{definition}\label{zipdef}
Let $\iota_\cL : \cL \hookrightarrow \Zbar^n$ be an $\lcm$-lattice. Then for any $E$ in $\zsmod$
we define
\begin{equation*}
\zip^\cL E := \lcmzip^\cL E := \iota^{-1}_\cL \bar{E}.
\end{equation*}
\end{definition}
Below we will introduce the Matlis dual notion $\gcdzip^\cG$ but we will mostly
be working with $\lcmzip^\cL$. Therefore we will usually drop the subscript ``$\lcm$''
if there is no ambiguity.

Now, given a representation $F$ of an $\lcm$-lattice $\cL$, it is possible to extend 
this representation
to a representation $F'$ of $\Zbar^n$ as follows. Given any $\uc \in \Zbar^n$,
we have two possibilities: either there exists no element $\uc'$ in $\cL$ with
$\uc' \leq \uc$, or there exists a unique maximal element $\max(\uc) \in \cL$
with $\max(\uc) \leq \uc$, which is the $\lcm$ of all $\uc' \in \cL$ with
$\uc' \leq \uc$. In the first case we set $F'_\uc := 0$. In the second case
we set $F'_\uc := F_{\max(\uc)}$. For any $\uc \leq \uc'$ such that $F'_\uc
\neq 0$, we set as morphism $F'(\uc, \uc') := F\big(\max(\uc), \max(\uc')\big)$
(and zero otherwise). This way we
get a $\K$-linear representation of $\Zbar^n$.

\begin{definition}\label{unzipdef}
Let $\cL \subseteq \Zbar^n$ be an $\lcm$-lattice, $F$ a representation of $\cL$
and $F'$ its extension to $\Zbar^n$ as constructed above. Then we denote
\begin{equation*}
\unzip^\cL F := \lcmunzip^\cL F := F'.
\end{equation*}
\end{definition}

In general, applying the functor $\zip^\cL$ to any module $E$ will destroy a lot
of information about $E$ such that it is not possible to reconstruct $E$ from
its restriction to $\cL$.

\begin{definition}
Let $E$ be in $\zsmod$ and $\cL \subseteq \Zbar^n$ an
$\lcm$-lattice. Then we call $\cL$ an {\em $E$-admissible} (or simply admissible)
$\lcm$-lattice if $E \cong \unzip^\cL \circ \zip^\cL E$.
\end{definition}

\begin{example}\label{admissibleexample1}
Let $I \subset \on$ and $\uc \in \Z^n$. Then the module $S_I(\uc)$ admits
an admissible $\lcm$-lattice which consists of only one element $p_{\uc, I}
= (p_1, \dots, p_n) \in \Zbar^n$,
where
\begin{equation*}
p_i =
\begin{cases}
-\infty & \text{ for } i \in I\\
c_i & \text{else}.
\end{cases}
\end{equation*}
The representation $\zip^{\{p_{\uc, I}\}}S_I(\uc)$ then associates to
$p_{\uc, I}$ the vector space $\K$. Conversely, every element $\underline{p}$
in $\Zbar^n$ with $p_i < \infty$ for all $i$ together with the representation
$\underline{p} \mapsto \K$ gives rise to a module $S_I(\uc)$, where
$I = \{i \in \on \mid p_i = -\infty\}$ and $\uc \in \Z^n$ any element such that
$c_i = p_i$ for $i \notin I$.
\end{example}

\begin{example}\label{admissibleexample2}
Let $S_I(\uc)$ be as in the previous example. Then for the Matlis duals $\check{S}_I(-\uc)$ 
we get a more complicated picture. A minimal admissible $\lcm$-lattice $\cL_{\uc, I}$ is the
$\lcm$-lattice generated by $(-\infty, \dots, -\infty)$ and $\{\underline{p}^i\}_{i \notin I}$
where
\begin{equation*}
p^i_j =
\begin{cases}
-c_i + 1 & \text{ for } j = i\\
-\infty & \text{ else}.
\end{cases}
\end{equation*}
The representation $\zip^{\cL_{\uc, I}} \check{S}_I(-\uc)$ associates $\K$ to
$(-\infty, \dots, -\infty)$ and $0$ to all other elements of $\cL_{\uc, I}$.
\end{example}

\begin{example}\label{admissibleexample3}
Let $E$ in $\zsMod$ be finitely generated and torsion free and denote $e_1, \dots,
e_t$ a minimal set of homogeneous generators of $E$ with degrees $\uc_1, \dots, \uc_t$.
Then the $\lcm$-lattice generated by $\uc_1, \dots, \uc_t$ is admissible for $E$.
In case that $E$ is a monomial ideal, this $\lcm$-lattice coincides with the
classical $\lcm$-lattice of \cite{GPW99}. For general torsion free modules it
coincides with that of \cite{CharalambousTchernev03}.
\end{example}

\begin{example}\label{artinianexample}
Let $E = \K[x, y] / (x^2, xy, y^2)$. A minimal admissible $\lcm$-lattice is generated
by $\{0, (2, 0), (1, 1), (0,2)\}$ in $\Z^2$.
\end{example}

The proof of the following proposition yields a general construction of admissible
posets:

\begin{proposition}\label{admissibleexists}
Any module in $\zsMod$ admits an admissible poset.
\end{proposition}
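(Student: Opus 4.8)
The plan is to extract an admissible $\lcm$-lattice for an arbitrary $E$ in $\zsMod$ from a graded free presentation, generalising the situation of Examples \ref{admissibleexample3} and \ref{artinianexample}. First I would choose a homogeneous generating system of $E$, with degrees $(\uc_i)_i$; this gives a $\Z^n$-graded surjection $F_0 := \bigoplus_i S(-\uc_i) \twoheadrightarrow E$, whose kernel is again $\Z^n$-graded, so choosing a homogeneous generating system of the kernel, with degrees $(\uc_j')_j$, yields $F_1 := \bigoplus_j S(-\uc_j') \xrightarrow{\phi} F_0$ with $E \cong \coker \phi$. Let $\cL$ be the $\lcm$-lattice generated (under finite $\lcm$s) by $\{\uc_i\}_i \cup \{\uc_j'\}_j$. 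Since each $\uc_i$ and $\uc_j'$ lies in $\Z^n$ and a finite componentwise maximum of integer vectors is again an integer vector, in fact $\cL \subseteq \Z^n$; in particular all coordinates of all its elements are finite, so $\cL$ is indeed an $\lcm$-lattice.

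The core of the proof is an analysis of the endofunctor $T := \unzip^\cL \circ \zip^\cL$, which I would view as an endofunctor of $\zsMod$ by restricting $\unzip^\cL\zip^\cL E$ (a priori a representation of $\Zbar^n$) to $\Z^n$ --- this is what the isomorphism defining admissibility compares with $E$. I would prove two things. First, \emph{$T$ is exact and commutes with arbitrary direct sums}: because $\cL \subseteq \Z^n$ and $\bar E_\uc = E_\uc$ for $\uc \in \Z^n$, the functor $\zip^\cL$ is simply restriction of representations to the subposet $\cL$; and $\unzip^\cL$ sends a representation $F$ of $\cL$ to the representation $w \mapsto F_{\max(w)}$, which for each fixed $w$ is evaluation at a fixed point of $\cL$; both operations are exact and commute with direct sums. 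Second, \emph{$T$ fixes the building blocks}: for $\uc \in \cL$ one has $(T\,S(-\uc))_w = S(-\uc)_{\max(w)}$, and for $w \in \Z^n$ the equivalence $\uc \le \max(w) \Leftrightarrow \uc \le w$ holds --- ``$\Leftarrow$'' because then $\uc$ itself lies in $\{v \in \cL : v \le w\}$, whose join is $\max(w)$, and ``$\Rightarrow$'' because $\max(w) \le w$ always --- so the canonical comparison morphism $T\,E \to E$ (induced in degree $w$ by the transition map $\bar E_{\max(w)} \to \bar E_w$) restricts to an isomorphism $T\,S(-\uc) \xrightarrow{\sim} S(-\uc)$; being additive, $T$ then restricts to a functor naturally isomorphic to the identity on (possibly infinite) direct sums of such $S(-\uc)$ and on morphisms between them.

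Granting these, I apply the right-exact functor $T$ to $F_1 \xrightarrow{\phi} F_0 \to E \to 0$: since all the $\uc_i$ and $\uc_j'$ lie in $\cL$, the comparison morphisms give $T F_1 \cong F_1$, $T F_0 \cong F_0$ and identify $T\phi$ with $\phi$, so $\unzip^\cL\zip^\cL E = T E \cong \coker(T\phi) \cong \coker \phi \cong E$. Hence $\cL$ is $E$-admissible, which is the assertion. (One may also note that the maximal lattice $\cL = \{\uc \in \Zbar^n : \uc_k < \infty \text{ for all }k\}$ is admissible for every $E$, since there $\max(w) = w$ for all $w \in \Z^n$; the point of the construction above is not existence as such but that $\cL$ can be taken much smaller --- finite whenever $E$ is finitely generated, in which case one recovers the classical $\lcm$-lattice.)

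The step I expect to be the main obstacle is the well-definedness of $\unzip^\cL$ that is used throughout, namely that for every $w \in \Z^n$ the join $\max(w) = \bigvee\{v \in \cL : v \le w\}$ actually lies in $\cL$, even when this set is infinite --- which does occur when $E$ is not finitely generated. Here discreteness of $\Z$ is essential: for each coordinate $k$, the number $s_k := \sup\{v_k : v \in \cL,\ v \le w\}$ is a finite integer (it is bounded above by $w_k$), hence is attained by some $v^{(k)} \in \cL$ with $v^{(k)} \le w$; the finite $\lcm$ $v^{(1)} \vee \cdots \vee v^{(n)}$ then lies in $\cL$, has $k$-th coordinate $s_k$ for every $k$, and therefore equals $\max(w)$. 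With this settled, the natural-isomorphism bookkeeping in the second point and the cokernel computation are routine.
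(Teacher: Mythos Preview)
Your proof is correct and takes a genuinely different route from the paper's. The paper constructs $\cL_E$ intrinsically: for each $\uc\in\Z^n$ it collects the minimal $\uc'\in\Zbar^n$ such that all transition maps $\bar E_{\uc'}\to\bar E_{\uc''}\to\bar E_\uc$ for $\uc'\leq\uc''\leq\uc$ are isomorphisms, and then takes the $\lcm$-closure; admissibility is then checked directly by observing that $E_\uc\cong E_{\max(\uc)}$ via these very isomorphisms. You instead pull $\cL$ from a graded free presentation $F_1\to F_0\to E\to 0$ and verify admissibility functorially, using that $T=\unzip^\cL\circ\zip^\cL$ is exact, commutes with direct sums, and fixes each $S(-\uc)$ with $\uc\in\cL$ via the canonical map $T\to\mathrm{id}$; the five-lemma then forces $TE\cong E$. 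Your argument is cleaner homologically and makes the finiteness of $\cL$ for finitely generated $E$ immediate; the paper's construction is more intrinsic to $E$ (no choice of presentation) but, as the remark following the proposition concedes, already overshoots for simple artinian examples. Your parenthetical observation that the full half-lattice $\{\uc\in\Zbar^n:c_k<\infty\text{ for all }k\}$ is trivially admissible is a nice sanity check that the proposition itself is about producing a \emph{useful} construction rather than bare existence.
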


\begin{proof}
Let $E$ be in $\zsMod$ and $\bar{E}$ its extension to $\Zbar^n$. Then for any $\uc \in
\Z^n$ we denote $I_E(\uc)$ the set of all those elements $\uc' \in \Zbar^n$ which
are minimal with the property that for all $\uc'' \in \Zbar^n$ with
$\uc' \leq \uc'' \leq \uc$ the homomorphisms $\bar{E}_{\uc'} \rightarrow \bar{E}_{\uc''}$
and $\bar{E}_{\uc''} \rightarrow \bar{E}_\uc$ are isomorphisms. Denote $\cL_E$ the
$\lcm$-lattice generated by $\bigcup_{\uc \in \Z^n} I_E(\uc)$. We claim that
$\cL_E$ is $E$-admissible. By construction we get that
$(\unzip^{\cL_E} \circ \zip^{\cL_E} E)_\uc \cong E_\uc \cong E_{\max(\uc)}$
for every $\uc \in \Z^n$ (with notation as used in the context of Definition
\ref{unzipdef}). We establish an isomorphism of representations by explicitly
setting this isomorphism equal to the isomomorphism $E_{\max(\uc)} \rightarrow
E_\uc$. It is straightforward to check that this indeed induces an equivalence
of representations.
\end{proof}

\begin{remark}
Though the procedure in the proof of Proposition \ref{admissibleexists} always
delivers an admissible $\lcm$-lattice, this might not be an optimal (or minimal)
choice in general. For the module of Example \ref{artinianexample} we get
that $\cL_E$ is generated by $\{(-\infty, -\infty), (2, -\infty), (-\infty, 2), 0, (1, 1)\}$
which is larger than the minimal $\lcm$-lattice given in Example \ref{artinianexample}.
\end{remark}

\subsection{$\lcm$-lattices of reflexive modules}\label{reflexivelcmlattices}

The most important class of modules we want to understand are the
finitely generated reflexive modules in $\zsMod$. Let $E$ be such a module,
described by a family of full filtrations $\cdots \subseteq E^k(i)
\subseteq E^k(i + 1) \subseteq \cdots \subseteq \mathbf{E}$ for every $k \in \on$.
Any such filtration is determined by its underlying flag of subvector spaces
of $\mathbf{E}$: for any $k$ denote $i^k_1 < \cdots < i^k_{t_k}$ the maximal
sequence of integers such that $\dim E^k(i^k_j - 1) < \dim E^k(i^k_j)$ for any
$1 \leq j \leq t_k$. Then the vector spaces $E^k(i^k_1) \subsetneq \cdots
\subsetneq E^k(i^k_{t_k})$ form a (partial) flag in $\mathbf{E}$. In particular,
the set of subvector spaces $\{E^k(i^k_j) \mid k \in \on, 1 \leq j \leq t_k\}$
forms a subvector space arrangement in $\mathbf{E}$. We denote by $\mathcal{V}_E$
the subvector space arrangement in $\mathbf{E}$ which is generated by all
intersections of the $E^k(i^k_j)$. $\mathcal{V}_E$ clearly forms a partially
ordered set with respect to inclusion. To avoid cumbersome notation, we
introduce the following convention.

\begin{convention}\label{arrangementconvention}
Let $\mathcal{V}$ (or $\mathcal{V}_E, \mathcal{V}_{F_i}, \dots$), denote a vector
space arrangement, i.e. a family of subvector space of some fixed vector space which
is closed under taking intersections.
Then by abuse of notation, we use the same symbol $\mathcal{V}$ (or $\mathcal{V}_E,
\mathcal{V}_{F_i}, \dots$) to denote its underlying poset.
\end{convention}

With is convention, we identify the {\em partially ordered set} $\mathcal{V}_E$
with a subset of $\Z^n$ by the following map:
\begin{equation*}
\mathcal{V}_E \ni X \mapsto \underline{i}^X := (i_1^X, \dots, i_n^X) \in \Z^n,
\end{equation*}
where $i_k^X := \min\{i_j^k \mid X \subseteq E^k(i_j^k)\}$.
This map is order preserving and we can thus identify the poset underlying
$\mathcal{V}_E$ with a subposet of $\Z^n$. We even get:

\begin{proposition}\label{reflexivelcmlattice}
Let $E$ be a finitely generated reflexive module in $\zsMod$. Then the set
$\mathcal{V}_E$ as defined above is a minimal $E$-admissible $\lcm$-lattice.
\end{proposition}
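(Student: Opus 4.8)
The plan is to verify two things: first, that $\mathcal{V}_E$ (via the embedding $X \mapsto \underline{i}^X$) is an $E$-admissible $\lcm$-lattice, and second, that it is minimal among such lattices. For admissibility I would first check that the image of $\mathcal{V}_E$ in $\Z^n$ is closed under $\lcm$. Given $X, Y \in \mathcal{V}_E$, the intersection $X \cap Y$ is again in $\mathcal{V}_E$ (the arrangement is closed under intersections), and one checks directly from the definition $i_k^Z = \min\{i \mid Z \subseteq E^k(i)\}$ that $i_k^{X \cap Y} = \max\{i_k^X, i_k^Y\}$: indeed $X \cap Y \subseteq E^k(i)$ holds precisely when both $X \subseteq E^k(i)$ and $Y \subseteq E^k(i)$ (using that the $E^k(i)$ form a flag, so they are totally ordered and $X\cap Y\subseteq E^k(i)$ forces the smaller of $X,Y$ — whichever lies in $E^k(i)$ — plus the whole intersection; more carefully, $X\cap Y\subseteq E^k(i)$ iff $i\ge \min\{i_k^X,i_k^Y\}$ is \emph{not} quite right, so one argues: $X\subseteq E^k(i_k^X)$ and $Y\subseteq E^k(i_k^Y)$, hence $X\cap Y\subseteq E^k(\max\{i_k^X,i_k^Y\})$, giving $i_k^{X\cap Y}\le\max$; conversely $E^k(i_k^{X\cap Y})\supseteq X\cap Y$, but membership of the individual spaces may fail, so instead note $\underline{i}^{X\cap Y}$ is an upper bound for $\underline{i}^X,\underline{i}^Y$ in $\Z^n$ and is the least such among elements of $\mathcal{V}_E$). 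This last point is exactly where the argument needs care and is, I expect, the main obstacle: one must show that the poset-theoretic join in $\mathcal{V}_E$ agrees with the coordinatewise maximum in $\Z^n$, i.e. that $X \cap Y$ realizes the $\lcm$ of $\underline{i}^X$ and $\underline{i}^Y$, which uses essentially that each filtration $E^k(\bullet)$ is a flag (totally ordered) together with the reconstruction formula $E_m = \bigcap_k E^k(l_k(m))$.

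Next I would establish that $E \cong \unzip^{\mathcal{V}_E} \circ \zip^{\mathcal{V}_E} E$. For $\uc \in \Z^n$, the reconstruction formula from Theorem~\ref{klythm} gives $E_{(\uc)} = \bigcap_{k=1}^n E^k(c_k)$ (after translating between the $\Z^n$-grading and the filtration indexing). This intersection is a member of the arrangement $\mathcal{V}_E$, say equal to $X$, and by definition of $\underline{i}^X$ we have $\underline{i}^X \leq \uc$ coordinatewise, and $\underline{i}^X$ is the maximal element of $\mathcal{V}_E$ below $\uc$: any $Y \in \mathcal{V}_E$ with $\underline{i}^Y \le \uc$ satisfies $Y \subseteq E^k(c_k)$ for all $k$, hence $Y \subseteq X$, hence $\underline{i}^Y \le \underline{i}^X$. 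Thus $\max(\uc) = \underline{i}^X$ in the sense of Definition~\ref{unzipdef}, and $(\unzip^{\mathcal{V}_E}\circ\zip^{\mathcal{V}_E}E)_{(\uc)} = (\zip^{\mathcal{V}_E}E)_{\max(\uc)} = E_{(\underline{i}^X)} = X = E_{(\uc)}$. One checks the transition maps match by the same identification, so the isomorphism of representations holds and $\mathcal{V}_E$ is admissible.

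Finally, for minimality I would argue that no proper $\lcm$-sublattice of $\mathcal{V}_E$ remains admissible, equivalently that every element of $\mathcal{V}_E$ is forced. Suppose $X \in \mathcal{V}_E$ and consider $\uc = \underline{i}^X$. If we remove $X$ from the lattice, then $\max(\uc)$ computed in the smaller lattice becomes some $X' \subsetneq X$ with $X' \in \mathcal{V}_E$, $X'$ maximal properly below $X$; but then $E_{(\uc)} = X \ne X' = (\unzip \circ \zip\, E)_{(\uc)}$, contradicting admissibility. Here one uses that $X$ is not equal to the $\lcm$ (i.e. intersection) of the elements of $\mathcal{V}_E$ strictly below it — which holds because $\mathcal{V}_E$ is the arrangement \emph{generated by} intersections of the flag members $E^k(i^k_j)$, so every element of $\mathcal{V}_E$ that is not such an intersection of \emph{strictly larger} ones is genuinely new; more precisely, $X\in\mathcal V_E$ means $X$ arises as an intersection, but its defining data $\underline i^X$ records precisely the thresholds where $X$ first appears, so the graded piece at $\underline i^X$ jumps. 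This establishes that $\mathcal{V}_E$ is a minimal $E$-admissible $\lcm$-lattice.
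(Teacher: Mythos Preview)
Your admissibility argument is essentially correct and matches the paper's approach, but there is a genuine error in your verification that $\mathcal{V}_E$ is closed under $\lcm$: you take $X\cap Y$ as the candidate element realizing $\lcm\{\underline{i}^X,\underline{i}^Y\}$, and this is the wrong element. The embedding $X\mapsto\underline{i}^X$ is order-preserving with respect to \emph{inclusion}, so the $\lcm$ in $\Z^n$ must correspond to a \emph{superspace} of $X$ and $Y$, not to their intersection. Concretely, $X\cap Y\subseteq X$ gives $\underline{i}^{X\cap Y}\le\underline{i}^X$, so $\underline{i}^{X\cap Y}$ is a lower bound, not an upper bound, for $\underline{i}^X,\underline{i}^Y$; your parenthetical attempt to salvage the direction is simply false. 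A small example: in $\K^3$ with $E^1(1)=\langle e_1\rangle$, $E^2(1)=X=\langle e_1,e_2\rangle$, $E^3(1)=Y=\langle e_1,e_3\rangle$ (all other steps trivial), one computes $\underline{i}^X=(2,1,2)$, $\underline{i}^Y=(2,2,1)$, so $\lcm=(2,2,2)$, while $\underline{i}^{X\cap Y}=(1,1,1)$.

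The paper's fix is to set $j_k=\max\{i_k^X,i_k^Y\}$ and take $Z:=\bigcap_k E^k(j_k)\in\mathcal{V}_E$; this $Z$ contains both $X$ and $Y$ (since $E^k(j_k)\supseteq E^k(i_k^X)\supseteq X$, etc.), and one then checks $i_k^Z=j_k$ by contradiction: if $i_k^Z<j_k$ then, say, $i_k^Z<i_k^X$, whence $X\nsubseteq E^k(i_k^Z)\supseteq Z$, contradicting $X\subseteq Z$. In the example above, $Z=\K^3$ with $\underline{i}^Z=(2,2,2)$, as required. Once you have this, your second paragraph goes through unchanged. (As a side remark, the paper itself does not spell out the minimality assertion; your sketch for that part has the right shape but would need to be made precise about what ``removing $X$'' does to the $\lcm$-closure.)
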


\begin{proof}
We first show that $\mathcal{V}_E$ is an $\lcm$-lattice. Let $X, Y \in \mathcal{V}_E$,
then $\lcm\{\underline{i}^X, \underline{i}^Y\} = (\max\{i_1^X,$ $i_1^Y\},$ $\dots,
\max\{i_n^X, i_n^Y\}) =: (j_1, \dots, j_n)$. Denote $Z :=
\bigcap_{k = 1}^n E^k(j_k)$. We claim that $j_k = i_k^Z$ for every $k$. Assume
there exists some $k$ such that $i_k^Z < j_k$. Then either $i_k^Z < i_k^X$ or
$i_k^Z < i_k^Y$. But then either $X \nsubseteq E^k(i_k^Z)$ or $Y \nsubseteq E^k(i_k^Z)$.
But then either $X \nsubseteq Z$ or $Y \nsubseteq Z$ which is a contradiction,
as $Z$ by construction contains both $X$ and $Y$. Therefore $\mathcal{V}_E$ is
an $\lcm$-lattice.

To check that $\mathcal{V}_E$ is admissible, it suffices to verify that
$(\unzip^{\mathcal{V}_E} \circ \zip^{\mathcal{V}_E} E)_\uc$ is isomorphic to
$E_\uc$ for every $\uc \in \Z^n$. As $\mathcal{V}_E$ does not contain any
element $\uc$ such that $c_k < i^k_1$ for any $k$ and $E_\uc = 0$ for
any such $\uc$, it suffices to assume that $\uc \geq (i^1_1, \dots, i^n_1$).
Consider any graded component $E_\uc$. We denote $i^k_\uc :=
\min\{i^k_j \mid E_\uc \subseteq E^k(i^k_j)\}$ and $\underline{i}_\uc =
(i^1_\uc, \dots, i^n_\uc)$. We observe that $E_{\underline{i}_\uc}$ is an
element of $\mathcal{V}_E$ and $E_{\underline{i}_\uc} \cong E_\uc$.
Also, $\underline{i}_\uc$ is the $\lcm$ of all $\underline{i}_X$ such
that $X \subseteq E_\uc$.
\end{proof}

For $E$ reflexive, among all possible choices for $E$-admissible $\lcm$-lattices,
the vector space arrangement $\mathcal{V}_E$ is a distinguished choice:

\begin{definition}
Let $E$ and $\mathcal{V}_E$ be as above, then we call $\mathcal{V}_E$ the
{\em canonical} $E$-admissible $\lcm$-lattice.
\end{definition}

\subsection{Combinatorially finite modules}\label{combinatoriallyfinite}

Now we want establish for any given finite $\lcm$-lattice $\cL$ an equivalence
of categories between representations of $\cL$ and $S$-modules for which $\cL$
is admissible.

\begin{proposition}\label{admissibleequivalence}
Let $\cL$ be a finite $\lcm$-lattice in $\Zbar^n$. Then the functors $\zip^\cL$
and $\unzip^\cL$ establish an equivalence of categories between the category
of representations of $\cL$ and the category of modules in $\zsMod$ for which
$\cL$ is admissible.
\end{proposition}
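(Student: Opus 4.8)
The plan is to show that $\zip^\cL$ and $\unzip^\cL$ are mutually quasi-inverse functors between the two categories in question. The essential content splits into two directions. First, for any module $E$ in $\zsMod$ for which $\cL$ is admissible, we must show $\unzip^\cL \circ \zip^\cL E \cong E$; but this is precisely the definition of $\cL$ being $E$-admissible, so that direction is immediate. Second, and this is the substantive half, I would take an arbitrary representation $F$ of $\cL$ and show that $\zip^\cL \circ \unzip^\cL F \cong F$, and moreover that $\cL$ is admissible for the module $\unzip^\cL F$ (so that $\unzip^\cL$ indeed lands in the claimed target category). Once both composites are naturally isomorphic to the identity, functoriality of $\zip^\cL$ (restriction of sheaves along $\iota_\cL$) and of $\unzip^\cL$ (the explicit pointwise construction from Definition \ref{unzipdef}) gives the equivalence.

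For the key computation $\zip^\cL \unzip^\cL F \cong F$, I would unwind the definitions. Given $F$ on $\cL$, set $F' := \unzip^\cL F$, the representation of $\Zbar^n$ with $F'_\uc = F_{\max(\uc)}$ when $\max(\uc) := \bigvee\{\uc' \in \cL \mid \uc' \le \uc\}$ exists in $\cL$ and $F'_\uc = 0$ otherwise (here finiteness of $\cL$ guarantees this $\lcm$ is itself in $\cL$, so $\max(\uc)$ is well-defined whenever the index set is nonempty). Now $\zip^\cL F' = \iota_\cL^{-1}\overline{F'}$: I would first argue that $F'$, restricted to $\Z^n$, already has the ``constancy on up-sets'' property that makes the $\bar{\ }$-completion to $\Zbar^n$ recover $F'$ itself on $\cL$ — concretely, for $\uc \in \cL$ one has $\max(\uc) = \uc$, hence $F'_\uc = F_\uc$, and the limit defining $\overline{F'}_\uc$ over $\uc' \ge \uc$ in $\Z^n$ stabilizes to $F_\uc$ because all transition maps $F'_\uc \to F'_{\uc'}$ factor through $F_{\max(\uc')}$ and the relevant cofinal system is controlled by the finitely many elements of $\cL$ above $\uc$. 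So $(\zip^\cL F')_\uc \cong F_\uc$ for every $\uc \in \cL$, naturally in $\uc$, giving $\zip^\cL \unzip^\cL F \cong F$. The admissibility of $\cL$ for $\unzip^\cL F$ then follows formally: $\unzip^\cL \zip^\cL (\unzip^\cL F) \cong \unzip^\cL F$ by what we just proved applied to $\zip^\cL(\unzip^\cL F) \cong F$.

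The main obstacle I anticipate is the careful bookkeeping around the $\bar{\ }$ functor on the passage $\Z^n \hookrightarrow \Zbar^n$: one must verify that the inverse limit $\overline{F'}_\n = \varprojlim_{\n \le \n'} F'_{\n'}$ genuinely agrees with $F'_\n$ on $\cL$ and more generally that no spurious identifications or losses occur when restricting back down to $\cL$. This is where finiteness of $\cL$ is used essentially — it ensures each ``$\max$'' exists and that the relevant diagrams are finite (or at least have cofinal finite subsystems), so the limits are computed by a largest element. I would also need to check naturality: a morphism $\phi \colon F \to G$ of representations of $\cL$ extends to $\unzip^\cL \phi$ componentwise via $\phi_{\max(\uc)}$, this is compatible with the transition maps of $F'$ and $G'$, and restricting back recovers $\phi$; none of this is hard but it is the part requiring attention rather than invocation of a prior result. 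Everything else — exactness-type statements, the module-vs-representation dictionary — is supplied by Propositions \ref{repsheafequiv} and \ref{gradedrepequiv} and the discussion preceding Definition \ref{zipdef}.
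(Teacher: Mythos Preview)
Your proposal is correct and follows essentially the same approach as the paper: one direction is the definition of admissibility, and the substantive direction is verifying $\zip^\cL\unzip^\cL F\cong F$ pointwise on $\cL$, using finiteness of $\cL$ to control the inverse limit defining $\bar{\ }$. The paper phrases the latter slightly differently---it splits into the trivial case $\uc\in\Z^n$ and the case $\uc\in\Zbar^n\setminus\Z^n$, where finiteness yields a region of $\uc'\in\Z^n$ with $\max(\uc')=\uc$ on which $\unzip^\cL F$ is constantly $F_\uc$, making the limit equal to $F_\uc$---but this is exactly what your ``cofinal system controlled by finitely many elements of $\cL$'' is pointing at; you are also more explicit than the paper about naturality and about $\unzip^\cL$ landing in the admissible subcategory.
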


\begin{proof}
By definition, we have $E \cong \unzip^\cL \circ \zip^\cL E$. Now let $F$ be
any representation of $\cL$. We have to show that $F_\uc = (\zip^\cL \circ \unzip^\cL F)_\uc$
for every $\uc \in \cL$. This is immediately clear if $\uc \in \Z^n$. In the
case that $\uc \in \Zbar^n \setminus \Z^n$, we have $(\zip^\cL \circ \unzip^\cL F)_\uc
\cong \ilim\, (\unzip^\cL)_{\uc'}$, where the limit runs over all $\uc' \geq \uc$.
By the finiteness of $\cL$, there is a nonempty region in $\Zbar^n$ of which
$\uc$ is the unique minimal element and $(\bar{\unzip^\cL F})_{\uc'} = F_\uc$
for all $\uc'$ in this region. Therefore $(\bar{\unzip^\cL F})_\uc$ and thus
$(\zip^\cL \circ \unzip^\cL F)_\uc$ must coincide with $F_\uc$.
\end{proof}

In the sequel we will only consider modules in $\zsMod$ which admit a finite
admissible $\lcm$-lattice and whose graded components are finite-dimensional:

\begin{definition}
We call an $S$-module $E$ {\em combinatorially finite}
if it is in $\zsModf$ and admits a finite $E$-admissible $\lcm$-lattice $\cL
\subseteq \Zbar^n$. We denote {\em $\zsmod$} the category of combinatorially
finite $S$-modules.
\end{definition}

We have seen in Examples \ref{admissibleexample1} and \ref{admissibleexample2} that
$\zsmod$ contains the injective and projective objects of $\zsModf$. The following
proposition whose proof we leave to the reader shows that $\zsmod$ moreover contains finitely
generated modules and their Matlis duals:

\begin{proposition}\label{combfinprop}
The category of combinatorially finite $S$-modules is abelian and closed under $\Hom_S$,
$\otimes_S$, localization at monomials, and Matlis duality.
\end{proposition}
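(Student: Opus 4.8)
The plan is to verify each closure property in turn, working from the equivalence of Proposition \ref{admissibleequivalence}, which lets us translate statements about combinatorially finite modules into statements about representations of finite $\lcm$-lattices. The underlying principle throughout is: given two combinatorially finite modules $E, F$ with admissible $\lcm$-lattices $\cL_E, \cL_F$, one first passes to a common admissible $\lcm$-lattice. For this, I would show that the $\lcm$-lattice $\cL$ generated by $\cL_E \cup \cL_F$ inside $\Zbar^n$ is still finite (the set of $\lcm$s of subsets of a finite set in $\Zbar^n$ is finite, since each coordinate takes only finitely many values among $\{-\infty\} \cup \Z$ appearing in $\cL_E \cup \cL_F$) and is admissible for both $E$ and $F$ --- admissibility is inherited by larger $\lcm$-lattices because refining the subdivision of $\Zbar^n$ induced by $\cL$ only refines the regions of constancy, so $\unzip^\cL \circ \zip^\cL$ still reconstructs the module. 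Thus without loss of generality $E$ and $F$ are both represented on a single finite $\lcm$-lattice $\cL$.

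For \emph{abelianness}: the category of $\K$-linear representations of any (finite) poset is abelian, with kernels and cokernels computed pointwise. So it suffices to check that $\zsmod$ is closed under kernels, cokernels, and finite direct sums inside $\zsModf$. Finite-dimensionality of graded pieces is clearly preserved. For the $\lcm$-lattice: if $\phi : E \to F$ is a morphism and both are represented on the common finite $\lcm$-lattice $\cL$, then $\ker \phi$, $\coker \phi$, and $E \oplus F$ are all represented on $\cL$ as well (pointwise kernel/cokernel/sum of the representations), and $\cL$ is admissible for each of them: on the region of $\Zbar^n$ where $\max(\uc)$ is constant, $E$ and $F$ are both constant, hence so are the kernel, cokernel, and direct sum, which is exactly the admissibility condition. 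For \emph{$\Hom_S$, $\otimes_S$}: by Lemma \ref{divisorialproperties}(\ref{injauxi})--(\ref{injauxii}) these operations behave well on divisorial and codivisorial modules, shifting degree-vectors additively or subtractively; more structurally, if $E, F$ are represented on $\cL$, then $(E \otimes_S F)_\uc$ and $\Hom_S(E,F)_\uc$ depend only on the values $E_{\max(\uc')}, F_{\max(\uc'')}$ for finitely many relevant $\uc', \uc''$, and one checks the result is constant on a subdivision of $\Zbar^n$ into finitely many regions --- this gives a finite admissible $\lcm$-lattice (generated by $\lcm$s, resp. $\gcd$s converted appropriately, of elements of $\cL$). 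For \emph{localization at monomials}: localizing $E$ at $x_i$ amounts, at the level of the extended representation $\bar E$ over $\Zbar^n$, to replacing the $i$-th coordinate bound by $-\infty$ in each generator of $\cL$; the resulting set is still finite and admissible, so $E_{x_I}$ is combinatorially finite.

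For \emph{Matlis duality}: this is the one requiring the most care and is the point where I would expect the main obstacle. The functor $\dual{\ }$ sends $E_m$ to $\Hom_\K(E_{-m},\K)$, which reverses the direction of all structure maps, so it converts a representation of $(\Z^n,\leq)$ into a representation of $(\Z^n,\geq)$; equivalently it intertwines $\lcm$-lattices with $\gcd$-lattices (this is the duality foreshadowed by the not-yet-introduced $\gcdzip$). Concretely, if $\cL$ is a finite admissible $\lcm$-lattice for $E$, one shows that the ``mirror'' lattice $\cG = \{-\uc \mid \uc \in \cL\} \subseteq \Zbar^n$ (with $-(-\infty)=\infty$) is a finite admissible $\gcd$-lattice for $\dual E$, and then translates admissibility-with-respect-to-a-$\gcd$-lattice back into admissibility-with-respect-to-a-finite-$\lcm$-lattice. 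The key technical point is that $\dual E$ is again in $\zsModf$ (each graded piece stays finite-dimensional since $E$'s does) and that the regions of $\Zbar^n$ on which $\bar{\dual E}$ is constant are, by the direction reversal, precisely obtained by negating the regions for $\bar E$; since there are finitely many of those, $\dual E$ admits a finite admissible $\lcm$-lattice. One must be slightly careful with the asymmetric roles of $+\infty$ and $-\infty$ in the definitions of $\lcm$-lattice versus $\gcd$-lattice, but Examples \ref{admissibleexample1} and \ref{admissibleexample2} (which exhibit exactly this for $S_I(\uc)$ versus $\dual S_I(-\uc)$) provide the template to follow, and the general case is the evident globalization. This completes the verification of all the listed closure properties.
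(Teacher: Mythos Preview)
The paper does not actually prove this proposition: the sentence introducing it reads ``the following proposition whose proof we leave to the reader,'' so there is no argument in the text to compare against.

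Your sketch is a reasonable way to supply the missing proof. The strategy of passing to a common finite admissible $\lcm$-lattice and checking that kernels, cokernels, and direct sums are again represented on it is correct and gives abelianness; localization at $x_I$ is handled as you say (push the relevant coordinates to $-\infty$); and your treatment of Matlis duality is on the right track, with Examples~\ref{admissibleexample1}--\ref{admissibleexample2} indeed exhibiting the needed $+1$ shift that turns the negated $\gcd$-lattice back into an honest $\lcm$-lattice. The place where your sketch is thinnest is the closure under $\Hom_S$ and $\otimes_S$: the sentence ``depend only on the values $E_{\max(\uc')}, F_{\max(\uc'')}$ for finitely many relevant $\uc',\uc''$'' is not quite an argument, because the graded pieces of $E\otimes_S F$ and $\Hom_S(E,F)$ involve the full module structure and not merely finitely many graded components of $E$ and $F$. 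A cleaner route is to first establish abelianness, then use the projective resolutions of subsection~\ref{resolutions} (which logically do not depend on the present proposition) to replace $E$ and $F$ by finite complexes of modules of the form $S_I(\uc)$; Lemma~\ref{divisorialproperties} then computes $\Hom_S$ and $\otimes_S$ on the building blocks, and the desired closure follows by taking (co)homology inside the already-established abelian category $\zsmod$.
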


We can now introduce dual notion of associated $\gcd$-lattices. Note that a subposet
$\cG \subseteq \bar{\Z}^n$ is a $\gcd$-lattice iff $-\cG$ is an $\lcm$-lattice.

\begin{definition}\label{gcdadmissibledef}
Let $E$ be an $S$-module in $\zsmod$ and $\cG$ a finite $\gcd$-lattice in $\Zbar^n$.
Then $\cG$ is {\em $E$-admissible} if $-\cG$ is an $\check{E}$-admissible $\lcm$-lattice.
\end{definition}

We can also define zip and unzip for $\gcd$-lattices.

\begin{definition}
For any subposet $\p$ of $\Zbar^n$ and any representation $F$ of $\p$, denote
by $F^\op$ the dual representation of the poset $-\p$, which maps $\uc$ to $\Hom_\K(F_\uc, \K)$.
Then for any $E$ in $\zsmod$ and any $\gcd$-lattice $\cG$ we set
\begin{equation*}
\gcdzip^\cG E := \left(\lcmzip^{-\cG}\check{E}\right)^\op.
\end{equation*}
For a representation $F$ of $\cG$ we set
\begin{equation*}
\gcdunzip^\cG F := \left(\lcmunzip^{-\cG}F^\op\right)\check{\ }.
\end{equation*}
\end{definition}

We leave it to the reader to formulate the $\gcd$-version of Proposition
\ref{admissibleequivalence}.

Now we can introduce the notion of combinatorial finiteness for general
rings $\ksm$ with homogeneous coordinate ring $S$:

\begin{definition}
A module $E$ in $\mksmModf$ is called {\em combinatorially finite} if
there exists some $F \in \zsmod$ such that $E \cong F_0$. We denote $\mksmmod$
the full category of $\mksmModf$ of combinatorially finite modules.
\end{definition}

At this point we could, similar as in Proposition \ref{combfinprop}, try to proceed by
investigating general properties of the category $\mksmmod$.
However, our approach instead will be to construct resolutions for
a given $\ksm$-module $E$ by constructing a resolution for some appropriate
$S$-module $F$ with $F_0 = E$.

We conclude this subsection with the following observation:

\begin{theorem}
The category $\mksmmod$ is a Krull-Schmidt category.
\end{theorem}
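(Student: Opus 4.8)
The plan is to verify the two defining properties of a Krull–Schmidt category for $\mksmmod$: (a) every object decomposes as a finite direct sum of objects with local endomorphism rings, and (b) such decompositions are unique up to permutation and isomorphism. By a standard criterion (see e.g.\ Krause's survey, or Atiyah's argument), it suffices to show that $\mksmmod$ is an additive category in which idempotents split and in which the endomorphism ring of every object is semiperfect; equivalently, since $\mksmmod$ sits inside $\mksmModf$ whose Hom-spaces are $\K$-vector spaces, it suffices to show that $\Hom^M_\ksm(E,F)$ is a \emph{finite-dimensional} $\K$-vector space for all $E, F \in \mksmmod$, that $\mksmmod$ has finite direct sums and splitting idempotents (hence is a full additive subcategory closed under summands), and that every object has finite "length" in an appropriate sense so that the Fitting-type argument applies.

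First I would establish the finite-dimensionality of Hom-spaces. Given $E, F \in \mksmmod$, choose (by definition) $\tilde E, \tilde F \in \zsmod$ with $\tilde E_0 \cong E$, $\tilde F_0 \cong F$. By Proposition~\ref{combfinprop}, $\Hom_S(\tilde E, \tilde F)$ is again combinatorially finite, hence lies in $\zsModf$, so each of its graded pieces is finite-dimensional; in particular the degree-zero piece, which maps onto (indeed, relates to) $\Hom^M_\ksm(E,F)$ via the exact functor $-_{(0)}$, is finite-dimensional. One must check that $\Hom^M_\ksm(F_0, E_0)$ is a subquotient (in fact equals, up to the ambiguity of the lift) of a finite-dimensional space; the cleanest route is to note that a combinatorially finite $\ksm$-module $E$ has an admissible finite $\lcm$-lattice upstairs, so $E$ is a finite-dimensional representation of a finite poset, and morphisms between finite-dimensional representations of a finite poset form a finite-dimensional space. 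This simultaneously gives that $\operatorname{End}^M_\ksm(E)$ is a finite-dimensional $\K$-algebra.

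Once endomorphism rings are finite-dimensional $\K$-algebras, they are in particular semiperfect (Artinian), so their idempotents lift modulo the radical and decompose the identity into a sum of primitive orthogonal idempotents; since $\mksmmod$ is abelian (it is the degree-zero part of the abelian category $\zsmod$ under an exact functor, and one checks it is closed under kernels, cokernels, and finite sums — or one simply invokes that $\mksmmod$ is closed under summands because a direct summand of $F_0$ is $(F')_0$ for a summand $F'$ of $F$, using Proposition~\ref{combfinprop} again), these idempotents split and yield a finite indecomposable decomposition of each object. Uniqueness then follows from the classical Krull–Schmidt–Azumaya theorem applied fibrewise: an indecomposable object has local endomorphism ring precisely because a finite-dimensional $\K$-algebra with no nontrivial idempotents is local.

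The main obstacle I anticipate is the bookkeeping that connects the downstairs category $\mksmmod$ with the upstairs category $\zsmod$: one must check that "combinatorially finite" really does guarantee finite-dimensional total Hom-spaces downstairs (not merely finite-dimensional in each $M$-degree, of which there are infinitely many), and that $\mksmmod$ is closed under taking direct summands so that indecomposable decompositions stay inside the category. Both points hinge on the equivalence, implicit in subsection~\ref{combinatoriallyfinite}, between combinatorially finite modules and finite-dimensional representations of finite $\lcm$-lattices together with the compatibility of $\zip^\cL/\unzip^\cL$ with $-_{(0)}$; granting that equivalence, the Krull–Schmidt property is the standard statement that the category of finite-dimensional representations of a finite poset over a field is Krull–Schmidt.
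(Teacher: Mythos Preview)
Your proposal is correct and follows essentially the same approach as the paper: reduce the Krull--Schmidt property to showing that $\Hom^M_\ksm(E,F)$ is finite-dimensional, and establish this by choosing lifts $E',F'\in\zsmod$ with a common finite admissible $\lcm$-lattice $\cL$, so that a morphism is determined by finitely many linear maps between finite-dimensional pieces. The paper is terser---it simply asserts ``it suffices'' for the reduction you spell out via semiperfect endomorphism rings, and it bypasses your first route through $\Hom_S(\tilde E,\tilde F)_{(0)}$ (whose relation to the downstairs $\Hom$ you rightly flag as delicate) in favour of exactly your ``cleanest route'': embedding $\Hom^M_\ksm(E,F)$ directly into a finite sum indexed by $\cL$.
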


\begin{proof}
It suffices to show that for any combinatorially finite modules $E, F$ the
$\K$-vector space $\Hom^M_\ksm(E, F)$ is finite-dimensional. For this, we choose
combinatorially finite $S$-modules $E'$ and $F'$ with $E'_{(0)} = E$ and
$F'_{(0)} = F$, respectively, and a finite $\lcm$-lattice $\cL$ which is
admissible for both $E'$ and $F'$. Then $\Hom_\ksm^M(E, F)$ is a subvector
space of $\bigoplus_{\uc \in \cL} \ilim E_m$, where $\ilim E_m$ runs over
all $m \in M$ with $\uc \leq m$ and hence is finite-dimensional.
\end{proof}

\subsection{Using $\lcm$- and $\gcd$-lattices to compute resolutions}\label{resolutions}

In this subsection we will show that we can construct projective or injective resolutions
for combinatorially finite $\ksm$-modules by computing the corresponding resolutions
in an appropriate category of representations of admissible posets. The category of
representations of a poset is equivalent to the category of modules over the
incidence algebra of the associated Hasse diagram. We will present some basic facts
about such representations and refer to \cite{Ass06} for general overview of the
theory of path algebras.

Let $\p$ be a finite poset. There is a bijection between the
elements of $\p$ and the indecomposable projective object in the category
of representations of $\p$, where
for any $x \in \p$ its unique associated projective object $P_x$ is given by:
\begin{equation*}
P_{x, y} =
\begin{cases}
\K & \text{ if } x \leq y\\
0 & \text{ else}
\end{cases}
\end{equation*}
together with identity homomorphisms $P_{x, y} \rightarrow P_{x, z}$, whenever $x \leq y
\leq z$. Similarly, there is a bijection between $\p$ and the indecomposable injective
objects. For $x \in \p$ its injective module $I_x$ is given by:
\begin{equation*}
I_{x, y} =
\begin{cases}
\K & \text{ if } y \leq x\\
0 & \text{ else}
\end{cases}
\end{equation*}
together with identity homomorphisms $I_{x, y} \rightarrow I_{x, z}$, whenever $y \leq z
\leq x$.

Every representation of $\p$ admits a finite projective as well as a finite injective
resolution (see Proposition \ref{globaldimbound} below).
As these resolutions will be crucial for our applications later on, we give now an
explicit algorithmical description on how to obtain minimal projective resolutions.
Let $F$ be any representation of $\p$. For any $x \in \p$, we can split the vector
space $F_x$ as follows:
\begin{equation*}
F_x \cong F_{x, \leq} \oplus F_{x, >}
\end{equation*}
where $F_{x, \leq} = \sum_{y < x} F(y, x)(F_y)$. By fixing a basis of $F_{x, >}$ we get an
isomorphism $\K^{n^0_x} \cong F_{x, >}$ which gives rise to a natural homomorphism
of $\p$-representations
\begin{equation*}
P_x^{n_x} \rightarrow F
\end{equation*}
mapping the generators of $(P_x^{n^0_x})_x$ to the basis of of $F_{x, >}$. Consequently,
we get a short exact sequence of $\p$-representations
\begin{equation*}
0 \longrightarrow F^1 \longrightarrow \bigoplus_{x \in \p} P_x^{n^0_x}
\longrightarrow F \longrightarrow 0.
\end{equation*}
The kernel $F^1$ then is the $1$st syzygy of $F$. By iterating, we obtain a projective
resolution:
\begin{equation*}
0 \longrightarrow \bigoplus_{x \in \p} P_x^{n^t_x} \longrightarrow \cdots
\longrightarrow \bigoplus_{x \in \p} P_x^{n^0_x} \longrightarrow F \longrightarrow 0.
\end{equation*}
This resolution is minimal by construction. Similarly,
we can split $F_x \cong F^{x, <} \oplus F^{x, \geq}$, where $F^{x, <} =
\bigcap_{x < y} \ker F(x, y)$ and $F^{x, \geq} \cong \K^{m_x^0}$ for $m_x^0 = \dim
F^{x, \geq}$. By iterating, we get a minimal injective resolution:
\begin{equation*}
0 \longrightarrow F \longrightarrow \bigoplus_{x \in \p} I_x^{m^0_x} \longrightarrow \cdots
\longrightarrow \bigoplus_{x \in \p} I_x^{m^r_x} \longrightarrow 0.
\end{equation*}
From these constructions we can read off the following general property of $\p$-representations:

\begin{proposition}\label{globaldimbound}
Let $\p$ be a finite poset and $F$ a representation of $\p$. Then both projective and
injective dimension of $\p$ are bounded by the maximal length of a chain in $\p$ minus
one.
\end{proposition}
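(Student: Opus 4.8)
The plan is to read off the bound directly from the explicit minimal resolutions constructed immediately above the statement, rather than to invoke general homological algebra. First I would note that the key numerical fact is the following: in the short exact sequence
\begin{equation*}
0 \longrightarrow F^1 \longrightarrow \bigoplus_{x \in \p} P_x^{n^0_x} \longrightarrow F \longrightarrow 0,
\end{equation*}
the syzygy $F^1$ is supported on a strictly smaller "upward-closed" portion of $\p$ in a precise sense; more exactly, $(F^1)_x = 0$ whenever $x$ is a minimal element among those $y$ with $F_y \neq 0$. Indeed, for such a minimal $x$ the subspace $F_{x,\leq} = \sum_{y < x} F(y,x)(F_y)$ is zero, so $F_{x,>} = F_x$ and the surjection $\bigoplus_x P_x^{n^0_x} \to F$ is an isomorphism in degree $x$; hence $(F^1)_x = 0$. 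Thus passing from $F$ to its first syzygy strictly shrinks the support poset, and iterating at most (length of longest chain in $\p$) times exhausts it. This gives projective dimension $\leq \ell(\p) - 1$, where $\ell(\p)$ denotes the maximal number of elements in a chain.

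More carefully, I would set up the induction on $c(F) := $ the maximal length of a chain in $\p$ contained in the support $\{x \mid F_x \neq 0\}$. If $c(F) = 1$ then $F$ is a direct sum of the $P_x$ for $x$ in an antichain (each such $P_x$ restricted to the support is just a point), so $\operatorname{pd} F = 0$. For the inductive step, removing the minimal elements of the support as above shows $\operatorname{supp}(F^1) \subseteq \operatorname{supp}(F) \setminus \{\text{minimal elements of } \operatorname{supp}(F)\}$, and any chain in the smaller support can be extended downward by one of the removed minimal elements, so $c(F^1) \leq c(F) - 1$. By induction $\operatorname{pd} F^1 \leq c(F^1) - 1 \leq c(F) - 2$, whence $\operatorname{pd} F \leq c(F) - 1 \leq \ell(\p) - 1$. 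The injective case is entirely dual: one runs the same argument with the explicitly constructed injective resolution, using that $F^{x,<} = \bigcap_{x < y} \ker F(x,y)$ equals $F_x$ when $x$ is \emph{maximal} in the support, so the first cosyzygy drops the maximal elements of the support and $c$ decreases by at least one. Alternatively, injective dimension of $F$ equals projective dimension of the dual representation $F^{\op}$ over the opposite poset $-\p$ (in the sense of the duality already introduced before the statement), and $-\p$ has the same maximal chain length as $\p$.

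The only slightly delicate point is verifying the support-shrinking claim precisely — i.e., that the generators needed at the bottom of $F$ are exactly the minimal elements of its support and that the kernel vanishes there — but this is exactly what the splitting $F_x \cong F_{x,\leq} \oplus F_{x,>}$ records, so it is immediate from the construction given above. I expect no real obstacle; the entire content is bookkeeping with the support poset. One should, however, state clearly that "length of a chain" is being counted as the number of vertices (so a single element is a chain of length one and contributes bound zero), to match the "minus one" in the statement. Note finally that as a byproduct this reproves the existence of finite projective and injective resolutions asserted just above, closing the forward reference to Proposition \ref{globaldimbound}.
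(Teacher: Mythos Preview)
Your overall strategy---show that passing to the first syzygy kills the bottom layer of the support and iterate---is exactly the paper's approach. The key observation that $(F^1)_x = 0$ whenever $x$ is minimal in $\supp(F)$ is correct and is what the paper uses.

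However, your induction on $c(F) = $ maximal chain length \emph{inside} $\supp(F)$ does not work, because the containment $\supp(F^1) \subseteq \supp(F)\setminus\{\text{minimal elements}\}$ is false. Take $\p = \{a < b\}$ and $F$ the simple representation at $a$ (so $F_a = \K$, $F_b = 0$). Then the projective cover is $P_a$, and $F^1$ is the simple at $b$: here $\supp(F^1) = \{b\} \not\subseteq \supp(F) = \{a\}$. Your base case also fails on this example: $c(F) = 1$ but $F$ is not projective, so $\operatorname{pd} F = 1 \neq 0$. The support can grow upward when you take syzygies; what cannot happen is that it stays low.

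The fix is to measure height in $\p$ rather than in $\supp(F)$. Set $h(x)$ to be the length of the longest chain in $\p$ ending at $x$, and show by the same computation that $(F^i)_x = 0$ whenever $h(x) \leq i$: for such $x$ every $y < x$ has $h(y) < i$, so by induction $(F^{i-1})_y = 0$, whence $n^{i-1}_y = 0$ and the cover map is an isomorphism at $x$. After $s = \ell(\p)$ steps every $x$ satisfies $h(x) \leq s$, so $F^s = 0$. This is precisely the (terse) argument the paper gives when it says that along a maximal chain $x_1 < \cdots < x_s$ in $\p$ one has $F^i_{x_i} = 0$.
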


Note that by the general theory of path algebras it follows that projective and injective
dimension coincide.
Theorem \ref{globaldimbound3} below will give even more effective bounds for posets which
can be realized as $\lcm$-lattices.

\begin{proof}
Let $x \in \p$ be minimal with the property that $F_x \neq 0$. Then, by above
construction we get that $F^1_x = 0$. So, if $x_1 < x_2 \cdots < x_s$ is a maximal
chain in $\p$, then in the iteration we get for the $i$-th syzygy $F^i$ that
$F^i_{x_i} = 0$. This argument applies analogously to minimal injective
resolutions.
\end{proof}

Now let $E$ be any module in $\zsmod$, and $\cL \subset \Zbar^n$ a finite
$E$-admissible $\lcm$-lattice. Then, as above, we can compute a minimal projective
resolution
\begin{equation*}
0 \longrightarrow \bigoplus_{\uc \in \cL} P_\uc^{n^t_\uc} \longrightarrow \cdots
\longrightarrow \bigoplus_{\uc \in \cL} P_\uc^{n^0_\uc} \longrightarrow \lcmzip^\cL E
\longrightarrow 0.
\end{equation*}
Unzipping yields an exact sequence
\begin{equation}\label{projres}
0 \longrightarrow \bigoplus_{\uc \in \cL} \lcmunzip^\cL P_\uc^{n^t_\uc} \longrightarrow
\cdots \longrightarrow \bigoplus_{\uc \in \cL} \lcmunzip^\cL P_\uc^{n^0_\uc} \longrightarrow
E \longrightarrow 0.
\end{equation}
Observing that $\lcmunzip^\cL P_\uc \cong S_I(-\uc')$ with $I = \{i \in \on \mid c_i =
-\infty\}$ and $\uc' = (c_1', \dots, c_n') \in \Z^n$ such that $c_i' = c_i$ whenever
$i \notin \on$ (see Example \ref{admissibleexample1}), we see that we have produced
a minimal free resolution of $E$.
Similarly, if $\mathcal{K}$ is a finite $E$-admissible $\gcd$-lattice, we obtain
an injective resolution of $E$ and a codivisorial resolution of $E_0$:
\begin{equation}\label{injres}
0 \longrightarrow E \longrightarrow \bigoplus_{\uc \in \cL}
\gcdunzip^\mathcal{K} I_\uc^{m^0_\uc} \longrightarrow \cdots
\longrightarrow \bigoplus_{\uc \in \cL} \gcdunzip^\mathcal{K}
I_\uc^{m^r_\uc} \longrightarrow 0.
\end{equation}

If we consider $S$ as homogeneous coordinate ring
for $\ksm$, then the degree zero parts  $(\lcmunzip^\cL P_\uc^{n^i_\uc})_{(0)}$
and $(\gcdunzip^\mathcal{K} I_\uc^{m^0_\uc})_{(0)}$ are divisorial, respectively
codivisorial $\ksm$-modules. By the exactness of taking degree zero we thus get:

\begin{theorem}
Let $F$ be a combinatorially finite $\ksm$-module and $E$ a combinatorially finite
$S$-module with $E_{(0)} = F$. Then the degree zero parts of complexes (\ref{projres})
and (\ref{injres}) yield divisorial, respectively codivisorial resolutions of $F$ over
$\ksm$.
\end{theorem}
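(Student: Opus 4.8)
\noindent\emph{Proof proposal.} The plan is to obtain the asserted resolutions of $F$ over $\ksm$ by simply pushing the $S$-module resolutions \eqref{projres} and \eqref{injres} down along the degree-zero functor $-_{(0)}\colon\zsMod\to\mksmMod$, all of whose relevant properties have already been established.

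First I would recall from the discussion preceding the theorem that \eqref{projres} is an exact complex of $\Z^n$-graded $S$-modules, and that by the identification $\lcmunzip^\cL P_\uc\cong S_I(-\uc')$ worked out in Example~\ref{admissibleexample1} each term $\bigoplus_{\uc\in\cL}\lcmunzip^\cL P_\uc^{\,n^i_\uc}$ is a \emph{finite} direct sum of modules of the form $S_I(\uc)$. Finiteness here is the only point deserving a word: $\cL$ is a finite $\lcm$-lattice, and since $E\in\zsModf$ the representation $\lcmzip^\cL E$ of $\cL$ has finite-dimensional graded pieces, so all the multiplicities $n^i_\uc$ occurring in the minimal projective resolution of $\lcmzip^\cL E$ are finite. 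Dually, each term of \eqref{injres} is a finite direct sum of Matlis duals $\dual{S}_I(\uc)$, via $\gcdunzip^{\mathcal{K}}I_\uc\cong\dual{S}_I(\uc)$, with the $m^i_\uc$ finite for the same reason applied to $\check E$ and the finite $\gcd$-lattice $\mathcal{K}$.

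Next I would apply $-_{(0)}$. By the discussion of subsection~\ref{homogeneouscoordinates} this functor is exact, and since it is just the selection of the $A$-degree-zero summand it commutes with the finite direct sums appearing above. Hence $-_{(0)}$ carries \eqref{projres} to an exact complex of $M$-graded $\ksm$-modules whose nonzero-degree terms are finite direct sums of the modules $\big(S_I(\uc)\big)_{(0)}$ — divisorial by definition — and whose degree-zero homology is $E_{(0)}=F$ by hypothesis; that is, a divisorial resolution of $F$. Likewise, $-_{(0)}$ applied to \eqref{injres} yields an exact complex whose terms are finite direct sums of the codivisorial modules $\big(\dual{S}_I(\uc)\big)_{(0)}$ and which coresolves $F$.

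I do not expect a genuine obstacle: the substantive content (exactness of \eqref{projres} and \eqref{injres}, the explicit shape of the unzip modules in Examples~\ref{admissibleexample1} and~\ref{admissibleexample2}, and exactness of $-_{(0)}$) is already in place, so the proof is a matter of assembling these facts and checking that the degree-zero functor commutes with the finite direct sums in play — which, as noted, reduces to the elementary observation that all multiplicities are finite because $E\in\zsModf$ and $\cL$ (resp.\ $\mathcal{K}$) is finite.
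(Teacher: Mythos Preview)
Your proposal is correct and follows essentially the same route as the paper: the paper simply notes, in the sentence immediately preceding the theorem, that the degree-zero parts of the unzipped projectives and injectives are divisorial respectively codivisorial, and that taking degree zero is exact, so the theorem follows. Your write-up is a more detailed version of this same argument, with the added (and welcome) observation that the multiplicities $n^i_\uc$ and $m^i_\uc$ are finite because $E\in\zsModf$ and the lattices are finite.
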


A first application of these constructions is a combinatorial characterization
of projective and injective dimension of a combinatorially finite $S$-module.

\begin{theorem}\label{globaldimbound2}
Let $E$ be a module in $\zsmod$. Then both the projective and injective dimension
of $E$ are bounded by the maximal chain in an $E$-admissible finite $\lcm$-
and $\gcd$-lattice, respectively.
\end{theorem}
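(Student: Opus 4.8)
The plan is to reduce the statement for $E$ in $\zsmod$ to the already-established bound for representations of a finite poset, namely Proposition \ref{globaldimbound}. Concretely, let $\cL$ be a finite $E$-admissible $\lcm$-lattice (which exists by definition of combinatorial finiteness), so that $E \cong \unzip^\cL \circ \zip^\cL E$. First I would observe that the functors $\zip^\cL$ and $\unzip^\cL$ give an equivalence of categories between representations of $\cL$ and those $S$-modules for which $\cL$ is admissible (Proposition \ref{admissibleequivalence}); in particular this equivalence is exact and sends indecomposable projective representations $P_\uc$ of $\cL$ to the divisorial modules $S_I(-\uc')$ described in Example \ref{admissibleexample1}. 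Hence the minimal projective resolution of the representation $\zip^\cL E$ constructed just above (the one of length at most the longest chain in $\cL$ minus one, by Proposition \ref{globaldimbound}) unzips, via sequence (\ref{projres}), to a free resolution of $E$ of the same length. This gives the bound on projective dimension.

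For the injective side I would argue Matlis-dually. Let $\cG$ be a finite $E$-admissible $\gcd$-lattice, i.e. $-\cG$ is a $\check E$-admissible $\lcm$-lattice (Definition \ref{gcdadmissibledef}); such a $\cG$ exists because $\check E$ is again combinatorially finite by Proposition \ref{combfinprop}, and one may in fact take $\cG = -\cL'$ for a finite $\check E$-admissible $\lcm$-lattice $\cL'$. Applying the projective-resolution construction to $\zip^{-\cG}\check E = (\gcdzip^\cG E)^\op$ yields a minimal projective resolution of length at most the longest chain in $-\cG$ minus one; dualizing (via the $^\op$ and $\check{\ }$ operations packaged into $\gcdunzip$) turns this into the injective resolution (\ref{injres}) of $E$, whose length is at most the longest chain in $\cG$ minus one. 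Since chains in $\cG$ and in $-\cG$ are in length-preserving bijection, this is the claimed bound on injective dimension.

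The only genuinely delicate point is making sure the unzipped complexes (\ref{projres}) and (\ref{injres}) are honest \emph{minimal} resolutions of $E$ rather than merely of $\zip^\cL E$ — i.e. that exactness and the summand structure are preserved under $\unzip^\cL$. This is exactly the content of the exactness of the equivalence in Proposition \ref{admissibleequivalence} together with the identification $\unzip^\cL P_\uc \cong S_I(-\uc')$, so the argument is essentially bookkeeping: one checks that a short exact sequence of $\cL$-representations with projective middle term unzips to a short exact sequence of $S$-modules with free middle term, and iterates. I would also remark that, by the general theory of path algebras, projective and injective dimension of a poset representation coincide, so the two halves of the statement are not independent; but since $\cL$ and $\cG$ are \emph{different} finite admissible lattices in general, it is cleanest to state and prove the two bounds separately as above. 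Finally I would note that Theorem \ref{globaldimbound3} (referenced earlier) sharpens these chain-length bounds, so the present theorem is only a first, coarse estimate whose proof need not be pushed further.
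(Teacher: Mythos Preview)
Your proposal is correct and follows exactly the same approach as the paper: the paper's proof is the single line ``Follows immediately from Proposition \ref{globaldimbound},'' and you have simply spelled out what that means using the unzipping construction (sequences (\ref{projres}) and (\ref{injres})) that the paper develops immediately before the theorem. One small remark: for the bound alone you do not need minimality of the unzipped resolution, only that it \emph{is} a resolution (i.e.\ exactness of $\unzip^\cL$ via Proposition \ref{admissibleequivalence}); so the ``delicate point'' you flag is less delicate than you suggest.
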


\begin{proof}
Follows immediately from Proposition \ref{globaldimbound}.
\end{proof}

The following observation may be of independent interest for the representation theory
of incidence algebras:

\begin{theorem}\label{globaldimbound3}
Let $\p$ be a finite poset. If $\p$ is isomorphic either to an
$\lcm$-lattice or a $\gcd$-lattice in some $\Z^n$, then both the projective and the
injective dimension of $\p$ are bounded by $n$.
\end{theorem}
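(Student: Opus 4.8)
\emph{The plan} is to transport a minimal projective resolution of a representation of $\p$ through the functor $\lcmunzip$ of subsection~\ref{resolutions} into a minimal free resolution over $S=\K[x_1,\dots,x_n]$ and then to invoke Hilbert's syzygy theorem. Throughout, ``projective (resp. injective) dimension of $\p$'' should be read as the supremum of $\operatorname{pd}F$ (resp. $\operatorname{id}F$) over representations $F$ of the finite poset $\p$, i.e. as the global dimension of the incidence algebra $\K\p$; these two suprema agree, as noted in the text, so it is enough to bound one of them. I would first reduce to the case of an $\lcm$-lattice: if $\p\cong\cG$ for a $\gcd$-lattice $\cG\subseteq\Z^n$, then $-\cG$ is an $\lcm$-lattice, and the duality $F\mapsto F^{\op}$ is a contravariant equivalence between representations of $\cG$ and of $-\cG$ carrying indecomposable projectives to indecomposable injectives; hence $\sup_F\operatorname{pd}_{\K\cG}F=\sup_G\operatorname{id}_{\K(-\cG)}G$, which is the global dimension of $\K(-\cG)$, and the $\lcm$-case applied to $-\cG$ finishes this case.

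For the main step, let $\cL\subseteq\Z^n$ be a finite $\lcm$-lattice and let $F$ be a representation of $\cL$; finite-dimensional $F$ suffice, since $\K\cL$ is finite-dimensional. By Proposition~\ref{admissibleequivalence}, $F\cong\lcmzip^\cL E$ for $E:=\lcmunzip^\cL F$, and $\cL$ is $E$-admissible. I would first observe that $\lcmunzip^\cL$ is exact — it evaluates a representation of $\cL$ at the unique maximal element of $\cL$ lying below a given degree — so that $E$ is combinatorially finite: its graded pieces have dimension at most $\max_{\uc\in\cL}\dim F_\uc$, and $E$ is a quotient of the finitely generated module $\bigoplus_{\uc\in\cL}S(-\uc)^{n^{0}_{\uc}}$ occurring in the construction of subsection~\ref{resolutions}. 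That construction produces a minimal projective resolution $0\to\bigoplus_\uc P_\uc^{n^{t}_{\uc}}\to\cdots\to\bigoplus_\uc P_\uc^{n^{0}_{\uc}}\to F\to0$, whose length $t$ equals $\operatorname{pd}_{\K\cL}F$ by minimality. Applying $\lcmunzip^\cL$ and using $\lcmunzip^\cL P_\uc\cong S(-\uc)$ for $\uc\in\cL\subseteq\Z^n$ (Example~\ref{admissibleexample1}), subsection~\ref{resolutions} tells us that the result is a minimal free resolution of $E$ over $S$, necessarily of the same length $t$. Therefore $t=\operatorname{pd}_S E\le n$ by Hilbert's syzygy theorem. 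Since $F$ was arbitrary, $\operatorname{pd}_{\K\cL}F\le n$ for all $F$, and together with the reduction above this proves the theorem.

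The one point that needs care is the assertion — which I am taking from subsection~\ref{resolutions} — that $\lcmunzip^\cL$ turns a minimal projective resolution of $F=\lcmzip^\cL E$ into a minimal \emph{free} resolution of $E$ of the same length; in particular, that exactness is preserved at every stage. This rests on $\cL$ remaining admissible for every syzygy of $E$, so that the equivalence of Proposition~\ref{admissibleequivalence} may be applied term by term, together with the identifications $\lcmunzip^\cL P_\uc\cong S(-\uc)$. Once that is granted, the bound is immediate from the fact that $S$ has global dimension $n$, and it uses nothing about the combinatorics of $\cL$ — which is exactly why this estimate improves on the chain-length bound of Proposition~\ref{globaldimbound}.
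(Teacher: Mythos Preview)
Your proposal is correct and follows essentially the same approach as the paper: transport a projective resolution of a representation of $\p$ through $\lcmunzip^\cL$ to a free resolution over $S$ and invoke Hilbert's syzygy theorem. The paper's proof is a terse two-line version of exactly this argument (handling the $\gcd$-case via $\gcdunzip$ rather than your explicit duality reduction), so your write-up is really a fleshed-out form of the same idea, with your final paragraph correctly isolating the one point---preservation of minimality and exactness under $\lcmunzip^\cL$---that the paper leaves implicit.
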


\begin{proof}
Projective and injective dimensions of any representation $F$ of $\p$ coincides
with the projective and injective dimensions of $\lcmunzip^\p F$ and $\gcdunzip^\p F$,
respectively, as $S$-modules.
The projective dimension of $S$ equals its injective dimension. By the Hilbert
syzygy theorem, the projective dimension of $S$ is $n$.
\end{proof}

\begin{remark}
Note that for Theorem \ref{globaldimbound3} it is not necessary to consider
embeddings into $\Zbar^n$. Because $\p$ is finite, then, if we can an embedding
of $\p$ into $\Zbar^n$, we can also find an embedding into $\Z^n$.
\end{remark}

\subsection{Reflexive modules and resolutions of vector space arrangements}\label{reflres}

Let $E$ be a reflexive module in $\zsmod$ with filtrations $E^k(i)$ and
numbers $i_1^k < \cdots < i^k_{t_k}$ as used in Proposition
\ref{reflexivelcmlattice}, denoting the steps in the filtrations.
The set of vector spaces $E_\uc = \bigcap_{k \in \on} E^k(c_i) \subseteq \mathbf{E}$
form a vector space arrangement $\mathcal{V}_E$ in $\mathbf{E}$. By Convention
\ref{arrangementconvention}, we denote $\mathcal{V}_E$ also the underlying canonical
poset in $\Z^n$. The first step of the algorithm described
in subsection \ref{resolutions} yields the following short exact sequence:
\begin{equation}\label{firststep}
0 \longrightarrow E^1 \longrightarrow F \longrightarrow E \longrightarrow 0,
\end{equation}
where $F \cong \bigoplus_{\uc \in \mathcal{V}_E} S(-\uc)^{n_\uc}$ is a free
cover of $E$ and  $E^1$ the first syzygy of $E$. By
construction, $\mathcal{V}_E$ is also admissible for $E^1$ and $F$. On
the other hand, $E^1$ and $F$ both are reflexive and therefore come
with their own canonical admissible posets $\mathcal{V}_{E^1}$ and
$\mathcal{V}_F$, respectively, corresponding to vector space arrangements
in the limit vector spaces $\mathbf{E^1}$ and $\mathbf{F}$. From the
arguments of Proposition
\ref{globaldimbound} we conclude that $\mathcal{V}_{E^1}$ is a proper
subposet of $\mathcal{V}_E$. For $\mathcal{V}_F$ we get:

\begin{proposition}\label{coordinatearrangement}
With above notation the posets $\mathcal{V}_F$ and $\mathcal{V}_E$ coincide
as subsets of $\Z^n$.
\end{proposition}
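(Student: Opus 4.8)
The plan is to compute $\mathcal{V}_F$ directly from the explicit form of $F$ and to check that, as a subset of $\Z^n$, it reproduces $\mathcal{V}_E$. By the construction of subsection \ref{resolutions} we have $F \cong \bigoplus_{\uc \in \mathcal{V}_E} S(-\uc)^{n_\uc}$ with $n_\uc = \dim(\lcmzip^{\mathcal{V}_E} E)_{\uc, >} \geq 0$; write $\mathcal{V}_E^+ := \{\uc \in \mathcal{V}_E \mid n_\uc > 0\}$. Each $S(-\uc)$ is free of rank one, with one-dimensional limit space and $k$-th filtration $S(-\uc)^k(i) = \K$ for $i \geq c_k$ and $0$ otherwise; hence Proposition \ref{filtaux}(\ref{filtauxi}) gives $\mathbf{F} = \bigoplus_\uc \K^{n_\uc}$ and $F^k(i) = \bigoplus_{\uc \,:\, c_k \leq i} \K^{n_\uc}$, a coordinate subspace of $\mathbf{F}$. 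Consequently every element of the arrangement $\mathcal{V}_F$ is a coordinate subspace $U_T := \bigoplus_{\uc \in T} \K^{n_\uc}$ obtained as an intersection $\bigcap_{k \in K} F^k(a_k)$ of filtration steps, so that $T = \{\uc \in \mathcal{V}_E^+ \mid c_k \leq a_k \text{ for all } k \in K\}$ for some $K \subseteq \on$ and step values $a_k$; in particular the set of step values of the $k$-th filtration of $F$ is exactly $\{c_k \mid \uc \in \mathcal{V}_E^+\}$.

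The combinatorial heart of the argument is: every nonzero $X \in \mathcal{V}_E$ equals $\sum_{\uc \in \mathcal{V}_E^+,\ \uc \leq \underline{i}^X} E_\uc$. This is proved by induction along the finite poset $\mathcal{V}_E$: if $n_X > 0$ the summand $\uc = \underline{i}^X$ already yields $X$; if $n_X = 0$ then $X = \sum_{Y \in \mathcal{V}_E,\ Y \subsetneq X} E_Y$ (this is the splitting defining $n_X$ in subsection \ref{resolutions}, and here the minimality of $\mathcal{V}_E$ is used), where each $Y$ has $\underline{i}^Y < \underline{i}^X$ and may be expanded inductively. Two consequences: (a) for each $k$ the step values of $E^k$ coincide with $\{c_k \mid \uc \in \mathcal{V}_E^+\}$, hence with those of $F^k$ --- the inclusion $\supseteq$ holds since each coordinate of an $\underline{i}^X$ is a step value of $E^k$, and for $\subseteq$ one takes a step $W = E^k(v)$ with immediate predecessor step $W^-$ and picks $X \in \mathcal{V}_E$ minimal with $X \subseteq W$ and $X \not\subseteq W^-$; then $n_X = 0$ would force $X = \sum_{Y \in \mathcal{V}_E,\, Y \subsetneq X} E_Y \subseteq W^-$, a contradiction, so $X \in \mathcal{V}_E^+$ and $\underline{i}^X_k = v$; (b) since $E_\uc \subseteq E^k(v) \iff c_k \leq v$, for nonzero $X \in \mathcal{V}_E$ and $T := \{\uc \in \mathcal{V}_E^+ \mid \uc \leq \underline{i}^X\}$ (nonempty, because $X = \sum_{\uc \in T} E_\uc$) one gets $\underline{i}^X_k = \max_{\uc \in T} c_k$, i.e. $\underline{i}^X = \lcm(T)$.

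Now compare the two subsets of $\Z^n$. If $U_T \in \mathcal{V}_F$ is nonzero, the same computation as in (b) (using that $\max_{\uc \in T} c_k$ is a step value of $F^k$) shows its image is $\lcm(T)$, which lies in $\mathcal{V}_E$ as $T \subseteq \mathcal{V}_E$ and $\mathcal{V}_E$ is closed under $\lcm$ of finite nonempty subsets; thus $\mathcal{V}_F \subseteq \mathcal{V}_E$ away from $0$. Conversely, for nonzero $X \in \mathcal{V}_E$ the numbers $\underline{i}^X_k$ are step values of $E^k$, hence of $F^k$ by (a), so $U_T = \bigcap_k F^k(\underline{i}^X_k) = \bigoplus_{\uc \in T} \K^{n_\uc} \in \mathcal{V}_F$ (with $T$ as in (b)), and it has image $\lcm(T) = \underline{i}^X$. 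Finally, $0 \in \mathcal{V}_F$ iff some nonempty intersection of nonzero steps of $F$ vanishes, i.e. iff there are $K \subseteq \on$ and step values $a_k$ ($k \in K$) with no $\uc \in \mathcal{V}_E^+$ satisfying $c_k \leq a_k$ for all $k \in K$; by (a) the same $a_k$ are step values of $E^k$, and by the core claim this is equivalent to $\bigcap_{k \in K} E^k(a_k) = 0$, i.e. to $0 \in \mathcal{V}_E$; in that case $\underline{i}^0$ has $k$-th coordinate the smallest step value of the $k$-th filtration, which is the same for $E$ and for $F$. Hence $\mathcal{V}_F = \mathcal{V}_E$ as subsets of $\Z^n$, and therefore as posets.

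The step I expect to be the main obstacle is the equality of the step-value sets of $E^k$ and $F^k$ in part (a) --- this is where the minimality of the $\lcm$-lattice $\mathcal{V}_E$ genuinely enters --- together with the ensuing separate treatment of the zero subspace of $\mathbf{E}$, which need not be an $\lcm$ of elements of $\mathcal{V}_E^+$ and so must be matched with its $\mathcal{V}_F$-counterpart by hand.
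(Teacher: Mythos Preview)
Your proof is correct but takes a genuinely different route from the paper. The paper argues directly with the graded pieces $F_\uc$: since $F_\uc = \bigoplus_{\uc' \in \mathcal{V}_E,\, \uc' \leq \uc} S(-\uc')_\uc^{\,n_{\uc'}}$, the value $F_\uc$ depends only on $\max\{\uc'' \in \mathcal{V}_E \mid \uc'' \leq \uc\}$, whence $\mathcal{V}_F \subseteq \mathcal{V}_E$; then for $\uc' < \uc$ in $\mathcal{V}_E$ one has $E_{\uc'} \subsetneq E_\uc$ (using the argument of Proposition~\ref{reflexivelcmlattice}), so some $\uc'' \in \mathcal{V}_E^+$ satisfies $\uc'' \leq \uc$ but $\uc'' \not\leq \uc'$, giving $\dim F_{\uc'} < \dim F_\uc$ and forcing $\mathcal{V}_E \subseteq \mathcal{V}_F$. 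You instead work at the level of filtrations: you identify the step-value sets of $F^k$ with those of $E^k$, prove the ``core claim'' that every nonzero $X \in \mathcal{V}_E$ is spanned by the $E_\uc$ with $\uc \in \mathcal{V}_E^+$ below $\underline{i}^X$, and then match elements of the two posets via $T \mapsto \lcm(T)$. Your approach is more explicit about the coordinate-arrangement structure of $\mathcal{V}_F$ and handles the edge cases carefully --- in particular the zero subspace, which the paper's short argument leaves implicit --- at the cost of considerable length. The paper's dimension count is quicker because it bypasses the filtrations entirely and uses only that $\mathcal{V}_E$ is $F$-admissible and that the surjection $F \twoheadrightarrow E$ witnesses every strict inclusion in $\mathcal{V}_E$; your step-value lemma (a) is the analogue of that last observation, reformulated on the filtration side.
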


\begin{proof}
By construction, we have
\begin{equation*}
F_\uc = \bigoplus_{\substack{\uc' \in \mathcal{V}_E \\ \uc' \leq \uc}} S(-\uc')^{n_{\uc'}}_\uc,
\quad \text{ and } \quad \dim F_\uc = \sum_{\substack{\uc' \in \mathcal{V}_E \\ \uc' \leq \uc}}
n_{\uc'}.
\end{equation*}
In particular, $F_\uc = F_{\uc'}$, where $\uc' = \max\{\uc'' \in \mathcal{V}_E \mid \uc'' \leq
\uc\}$. Therefore, $\mathcal{V}_F \subseteq \mathcal{V}_E$. To show equality, it suffices
to check that $F_{\uc'} \neq F_{\uc}$ whenever $\uc' <  \uc \in \mathcal{V}_E$. As in
the proof of Proposition \ref{reflexivelcmlattice}, $\uc' < \uc$ implies that
there exists some $k \in \on$ such that $E_\uc \subseteq E^k(c_k)$ but
$E_\uc \nsubseteq E^k(c_k')$ and $E_\uc' \subseteq E^k(c_k')$, where
$\uc = (c_1, \dots, c_n)$, $\uc' = (c_1', \dots, c_n')$. In particular,
$E_{\uc'} \subsetneq E_\uc$. But then there must exist at least one
$\uc'' \leq \uc$, but $\uc'' \not \leq \uc'$ such that $n_{\uc''} > 0$.
Then $\dim F_\uc > \dim F_{\uc'}$ and the claim follows.
\end{proof}

Another way of phrasing Proposition \ref{coordinatearrangement} is that we have
constructed a surjection of the vector space arrangement $\mathcal{V}_E$ by a
combinatorially equivalent {\em coordinate space arrangement} $\mathcal{V}_F$
such that any $F_\uc \in \mathcal{V}_F$ maps surjectively onto $E_\uc \in \mathcal{V}_E$.
The vector space arrangement $\mathcal{V}_{E^1}$ then can be considered as a
{\em syzygy arrangement} of $\mathcal{V}_E$. By iterating, our free resolution
of $E$ then yields a {\em resolution of $\mathcal{V}_E$ in terms of
coordinate vector space arrangements}:
\begin{equation*}
0 \longrightarrow \mathcal{V}_{F_t} \longrightarrow \cdots \longrightarrow
\mathcal{V}_{F_0} \longrightarrow \mathcal{V}_E \longrightarrow 0,
\end{equation*}
where every $\mathcal{V}_{F_i}$ is a coordinate vector space arrangement which is
combinatorially equivalent to the $i$-th syzygy arrangement $\mathcal{V}_{E_i}$.
This is a remarkable observation which yields a new class of invariants of
vector space arrangements which can be expressed in combinatorial terms, where
the vector space dimension in the arrangements $\mathcal{V}_{F_i}$ play
the role of Betti numbers. In section \ref{hyperplanearrangements} we will determine these
invariants for the case of central hyperplane arrangements.
However, so far it is not clear whether these have any use
in the study of general vector space arrangements. From this discussion and Proposition
\ref{coordinatearrangement} we conclude:

\begin{theorem}\label{invariants}
Let $E$ be a $\Z^n$-graded, finitely generated, reflexive $S$-module. Then the poset of
nonzero graded Betti numbers is determined by the embedding of the poset given by the
underlying vector space arrangement $\mathcal{V}_E$ into $\Z^n$
For given $X \in \mathcal{V}_E$, the corresponding Betti number depends only on
$\mathcal{V}_E$.
\end{theorem}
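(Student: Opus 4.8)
The plan is to reduce the statement to a computation inside the fixed finite poset $\mathcal{V}_E$, exploiting that $\mathcal{V}_E$ is an $E$-admissible $\lcm$-lattice. By Proposition \ref{reflexivelcmlattice} we have $E \cong \lcmunzip^{\mathcal{V}_E} \circ \lcmzip^{\mathcal{V}_E} E$, and by the construction around sequence (\ref{projres}) the minimal free resolution of $E$ is obtained by applying $\lcmunzip^{\mathcal{V}_E}$ term by term to the minimal projective resolution of the $\mathcal{V}_E$-representation $\mathcal{F} := \lcmzip^{\mathcal{V}_E} E$. Writing the $i$-th projective term of the latter as $\bigoplus_{X \in \mathcal{V}_E} P_X^{\,n^i_X}$ and using that every element of $\mathcal{V}_E$ has finite coordinates, Example \ref{admissibleexample1} gives $\lcmunzip^{\mathcal{V}_E} P_X \cong S(-\underline{i}^X)$; hence $\beta_i(\uc) = n^i_X$ when $\uc = \underline{i}^X$ for some $X \in \mathcal{V}_E$, and $\beta_i(\uc) = 0$ otherwise. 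That this bookkeeping is consistent --- that no degrees outside $\mathcal{V}_E$ ever occur --- is exactly the content of Proposition \ref{coordinatearrangement} together with the observation in the proof of Proposition \ref{globaldimbound} that passing to a syzygy replaces the supporting subposet by a proper subposet.

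Thus everything is governed by the single representation $\mathcal{F}$ of the poset $\mathcal{V}_E$, and the next step is to identify $\mathcal{F}$ intrinsically. Since $E$ is reflexive it is torsion-free, so all its structure maps are injective; combined with the identification in Proposition \ref{reflexivelcmlattice} of $E_{\underline{i}^X}$ with the subspace $X \subseteq \mathbf{E}$, this says that $\mathcal{F}$ is the tautological representation of the vector space arrangement $\mathcal{V}_E$: it assigns to $X$ the space $X$ itself and to a relation $X \subseteq Y$ the inclusion $X \hookrightarrow Y$. Consequently, as an object of the category of representations of the abstract poset $\mathcal{V}_E$, the representation $\mathcal{F}$ is determined up to isomorphism by the arrangement $\mathcal{V}_E$ alone (its member dimensions and its intersection structure), independently of which reflexive module $E$ realizes it.

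Minimal projective resolutions over the incidence algebra of a finite poset are unique up to isomorphism (see \cite{Ass06}), so the multiplicities $n^i_X$ are invariants of $\mathcal{F}$, hence of $\mathcal{V}_E$. This is the second assertion: for each $X \in \mathcal{V}_E$ the Betti numbers $\beta_i(\underline{i}^X) = n^i_X$, $i \geq 0$, depend only on $\mathcal{V}_E$. For the first assertion, the poset of nonzero graded Betti numbers has underlying set $\{(i, \underline{i}^X) \mid X \in \mathcal{V}_E,\ n^i_X \neq 0\}$; the condition $n^i_X \neq 0$ depends only on $\mathcal{V}_E$ by what was just shown, while the partial order is inherited from the homological grading and from the order on $\Z^n$, hence is recovered from the positions $\underline{i}^X$, i.e. from the embedding $\mathcal{V}_E \hookrightarrow \Z^n$.

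The one step that needs care is the intrinsic identification of $\mathcal{F}$ with the tautological arrangement representation, and the accompanying claim that the entire resolution can be carried out over the fixed finite poset $\mathcal{V}_E$; both are delivered by Propositions \ref{reflexivelcmlattice} and \ref{coordinatearrangement} and the shrinking-of-syzygies remark in the proof of Proposition \ref{globaldimbound}. Granting these, the theorem follows formally from uniqueness of minimal resolutions and the equivalence of categories of Proposition \ref{admissibleequivalence}.
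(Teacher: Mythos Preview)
Your proof is correct and follows essentially the same route as the paper. The paper derives the theorem directly from the discussion preceding it (the iterative construction of syzygy arrangements together with Proposition \ref{coordinatearrangement}); you package the same ingredients more formally by invoking the equivalence of Proposition \ref{admissibleequivalence}, identifying $\lcmzip^{\mathcal{V}_E} E$ with the tautological arrangement representation, and appealing to uniqueness of minimal projective resolutions over the incidence algebra, but the substance is identical.
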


By the following construction our correspondence between
reflexive $S$-modules and vector space arrangements at least provides a method
to efficiently compute resolutions of arrangements (see Remark \ref{vecresremark}).

\begin{definition}\label{reflexivemodeldef}
Let $\mathcal{V}$ be a finite vector space arrangement in some finite-dimensional
vector space $V$. We say that a reflexive module $E$ in $\zsmod$ for some $n$ is
a  {\em reflexive model} for $\mathcal{V}$, if there exists an isomorphism
of vector spaces $\mathbf{E} \rightarrow V$ which induces an isomorphism of
vector space arrangements $\mathcal{V}_E \cong \mathcal{V}$.
\end{definition}

\begin{proposition}\label{reflexivemodelexists}
Every finite vector space arrangement has a reflexive model.
\end{proposition}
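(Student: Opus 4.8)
The plan is to construct, for a given finite vector space arrangement $\mathcal{V}$ in a vector space $V$, a family of $n$ full filtrations on $V$ whose associated vector space arrangement (in the sense of subsection \ref{reflres}, i.e. the arrangement generated by all intersections of the filtration steps) is exactly $\mathcal{V}$, and then invoke Theorem \ref{klythm} to obtain a reflexive module. The number $n$ of filtrations is ours to choose, so there is a lot of room; the natural choice is to take one filtration for each element of $\mathcal{V}$ that is meet-irreducible in the intersection lattice, or more crudely, simply one filtration per subspace in $\mathcal{V}$.

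First I would recall that $\mathcal{V}$, being closed under intersections, is a finite meet-semilattice (with $V$ as its top element, which we may assume belongs to $\mathcal{V}$ after adjoining it if necessary — adjoining $V$ does not change the arrangement). For each $W \in \mathcal{V}$ with $W \neq V$ introduce an index $k_W \in \on$ and define a filtration $E^{k_W}(\,\cdot\,)$ on $V$ that jumps exactly once: set $E^{k_W}(i) = 0$ for $i < 0$, $E^{k_W}(0) = W$, and $E^{k_W}(i) = V$ for $i \geq 1$. This is a full filtration in the sense of Theorem \ref{klythm}. For the remaining indices (if we want $n$ larger, or to make the arrangement come out exactly right) take trivial filtrations jumping from $0$ to $V$ at step $0$, which contribute nothing new to intersections. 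Now the associated arrangement is generated by $\{W : W \in \mathcal{V}\} \cup \{0, V\}$ under intersection; since $\mathcal{V}$ is already closed under intersection and contains $0$ (the empty intersection over all the one-step filtrations lands in $0$ provided $\bigcap_{W} W = 0$, which we can arrange by including enough proper subspaces, or by one extra filtration with step $0 \subsetneq V$) and $V$, this generated arrangement is exactly $\mathcal{V}$. Then Theorem \ref{klythm} produces a finitely generated $M$-graded reflexive module, hence (taking $M = \Z^n$ with the $l_k$ the standard basis, i.e. $S$ itself as coordinate ring, so that $\mksm = S$) a reflexive module $E$ in $\zsmod$ with $\mathbf{E} \cong V$ and $\mathcal{V}_E \cong \mathcal{V}$.

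The main point requiring care is to verify that the arrangement $\mathcal{V}_E$ attached to $E$ by the recipe of subsection \ref{reflexivelcmlattices} — namely the arrangement generated by the subspaces $E^k(i^k_j)$ at the jump positions — is precisely $\mathcal{V}$ and not something larger. With the one-step filtrations above, the only jump subspaces are the $W \in \mathcal{V}$ themselves together with $0$ and $V$, and closing under intersection adds nothing since $\mathcal{V}$ is already closed; so equality holds. One should also confirm that $E$ lies in $\zsmod$, i.e. is combinatorially finite: this is automatic since $E$ is finitely generated (Klyachko's equivalence gives finitely generated modules from finite-dimensional $V$), and a finitely generated torsion-free module admits a finite admissible $\lcm$-lattice by Example \ref{admissibleexample3}, while Proposition \ref{reflexivelcmlattice} identifies the canonical one with $\mathcal{V}_E$, which is finite. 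Finally, the requested isomorphism of arrangements $\mathbf{E} \to V$ is built into the construction via Klyachko's equivalence, under which the limit vector space of $E$ is the ambient space $V$ on which the filtrations were defined. The only genuinely delicate choice is ensuring $\bigcap_{W \in \mathcal{V},\, W \neq V} W = 0$ so that $0 \in \mathcal{V}_E$; if $\mathcal{V}$ does not already contain $0$, one adds a single further one-step filtration jumping $0 \subsetneq V$, which forces $0$ into the generated arrangement without introducing any other new subspace.
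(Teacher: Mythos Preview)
Your construction is essentially identical to the paper's: introduce one two-step filtration $0 \subsetneq W \subsetneq V$ for each proper nonzero element $W$ of $\mathcal{V}$, and invoke Theorem \ref{klythm}. The only quibble is your final paragraph: the concern about forcing $0$ into $\mathcal{V}_E$ is misplaced (the definition of $\mathcal{V}_E$ does not require $0$ to belong to it, and if $0 \notin \mathcal{V}$ you would not want $0 \in \mathcal{V}_E$ anyway), and a trivial filtration $0 \subsetneq V$ would contribute only $V$ as a jump subspace, not $0$ --- but since the worry was unnecessary to begin with, this does not affect the validity of your argument.
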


\begin{proof}
Let $\mathcal{V}$ be a finite vector space arrangements in some vector space $V$.
We can assume without loss of generality that $\mathcal{V}$ is nontrivial and
enumerate its elements which are different from $V$ and $0$ by $X_1, \dots, X_n$.
Then we define a set of filtrations
$V^k(i)$ of $V$ with
\begin{equation*}
V^k(i) =
\begin{cases}
0 & \text{ for } i < 0 \\
X_k & \text{ for } i = 0 \\
V & \text{ for } i > 0.
\end{cases}
\end{equation*}
By Theorem \ref{klythm} this data gives rise to a reflexive
module in $\zsmod$. Its underlying vector space arrangement by construction
coincides with $\mathcal{V}$.
\end{proof}

Of course, the choice of a reflexive model as in the proof of Proposition
\ref{reflexivemodelexists} is not canonical and far from being unique. For
instance, for a given arrangement one could instead choose a minimal collection
of flags whose intersection poset generate $\mathcal{V}$.

\begin{remark}
Note that our resolutions only depend on the actual arrangement. If, say,
some vector space is contained in more than one filtration $E^k(i)$
associated to some reflexive module $E$, the resolution of the underlying
vector space arrangement by coordinate
arrangements does not depend on this multiplicity.
However, the actual embedding of $\mathcal{V}_E$ in $\Z^n$ (and therefore
the degrees which show up in the minimal resolution) does depend
on possible multiplicities (see Example \ref{einfach} below).
\end{remark}

\begin{remark}\label{vecresremark}
Let a vector space arrangement $\mathcal{V}$ and a reflexive model $E_\mathcal{V}$
be given. By Proposition \ref{filtaux} (\ref{filtauxii}) we can choose an embedding
of $E_\mathcal{V}$ into some free module $S(-\uc)^{\rk E_\mathcal{V}}$ where
$E^k(c_k) = 0$ for every $k \in \on$. If
we are able to determine the first step of the resolution as in sequence
(\ref{firststep}), then we obtain a representation of $E_\mathcal{V}$ as the image of
a monomial matrix $F_0 \rightarrow S(-\uc)^{\rk E_\mathcal{V}}$. Then by computing
a resolution of $E_\mathcal{V}$, say with help of a computer algebra system, we obtain
also a resolution $\mathcal{V}$.
\end{remark}

\begin{example}\label{einfach}
Let $\mathcal{V}$ be a vector space arrangement in $V \cong \K^r$ generated by a
family of subvector spaces $\{V_1, \dots, V_t\}$
which has only trivial intersections, i.e. $V_i \cap V_j = \{0\}$ whenever $i \neq j$.
We assume that the $V_i$ span $V$. Denote $I_1 \sqcup \cdots \sqcup I_t = \on$ some
decomposition. If $E$ is given by filtrations
\begin{equation*}
V^k(i) =
\begin{cases}
0 & \text{ for } i < i_1^k \\
V_l & \text{ for } i_1^k \leq i < i_2^k \text{ and } k \in I_l\\
V & \text{ for } i_2^k \leq i
\end{cases}
\end{equation*}
and integers $i_1^k < i_2^k$ and $k \in \on$,
then we get as a minimal free resolution:
\begin{equation*}
0 \longrightarrow S(-\underline{i}_2)^{\sum_l \dim V_l - r} \longrightarrow
\bigoplus_{l = 1}^t S(-\uc_l)^{\dim V_l} \longrightarrow E \longrightarrow 0.
\end{equation*}
Here, $\underline{i}_2 = (i_2^1, \dots, i_2^n)$ and
$\uc_l = (c_l^1, \dots, c_l^n)$ with $c_l^k = i_1^k$ if $k \in I_l$ and $c_l^k = i_2^k$ else.
\end{example}

\begin{example}[see also \cite{Brenner08} \S 6 \& \S7]]
Let $I \subset S$ be a monomial ideal and denote $T \subseteq \Z^n$ the minimal set of
generators of
its $\lcm$-lattice in the sense of Example \ref{admissibleexample3}. The first term of
a minimal resolution of $I$ is given by
\begin{equation*}
0 \longrightarrow I^1 \longrightarrow \bigoplus_{\uc \in T} S(-\uc) \longrightarrow I
\longrightarrow 0
\end{equation*}
such that the first syzygy $I^1$ is a reflexive $S$-module. Consider the associated
short exact sequence of limit vector spaces $0 \rightarrow \mathbf{I^1} \rightarrow
\mathbf{F} \rightarrow \mathbf{I} \rightarrow 0$, where $\dim \mathbf{I} = 1$ and
$\mathbf{F} \cong \bigoplus_{\uc \in T} \mathbf{F}_\uc$ with $\dim \mathbf{F}_\uc = 1$
for all $\uc \in T$. The filtrations of $\mathbf{F}$ are given by
\begin{equation*}
F^k(i) = \bigoplus_{\uc \in T,\ c_k \leq i}\mathbf{F}_\uc.
\end{equation*}
The filtrations of $\mathbf{I^1}$ then are given by the kernels of the homomorphisms
$F^k(i) \rightarrow \mathbf{I}$. Betti numbers of monomial ideals are not purely
combinatorial, as in general, for instance, they depend on the characteristic of $\K$.
It would be of some interest to study this phenomenon in terms of vector space
arrangements.
\end{example}

The approach presented here reduces the problem of constructing free resolutions to a
straightforward linear algebra problem which allows to construct graded Betti numbers
by iteratively analyzing linear dependencies. In general, we cannot expect to obtain
closed forms for such resolutions apart from very easy cases, such as in example
\ref{einfach}. However, as we will see in section \ref{hyperplanearrangements}, our
approach is powerful enough to yield closed expressions for the case of hyperplane
arrangements.

\subsection{Duality of resolutions and local cohomology}\label{duality}

We have seen so far that by using appropriate $\lcm$- and $\gcd$-lattices,
projective and injective resolutions of combinatorially finite
$S$-modules  both can be constructed straightforwardly.
In this subsection we will show that it actually suffices to consider only
projective resolutions. More precisely, given some combinatorially finite
$S$-module $E$, we will show that we from a given minimal projective
resolution of $E$ a minimal injective resolution can directly be constructed.

For this consider the $\Z^n$-graded Grothendieck-Cousin complex of $S$, which
provides a minimal injective resolution $0\rightarrow S \rightarrow I_S$ in $\zsmod$,
with $I_S$ explicitly given by (see subsection \ref{divisorialmodules}):
\begin{equation*}
0 \longrightarrow \dual{S}_\on(\one) \longrightarrow \cdots \longrightarrow
\bigoplus_{I \subseteq \on, \ \vert I \vert = n - i} \dual{S}_I(\one)
\longrightarrow \cdots \longrightarrow \dual{S}(\one) \longrightarrow 0,
\end{equation*}
where $\one = (1, \dots, 1) \in \Z^n$.

Now assume that we have a projective resolution of $E$, that is, a finite complex
of projective modules $P_E$ in $\zsmod$ which is quasi-isomorphic to $E$, where
$E$ is considered as a complex concentrated in degree $0$. Then we have the
following chain of quasi-isomorphisms
\begin{equation*}
E \cong P_E \cong P_E \otimes_S S \cong P_E \otimes I_S,
\end{equation*}
where, due to Lemma \ref{injaux} (\ref{injauxi}), the last complex is a complex
of injective modules in $\zsmod$. This can be expressed more generally in the
setting of derived categories as follows:

\begin{lemma}\label{homotopylemma}
Denote $K^b(\text{$\Z^n$-$S$-$\operatorname{proj}$})$ and
$K^b(\text{$\Z^n$-$S$-$\operatorname{inj}$})$ the homotopy categories of bounded
projective and injective complexes, respectively, in $\zsmod$, and denote
$D^b(\zsmod)$ the bounded derived category of $\zsmod$. Then the functors
\begin{equation*}
- \otimes_S I_S :
K^b(\text{$\Z^n$-$S$-$\operatorname{proj}$}) \rightarrow
K^b(\text{$\Z^n$-$S$-$\operatorname{inj}$}) \text{ and } \Hom(I_S, -) :
K^b(\text{$\Z^n$-$S$-$\operatorname{inj}$}) \rightarrow
K^b(\text{$\Z^n$-$S$-$\operatorname{proj}$})
\end{equation*}
are well-defined and fit as
mutually inverse functors into the
following commutative diagram of equivalences of triangulated categories:
\begin{equation*}
\xymatrix{
K^b(\text{$\Z^n$-$S$-$\operatorname{proj}$}) \ar[rr]^{- \otimes_S I_S}_\cong \ar[d]_\cong
& & K^b(\text{$\Z^n$-$S$-$\operatorname{inj}$}) \ar[d]_\cong \ar[rr]^{\Hom(I_S, -)}_\cong
& & K^b(\text{$\Z^n$-$S$-$\operatorname{proj}$}) \ar[d]_\cong \\
D^b(\zsmod) \ar[rr]^{- \otimes_S S}_\cong & & D^b(\zsmod) \ar[rr]^{Hom(S, -)}_\cong
& & D^b(\zsmod)
}
\end{equation*}
\end{lemma}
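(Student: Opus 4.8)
The plan is to proceed in three stages: first show that the two horizontal functors are well-defined on the homotopy categories, then that each square commutes up to natural isomorphism, and finally deduce that the horizontal functors are equivalences that are mutually inverse. Throughout, $\Hom$ denotes the total complex formed from the graded internal $\Hom$-modules, and I take as given from the resolution theory of subsection \ref{resolutions} (in particular Theorem \ref{globaldimbound2}, via the $\zip$/$\unzip$ machinery) that every object of $\zsmod$ admits a finite resolution by finite direct sums of the $S_I(\uc)$ and a finite resolution by finite direct sums of the $\dual{S}_I(\uc)$, so that the two vertical functors are equivalences of triangulated categories.

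For well-definedness: if $P$ is a bounded complex whose terms are finite direct sums of modules $S_J(\uc')$, then by Lemma \ref{injaux}(\ref{injauxi}) each summand $S_J(\uc') \otimes_S \dual{S}_I(\one)$ occurring in $P \otimes_S I_S$ is either the injective module $\dual{S}_I(\uc' + \one)$ (when $J \subseteq I$) or zero, so $P \otimes_S I_S$ is a bounded complex of injectives; dually, if $Q$ is a bounded complex of injectives $\dual{S}_I(\uc)$, then by Lemma \ref{injaux}(\ref{injauxii}) each summand $\Hom_S(\dual{S}_{I'}(\one), \dual{S}_I(\uc))$ occurring in $\Hom(I_S, Q)$ is either the projective module $S_I(\uc - \one)$ (when $I \subseteq I'$) or zero, so $\Hom(I_S, Q)$ is a bounded complex of projectives. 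Since $- \otimes_S I_S$ and $\Hom(I_S, -)$ are additive in the varying argument, they preserve chain homotopies and mapping cones, hence descend to triangulated functors between the indicated homotopy categories.

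For commutativity and the conclusion: the augmentation $\iota \colon S \xrightarrow{\sim} I_S$ induces natural maps $P \to P \otimes_S I_S$ and $\Hom(I_S, Q) \to \Hom(S, Q) = Q$. Writing $C := \mathrm{cone}(\iota)$, a bounded acyclic complex, the total complex $P \otimes_S C$ is acyclic (each term of $P$ is flat, being a shift of a localization of $S$, and $C$ is bounded and acyclic, so a finite-filtration spectral sequence applies), and $\Hom(C, Q)$ is acyclic (each $\Hom(-, Q^j)$ is exact since $Q^j$ is injective, and $Q$ is bounded); hence both natural maps are quasi-isomorphisms. Using $- \otimes_S S \cong \mathrm{id}$ and $\Hom(S, -) \cong \mathrm{id}$, both squares of the diagram commute up to these natural isomorphisms. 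Writing $v_P$ and $v_I$ for the vertical equivalences, this gives $- \otimes_S I_S \cong v_I^{-1} \circ v_P$ and $\Hom(I_S, -) \cong v_P^{-1} \circ v_I$, which are therefore equivalences of triangulated categories, and whose composites in either order are naturally isomorphic to the respective identity functors, i.e.\ they are mutually inverse.

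The step I expect to be the genuine obstacle is not the bookkeeping above but the input that the two vertical functors are equivalences: this amounts to the statement that $\zsmod$ has finite projective and injective dimension relative to the families $\{S_I(\uc)\}$ and $\{\dual{S}_I(\uc)\}$, and that these families are ``projective enough'' respectively ``injective enough'' for the comparison $K^b \to D^b(\zsmod)$ to be an equivalence --- precisely what the $\zip$/$\unzip$ machinery and Theorem \ref{globaldimbound2} are designed to provide. The only other point requiring a little care is tracking the boundedness hypotheses in the commutativity step, without which the finite-filtration acyclicity arguments for $P \otimes_S C$ and $\Hom(C, Q)$ would not be available.
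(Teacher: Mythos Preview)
Your proof is correct and follows essentially the same approach as the paper's own proof: both invoke Lemma \ref{injaux}(\ref{injauxi}) and (\ref{injauxii}) for well-definedness, take the vertical equivalences as the standard ones, and then obtain commutativity and the equivalence of the horizontal functors. The paper compresses your second and third stages into the single sentence ``It is then clear that the horizontal arrows yield equivalences and that the diagram commutes,'' whereas you have unpacked this via the cone of the augmentation $S \to I_S$ and the flatness/injectivity of the indecomposable summands --- a reasonable and correct way to justify what the paper leaves implicit.
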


\begin{proof}
The vertical isomorphisms are the standard isomorphisms. It follows from Lemma
\ref{injaux} (\ref{injauxi}) that the tensor product of a complex of projective
modules yields a complex of injective modules. Likewise, it follows from
\ref{injaux} (\ref{injauxii}) that $\Hom_S(I_S, -)$ applied to a complex of injective
modules yields a complex of  projective modules. It is then clear that the
horizontal arrows yield equivalences and that the diagram commutes.
\end{proof}

For simplicity, we will assume from now that $E$ is a finitely generated module.
Then a minimal projective resolution $P_E$ automatically is a free resolution.
The complex $P_E \otimes_S I_S$ is not a resolution of $E$ in the traditional
sense, as it is nonzero in degrees smaller than $0$.
Our aim is to obtain from $P_E \otimes_S I_S$ a minimal
injective resolution for $E$. To this end, consider the decomposition
\begin{equation*}
I_S = \bigoplus_{i = 0}^n \bigoplus_{I \subseteq \on, \vert I \vert = n -i} \dual{S}_I
(e_I)[i - n].
\end{equation*}
Note that this decomposition is not a proper decomposition of $I_S$ into
subcomplexes, i.e. we disregard the differential of $I_S$ for the moment.
Then we can decompose
\begin{equation*}
P_E \otimes_S I_S =
\bigoplus_{i = 0}^n \bigoplus_{I \subseteq \on, \vert I \vert = n -i}
P_E \otimes_S \dual{S}_I (\one)[i - n].
\end{equation*}

Explicitly, if we write $P_{E, i} \rightarrow P_{E, i - 1}$ as
$\bigoplus_j S(\uc_{i, j}) \overset{\phi_i}{\rightarrow} \bigoplus_k S(\uc_{i - 1, j})$,
we get by Lemma \ref{injaux} (\ref{injauxi}) for $P_{E} \otimes \dual{S}_I(\one)$:
\begin{equation*}
\bigoplus_k \dual{S}_I(\uc_{i, k} + \one) \longrightarrow \bigoplus_j
\dual{S}_I(\uc_{i - 1, j} + \one)
\end{equation*}
Now, applying $\Hom_S(S_I, -)$, we get by Lemma \ref{injaux} (\ref{injauxii}):
\begin{equation*}
\bigoplus_k S_I(\uc_{i, k} + \one) \overset{\phi_i}{\longrightarrow} \bigoplus_j
S_I(\uc_{i - 1, j} + \one).
\end{equation*}
That is, we can naturally identify the complex $\Hom_S(S_I, P_E \otimes_S \dual{S}_I(\one))$
with the localization of the complex $P_E$ at the monomial $x^I \in S$. The complex
$(P_E)_{x^I}$ then is a projective resolution of $E_{x^I}$ over $S_{x^I}$ which, in
general, is no longer minimal. In the following
theorem we will use this information and apply Lemma \ref{minimizeinjres} to construct
from $P_E \otimes_S I_S$ a minimal injective resolution for $E$.

\begin{theorem}\label{dualitytheorem}
Let $E$ be a finitely generated $S$-module and $P_E$ a minimal free resolution
of $E$. Then, by splitting off summands from $P_E \otimes_S I_S$, we obtain a minimal
injective resolution $I_E$ of $E$. Its components with respect to the injectives
$S_I$ are given by
\begin{equation*}
P_{E_{x^I}} \otimes_S \dual{S}_I(\one),
\end{equation*}
where $P_{E_{x^I}}$ denotes the minimal free resolution of $E_{x^I}$ over $S_{x^I}$.
\end{theorem}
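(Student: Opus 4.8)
The plan is to start from the complex $P_E \otimes_S I_S$, which by Lemma \ref{homotopylemma} is a complex of injective modules in $\zsmod$ quasi-isomorphic to $E$, and then to remove all ``superfluous'' summands using Lemma \ref{minimizeinjres} until a minimal injective resolution remains. The first task is bookkeeping: using the decomposition $I_S = \bigoplus_{i=0}^n \bigoplus_{\vert I\vert = n-i} \dual{S}_I(\one)[i-n]$ (disregarding differentials), together with Lemma \ref{injaux}(\ref{injauxi}), write out $P_E \otimes_S I_S$ explicitly as a double complex whose $I$-component, for fixed $I \subseteq \on$, consists of the terms $\bigoplus_k \dual{S}_I(\uc_{i,k}+\one)$. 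I would then invoke the identification established in the paragraph preceding the theorem: applying $\Hom_S(S_I,-)$ to the $I$-strand of $P_E\otimes_S I_S$ recovers the localized complex $(P_E)_{x^I}$, a (generally non-minimal) projective resolution of $E_{x^I}$ over $S_{x^I}$. Since Matlis-type duality/localization is exact and $\Hom_S(S_I,-)$ is faithful on the relevant summands (Lemma \ref{injaux}(\ref{injauxii})), minimizing the $I$-strand of $P_E\otimes_S I_S$ as a complex of $\dual{S}_I$'s is \emph{equivalent} to minimizing $(P_E)_{x^I}$ as a complex of $S_I$'s.

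Next I would carry out the minimization. A pair $(i,j,k)$ admitting a split-off in the sense of Lemma \ref{minimizeinjres} corresponds exactly to a unit entry in the monomial matrix of the localized differential, i.e. to a non-minimality of $(P_E)_{x^I}$ over $S_{x^I}$. By the Hilbert syzygy theorem (or Proposition \ref{globaldimbound}/Theorem \ref{globaldimbound2}) both $P_E$ and its localizations are finite, so the minimization process terminates after finitely many steps and produces, strand by strand, a minimal complex. The key point — which I expect to be the main obstacle — is to check that these split-offs can be performed \emph{compatibly across all the strands $I$ simultaneously}: the differential of $I_S$ couples the $\dual{S}_I(\one)$-summand to $\dual{S}_{I\cup\{i\}}(\one)$-summands, so splitting off a summand in one strand must not reintroduce a unit in a neighbouring strand. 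I would handle this by exploiting the fact that the coupling differentials in $I_S$ are (up to sign) the canonical localization maps $\dual{S}_I(\one)\to\dual{S}_{I\cup\{i\}}(\one)$, which by Lemma \ref{injaux} involve no shift in the $M$-direction; hence a split-off governed by a unit in the ``horizontal'' (i.e. $P_E$) differential is preserved under passage to any localization, and the minimization is strand-wise well-defined. Concretely, one orders the strands by $\vert I\vert$ and minimizes from $\vert I\vert = n$ downward, noting that once $(P_E)_{x^I}$ is minimal for all $I$ with $\vert I\vert \geq m$, no further cancellation in those strands can be forced by the strands with $\vert I\vert < m$.

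Finally, having produced a minimal complex $I_E$ of injectives quasi-isomorphic to $E$, it is automatically a minimal injective resolution of $E$ (it is bounded, its cohomology is concentrated in degree $0$ and equals $E$, and no summands can be split off). By the strand-wise description above, its $\dual{S}_I$-component is precisely the minimized localized complex $(P_{E_{x^I}})$ twisted by $\dual{S}_I(\one)$, which under the identification $\Hom_S(S_I,-)$ is $P_{E_{x^I}}\otimes_S \dual{S}_I(\one)$ with $P_{E_{x^I}}$ the \emph{minimal} free resolution of $E_{x^I}$ over $S_{x^I}$. This gives exactly the claimed form of the components, completing the proof. The only genuinely delicate point, as noted, is the cross-strand compatibility of the minimization; everything else is a translation between the explicit monomial-matrix descriptions of Lemma \ref{injaux} and Lemma \ref{minimizeinjres}.
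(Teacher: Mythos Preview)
Your proposal is correct and follows the same route as the paper: decompose $P_E \otimes_S I_S$ into its $I$-strands, identify each strand with the (generally non-minimal) localized resolution $(P_E)_{x^I}$, and apply Lemmas \ref{minimizeprojres} and \ref{minimizeinjres} strand by strand. The cross-strand compatibility you flag as the main obstacle is in fact automatic and requires no ordering argument: splitting off a pair of $\dual{S}_I$-summands modifies the remaining differential only by a correction term factoring through $\dual{S}_I$, which by Lemma \ref{injaux}(\ref{injauxii}) can alter an entry $\dual{S}_J \to \dual{S}_{J'}$ only when $J \supseteq I \supseteq J'$, so a new same-strand unit could arise only in strand $I$ itself---hence the paper's terse ``simultaneously'' is justified without further work.
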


\begin{proof}
First observe that $P_E \otimes_S \dual{S}_I(\one) \cong (P_E)_{x^I} \otimes_S
\dual{S}_I(\one)$ for every $I \subseteq \on$.
Consider the decomposition of $P_E \otimes_S I_S$ as above. Then, for every
$I \subseteq \on$, for every summand of the projective resolution $(P_E)_{x^I}$
which can be split of, we can can split of the corresponding summand of
$P_E \otimes_S I_S$ by Lemmas \ref{minimizeprojres} and \ref{minimizeinjres}.
Therefore, reducing $(P_E)_{x^I}$ to a minimal resolution $P_{E_{x^I}}$
over $S_{x^I}$ for every $I \subseteq \on$ simultaneously reduces $P_E \otimes_S I_S$
to a minimal injective resolution of $E$ with the stated properties.
\end{proof}

Using Theorem \ref{dualitytheorem}, we can rederive a well-known correspondence
between graded Betti and Bass numbers of finitely generated modules.

\begin{corollary}[see \cite{Miller00} \S 5]\label{bettibasscorrespondence}
Let $E$ be a finitely-generated $\Z^n$-graded $S$-module and for $I \subset \on$ denote
$\tau_I$ the corresponding face of the positive orthant in $\R^n$. Then the graded Bass
number $b^i(\tau_I, \uc)$ equals the graded Betti number $\beta_{p_I - i}(\uc + \one)$ of
the $S_{x^I}$-module $E_{x^I}$, where $p_I$ denotes the projective dimension of
$E_{x^I}$ as an $S_{x^I}$-module.
\end{corollary}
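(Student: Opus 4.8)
The plan is to read off the desired identity of Betti and Bass numbers directly from Theorem~\ref{dualitytheorem}. By that theorem, a minimal injective resolution $I_E$ of $E$ is obtained from $P_E \otimes_S I_S$ by splitting off summands, and its summands built from the injective $\dual S_I$ are exactly the terms of the complex $P_{E_{x^I}} \otimes_S \dual S_I(\one)$, where $P_{E_{x^I}}$ is the minimal free resolution of the localized module $E_{x^I}$ over $S_{x^I}$. So the bookkeeping reduces to tracking cohomological degrees under the tensor shift.

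First I would fix $I \subseteq \on$ and recall from subsection~\ref{divisorialmodules} that in the Grothendieck--Cousin complex $I_S$ the summand $\dual S_I(\one)$ sits in cohomological degree $n - |I|$; thus the factor $[\,|I| - n\,]$ in the decomposition of $I_S$ shifts it back to degree $0$ in the relevant piece. Tensoring a minimal free resolution $P_E = (\cdots \to P_{E,1} \to P_{E,0})$ with $\dual S_I(\one)$ and applying Lemma~\ref{injaux}(\ref{injauxi}) turns a free summand $S(-\uc)$ in homological degree $j$ of $P_E$ into a summand $\dual S_I(-\uc + \one)$; after re-indexing by the shift coming from $I_S$, a contribution in homological degree $j$ of $P_{E_{x^I}}$ lands in \emph{cohomological} degree $p_I - j$ of $I_E$, where $p_I = \fd(E_{x^I})$ over $S_{x^I}$ is the top homological degree of $P_{E_{x^I}}$ (so the bottom term $P_{E_{x^I}, p_I}$ maps to cohomological degree $0$, matching the fact that $\dual S_I(\one)$ itself is the degree-$0$ injective hull term when $I = \on$). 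Comparing degree shifts: a generator of $P_{E_{x^I}}$ of internal degree $\uc'$ (i.e. a free summand $S_{x^I}(-\uc')$, contributing to $\beta_{j}(\uc')$ of $E_{x^I}$) produces a summand $\dual S_I(-\uc' + \one)$ of $I_E$; matching this against the definition $I_E^i = \bigoplus \K[\tau_M]\dual{\ }(-m)^{b^i(\tau,m)}$ and the dictionary $\dual S_I \leftrightarrow \K[\tau_{I,M}]\dual{\ }$ identifies $b^{p_I - j}(\tau_I, \uc' - \one) = \beta_j(\uc')$ of $E_{x^I}$. Setting $i = p_I - j$, i.e. $j = p_I - i$, and $\uc = \uc' - \one$, i.e. $\uc' = \uc + \one$, gives exactly $b^i(\tau_I, \uc) = \beta_{p_I - i}(\uc + \one)$ of $E_{x^I}$.

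The only genuine subtlety, and the step I would be most careful about, is the bookkeeping of the internal degree shift by $\one$ and the cohomological shift by $p_I$ simultaneously, together with checking that the minimality statement in Theorem~\ref{dualitytheorem} really does produce the \emph{minimal} injective resolution so that the exponents computed this way are honestly the Bass numbers rather than merely upper bounds — this is where Lemmas~\ref{minimizeprojres} and \ref{minimizeinjres} and the compatibility of splittings asserted in the proof of Theorem~\ref{dualitytheorem} do the real work, and one must invoke that the minimal free resolution $P_{E_{x^I}}$ over the localization is obtained from $(P_E)_{x^I}$ precisely by the same splittings. Once that compatibility is granted (it is, by Theorem~\ref{dualitytheorem}), the statement follows by the degree matching above; I would then remark that this recovers Miller's correspondence \cite{Miller00} \S 5, as indicated.
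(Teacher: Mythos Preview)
Your approach matches the paper's: the corollary is stated without a separate proof, as an immediate consequence of Theorem~\ref{dualitytheorem}, and your argument is precisely an unpacking of that theorem together with the requisite degree bookkeeping. Your treatment of the internal $\one$-shift and your appeal to Lemmas~\ref{minimizeprojres} and~\ref{minimizeinjres} for minimality are both correct.

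There is, however, a concrete error in your cohomological shift. You assert that a summand in homological degree $j$ of $P_{E_{x^I}}$ lands in cohomological degree $p_I - j$ of $I_E$. This is not what the construction gives. In the Grothendieck--Cousin complex $I_S$, the summand $\dual S_I(\one)$ sits in cohomological degree $n - |I|$ (a number depending only on $I$, not on $E$), so in the total complex $P_E \otimes_S I_S$ the term $P_{E_{x^I},j} \otimes_S \dual S_I(\one)$ lies in cohomological degree $(n - |I|) - j$. Your parenthetical sanity check uses only $I = \on$, where $n - |I| = 0 = p_{\on}$, so it cannot distinguish the two shifts. The case $E = S$ already exhibits the discrepancy: here $p_I = 0$ for every $I$, and the minimal injective resolution is $I_S$ itself, with $\dual S_I(\one)$ in degree $n - |I|$; your formula would place every $\dual S_I$-contribution in degree $0$.

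Thus the bookkeeping from Theorem~\ref{dualitytheorem} actually yields
\[
b^i(\tau_I,\uc) \;=\; \beta_{(n-|I|)-i}(\uc+\one)\quad\text{for }E_{x^I},
\]
which coincides with the stated formula only when $p_I = n - |I|$, i.e.\ when $E_{x^I}$ has maximal projective dimension over $S_{x^I}$. It appears the printed statement carries this same slip, so you were matching the target faithfully; but the step where you pass from ``the shift coming from $I_S$'' to the value $p_I$ is asserted rather than derived, and the derivation gives $n - |I|$ instead.
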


\begin{remark}
As already mentioned, Corollary \ref{bettibasscorrespondence} is not a new result
but has been proved in (essentially) the same generality in \cite{Miller00}. However,
in loc. cit. the proof involves several technical ad-hoc methods, whereas by
Lemma \ref{homotopylemma} this correspondence is just the specialization
of a  natural lift of the trivial autoequivalence of $D^b(\zsmod)$ to the level of homotopy
categories. Also, we can construct the full minimal injective resolution of a given
module $E$, whereas in loc. cit., starting with the
case of modules of finite length, an inductive argument
for the pieces $P_E \otimes_S \dual{S}_I(\one)[i - n]$ is given.
Moreover, we can interpret now Alexander duality as studied in \cite{Miller00} in more
general terms by the autoequivalence of $D^b(\zsmod)$
given by the Matlis duality functor.
\end{remark}

\begin{remark}
The graded Grothendieck-Cousin complex is a very simplified version of
the general Grothendieck-Cousin complex for local rings
(see \cite{Sharp69}). However, the construction of Lemma
\ref{homotopylemma} might be of more general
interest, e.g. for studying Bass numbers and local cohomologies of
modules over regular local rings.
\end{remark}

Another application is the following formula for
computing the local cohomologies of modules in $\zsmod$:

\begin{theorem}\label{loccohom1}
Let $V \subseteq \mathbf{A}^n_K$ be a torus invariant closed subscheme
and denote $R \Gamma_V : D^b(\zsmod) \rightarrow
D^b(\zsmod)$ the right derived local cohomology functor. Then for any object
$E$ in $D^b(\zsmod)$ which has a projective representative $P_E$ in
$K^b(\text{$\Z^n$-$S$-$\operatorname{proj}$})$ we get the following formula
for local cohomology:
\begin{equation*}
H^i_V(E) \cong H^i(P_E \otimes_S R \Gamma_V S).
\end{equation*}
\end{theorem}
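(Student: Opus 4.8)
The plan is to compute $R\Gamma_V E$ by means of the injective resolution of $E$ furnished by Lemma \ref{homotopylemma}, and then to commute the functor $\Gamma_V$ past the tensor product with $P_E$.

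First I would observe that $P_E \otimes_S I_S$ is a bounded injective resolution of $E$ in $\zsmod$. Each projective summand $S_J(\uc)$ occurring in $P_E$ is a localization of $S$, hence flat, so $- \otimes_S P_E$ sends the quasi-isomorphism $S \to I_S$ to a quasi-isomorphism $P_E = P_E \otimes_S S \to P_E \otimes_S I_S$; thus $P_E \otimes_S I_S \simeq E$. Its terms are, by Lemma \ref{injaux} (\ref{injauxi}), finite direct sums of modules $\dual{S}_I(\uc)$, which are injective in $\zsmod$. Therefore $R\Gamma_V E$ is represented by the complex $\Gamma_V(P_E \otimes_S I_S)$. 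For the same reason $\Gamma_V I_S$ represents $R\Gamma_V S$, since $I_S$ is an injective resolution of $S$, and because $P_E$ is a bounded complex of flat modules the complex $P_E \otimes_S \Gamma_V I_S$ represents $P_E \otimes_S R\Gamma_V S$ without any further resolution. So the statement reduces to identifying the two complexes $\Gamma_V(P_E \otimes_S I_S)$ and $P_E \otimes_S \Gamma_V I_S$.

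This identification is the heart of the matter, and the step I expect to be the main obstacle. I would argue summand by summand: for a projective $S_J(\ud)$ of $P_E$, Lemma \ref{injaux} (\ref{injauxi}) gives $S_J(\ud) \otimes_S \dual{S}_I(\one) \cong \dual{S}_I(\ud + \one)$ when $J \subseteq I$ and $0$ otherwise, so $P_E \otimes_S I_S$ is a direct sum of shifted copies of the codivisorial modules $\dual{S}_I$. By Proposition \ref{acycprop} (\ref{acycpropi}) and (\ref{acycpropiii}), applying $\Gamma_V$ to $\dual{S}_I(\uc)$ returns either $\dual{S}_I(\uc)$ or $0$ according to whether the orbit of $\A^n_\K$ indexed by $\tau_I$ is contained in $V$ --- a condition depending only on $I$ (and $V$), never on the degree shift $\uc$. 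Hence $\Gamma_V$ acts on $P_E \otimes_S I_S$ exactly by deleting every summand whose index set $I$ fails this condition, which is precisely the effect of first passing from $I_S$ to $\Gamma_V I_S$ and then tensoring with $P_E$. Since $\Gamma_V$ is a subfunctor of the identity, this yields an equality of subcomplexes of $P_E \otimes_S I_S$, so it is automatically compatible with the differentials and gives an isomorphism of complexes $\Gamma_V(P_E \otimes_S I_S) \cong P_E \otimes_S \Gamma_V I_S$. Passing to cohomology then gives
\[
H^i_V(E) \cong H^i\big(\Gamma_V(P_E \otimes_S I_S)\big) \cong H^i\big(P_E \otimes_S \Gamma_V I_S\big) \cong H^i(P_E \otimes_S R\Gamma_V S),
\]
as claimed. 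The one delicate point worth checking carefully is this last compatibility with the differentials of the Grothendieck-Cousin complex $I_S$, but it is forced: on every summand $\Gamma_V$ is either the identity or the zero map, with the alternative determined by $I$ alone, and the differentials of $I_S$ change only $I$, so no interference is possible.
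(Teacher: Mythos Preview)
Your proof is correct and follows essentially the same approach as the paper: represent $E$ by the injective complex $P_E \otimes_S I_S$ via Lemma \ref{homotopylemma}, then use Proposition \ref{acycprop} (\ref{acycpropi}) to see that $\Gamma_V$ kills or keeps each summand $\dual{S}_I(\uc)$ according to $I$ alone, so that $\Gamma_V(P_E \otimes_S I_S) = P_E \otimes_S \Gamma_V I_S = P_E \otimes_S R\Gamma_V S$. Your write-up is more explicit about the compatibility with differentials and the flatness argument, but the strategy is identical; the reference to part (\ref{acycpropiii}) is unnecessary here since $V$ already lives in $\A^n_\K$.
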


\begin{proof}
As in Lemma \ref{homotopylemma}, we represent $E$ by a complex of injectives given
by $P_E \otimes_S I_S$. Then clearly $R \Gamma_V(E) \cong \Gamma_V (P_E \otimes_S I_S)$.
Then, decomposing $P_E \otimes_S I_E$ as before, we get by Lemma
\ref{acycprop} (\ref{acycpropi}) that
$\Gamma_V\big(P_E \otimes_S \dual{S}_I(\one)[i - n]\big) = 0$ iff the support
of $\dual{S}_I(e_I)$ is not contained in $V$. Hence, the surviving part of
$\Gamma_V\big(P_E \otimes_S \dual{S}_I(\one)[i - n]\big)$ equals
$P_E \otimes_S R\Gamma_V S$.
\end{proof}

\begin{remark}
Theorem \ref{loccohom1} is another variant of a well-known formula, e.g. see
\cite{brunsherzog} Thm. 3.5.6 and \cite{Miller00} Thm. 6.2.
\end{remark}

\subsection{Computing local cohomologies}\label{loccohomcomp}

Given a finitely generated module $E$ in $\zsmod$, Theorem \ref{dualitytheorem} tells
us how to construct an admissible $\gcd$-lattice. By Proposition \ref{localequiv},
for every $I \subset \on$, the
category of $\Z^n$-graded $S_{x^I}$-modules is equivalent to the category of
$\Z^{\on \setminus I}$-graded $\big(S / (x_i \mid i \in I)\big)$-modules. Hence,
for every $E_{x^I}$ we have an admissible $\lcm$-lattice $\cL_I$ in
$\Zbar^{\on \setminus I}$.
We embed these $\cL_I$ into $\Zbar^n$ by $\uc = (c_i \mid i \in \on \setminus I)
\mapsto \iota_I(\uc) = (c'_1, \dots, c'_n) \in \Zbar^n$, where
$c'_i = \begin{cases}\infty & \text{ if } i \in I\\ c_i - 1 & \text{ otherwise}.\end{cases}$
Then by Theorem \ref{dualitytheorem} $\bigcup_{I \subset \on} \iota_I(\cL_I)$
is a superset of all degrees
of objects contained in the minimal injective resolution of $E$. Hence,
the $\gcd$-lattice $\cG$ generated by all $\iota_I(\cL_I)$ in $\Zbar^n$ is $E$-admissible
in the sense of Definition \ref{gcdadmissibledef}.

Now assume that we have constructed some minimal injective resolution $I_E$ and
let $V \subseteq \A^n_\K$ be any closed torus invariant subscheme. Then $H^i_V E$
is determined completely once its graded parts $(H^i_V E)_\uc = H^i(\Gamma_V I_E)_\uc$
have been determined for every $\uc \in \cG$. For every $i \geq 0$ we decompose
$(\Gamma_V I_E)^i =: A^{i, \uc} \oplus B^{i, \uc}$ such that the first summand
contains all summands $\dual{S}_I(\underline{d})$ with $d_i \geq c_i$ for every
$i \in \on \setminus I$. Now, for any two summands $S_J(\underline{d})$ and
$S_I(\underline{d}')$ of $(\Gamma_V I_E)^i$ and $(\Gamma_V I_E)^{i + 1}$,
respectively, we know by Lemma \ref{injaux} (\ref{injauxii}) that the corresponding
entry of the representing matrix of the differential
$\phi^i: (\Gamma_V I_E)^i \rightarrow (\Gamma_V I_E)^{i + 1}$ (see Lemma
\ref{minimizeinjres}) is zero unless
$I \subseteq J$ and $d_i \leq d_i'$ for every $i \in \on \setminus J$. In particular,
the image of restriction $\phi^i\vert_{B^{i, \uc}}$ is contained in $B^{i + 1, \uc}$.
The following proposition will be useful.

\begin{proposition}\label{cohomologyshift}
With above notation we have  $(H^i_V E)_\uc \cong \big(H^i(A^{\bullet, \uc})\big)_\uc
\cong \big(H^{i + 1}(B^{\bullet, \uc})\big)_\uc$ for every $\uc \in \cG$
\end{proposition}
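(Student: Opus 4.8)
The plan is to exploit the decomposition $(\Gamma_V I_E)^i = A^{i,\uc} \oplus B^{i,\uc}$ together with the triangularity of the differential that was just observed. First I would record precisely what has already been established: the differential $\phi^i$ sends $B^{i,\uc}$ into $B^{i+1,\uc}$ (since a summand $\dual S_I(\ud')$ receiving a nonzero map from $S_J(\ud) \subseteq B^{i,\uc}$ has $I \subseteq J$ and $d'_i \geq d_i$ for $i \in \on \setminus J$, and $J$ having some coordinate below $\uc$ forces $I$ to as well). Hence $B^{\bullet,\uc}$ is a subcomplex of $(\Gamma_V I_E)^\bullet$ and $A^{\bullet,\uc}$ is the quotient complex. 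This gives a short exact sequence of complexes
\begin{equation*}
0 \longrightarrow B^{\bullet,\uc} \longrightarrow (\Gamma_V I_E)^\bullet \longrightarrow A^{\bullet,\uc} \longrightarrow 0,
\end{equation*}
hence a long exact sequence in cohomology.

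Next I would pass to the degree-$\uc$ graded piece. The key point is that $B^{i,\uc}$ is built from summands $\dual S_I(\ud)$ with $d_i < c_i$ for \emph{some} $i \in \on \setminus I$, and for such a summand the graded component $\big(\dual S_I(\ud)\big)_\uc = 0$: by the formula $\dual S_I(\uc)_{(0)}$-style description (or directly, $\dual S_I(\ud)_\uc \neq 0$ requires $u_i + d_i \geq 0$... more precisely, $\dual S_I(\ud) = \dual S_I \otimes (\ud)$ is supported in degrees $\ud + \ue$ with $e_i \leq 0$ for $i \notin I$ and $e_i$ arbitrary for $i \in I$, so $\ud + \ue = \uc$ forces $c_i \leq d_i$ for all $i \notin I$). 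Therefore $(B^{i,\uc})_\uc = 0$ for every $i$, so $(B^{\bullet,\uc})_\uc$ is the zero complex in degree $\uc$, and consequently $\big((\Gamma_V I_E)^i\big)_\uc = (A^{i,\uc})_\uc$ for all $i$. Taking cohomology in degree $\uc$ immediately yields $(H^i_V E)_\uc = H^i(\Gamma_V I_E)_\uc \cong \big(H^i(A^{\bullet,\uc})\big)_\uc$, the first isomorphism.

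For the second isomorphism I would instead \emph{not} restrict to degree $\uc$ but use the long exact sequence of the short exact sequence of complexes above in full. One should check that $A^{\bullet,\uc}$ is \emph{acyclic} as a complex (not just in degree $\uc$): each $A^{\bullet,\uc}$ assembles, over the subsets $I$ with $\dual S_I$-summands having all coordinates in $\on\setminus I$ at least $\uc$, into pieces of the form $P_{E_{x^I}} \otimes_S \dual S_I(\one)$ localized/truncated appropriately, and by Theorem \ref{dualitytheorem} and Theorem \ref{loccohom1} the relevant truncated Cousin-type complex computes something that vanishes once one is ``above'' $\uc$ in the $\gcd$-lattice — here is where the adjacency of $\uc$ to an element of $\cG$ is used, so that no genuinely new cohomology is picked up in $A^{\bullet,\uc}$. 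Given acyclicity of $A^{\bullet,\uc}$, the long exact sequence gives $H^{i+1}(B^{\bullet,\uc}) \cong H^i\big((\Gamma_V I_E)^\bullet\big) = H^i_V E$; restricting the resulting isomorphism to degree $\uc$ yields $\big(H^{i+1}(B^{\bullet,\uc})\big)_\uc \cong (H^i_V E)_\uc$, which is the third term in the claimed chain.

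The main obstacle I anticipate is pinning down exactly the acyclicity (or the precise degree-$\uc$ triviality) of $A^{\bullet,\uc}$: one has to argue carefully that splitting $(\Gamma_V I_E)^i$ according to ``all free coordinates $\geq c_i$'' is compatible with the differential in the way needed, and that the ``$A$-part'' contributes nothing new — this is really a bookkeeping argument on the monomial-matrix entries combined with Lemma \ref{injaux}(\ref{injauxii}) and the structure of the minimal injective resolution from Theorem \ref{dualitytheorem}. Everything else (forming the short exact sequence of complexes, the long exact sequence, the vanishing $(B^{i,\uc})_\uc = 0$) is routine. The cleanest writeup is probably to prove the first isomorphism directly by the degree-$\uc$ vanishing of $B$, and the second by the degree-$\uc$ vanishing of $A$ together with the long exact sequence, treating the two summands symmetrically.
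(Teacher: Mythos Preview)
Your overall framework — the block-triangular differential, the short exact sequence of complexes $0 \to B^{\bullet,\uc} \to \Gamma_V I_E \to A^{\bullet,\uc} \to 0$, and the induced long exact cohomology sequence — is exactly the paper's. Your argument for the first isomorphism via the vanishing $(B^{\bullet,\uc})_\uc = 0$ (so that $(\Gamma_V I_E)_\uc = (A^{\bullet,\uc})_\uc$ as complexes of vector spaces) is correct and in fact more explicit than the paper's one-line appeal.

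There is a genuine gap in your treatment of the second isomorphism. You propose to show that $A^{\bullet,\uc}$ is acyclic as a complex of modules and then read off $H^i_V E \cong H^{i+1}(B^{\bullet,\uc})$ from the long exact sequence. But $A^{\bullet,\uc}$ cannot be acyclic: by your own first-isomorphism argument one has $(A^{\bullet,\uc})_\uc = (\Gamma_V I_E)_\uc$, whose $i$-th cohomology is $(H^i_V E)_\uc$ --- precisely the nonzero quantity you are trying to compute. So the ``acyclicity of $A$'' route is self-contradictory, and the uncertainty you already flag about this step (``the main obstacle I anticipate\dots'') is well founded. The paper does \emph{not} argue that $A^{\bullet,\uc}$ is acyclic; instead it asserts that the \emph{middle} complex $A^\bullet \oplus B^\bullet$ is exact, and then the long exact sequence directly yields $H^i(A^{\bullet,\uc}) \cong H^{i+1}(B^{\bullet,\uc})$ for every $i$, from which the second isomorphism follows by restriction to degree~$\uc$. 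The exactness input therefore sits with the total complex in the middle of the short exact sequence, not with the $A$-summand alone.
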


\begin{proof}
By the above discussion
the differential $\phi^i : A^i \oplus B^i \rightarrow A^{i + 1} \oplus B^{i + 1}$ decomposes
into blocks as follows
\begin{equation*}
\phi^i =
\begin{pmatrix}
\phi^i_A & 0 \\
\phi^i_{AB} & \phi^i_B.
\end{pmatrix}
\end{equation*}
with $\phi^i_A : A^i \rightarrow A^{i + 1}$, $\phi^i_{AB} : A^i \rightarrow B^{i + 1}$,
and $\phi^i_B : B^i \rightarrow B^{i + 1}$. Then we have complexes $\dots \rightarrow A^i
\overset{\phi^i_A}{\rightarrow} A^{i + 1} \rightarrow \cdots$ and
$\dots \rightarrow B^i \overset{\phi^i_B}{\rightarrow} B^{i + 1} \rightarrow \cdots$ which
fit into a short exact sequence of complexes
\begin{equation*}
0 \longrightarrow B^\bullet \longrightarrow A^\bullet \oplus B^\bullet \longrightarrow
A^\bullet \longrightarrow 0.
\end{equation*}
Then the assertion follows from the induced long cohomology sequence and the
fact that the complex $A^\bullet \oplus B^\bullet$ is exact.
\end{proof}

Below we will mostly be interested in the local cohomologies $H^i_x E$ with respect
to the torus fixed point $x$ in $\A^n_\K$ (or the maximal $\Z^n$-graded ideal of $S$, respectively). If a minimal free
resolution $F_\bullet \rightarrow E \rightarrow 0$ is given, then by Theorem
\ref{loccohom1}, $H^i_x E_\mH$ coincides with the $i$-th cohomology
of the complex $F_\bullet \otimes_S \dual{S}(\one)[-n]$. This implies
that the ``interesting'' degrees are determined by the $\gcd$-lattice $\cG$
generated by the degrees of the nonzero graded Bass numbers $b^i(\uc)$ of $E$, i.e. by
$\{\uc - \one \mid \uc \in \cL_\emptyset\}$.

\begin{definition}\label{adjacentdefinition}
We call $\uc \in \Z^n$ {\em adjacent} to $\ud \in \cG$ if $\uc \leq \ud$
and $\uc \nleq \ud'$ for all $\ud' \in \cG$ with $\ud \nleq \ud'$.
\end{definition}

For any $\uc \in \Z^n$ we have $(H^i_x E)_\uc = 0$ if $\uc$ is not adjacent
to any $\ud \in \cG$ and $(H^i_x E)_\uc = (H^i_x E)_\ud$ if $\uc$ is
adjacent to $\ud$.

\begin{example}\label{einfachbass}
Consider vector space arrangements with reflexive model $E$ as in Example \ref{einfach}.
The canonical admissible $\lcm$-lattice is given by $\underline{i}_2$ and the
$\uc_k$. Then $\cG$ is given by
$\ud_P := \gcd\{\uc_l \mid l \in P\} -
\one$ for every subset $P$ of $\{1, \dots, t\}$.
Explicitly, $\ud_P = (d_P^1, \dots, d_P^n)$, where $d_P^k = i_1^k - 1$ if $k \in \bigcup_{p \in P}
I_p$ and $d_P^k = i_2^k - 1$ otherwise. The only possibly nontrivial local cohomologies
are given by $(H^{n - 1}_x E)_\uc$ and $(H^n_x E)_\uc$, where $\uc$ is adjacent to some
$\ud_p$. Then $\dim (H^{n - 1}_x E)_{\ud_P}$ and $\dim (H^n_x E)_{\ud_P}$
coincide with the dimension of the cohomologies of the complex
\begin{equation*}
0 \longrightarrow V^1 \longrightarrow \bigoplus_{p \in P} V_p \longrightarrow 0,
\end{equation*}
which is concentrated in degrees $n - 1$ and $n$, respectively. However, it is
more convenient to read off the dimensions from the $n$-th and $(n + 1)$-st
cohomologies of the following complex which is concentrated in degrees $n$
and $n + 1$:
\begin{equation*}
0 \longrightarrow \bigoplus_{p \notin P} V_p \longrightarrow V \longrightarrow 0.
\end{equation*}
From this we read off
\begin{equation*}
\dim (H^{n - 1}_x E)_{\ud_P} = \sum_{i \notin P} \dim V_i - \dim \sum_{i \notin P} V_i
\quad \text{ and } \quad
\dim (H^n_x E)_{\ud_P} = \dim V - \dim \sum_{i \notin P} V_i.
\end{equation*}
for every $P \subseteq \{1, \dots, t\}$.
\end{example}

\section{Hyperplane arrangements}\label{hyperplanearrangements}

In this section we want to apply our machinery to a special class of reflexive
modules, whose associated filtrations form a hyperplane arrangement. These modules
are introduced in subsection \ref{hyperplanemodules}. In subsection
\ref{combinatorics} we introduce some notions from combinatorics and characterize
some formulas in a form which is suitable for our applications. For a general
reference for hyperplane arrangements and their associated combinatorics we refer
to \cite{OrlikTerao}. In subsection \ref{resolutionofhyperplanemodules} construct
free resolutions of hyperplane modules and determine their $\Z^n$-graded Betti
numbers. In particular, we will show that these numbers are completely determined
by the combinatorics of the associated hyperplane arrangement.
Using these results and the duality of subsection \ref{duality}, we will
in subsection \ref{hyperplanesloccohom} completely
determine the local cohomology of hyperplane modules with respect to the maximal
$\Z^n$-graded ideal of $S$.

\subsection{Hyperplane modules}\label{hyperplanemodules}

A reflexive module $E$ in $\zsmod$ is called a {\em hyperplane module} if it is
a reflexive model of a finite hyperplane arrangement in the sense of Definition
\ref{reflexivemodeldef}. Let $\mathcal{H}$ be a hyperplane arrangement in
some vector space $V \cong \K^r$, generated
by hyperplanes $H_1, \dots, H_t$.
For simplicity, we will assume that there
are  $\{k_1, \dots, k_t\} \subseteq \on$
such that the associated filtrations are of the form
\begin{equation*}
E^{k_l}(i) =
\begin{cases}
0 & \text{ for } i < i_{k_l}\\
H_l & \text{ for } i_{k_l} \leq i < j_{k_l} \\
V & \text{ for } j_{k_l} \leq i,
\end{cases}
\quad and \quad
E^k(i) =
\begin{cases}
0 & \text{ for } i < j_k\\
V & \text{ for } j_k \leq i.
\end{cases}
\text{ for } k \notin \{k_1, \dots, k_t\}
\end{equation*}
for integers $i_{k_j} < j_{k_j}$ and $j_k$. In other words,
every hyperplane is contained in precisely one filtration and  we will allow for
additional coordinates with trivial filtrations.
The associated canonical admissible $\lcm$-lattice $\cL_\mathcal{H}$ in $\Z^n$
of $E_\mathcal{H}$ is given
by $\cL_\mathcal{H} = \{\uc^X = (c^X_1, \dots, c^X_n) \in \Z^n
\mid X \in \mathcal{H}\}$, where
\begin{equation*}
c^X_k =
\begin{cases}
i_{k_j} & \text{ if } k = k_j \text { for some } j \text{ and } X \subseteq H_j\\
j_k & \text{ else}.
\end{cases}
\end{equation*}

Recall that $\mH$ is called {\em essential} if $\bigcap_i H_i = \{0\}$. If
$\bigcap_i H_i = C$ for some nontrivial subvector space $C$ of $V$, we can apparently
split $V \cong V' \oplus C$ and $H_i \cong H_i' \oplus C$, where $V' := V / C$ and
$H_i' := H_i / C$ for every $i$. Obviously, the $H_i'$ generate a hyperplane arrangement
$\mH'$ in $V'$. We observe that the isomorphism $V \cong V' \oplus C$ induces an
isomorphism of hyperplane arrangements between $\mH$ and $\mH' \oplus C$.
for $\mathcal{H}$ splits into a direct sum
$E_{\mathcal{H}'} \oplus S(-j_1, \dots, -j_n)^{\dim C}$, where $E_{\mathcal{H}'}$
is a reflexive model for the hyperplane arrangement $\mathcal{H}' = \{H_1', \dots,
H_t'\}$. So the minimal free resolutions for $E_\mH$ and $E_{\mH'}$ coincide except
for the 0-th term, where for $E_\mH$ we get $S(-j_1, \dots, -j_n)^{\dim C}$ as an
extra summand. So below it will be safe to assume that $\mH$ is essential.

\subsection{On the combinatorics of hyperplane arrangements}\label{combinatorics}

We assume that $\mH$ is an essential hyperplane arrangement.
We consider $\mH$ as a ranked poset with respect to the dimension.

{\bf Caution:} Note that our this ordering on $\mH$ is opposite to the standard
ordering usually considered in the context of hyperplane arrangements see e.g. \cite{OrlikTerao}. This might be somewhat confusing for some readers, but it
is the more natural order with respect to our general setup.

For any subset
$\p$ of $\mathcal{H}$, the {\em M\"obius function}
of $\p$ is defined as
\begin{equation*}
\mu^\p(X, Y) =
\begin{cases}
1 & \text{ if } X = Y\\
-\sum_{X \leq Z < Y, \Z \in \p} \mu^\p(X, Z) & \text{ if } X \leq Y\\
0 & \text{ else},
\end{cases}
\end{equation*}
where we write $\mu$ for $\mu^\mH$. Using the reduced dimension
and the M\"obius function, we take the following definitions from \cite{GreeneZaslavsky}.

\begin{definition}
Let $\p$ be any subset of $\mH$.
\begin{enumerate}[(i)]
\item For $i \leq j \leq r$ the
{\em Whitney numbers} are defined as
\begin{equation*}
w_{ij}^\p = \sum_{\substack{X \leq Y \in \p\\ \dim X = i,\ \dim Y = j}}\mu^\p(X, Y).
\end{equation*}
We write $w_{ij}^\mH = w_{ij}$ and $w_{ij}^{\{Y \leq X\}} = w^X_{ij}$ for
some $X \in \mH$.

\item The {\em beta invariants} are defined as
\begin{equation*}
\beta^\p = (-1)^D \sum_{d = 0}^D d w_{dD}^\p,
\end{equation*}
where $D = \max\{\dim X \mid X \in \p\}$.

\item For particular posets of the form $\p_{k, X} := \{Y \leq X \mid \dim Y \leq k\}
\cup \{ X \}$
we write $\mu^{k, X} := \mu^{\p_{k, X}}$, $w_{ij}^{k, X} := w_{ij}^{\p_{k, X}}$,
$\beta^{k, X} := \beta^{\p_{k, X}}$, and $\beta^X := \beta^{\dim X - 1, X}$.
\end{enumerate}
\end{definition}

For our constructions we introduce now another combinatorial characterization of
beta invariants in terms of path length counting formulas.

\begin{definition}\label{chaindef}
Let $\p$ be any subset of $\mH$ and denote $c$ any chain $X_l < X_{l - 1} < \cdots <
X_0 = X$ in $\p$. Then we set $l(c) := l$, i.e. the length of $c$ minus one, and
$\dim c := \dim X_l$.
For any $X \in \mH$ we denote $C_{k, X}$ the set of chains in $\p_{k, X}$.
For any $Y \in \p_{k, X}$ we denote $C_{k, Y, X} \subseteq C_{k, X}$ the subset of
chains starting at $Y$.
\end{definition}

With these notations we get the following lemma.

\begin{lemma}\label{betachainlemma}
Let $k \geq 0$ and $X \in \mH$. Then
\begin{equation*}
\beta^{k, X} = (-1)^k \sum_{c \in C_{k, X}}(-1)^{l(c)} \dim c.
\end{equation*}
\end{lemma}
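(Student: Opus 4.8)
The plan is to prove Lemma~\ref{betachainlemma} by relating the chain-counting sum to the Whitney numbers $w_{dk}^{k,X}$ that appear in the definition of $\beta^{k,X}$. First I would recall that for the poset $\p_{k,X} = \{Y \leq X \mid \dim Y \leq k\} \cup \{X\}$, the M\"obius function admits the classical expression as an alternating sum over chains: for $Y \leq Z$ in any finite poset $\q$ one has
\begin{equation*}
\mu^\q(Y, Z) = \sum_{Y = Y_0 < Y_1 < \cdots < Y_s = Z} (-1)^s,
\end{equation*}
the sum running over all chains from $Y$ to $Z$ (this is Philip Hall's theorem on the M\"obius function). I would state this as the key input, citing it as a standard fact about posets.

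Next I would substitute this into the definition of $\beta^{k,X}$. We have $\beta^{k,X} = (-1)^k \sum_{d=0}^{k} d\, w_{dk}^{k,X}$, and since the only element of $\p_{k,X}$ of dimension strictly greater than $k-1$ contributing as a top is $X$ itself with $\dim X \geq k$, the relevant Whitney number is $w_{dk}^{k,X} = \sum_{Y} \mu^{k,X}(Y, X)$ where $Y$ ranges over elements of $\p_{k,X}$ with $\dim Y = d$ (using that $X$ is the unique maximal element and contributes the "top" rank $k$ in the truncated rank function of $\p_{k,X}$; here I would be careful about the caution regarding the reversed ordering and confirm the rank conventions match those set up just before the lemma). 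Expanding each $\mu^{k,X}(Y,X)$ via Hall's formula gives a sum over chains $Y = X_l < \cdots < X_0 = X$ in $\p_{k,X}$ weighted by $(-1)^l$, and the outer factor $d = \dim Y = \dim c$ then reassembles exactly the expression $\sum_{c \in C_{k,X}} (-1)^{l(c)} \dim c$. Multiplying by $(-1)^k$ yields the claimed formula.

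The step I expect to be the main obstacle is bookkeeping the rank/dimension conventions correctly: the paper deliberately uses the order on $\mH$ opposite to the standard one, the Whitney numbers $w_{ij}^\p$ are indexed by dimension rather than by rank, and the poset $\p_{k,X}$ is a truncation that is not a lower interval, so I must check that "$D = \max\{\dim X \mid X \in \p_{k,X}\} = \dim X$" and that the double sum defining $w_{dk}^\p$ collapses to summing $\mu^{k,X}(Y,X)$ over $Y$ of dimension $d$ — i.e. that no pairs $X \leq Y$ with $Y \neq X$ of dimension $\dim X$ occur, which holds since $X$ is the unique top. Once these conventions are pinned down, the combinatorial identity is a direct application of Hall's formula and reindexing, with no hard analysis involved.
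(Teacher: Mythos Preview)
Your proposal is correct and follows essentially the same route as the paper. The paper's proof also groups the chain sum by the starting vertex $Y$, invokes Hall's chain formula for the M\"obius function (citing \cite{OrlikTerao}, Prop.~2.37) to identify $\sum_{c \in C_{k,Y,X}} (-1)^{l(c)}$ with $\mu^{k,X}(Y,X)$, and then rewrites the result as $\sum_d d\, w^{k,X}_{d,\dim X}$ to recover $\beta^{k,X}$. One bookkeeping correction: in the paper's conventions $D = \max\{\dim Y \mid Y \in \p_{k,X}\} = \dim X$, so the relevant Whitney numbers are $w^{k,X}_{d,\dim X}$ rather than $w^{k,X}_{d,k}$; since you already flagged the indexing as the point requiring care, this is exactly the check you anticipated.
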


\begin{proof}
We write
\begin{equation*}
\sum_{c \in C_{k, X}}(-1)^{l(c)} \dim c = \sum_{d = 0}^{\dim X} d
\sum_{Y \leq X, \dim Y = d} \sum_{c \in C_{k, Y, X}} (-1)^{l(c)}.
\end{equation*}
Using the formula from \cite{OrlikTerao}, Prop. 2.37, we get:
\begin{align*}
(-1)^k \sum_{c \in C_{k, X}}(-1)^{l(c)} \dim c & = (-1)^k \sum_{d = 0}^{\dim X} d
\sum_{Y \in \p_{k, X}, \dim Y = d} \mu^{k, X}(Y, X)\\
& = (-1)^k \sum_{d = 0}^{\dim X} d w^{k, X}_{d\dim X} = \beta^{k, X}.
\end{align*}
\end{proof}

\subsection{Free resolutions}\label{resolutionofhyperplanemodules}

Let $\mH$ be a hyperplane arrangement. In this subsection we will a
construct an explicit minimal resolution of $\mH$ by coordinate arrangements
$0 \rightarrow \mathcal{F}_s
\overset{\phi_s}{\rightarrow} \cdots \overset{\phi_1}{\rightarrow} \mathcal{F}_0
\overset{\phi_0}{\rightarrow} \mathcal{H} \rightarrow 0$.
By results of subsection \ref{resolutions}, for a hyperplane module $E_\mH$
we get $\mathcal{F}_i \cong \bigoplus_{X \in \mathcal{H}} \K^{\beta_i(\uc^X)}$,
where $\beta_i(\uc^X)$ denote the graded Betti numbers of $E_\mH$. In particular,
for $\beta_i(\uc)$ to be nonzero, $\uc$ necessarily equals some $\uc^X$ for
$X \in \mathcal{H}$.
By definition, for $i > 0$, the $i$-th syzygy arrangement $\mathcal{H}^i$
consists of the arrangement of kernels of $\phi_{i - 1}$ with the convention
that $\mathcal{H} = \mathcal{H}^0$ is considered as the 0-th syzygy.
In particular, for
any $i \geq 0$ we have a short exact sequence of vector space arrangements
$0 \rightarrow \mathcal{H}^{i + 1} \rightarrow \mathcal{F}_i \rightarrow
\mathcal{H}^i \rightarrow 0$. As every $\mathcal{F}_i$ and every $\mathcal{H}^i$
can be considered as a $\K$-linear representation of the intersection poset of
$\mathcal{H}$, we get for any $X \in \mathcal{H}$ a series of vector spaces
$X = X^0, X^1, \dots$ and $F_0^X, F_1^X, \dots$ in $\mathcal{H}^0, \mathcal{H}^1,
\dots$ and $\mathcal{F}_0, \mathcal{F}_1, \dots$, respectively. These fit into
short exact sequences $0 \rightarrow X^{i + 1} \rightarrow F_i^X \rightarrow
X^i \rightarrow 0$ for any $i \geq 0$. We say that $X^i$ {\em represents} $X$ in the
$i$-th syzygy.

Our construction will be done
by induction on the number of generating hyperplanes. For this, we will
assume that we already know how to construct such a resolution for $\mH$
which is generated by $t$ hyperplanes $H_1, \dots, H_t$.
Now, if we add another hyperplane $H$, then the intersections $\{H \cap X
\mid X \in \mH \}$ induce a hyperplane arrangement $\mH'$ in $H$
which is generated by at most $t$ hyperplanes. So, our induction assumption
applies to this arrangement as well. The induction step then will be to
show that minimal resolutions of $\mH$ and $\mH'$ can be constructed by
splicing them together in a natural way. We will show the following theorem.

\begin{theorem}\label{res1}
Let $\mathcal{H}$ be an essential hyperplane arrangement of rank $r$. Then there
exists a minimal resolution $0 \rightarrow \mathcal{F}_{r - 1} \rightarrow \cdots
\rightarrow \mathcal{F}_0 \rightarrow \mathcal{H} \rightarrow 0$ by coordinate
arrangements with
\begin{equation*}
F^Y_i = \bigoplus_{\substack{X \leq Y\\ \dim X = i + 1}} F^X_i
\end{equation*}
for every $F^Y_i \in \mathcal{F}_i$. The differential is given by
the sum of natural isomorphisms
$F^X_i \cong X^i$ for any $X$ with $\dim X = i + 1$.
Moreover, for any $X \in \mathcal{H}$, we have $\dim X^k = \beta^{k, X}$
for any $X \in \mathcal{H}$ and any $k \geq 0$. In particular, we get
$\dim X^i = \dim F^X_i = \beta^X$ for any $X \in \mH$ with $\dim X = i + 1$.
\end{theorem}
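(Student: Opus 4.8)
The plan is to argue by deletion--restriction, inducting on the number $t$ of generating hyperplanes. Given $\tilde{\mathcal{H}}$ generated by $H_1,\dots,H_t,H$, write $\mathcal{H}$ for the deletion (drop $H$) and $\mathcal{H}'=\{X\cap H\mid X\in\mathcal{H}\}$ for the restriction to $H$; both are generated by strictly fewer hyperplanes, so the induction hypothesis applies to each. By the splitting observation at the end of subsection \ref{hyperplanemodules} it is enough to treat essential arrangements at every stage (a central factor only contributes a free summand in homological degree zero), and the base case $t\le 1$ is routine: the reflexive model is then free, the resolution is $0\to\mathcal{F}_0\to\mathcal{H}\to 0$ with $\mathcal{F}_0\cong\mathcal{H}$, and in rank one this is $0\to S(-\uc)\to E\to 0$ with $\dim V^0=\dim\mathbf{E}=1$, consistent with $\beta^{V}=1$.

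The first task in the inductive step is to record the deletion--restriction short exact sequence of reflexive models. Take the model $E$ of $\mathcal{H}$ and let $k^{\ast}$ be the coordinate carrying $H$, with its filtration trivial and normalized so that its single jump occurs at $i_{k^{\ast}}$; refining this filtration to $0\subseteq H\subseteq V$ with steps $i_{k^{\ast}}<j_{k^{\ast}}$ produces, by Proposition \ref{filtaux} (\ref{filtauxii}), an inclusion $\tilde E\hookrightarrow E$ of the model $\tilde E$ of $\tilde{\mathcal{H}}$, with cokernel $C=E/\tilde E$ torsion, supported on $\{x_{k^{\ast}}=0\}$, and satisfying $C_{\uc}=\image(E_{\hat\uc}\to V/H)$ on the slab $i_{k^{\ast}}\le c_{k^{\ast}}<j_{k^{\ast}}$. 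One checks that a single slice of $C$ is the cokernel of $E_{\mathcal{H}}\cap H\hookrightarrow E_{\mathcal{H}}$, that $E_{\mathcal{H}}\cap H$ is (after discarding trivially filtered coordinates) the reflexive model of $\mathcal{H}'$, and that $C$ itself is this slice thickened by a finite-length $\K[x_{k^{\ast}}]$-module. Consequently a (not yet minimal) resolution of $\tilde E$ can be assembled by a mapping-cone splicing of the two inductively given resolutions of $\mathcal{H}$ and $\mathcal{H}'$ together with the length-one Koszul complex on $x_{k^{\ast}}$.

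The remaining work splits into minimizing this complex and identifying its ranks. For minimization, the spliced complex is built from monomial matrices; after cancelling the summands that Lemmas \ref{minimizeprojres} and \ref{minimizeinjres} allow one to split off, every nonzero entry of a differential is a monomial of strictly positive degree, so no further cancellation is possible, and one argues that what survives is exactly the complex in the statement --- at homological degree $i$ the generators are indexed by the flats $X$ with $\dim X=i+1$, $\tilde F^{Y}_{i}=\bigoplus_{X\le Y,\ \dim X=i+1}\tilde F^{X}_{i}$, and the differential is the sum of the natural isomorphisms $\tilde F^{X}_{i}\cong X^{i}$ onto the $i$-th syzygy arrangement as in subsection \ref{reflres}.

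The main obstacle is the numerical identification $\dim\tilde X^{k}=\beta^{k,X}$ (computed in $\tilde{\mathcal{H}}$). I would combine two ingredients. On the homological side, the syzygy sequences $0\to X^{k+1}\to F^{X}_{k}\to X^{k}\to 0$ express $\dim X^{k}$ as an alternating sum over chains in the intersection poset, which I would match against the chain-counting formula of Lemma \ref{betachainlemma}. On the combinatorial side, there is the classical deletion--restriction recursion for the truncated M\"obius functions $\mu^{k,X}$ and Whitney numbers, which produces $\beta^{k,X}_{\tilde{\mathcal{H}}}$ out of the invariants of $\mathcal{H}$ and $\mathcal{H}'$ in exactly the pattern delivered by the splicing construction. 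The delicate point is that passing from $\mathcal{H}$ to $\tilde{\mathcal{H}}$ lowers the bottom of the intersection lattice and creates new flats $Z\cap H$, so the lower intervals, and hence the truncated posets $\p_{k,X}$, change in a way that must be tracked carefully; reducing to the essential case at each inductive stage and using Lemma \ref{betachainlemma} to convert the homological alternating sums into beta invariants is what keeps this tractable. A non-inductive alternative would compute $\beta_{i}(\uc^{X})=\dim_{\K}\Tor_{i}^{S}(E_{\mathcal{H}},\K)_{\uc^{X}}$ directly through the upper Koszul complex in multidegree $\uc^{X}$, which for a hyperplane module is a truncated order complex of a lower interval in the intersection lattice whose homology is governed by the Whitney homology of geometric lattices; this avoids the splicing but needs the same careful treatment of the truncation to recover the intermediate $\beta^{k,X}$, so I expect the inductive argument to be the cleaner one to write out.
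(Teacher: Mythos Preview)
Your deletion--restriction strategy matches the paper's, but the execution is different and the proposal has a real gap. The paper works entirely at the level of vector space arrangements rather than through $S$-modules and mapping cones: given minimal resolutions of $\mH$ and $\mH'$, it builds the minimal resolution of $\tilde{\mH}$ term by term, using for each $X\in\mH_{k+2}$ an augmentation diagram
\begin{equation*}
\xymatrix{
0 \ar[r] & X^{k+1} \ar[r] \ar@{^{(}->}[d] & \bigoplus_{\substack{Y\in\mH_{k+1}\\Y\le X}}\tilde Y^{k} \ar[r] \ar@{^{(}->}[d] & \tilde X^{k} \ar[r] \ar@{=}[d] & 0 \\
0 \ar[r] & \tilde X^{k+1} \ar[r] & \bigoplus_{\substack{Y\in\mH_{k+1}\\Y\le X}}\tilde Y^{k}\oplus X_{H}^{k} \ar[r] & \tilde X^{k} \ar[r] & 0
}
\end{equation*}
and verifies exactness and minimality directly by a rank count; no minimization step is needed. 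Your mapping-cone complex, by contrast, contains \emph{two} copies of the resolution of $\mH$ (one as $P_E$, one inside $P_C$ coming from $C_{\text{slice}}\cong E_{\mH}/E_{\mH'}$) together with $P_{\mH'}$ and the Koszul factor. The assertion that after applying Lemma~\ref{minimizeprojres} ``what survives is exactly the complex in the statement'' is precisely what has to be proved, and you do not supply the argument; carrying it out amounts to redoing the paper's direct construction.

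For the dimension formula $\dim X^{k}=\beta^{k,X}$ the paper does not invoke deletion--restriction at all. Once the structural form of the resolution is established, the syzygy sequences give $\dim X^{k}=\dim F_{k-1}^{X}-\dim X^{k-1}$ with $\dim F_{k-1}^{X}=\sum_{Y\le X,\,\dim Y=k}\dim Y^{k-1}$, and a short induction on $k$ yields the chain-sum expression that Lemma~\ref{betachainlemma} identifies with $\beta^{k,X}$. This is simpler than the two-ingredient argument you sketch. Finally, since the paper keeps the arrangement essential throughout, its base case is $t=r$ (a coordinate arrangement in $V$, where $\phi_0$ from Lemma~\ref{resolutionstart} is already an isomorphism), not $t\le 1$.
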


As an immediate corollary of Theorem \ref{res1} we obtain the following result
for the graded Betti numbers of $E_\mH$.

\begin{theorem}\label{hyperplanebettinumbers}
Let $\mH$ be an essential hyperplane arrangement and $E_\mH$ a reflexive model.
Then the graded Betti number $\beta_i(\uc^X)$ of $E_\mH$ is zero unless
$\dim X = i + 1$. If $\dim X = i + 1$, then $\beta_i(\uc^X)$
coincides with the beta invariant $\beta^X$ of $X$.
\end{theorem}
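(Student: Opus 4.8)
The plan is to obtain Theorem~\ref{hyperplanebettinumbers} as a direct consequence of Theorem~\ref{res1} via the dictionary, set up in subsections~\ref{resolutions} and~\ref{reflres}, between minimal free resolutions of a reflexive module and minimal resolutions of its vector space arrangement by coordinate arrangements. Since Theorem~\ref{res1} is taken as given, the argument is purely a matter of translation.

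First I would recall that, for the hyperplane module $E_\mH$, the canonical admissible $\lcm$-lattice is $\cL_\mH = \{\uc^X \mid X \in \mathcal{H}\}$, embedded in $\Z^n$ as in subsection~\ref{hyperplanemodules}. By the construction of subsection~\ref{resolutions}, a minimal free resolution $0 \to F_{r-1} \to \cdots \to F_0 \to E_\mH \to 0$ is obtained by applying $\lcmunzip^{\cL_\mH}$ to the minimal projective resolution of the representation $\lcmzip^{\cL_\mH}E_\mH$ of the poset $\cL_\mH$; since $E_\mH$ is reflexive, $\lcmunzip^{\cL_\mH}P_{\uc^X}\cong S(-\uc^X)$ by Example~\ref{admissibleexample1}. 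Hence the only degrees carrying a nonzero Betti number are the $\uc^X$ (so $\beta_i(\uc) = 0$ otherwise), and $\beta_i(\uc^X)$ is the multiplicity of $P_{\uc^X}$ as a summand of the $i$-th term of that projective resolution --- equivalently, in the language of subsection~\ref{reflres}, the dimension of the coordinate summand of $\mathcal{F}_i$ sitting in degree $\uc^X$.

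Next I would read off Theorem~\ref{res1}. It provides a minimal resolution by coordinate arrangements whose only coordinate summands are the blocks $F^X_i$ with $\dim X = i+1$ and $\dim F^X_i = \dim X^i = \beta^X$; the arrangement element at any other $Y$ is merely $\bigoplus_{X \le Y,\ \dim X = i+1} F^X_i$ and carries no new summand in degree $\uc^Y$. Since minimal free resolutions are unique up to isomorphism, the Betti numbers coincide with those of the previous paragraph. Therefore, for a fixed $i$: if $\dim X \ne i+1$ there is no coordinate summand in degree $\uc^X$, so $\beta_i(\uc^X) = 0$; and if $\dim X = i+1$, then $\beta_i(\uc^X) = \dim F^X_i = \beta^X$. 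Together with the vanishing for degrees outside $\cL_\mH$, this is precisely the assertion.

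The one step calling for genuine care is the degree bookkeeping --- verifying that, after $\lcmunzip^{\cL_\mH}$, the coordinate block $F^X_i$ of Theorem~\ref{res1} indeed lands in graded degree $\uc^X$ rather than a shift of it, so that it really contributes to $\beta_i(\uc^X)$. This is an unwinding of the embedding $\cL_\mH \hookrightarrow \Z^n$ from subsection~\ref{hyperplanemodules} against the definition of $\lcmunzip$; it is routine but should not be skipped. The entire combinatorial content --- the identity $\dim X^k = \beta^{k,X}$ and its specialization $\dim X^i = \beta^X$ for $\dim X = i+1$ --- is internal to Theorem~\ref{res1}; so no further combinatorics intervenes here, and the real obstacle, the inductive construction of the spliced resolution and the verification of that identity, lies in the proof of Theorem~\ref{res1} itself.
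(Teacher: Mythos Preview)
Your proposal is correct and matches the paper's own treatment: the paper states Theorem~\ref{hyperplanebettinumbers} as an immediate corollary of Theorem~\ref{res1}, with no further argument. Your write-up simply spells out the translation through the $\lcm$-lattice and $\lcmunzip$ dictionary more carefully than the paper does, which is fine.
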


The rest of this subsection is devoted to the proof of Theorem \ref{res1}.
First, we introduce some more notation.

\begin{definition}
Let $\mathcal{H}$ be a hyperplane arrangement.
For $k \geq 0$ we denote $\mathcal{H}_k = \{X \in \mathcal{H} \mid \dim X = k\}$.
\end{definition}

The following lemma states that
the first step of the resolution can be verified directly.

\begin{lemma}\label{resolutionstart}
Set $\mathcal{F}_0 := \bigoplus_{X \in \mH_1} X$ and $F^Y_0 = \bigoplus_{Y \geq
X \in \mH_1} X$. Then the natural homomorphism of vector space arrangements
$\mathcal{F}_0 \overset{\phi_0}{\rightarrow} \mH$ is surjective and minimal,
i.e. $\phi_0$ does
not factorize through another surjection $\mathcal{F}' \twoheadrightarrow \mathcal{H}$,
with $\mathcal{F}'$ a coordinate arrangement of strictly lower dimension than
$\mathcal{F}_0$.
\end{lemma}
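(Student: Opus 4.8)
The plan is to run the first step of the minimal cover construction of subsection~\ref{resolutions} on the hyperplane module $E = E_{\mH}$ and to read off the multiplicities. By Proposition~\ref{reflexivelcmlattice} and the description in subsection~\ref{hyperplanemodules}, the canonical admissible $\lcm$-lattice of $E$ is $\cL_{\mH} = \{\uc^X \mid X \in \mH\}$, and $\lcmzip^{\cL_{\mH}}E$ is the representation $F$ of the intersection poset $\mH$ that sends $X$ to the subspace $X \subseteq V$, with structure maps the inclusions. For this $F$ the splitting $F_X = F_{X,\leq} \oplus F_{X,>}$ of subsection~\ref{resolutions} has $F_{X,\leq} = \sum_{Y \in \mH,\, Y \subsetneq X} Y$, so the number $n^0_X$ of copies of $P_X$ occurring in $\mathcal{F}_0$ equals $\dim X - \dim\big(\sum_{Y \subsetneq X} Y\big)$. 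Thus the whole statement reduces to showing $n^0_X = 1$ when $\dim X = 1$ and $n^0_X = 0$ otherwise.

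The cases $\dim X \leq 1$ are immediate: for $X = 0$ the sum over $Y \subsetneq X$ is empty, so $n^0_0 = 0$; for $\dim X = 1$ the only element of $\mH$ strictly below $X$ is $0$, so $F_{X,\leq} = 0$ and $n^0_X = 1$. For $\dim X \geq 2$ the point I would use is that $\{X \cap H_j \mid 1 \leq j \leq t,\ X \not\subseteq H_j\}$ is an \emph{essential} hyperplane arrangement inside $X$: intersecting with $X$ those $H_j$ that already contain $X$ changes nothing, so $\bigcap_{j:\,X \not\subseteq H_j}(X \cap H_j) = X \cap \bigcap_j H_j = \{0\}$ by essentiality of $\mH$. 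Now an essential hyperplane arrangement in a space of dimension $\geq 2$ has hyperplanes spanning the ambient space --- if they all lay in one proper subspace they would, having the same dimension, all equal a common hyperplane, so their intersection would be nonzero. Hence $X = \sum_{j:\,X \not\subseteq H_j}(X \cap H_j)$; since each $X \cap H_j$ lies in $\mH$ and is properly contained in $X$, this gives $\sum_{Y \subsetneq X} Y = X$, i.e. $n^0_X = 0$. I expect this reduction to the essential induced arrangement on $X$ to be the only step needing an idea; everything else is the formalism of subsection~\ref{resolutions} plus elementary linear algebra.

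It follows that $\mathcal{F}_0 = \bigoplus_{X \in \mH_1} P_X$, which as a coordinate arrangement is precisely $\bigoplus_{X \in \mH_1} X$ with $F^Y_0 = \bigoplus_{Y \geq X \in \mH_1} X$, and that $\phi_0$ is the sum of the inclusions $X \hookrightarrow Y$. Surjectivity of $\phi_0$ is part of the construction of subsection~\ref{resolutions}; concretely it is the identity $\sum_{X \in \mH_1,\, X \leq Y} X = Y$, which also follows by induction on $\dim Y$ from the spanning statement above. Finally, minimality is built into the construction: $\mathcal{F}_0 \twoheadrightarrow \lcmzip^{\cL_{\mH}}E$ is a projective cover in the category of representations of $\mH$, so any factorization of $\phi_0$ through a surjection $\mathcal{F}' \twoheadrightarrow \mH$ of coordinate arrangements (equivalently, of projective representations) forces the induced map $\mathcal{F}_0 \to \mathcal{F}'$ to be split injective, and hence $\dim \mathcal{F}' \geq \dim \mathcal{F}_0$.
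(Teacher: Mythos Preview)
Your argument is correct, but it proceeds quite differently from the paper's proof. The paper argues minimality directly: since $\mH$ is essential, the elements of $\mH_1$ are precisely the minimal nontrivial members of the arrangement, so any coordinate arrangement surjecting onto $\mH$ must contribute at least one coordinate line to each element of $\mH_1$, forcing dimension $\geq |\mH_1|$. Surjectivity is dismissed as an ``easy induction on $\dim V$ and $t$''. You instead run the projective--cover algorithm of subsection~\ref{resolutions} and compute the multiplicities $n^0_X$ explicitly, reducing the nontrivial case $\dim X \geq 2$ to the observation that the induced arrangement $\{X \cap H_j : X \not\subseteq H_j\}$ is essential in $X$ and therefore spans it. This is a genuinely different route: what you gain is that the form of $\mathcal{F}_0$ is \emph{derived} rather than guessed and then verified, and your spanning argument makes the paper's ``easy induction'' completely explicit. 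Your minimality step (that the given factorization map $\mathcal{F}_0 \to \mathcal{F}'$ is split injective) is also correct---composing with a lift $\mathcal{F}' \to \mathcal{F}_0$ gives an endomorphism of $\mathcal{F}_0$ that is surjective modulo the superfluous kernel, hence an isomorphism---though the paper's one--line counting argument is more elementary here.
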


\begin{proof}
The minimality follows from the fact that $\mathcal{H}$ is essential and $\mH_1$
represents the set of its minimal nontrivial vector spaces. Therefore a coordinate vector
space arrangement $\mathcal{F}'$ surjecting onto $\mathcal{H}$ must at least have dimension
$\vert \mH_1 \vert$. It follows from an easy induction on $\dim V$ and $t$
that the restriction of $\phi_0$ to $F^X_0$ is onto $X$ for every $X \in \mathcal{H}$.
\end{proof}

We start now our induction.
Because we only consider essential arrangements, we may begin
with $t = r$ and by assuming without loss  of generality that $H_1, \dots, H_r$
are linearly independent and therefore generate an essential coordinate arrangement
in $V$. It follows that $\phi_0$ from Lemma \ref{resolutionstart}
is injective and therefore we get a minimal resolution of coordinate arrangements
$0 \rightarrow \mathcal{F}_0 \overset{\phi_0}{\rightarrow} \mH \rightarrow 0$
for which the assertions of Theorem \ref{res1} trivially hold.

Now, for $t \geq r$ we have constructed by our induction assumption a minimal
resolution of $\mH$ and $\mH'$ by coordinate arrangements. Denote $\tilde{\mH}$
the new hyperplane arrangement generated by $\mH$ and $\mH'$. We want to combine
the resolutions of $\mH$ and $\mH'$ to a resolution of $\tilde{\mH}$.
We introduce some notation:
\begin{enumerate}
\item We denote $\cdots \mathcal{F}_k \overset{\phi_k}{\rightarrow} \mathcal{F}_{k - 1}$
and $\cdots \mathcal{F}'_k \overset{\phi'_k}{\rightarrow} \mathcal{F}'_{k - 1}$ the
resolutions of $\mH$ and $\mH'$, respectively.
\item For every $k > 0$ set $\mathcal{A}_k := \mH'_k \setminus \mH_k$.
\item For $X \in \mH_k$ denote $X_H = \begin{cases} X \cap H \in \mathcal{A}_{k - 1} &
\text{ if } X \nsubseteq H\\ 0 & \text{ else.}\end{cases}$
\end{enumerate}
In particular, we have $\mathcal{A}_{k - 1} = \{X_H \mid X \in \mH_k\}$.
Our first step in the resolution then will be given by
\begin{equation*}
\tilde{\mathcal{F}}_0 := \mathcal{F}_0 \oplus \bigoplus_{X \in \mathcal{A}_1} X
\xrightarrow{\tilde{\phi}_0} V
\end{equation*}
where $\tilde{\phi}_0$ is the sum of $\phi_0$ and the restriction of $\phi'_0$ to
${\bigoplus_{X \in \mathcal{A}_1} X}$. The free arrangement is given by
$\tilde{F}^Y_0 = \bigoplus_{X \leq Y, X \in \tilde{\mH}_1} X$.
The map $\tilde{\phi}_0$ is surjective by Lemma
\ref{resolutionstart}. We get $\dim \kernel \tilde{\phi}_0 = \dim \kernel \phi_0 +
\vert \mathcal{A}_1 \vert$.
Now consider for every $X \in \mH_1$ the following commutative exact diagram:
\begin{equation}\label{augdiagram}
\xymatrix{
0 \ar[r] & X^1 \ar[r] \ar@{^{(}->}[d] & F_0^X \ar[r]
\ar@{^{(}->}[d] & X \ar[r] \ar@{=}[d] & 0 \\
0 \ar[r] & \tilde{X}^1 \ar[r] & F_0^X
\oplus X_H \ar[r] & X \ar[r] & 0.
}
\end{equation}
Denote $\psi_1$ the sum of all inclusions $\tilde{X}^1 \hookrightarrow \tilde{F}_0$
with $X \in \mH_1$,
then we can read off from this diagram the next step in the resolution:
\begin{equation*}
\tilde{F}_1 := \bigoplus_{X \in \mH_2} \tilde{X}^1 \oplus \bigoplus_{X \in \mathcal{A}_2} X^1
\xrightarrow{\tilde{\phi}_1} \tilde{F}_0.
\end{equation*}
Here, $\tilde{\phi}_1$ is the sum of $\psi_1$ and of $\phi'_1$ restricted to
$\bigoplus_{X \in \mathcal{A}_2}X^1$. Now by diagram (\ref{augdiagram}) and the
observation that every
$\tilde{X}^1$ with $X_H \neq 0$ projects nontrivially to $X_H$ and trivially to every
other $Y_H \in \mathcal{A}_1$, we conclude that $\rk \psi_1 = \rk \phi_1 +
\vert \mathcal{A}_1 \vert = \dim \kernel \tilde{\phi}_0$. It follows that
$\dim \ker \tilde{\phi}_1 = \dim \ker \phi_1 + \sum_{X \in \mathcal{A}_2} \dim X^1$.
To conclude, we observe that $\dim \kernel \tilde{\phi}_1 - \dim \kernel \phi_1 =
\sum_{X \in \mathcal{A}_2} \dim X^1$ and $\rk \tilde{\phi}_1 - \rk \phi_1 =
\sum_{X \in \mathcal{A}_1} \dim X^0$.

Now we assume inductively that we have constructed $\tilde{\phi}_i : \mathcal{F}_k
\rightarrow \mathcal{F}_{k - 1}$ and that the inclusion $\mathcal{F}_k \hookrightarrow
\bigoplus_{X \in \mH_k} \tilde{X}^k$ induces an isomorphism
$\kernel \psi_k \cong \kernel \phi_k$,
where $\psi_k$ denotes the restriction of $\tilde{\phi}$ to
$\bigoplus_{X \in \mH_{k + 1}} \tilde{X}^k$. Moreover, assume that this isomorphism
induces a natural isomorphism
$X^{k + 1} \cong \kernel \psi_k\vert_{\tilde{X}^k}$ for every $X \in \mH_{k + 2}$.
Then we have the following commutative exact diagram for every $X \in \mH_{k + 2}$:
\begin{equation*}
\xymatrix{
0 \ar[r] & X^{k + 1} \ar[r] \ar@{^{(}->}[d] & \bigoplus_{\substack{Y \in \mathcal{H}_{k + 1}
\\ Y \leq X}} \tilde{Y}^k  \ar[r] \ar@{^{(}->}[d] & \tilde{X}^k \ar[r] \ar@{=}[d] & 0 \\
0 \ar[r] & \tilde{X}^{k + 1} \ar[r] & \bigoplus_{\substack{Y \in \mathcal{H}_{k + 1}
\\ Y \leq X}} \tilde{Y}^k \oplus X_H^k \ar[r] & \tilde{X}^k \ar[r] & 0.
}
\end{equation*}
Denoting $\psi_{k + 1}$ the sum of all inclusions $\tilde{X}^{k + 1} \hookrightarrow
\tilde{F}_k$ we set
\begin{equation*}
\tilde{F}_{k + 1} := \bigoplus_{X \in \mH_{k + 2}} \tilde{X}^{k + 1} \oplus
\bigoplus_{X \in \mathcal{A}_{k + 1}} X^{k + 1}
\xrightarrow{\tilde{\phi}_{k + 1}} \tilde{F}_k.
\end{equation*}
with $\tilde{\phi}_{k + 1}$ the sum of $\psi_{k + 1}$ and $\phi'_{k + 1}$ restricted
to $\bigoplus_{X \in \mathcal{A}_{k + 1}} X^k$.
By observing that every $X \in \mH_{k + 1}$ with $X_H \neq 0$ the vector space
$\tilde{X}^{k + 1}$ projects nontrivially to $X_H^k$ and trivially to every other
$Y_H^k$ with $Y_H \in \mathcal{A}_{k + 1}$, we conclude that $\rk \psi_{k + 1}
= \rk \phi_{k + 1} + \sum_{X \in \mathcal{A}_{k + 1}}$. We conclude that
$\dim \kernel \tilde{\phi}_{k + 1} - \dim \kernel \phi_{k + 1} = \sum_{X \in \mathcal{A}_{k +2}}
\dim X^{k + 1}$ and $\rk \tilde{\phi}_{k + 1} - \rk \phi_{k + 1}= \sum_{X \in \mathcal{A}_{k + 1}}
\dim X^k$. In particular, the image of $\tilde{\phi}_{k + 1}$ coincides with the
kernel of $\tilde{\phi_k}$ and by
induction we obtain a minimal resolution of $\tilde{\mH}$ by coordinate arrangements.
This shows the first assertion of Theorem \ref{res1}.

The following proposition proves the assertion on the dimensions of the vector spaces $X^k$.

\begin{proposition}
$\dim X^k = \beta^{k, X}$ for any $X \in \mathcal{H}$. In particular, if $\dim X = i + 1$,
then $\dim X^i = \dim F^X_i = \beta^X$.
\end{proposition}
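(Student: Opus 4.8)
The plan is to prove $\dim X^k = \beta^{k,X}$ by induction on $k$ (read for $0 \le k \le \dim X - 1$, since $X^k = 0$ once $k \ge \dim X$), combining the short exact sequences $0 \to X^{k+1} \to F^X_k \to X^k \to 0$ of vector space arrangements and the structural description of $\mathcal F_k$ from the first assertion of Theorem \ref{res1} with the chain counting formula of Lemma \ref{betachainlemma}. The base case $k = 0$ is immediate: $X^0 = X$ gives $\dim X^0 = \dim X$, while, as $\mH$ is essential, $\p_{0,X}$ consists only of $X$ and the zero space, so Lemma \ref{betachainlemma} yields $\beta^{0,X} = \dim X$.

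For the inductive step assume $\dim Y^k = \beta^{k,Y}$ for all $Y \in \mH$, and take $X$ with $\dim X \ge k+2$. By Theorem \ref{res1} we have $F^X_k = \bigoplus_{Y \le X,\ \dim Y = k+1} F^Y_k$, and the natural isomorphisms $F^Y_k \cong Y^k$ for $\dim Y = k+1$ give $\dim F^X_k = \sum_{Y \le X,\ \dim Y = k+1}\dim Y^k$; so the short exact sequence and the induction hypothesis yield
\begin{equation*}
\dim X^{k+1} = \sum_{\substack{Y \le X \\ \dim Y = k+1}} \dim Y^k - \dim X^k = \sum_{\substack{Y \le X \\ \dim Y = k+1}} \beta^{k,Y} - \beta^{k,X}.
\end{equation*}
It thus remains to verify the purely combinatorial identity $\beta^{k+1,X} = \sum_{Y \le X,\ \dim Y = k+1}\beta^{k,Y} - \beta^{k,X}$.

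For this I would decompose, via Lemma \ref{betachainlemma}, the set $C_{k+1,X}$ of chains in $\p_{k+1,X}$ ending at $X$. As $\p_{k+1,X} = \p_{k,X} \cup \{\, Y \le X \mid \dim Y = k+1\,\}$, every such chain either lies entirely in $\p_{k,X}$ or passes through exactly one element $Y < X$ with $\dim Y = k+1$, necessarily the element directly below $X$ in the chain; deleting $X$ identifies that chain with a chain in $\p_{k,Y}$ ending at $Y$, and this gives a bijection. Splitting $\sum_{c \in C_{k+1,X}}(-1)^{l(c)}\dim c$ along this partition, the chains of the first kind contribute $\sum_{c \in C_{k,X}}(-1)^{l(c)}\dim c$, and those through a given $Y$ contribute $-\sum_{c \in C_{k,Y}}(-1)^{l(c)}\dim c$, the extra minus sign coming from the additional covering step $X > Y$; multiplying by the relevant sign and applying Lemma \ref{betachainlemma} to each term gives the identity. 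The second assertion of the proposition follows at once, since for $\dim X = i+1$ one has $\p_{i,X} = \{\, Y \le X\,\}$, hence $\beta^{i,X} = \beta^X$, and $\dim F^X_i = \dim X^i$ is part of the first assertion of Theorem \ref{res1}.

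I expect the main obstacle to be organizational rather than conceptual: keeping the homological index of $X^k$ aligned with the poset index of $\p_{k,X}$, treating the boundary cases $k \approx \dim X$, and making every power of $-1$ in the chain formula cancel as intended. A variant, parallel to the induction used to prove Theorem \ref{res1} itself, would be to induct on the number of generating hyperplanes: adding a hyperplane $H$ changes the syzygy dimensions by $\dim \tilde X^{k+1} = \dim X^{k+1} + \dim X_H^k$ (read off from diagram (\ref{augdiagram})), and one checks that $\beta^{k+1,X}$ obeys the matching deletion-restriction recursion; this replaces the chain bookkeeping by the classical deletion-restriction identity for Whitney numbers.
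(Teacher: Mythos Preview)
Your proof is correct and is essentially the same as the paper's. Both argue by induction on $k$, using the short exact sequence $0 \to X^{k+1} \to F^X_k \to X^k \to 0$ together with the decomposition $F^X_k = \bigoplus_{Y \le X,\ \dim Y = k+1} Y^k$ from Theorem~\ref{res1}, and the inductive step in each case reduces to the same chain decomposition of $C_{k+1,X}$ into chains lying in $\p_{k,X}$ and chains passing through a unique $Y$ of dimension $k+1$. The only organisational difference is that the paper keeps the induction hypothesis in the chain-sum form $\dim X^k = (-1)^k\sum_{c \in C_{k,X}}(-1)^{l(c)}\dim c$ and invokes Lemma~\ref{betachainlemma} once at the end, whereas you convert to $\beta^{k,X}$ immediately and then verify the recursion $\beta^{k+1,X} = \sum_Y \beta^{k,Y} - \beta^{k,X}$ by unpacking the lemma again; this costs you one extra translation but changes nothing of substance.
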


\begin{proof}
With the notation from Definition \ref{chaindef}
we claim $\dim X^k = (-1)^k \sum_{c \in C_{k, X}} (-1)^{l(c)} \dim c$.

The claim is trivially true for $k = 0$. Now we do induction on $k$ and assume that
$k > 0$. Then we have $\dim X^k = \dim F_{k - 1}^X - \dim X^{k - 1}$ and
$\dim F_{k - 1}^X =
\sum_{Y \leq X, \dim Y = k} \dim Y^{k - 1}$ by Theorem \ref{res1}. Using our induction
assumption, we get
\begin{align*}
\dim X^k & = \sum_{Y \in \p_{k, X}, \dim Y = k} (-1)^{k - 1} \sum_{c \in C_{k - 1, Y}} (-1)^{l(c)}
\dim c - (-1)^{k - 1}\sum_{c \in C_{k - 1, X}} (-1)^{l(c)} \dim c\\
& = (-1)^k \sum_{c \in C_{k, X}} (-1)^{l(c)} \dim c.
\end{align*}
So the claim is proved and the proposition follows with Lemma \ref{betachainlemma}.
\end{proof}

\begin{remark}
In order to relate our results to earlier work on $\Z^n$-graded resolutions
such as \cite{CharalambousTchernev03}, \cite{Tchernev07}, we briefly explain how
we can construct a matrix representation for a hyperperplane module. Assume that
the $H_i$ are given by the orthogonal complement of linear forms $u_1, \dots, u_t$
in $V^*$. Then we get a short exact sequence of $\K$-vector spaces
\begin{equation*}
0 \longrightarrow V \xrightarrow{\ \eta \ } \K^t \xrightarrow{\ \xi \ } \K^{t - r}
\longrightarrow 0,
\end{equation*}
where $\eta$ is represented by a matrix whose $l$-th row is given by $u_l$
and $\xi$ is the corresponding cokernel map. It is not difficult to see that
this sequence can be considered as associated to the following $\Z^n$-graded
exact sequence:
\begin{equation*}
0 \longrightarrow E_\mH \xrightarrow{\ \tilde{\eta}\ } \bigoplus_{i = 1}^t S(-\uc^{H_i})
\xrightarrow{\ \tilde{\xi}\ } S(-\uc^V)^{t - r}.
\end{equation*}
In particular, $\xi$ is the matrix of coefficients of the monomial matrix $\tilde{\xi}$.
This sequence is an example of an {\em Euler-type sequence} (see \cite{PerlingTrautmann}
for some exactness properties). One can now consider the Buchsbaum-Rim complex associated
to $\tilde{\xi}$ as has been done in \cite{CharalambousTchernev03}. As the
maps $\eta$ and $\xi$ are related by Gale duality, it is straightforward to see that
$\xi$ satisfies the uniformity condition of \cite{CharalambousTchernev03}, Def. 4.6
(see also \cite{Tchernev07}, \S 2.4) if and only if the hyperplanes in $\mH$ are in
general position. In that case the Buchsbaum-Rim complex is a minimal resolution of
$E_\mH$. Then by Theorem \ref{hyperplanebettinumbers} we get a nice combinatorial
interpretation of the ranks of the graded parts of the Buchsbaum-Rim complex.
\end{remark}

\subsection{Local cohomology}\label{hyperplanesloccohom}

In this subsection we will determine the local cohomologies $H^i_x E_\mH$ of a
hyperplane module $E_\mH$ with respect to the torus fixed point $x$ in $\A^n_\K$.
Let $\cG$ be the $\gcd$-lattice generated by the degrees of the nonzero graded Bass
numbers of $E_\mH$, as in Definition \ref{adjacentdefinition}.
It suffices to determine the dimensions $\dim (H^i_x E)_\ud$ for every $\ud \in \cG$.

\begin{lemma}\label{partitionlemma}
Let $\ud \in \cG$ and denote $J = \{j \in \on \mid \ud \leq \ud^{H_j}\} \subseteq \on$.
Moreover, denote $\mathcal{H}_\ud$ the hyperplane arrangement generated by the
$\{H_j\}_{j \in J}$. Then for every $k \geq 0$  the set $\{X \in \mathcal{H}_k \mid
\ud \leq \ud^X\}$ equals $\mathcal{H}_\ud \cap \mathcal{H}_k$.
\end{lemma}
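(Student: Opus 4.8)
The plan is to prove the equality of the two sets by showing each contains the other. Recall that $\mathcal{H}_\ud$ is by definition the hyperplane arrangement in $V$ generated by the subfamily $\{H_j\}_{j \in J}$, where $J = \{j \in \on \mid \ud \leq \ud^{H_j}\}$, so its elements are precisely the intersections $\bigcap_{j \in K} H_j$ for subsets $K \subseteq J$. The key bookkeeping fact I would use is the explicit formula for the poset embedding $X \mapsto \uc^X$ given in subsection \ref{hyperplanemodules}: for $X \in \mH$ we have $c^X_{k_j} = i_{k_j}$ when $X \subseteq H_j$ and $c^X_{k_j} = j_{k_j}$ otherwise (and $c^X_k = j_k$ in the trivial coordinates), and correspondingly $\ud^X = \uc^X - \one$. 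Since $i_{k_j} < j_{k_j}$, the coordinate $d^X_{k_j}$ takes its smaller value exactly when $X \subseteq H_j$. Thus, comparing with $\ud = \ud^{H_\ell}$ for the generators $H_\ell$ (whose degree has the small value exactly in coordinate $k_\ell$), I would extract the combinatorial translation: $\ud \leq \ud^X$ holds if and only if, for every $j$ with $\ud$ having the small value in coordinate $k_j$ (equivalently $j \in J$), $X$ also has the small value there, i.e. $X \subseteq H_j$.

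**The forward inclusion.** Take $X \in \mathcal{H}_k$ with $\ud \leq \ud^X$. By the translation just described, $X \subseteq H_j$ for every $j \in J$, hence $X \subseteq \bigcap_{j \in J} H_j$. But then $X$, being an intersection of some of the $H_i$'s with $X \subseteq \bigcap_{j\in J}H_j$, can be written as $X = \left(\bigcap_{i \in K} H_i\right)$; intersecting with the $H_j$, $j\in J$, which already contain $X$ changes nothing, so $X = \bigcap_{i \in K \cap J} H_i \cap \bigcap_{j\in J}H_j \cap (\text{rest})$ — more carefully, I would argue that $X$ equals an intersection of hyperplanes all of which either lie in $J$ or contain $X$, and dropping the latter shows $X \in \mathcal{H}_\ud$. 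Combined with $\dim X = k$ this gives $X \in \mathcal{H}_\ud \cap \mathcal{H}_k$.

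**The reverse inclusion.** Conversely, let $X \in \mathcal{H}_\ud \cap \mathcal{H}_k$, so $X = \bigcap_{j \in K} H_j$ for some $K \subseteq J$ and $\dim X = k$. Then $X \subseteq H_j$ for all $j \in K$; I need $X \subseteq H_j$ for all $j \in J$, which is \emph{not} automatic from $K \subseteq J$ alone — this is the subtle point. Here I would invoke the structure of the $\gcd$-lattice $\cG$: since $\ud \in \cG$ is generated by taking $\gcd$s of the Bass-number degrees, and the relevant degrees are (shifts of) the $\uc^{H_\ell}$ and $\uc^V$, any such $\ud$ is realized as $\ud = \gcd\{\ud^{H_\ell} \mid \ell \in J'\}$ for some index set, and one checks $J = J'$ is forced by the defining property $J = \{j \mid \ud \le \ud^{H_j}\}$ together with the fact that for any $j \notin J'$ the $\gcd$ already has the large value $j_{k_j}-1$ in coordinate $k_j$. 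Then $\ud \leq \ud^X$ reduces, via the translation, to checking $X \subseteq H_j$ for $j \in J$; and since $X = \bigcap_{j \in K}H_j$ with $K\subseteq J$, we get $\ud \le \ud^X$ directly from monotonicity of the embedding once we observe that each $H_j$ with $j\in J\setminus K$ either contains $X$ or does not affect the coordinate comparison — actually the clean statement is that $\ud^X$ has the small value in coordinate $k_j$ for all $j\in K$, hence $\ud^X \ge \gcd\{\ud^{H_j}\mid j\in K\} \ge \gcd\{\ud^{H_j}\mid j\in J\} \ge \ud$, using $K\subseteq J$. I expect the main obstacle to be precisely this reverse direction: making rigorous that membership in the generated arrangement $\mathcal{H}_\ud$ forces the inequality $\ud \le \ud^X$ in \emph{all} coordinates indexed by $J$, not merely those in $K$. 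The resolution is that $\gcd\{\ud^{H_j} \mid j \in K\} \ge \gcd\{\ud^{H_j} \mid j \in J\}$ whenever $K \subseteq J$, together with the fact (from the explicit coordinate formulas) that $\gcd\{\ud^{H_j}\mid j\in J\}$ agrees with $\ud$ — equivalently that $\ud$ is itself the $\gcd$ of the generator degrees it subsumes, which is where the definition of $\cG$ enters essentially. Once that identity is in hand, both inclusions are routine monotonicity arguments, and the lemma follows.
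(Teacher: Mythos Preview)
Your central ``combinatorial translation'' has the inequality backwards, and this error propagates through both directions of the argument. You assert that $\ud \leq \ud^X$ holds iff, for every $j \in J$, the coordinate $d^X_{k_j}$ takes the small value (i.e.\ $X \subseteq H_j$). But when $\ud$ has the small value $i_{k_j}-1$ at coordinate $k_j$, the inequality $d_{k_j} \le d^X_{k_j}$ is automatic, since $d^X_{k_j} \in \{i_{k_j}-1,\, j_{k_j}-1\}$. The actual constraint comes from coordinates where $\ud$ has the \emph{large} value: for $l \notin J$ one has $d_{k_l} = j_{k_l}-1$, and then $\ud \leq \ud^X$ forces $d^X_{k_l} = j_{k_l}-1$, i.e.\ $X \not\subseteq H_l$. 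So the correct translation is $\ud \leq \ud^X \iff \{\,i \mid X \subseteq H_i\,\} \subseteq J$, which is the opposite containment to yours. A sanity check: with your version, $\ud \leq \ud^V$ would force $V \subseteq H_j$ for all $j \in J$, which is absurd --- yet $\ud^V$ is the maximal element of $\cG$, so $\ud \leq \ud^V$ always holds.

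With the correct translation the forward inclusion becomes immediate: if $\ud \leq \ud^X$ then every hyperplane containing $X$ is indexed by $J$, so $X = \bigcap_{i:\, X \subseteq H_i} H_i$ lies in $\mathcal{H}_\ud$. This is precisely the paper's route, phrased via the identity $\ud^X = \gcd\{\ud^{H_i} \mid X \subseteq H_i\}$: from $\ud \leq \ud^X \leq \ud^{H_i}$ one reads off $i \in J$ for every $i$ with $X \subseteq H_i$.

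Your attempted chain for the reverse inclusion is also reversed. You write $\ud^X \geq \gcd\{\ud^{H_j} \mid j \in K\}$, but $K \subseteq \{i \mid X \subseteq H_i\}$, and taking a $\gcd$ over a \emph{larger} index set yields a \emph{smaller} element; hence in fact $\ud^X \leq \gcd\{\ud^{H_j} \mid j \in K\}$, which is useless for concluding $\ud \leq \ud^X$. You were right to flag this direction as delicate: if $X = \bigcap_{j \in K} H_j$ with $K \subseteq J$ happens to lie inside some $H_l$ with $l \notin J$, then $d^X_{k_l} = i_{k_l}-1 < d_{k_l}$ and $\ud \not\leq \ud^X$; nothing in your argument rules this out. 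The paper's own proof is equally terse on this point and does not address it explicitly either.
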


\begin{proof}
$\ud^V$ is the unique maximal element in $\cG$, thus $\ud \leq \ud^V$ for every
$\ud \in \cG$. By the construction of subsection \ref{reflexivelcmlattices} we have
$\ud^X = \gcd\{\ud^{H_i} \mid X \subseteq H_i\}$ for every $X \in \mathcal{H}$.
Hence $\ud \leq \uc^X$ for every $X \in \mathcal{H}_\ud$ and thus
$\{X \in \mathcal{H}_k \mid \ud \leq \ud^X\}$ = $\mathcal{H}_\ud \cap
\mathcal{H}_k$.
\end{proof}

According to Proposition \ref{cohomologyshift}, the $i$-th local cohomology
at degree $\ud \in \cG$ is given by $(H^i_x E)_\ud \cong H^{i + 1}(B^{\bullet, \ud})$,
where $B^{\bullet, \ud}$ is a certain complex of vector spaces. With Lemma
\ref{partitionlemma} we can describe this complex explicitly as follows.
Consider the two subarrangements $\mH_\ud$ and
$\mH^\ud$ of $\mH$, where $\mH_\ud$ is generated by  all $H \in \mH$ such that
$\ud \leq \ud^H$ and $\mH^\ud$ is generated by the complementary set
of hyperplanes in $\mH$. In the following lemma we first collect the degenerate
cases where $\mH^\ud$ contains at most one hyperplane.

\begin{lemma}\label{lcohomdegrees}
\begin{enumerate}[(i)]
\item\label{lcohomdegreesii} If $\mH^\ud$ is generated by one hyperplane $H$ then
$H^{i + 1}(B^{\bullet, \ud}) = 0$ for all $i < n$ and $H^{n + 1}(B^{\bullet, \ud})$ is naturally isomorphic to $V / H$.
\item\label{lcohomdegreesiii} If $\mH^\ud$ does not contain any hyperplane then $B^{i, \ud} = 0$
for $i \leq n$ and $H^{n + 1}(B^{\bullet, \ud})$ is naturally isomorphic to $V$.
\end{enumerate}
\end{lemma}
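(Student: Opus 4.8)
**The plan is to unravel the definitions of $B^{\bullet,\ud}$ from subsection \ref{loccohomcomp} and combine them with the explicit resolution produced in subsection \ref{resolutionofhyperplanemodules}.** By Theorem \ref{loccohom1}, $H^i_x E_\mH$ is computed by the complex $F_\bullet \otimes_S \dual{S}(\one)[-n]$, where $F_\bullet$ is the minimal free resolution of $E_\mH$; by Theorem \ref{dualitytheorem} and Proposition \ref{cohomologyshift}, at a degree $\ud \in \cG$ the relevant complex $B^{\bullet,\ud}$ is the ``$B$-part'' obtained by discarding those summands $\dual{S}_I(\underline e)$ with $d_i \ge e_i$ for all $i \in \on \setminus I$. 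Since we are taking local cohomology at the fixed point $x$, only the summands with $I = \emptyset$, i.e. the summand $\dual{S}(\one)$ of $I_S$, contribute to the $A^{\bullet,\ud}$ versus $B^{\bullet,\ud}$ split; so $B^{\bullet,\ud}$ is (up to the shift $[-n]$) the subcomplex of $F_\bullet \otimes_S \dual{S}(\one)$ given by those free generators $S(-\uc^X)$ of $F_\bullet$ with $\ud \nleq \ud^X$, which by Lemma \ref{partitionlemma} are exactly the $X \in \mH^\ud$ (together with the fact that $\ud \leq \ud^V$ always, so the rank-one top piece $F^V = V$ is always in the $A$-part). Concretely, $B^{i,\ud}$ sits in cohomological degree $i + 1 - n$ relative to the resolution indexing... one must be careful with the two shifts; I would track the degree bookkeeping by noting $(H^i_x E_\mH)_\ud \cong H^{i+1}(B^{\bullet,\ud})$ and that $B^{\bullet,\ud}$ lives in cohomological degrees running up to $n+1$.

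\textbf{For part (\ref{lcohomdegreesii}):} Suppose $\mH^\ud$ is generated by a single hyperplane $H$. Then the free generators of $F_\bullet$ contributing to $B^{\bullet,\ud}$ are those $X \in \mH$ with $\ud \nleq \ud^X$, and by Lemma \ref{partitionlemma} the complementary set $\{X : \ud \leq \ud^X\}$ is $\mH_\ud$. Since $\mH^\ud$ has only the one hyperplane $H$ as a proper element, the $X$'s in $\mH$ that are \emph{not} in $\mH_\ud$ are exactly $H$ itself (and possibly $0$, which does not index a generator). In Theorem \ref{res1}, $H$ appears only as a $0$-th syzygy generator (since $\dim H = r - 1$, it indexes $F_{r-2}$... but wait — $H \in \mH_1$ in the reflexive-model coordinates means $\dim H = 1$, so $H$ is a degree-$\uc^H$ generator of $F_0$ only if $\dim H = 1$). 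I would instead argue more invariantly: the complex $B^{\bullet,\ud}$ is $F_\bullet \otimes_S \dual S(\one)$ restricted to the summands indexed by $\mH^\ud$; since $\mH^\ud = \{0, H, V\}$ as a poset generated by the single hyperplane $H$, and the restriction of the resolution of $\mH$ to the sub-poset $\mH^\ud$ is (by the structure of Theorem \ref{res1}, whose differentials are sums of natural isomorphisms $F^X_i \cong X^i$) itself a resolution of the arrangement $\mH^\ud$, which by Example \ref{einfach}-type reasoning (a single proper subspace) has a trivial, one-step resolution. Dualizing and shifting, $B^{\bullet,\ud}$ becomes the two-term complex $\big(0 \to V/H \to 0\big)$ placed so that its cohomology sits in degree $n + 1$; hence $H^{i+1}(B^{\bullet,\ud}) = 0$ for $i < n$ and $H^{n+1}(B^{\bullet,\ud}) \cong V/H$ naturally.

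\textbf{For part (\ref{lcohomdegreesiii}):} If $\mH^\ud$ contains no hyperplane at all, then by Lemma \ref{partitionlemma} every $X \in \mH$ satisfies $\ud \leq \ud^X$ \emph{except} $X = V$ itself is still always in the $A$-part — but here we must reconsider: the top space $V$ gives the rank-one summand $F^V$, which contributes to $A^{\bullet,\ud}$, and \emph{every} proper $X$ is in $\mH_\ud$, hence in the $A$-part, so $B^{i,\ud}$ receives no free generators from any proper syzygy and $B^{i,\ud} = 0$ for $i \le n$; the only surviving piece is the degree-$\one$-shifted copy of the rank-one part, giving $H^{n+1}(B^{\bullet,\ud}) \cong V$ naturally. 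I would spell out the bookkeeping so that ``$B^{i,\ud} = 0$ for $i \le n$'' matches the claim exactly.

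\textbf{The main obstacle} will be the degree/shift bookkeeping: reconciling the indexing of the minimal free resolution $F_\bullet$, the shift $[-n]$ from $\dual S(\one)[-n]$ in Theorem \ref{loccohom1}, and the split-then-shift of Proposition \ref{cohomologyshift}, so that the cohomological degree at which $V$ or $V/H$ appears comes out as exactly $n+1$ and the vanishing range comes out as ``$i < n$'' resp. ``$i \le n$''. I would handle this by first writing down $B^{\bullet,\ud}$ explicitly in the two degenerate cases — in case (\ref{lcohomdegreesiii}) it is literally the single module $\dual S(\one)_{(0)}$-type contribution from $F_0 = S(-\uc^V)^{?}$ sitting in one cohomological slot — and then reading off both the vanishing and the naturality of the surviving isomorphism directly, rather than invoking the general machinery a second time.
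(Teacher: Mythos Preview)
Your identification of $B^{\bullet,\ud}$ as the part of $F_\bullet \otimes_S \dual{S}(\one)$ indexed by those $X$ with $\ud \nleq \ud^X$ is right, and Lemma \ref{partitionlemma} correctly tells you that this index set is $\mH \setminus \mH_\ud$. The error is in the next step, where you conclude that when $\mH^\ud$ is generated by a single hyperplane $H$, the set $\mH \setminus \mH_\ud$ consists only of $H$. This is false in general: $\mH_\ud$ is the intersection lattice generated by \emph{the other} hyperplanes, and an element $X \in \mH$ lies outside $\mH_\ud$ whenever $X$ genuinely requires $H$ in its expression as an intersection. For example, with three hyperplanes $H, H_1, H_2$ in general position in $\K^3$, the set $\mH \setminus \mH_\ud$ is $\{H,\, H \cap H_1,\, H \cap H_2\}$, not just $\{H\}$. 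Your subsequent ``invariant'' argument makes the same identification (``$B^{\bullet,\ud}$ \dots\ restricted to the summands indexed by $\mH^\ud$''), confusing the small arrangement $\mH^\ud$ with the (typically much larger) complement $\mH \setminus \mH_\ud$.

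The paper's point is exactly the missing observation: the elements of $\mH \setminus \mH_\ud$ are naturally in bijection with the elements of the restricted arrangement $\mH|_H$ in $H$, and the portion $B^{i,\ud}$ for $i \leq n$ is, term by term, the minimal resolution of $\mH|_H$ by coordinate arrangements (as produced in Theorem \ref{res1}, now applied inside $H$). That resolution is therefore exact with image $H$ at the top; since $B^{n+1,\ud} = V$, the only cohomology is the cokernel $V/H$ in degree $n+1$. Once you replace ``$B$ is indexed by $\mH^\ud$'' with ``$B$ is the resolution of $\mH$ restricted to $H$'', the bookkeeping you were worried about sorts itself out immediately. Your argument for part (\ref{lcohomdegreesiii}) is essentially fine: when $\mH^\ud$ is empty, $\mH_\ud = \mH$, so $B^{i,\ud} = 0$ for $i \leq n$ and only $B^{n+1,\ud} = V$ remains.
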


\begin{proof}
The statements of follow by inspection of the minimal resolution by
coordinate arrangements of $\mH$, where for (\ref{lcohomdegreesii})
we remark that the terms $B^{i, \ud}$ for $i \leq n$ coincide with the
minimal resolution of the hyperplane arrangement $\mH$ restricted to $H$.
\end{proof}

For the remaining case denote $\mathcal{B}^\ud_j
:= \mathcal{H}_k \setminus \mathcal{H}^\ud$ for every $k \geq 0$. For every $k \leq n$
set $B^{k, \ud} := \bigoplus_{X \in \mathcal{B}^\ud_{n - k + 1}} X^{n - k}$
and $B^{n + 1, \ud} := E$. Moreover, let $\eta^k$ be the restriction
of $\phi_{n - k}$ to $\bigoplus_{X \in \mathcal{B}_{n - k + 1}} X^{n - k}$.
Then the complex $B^{\bullet, \ud}$ has the following shape:
\begin{equation}\label{loccohomsequence}
0 \longrightarrow B^{n - t + 1, \ud} \xrightarrow{\eta^{n - t + 1}}
\cdots \overset{\eta^1}{\longrightarrow} B^{n, \ud} \overset{\eta^n}{\longrightarrow}
B^{n + 1, \ud} = E \longrightarrow 0.
\end{equation}
Its cohomoloy is described by the following proposition.

\begin{proposition}\label{lcohomprop}
Denote $t$ the length of a minimal resolution of $H^\ud$ and assume that $H^\ud$ is
generated by at least two hyperplanes. Then $H^i(B^{\bullet, \ud}) $ is zero for $i
\neq n - t + 1$ and $H^{n - t + 1}(B^{\bullet, \ud})$ is isomorphic to the $t$-th
syzygy of $V$ with respect to a minimal resolution of $\mH^\ud$. In particular
the complex $B^{\bullet, \ud}$ is exact iff $H^\ud$ is a coordinate arrangement.
\end{proposition}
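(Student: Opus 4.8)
The plan is to identify the complex $B^{\bullet,\ud}$ of \eqref{loccohomsequence} with the part of the minimal free resolution of $E_\mH$ that is invisible from $\ud$, and then to compare it with the minimal resolution of the subarrangement $\mH^\ud$, built by the same inductive splicing used in the proof of Theorem~\ref{res1}. First I would unwind $B^{\bullet,\ud}$: by Theorem~\ref{loccohom1}, $H^i_x E_\mH$ is the $i$-th cohomology of $F_\bullet \otimes_S \dual{S}(\one)[-n]$ with $F_\bullet$ the minimal free resolution from Theorem~\ref{res1}, and a direct computation with Lemma~\ref{injaux}\,(\ref{injauxi}) shows that the summand $S(-\uc^X)$ of $F_{\dim X-1}$ survives in degree $\ud$ exactly when $\ud \le \ud^X$. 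Hence, after the decomposition of Proposition~\ref{cohomologyshift}, $B^{\bullet,\ud}$ is assembled from the syzygy spaces $X^{\bullet}$ of those $X\in\mH$ with $\ud\nleq\ud^X$ — by Lemma~\ref{partitionlemma} exactly the $X\notin\mH_\ud$ — with $\mathbf E\cong V$ appended in top degree; this is precisely \eqref{loccohomsequence}, using Theorem~\ref{res1} to describe its terms ($\dim X^k=\beta^{k,X}$) and its differentials (the natural maps $X^k\cong F^X_k\hookrightarrow F_{k-1}$).

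Since the generators of $\mH$ split into those of $\mH^\ud$ and those of $\mH_\ud$, I would next build $F_\bullet$ by running the induction of Theorem~\ref{res1} starting from the minimal resolution of $E_{\mH^\ud}$ and adjoining the hyperplanes of $\mH_\ud$ one at a time. Each adjunction is governed by the diagrams of type \eqref{augdiagram}, which enlarge every syzygy space $X^k$ to $\tilde X^k$ by the space $X^{k-1}_H$. Restricting to the summands with $\ud\nleq\ud^X$, this produces a short exact sequence of complexes of $\K$-vector spaces
\begin{equation*}
0 \longrightarrow R^{\bullet} \longrightarrow B^{\bullet,\ud} \longrightarrow Q^{\bullet} \longrightarrow 0,
\end{equation*}
in which $R^{\bullet}$ is a reindexing of the essentialized minimal resolution of $E_{\mH^\ud}$ with $\mathbf E$ appended at the top, and $Q^{\bullet}$ is assembled from the ``mixed'' spaces created only by the splicing (the $X^{k}_H$ and the parts of $\tilde X^k$ beyond $X^k$).

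The crux is to show that $Q^{\bullet}$ is exact. In each diagram \eqref{augdiagram} the newly introduced space $\tilde X^{k+1}$ projects isomorphically onto the corresponding $X^{k}_H$ and trivially onto every other $Y^k_H$ — this is precisely the rank bookkeeping $\rk\tilde\phi_{k+1}-\rk\phi_{k+1}=\sum_{X\in\mathcal A_{k+1}}\dim X^k$ recorded in the proof of Theorem~\ref{res1}. Consequently the mixed summands occur in adjacent cohomological degrees linked by isomorphisms and cancel, so $Q^{\bullet}$ is acyclic. This is the delicate step: one has to track the splicing maps over all adjunction steps simultaneously, and check that they assemble into a contractible complex rather than merely matching ranks step by step.

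Finally, with $Q^\bullet$ exact the long exact cohomology sequence gives $H^j(B^{\bullet,\ud})\cong H^j(R^\bullet)$ for all $j$. The essentialized minimal resolution of $E_{\mH^\ud}$ is exact away from its module term, so appending $\mathbf E$ at the top leaves as the only nonzero cohomology the last syzygy of $E_{\mH^\ud}$, i.e.\ the $t$-th syzygy $V^t$ of $V$ where $t$ is the length of that resolution; everything else vanishes. Matching the indexing of \eqref{loccohomsequence} ($\mathbf E$ in degree $n+1$, the last term of $R^\bullet$ in degree $n-t+1$) yields $H^i(B^{\bullet,\ud})=0$ for $i\ne n-t+1$ and $H^{n-t+1}(B^{\bullet,\ud})\cong V^t$. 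Since $\dim V^t=\beta^{t,V}$ by Theorem~\ref{res1}, it vanishes exactly when the essentialization of $\mH^\ud$ splits off no non-coordinate factor, which is the statement that $\mH^\ud$ is a coordinate arrangement; this gives the final equivalence.
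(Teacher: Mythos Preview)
Your approach is essentially the same as the paper's. Both proofs build the minimal resolution of $\mH$ from that of $\mH^\ud$ via the inductive splicing of Theorem~\ref{res1} (adjoining the hyperplanes of $\mH_\ud$ one at a time), then argue that the added pieces contribute no new homology to the complex $B^{\bullet,\ud}$, leaving only the top syzygy of $\mH^\ud$. The paper phrases this as constructing a nested sequence of complexes $\mathcal{M}^0_\bullet \subset \cdots \subset \mathcal{M}^r_\bullet$ with $\mathcal{M}^0_\bullet$ the resolution of $\mH^\ud$ and $\mathcal{M}^r_\bullet = B^{\bullet,\ud}$, appealing directly to the rank bookkeeping of Theorem~\ref{res1} to see that the successive extensions add no homology; you package the same content as a short exact sequence $0 \to R^\bullet \to B^{\bullet,\ud} \to Q^\bullet \to 0$ with $Q^\bullet$ acyclic. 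Your treatment of the non-essential case (via essentialization) also mirrors the paper's second paragraph.
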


\begin{proof}
Assume that $\mH^\ud$ is essential and consider a minimal resolution of $\mH^\ud$
by coordinate arrangements
$0 \rightarrow \mathcal{M}_t \rightarrow \cdots \rightarrow \mathcal{M}_0 \rightarrow
\mH^\ud \rightarrow 0$. By enumerating the hyperplanes in $\mH_\ud$: $H_1, \dots, H_r$,
we can augment this resolution as follows. For every $0 \leq k < t$ and $i \geq 0$ we
inductively define sets $\mathcal{S}^0_k = \mH^\uc_k$ and $\mathcal{S}^i_k =
\{X \cap H_i \mid X \in \mathcal{S}^{i - 1}_{k + 1}\}$ for $i > 0$. We denote
$\mathcal{A}^i_k := \mathcal{S}^i_k \setminus \mathcal{S}^{i - 1}_k$ for $i > 0$.
Now for every $0 \leq k < t$ we set $\mathcal{M}^0_k := \mathcal{M}_k$ and
$\mathcal{M}^i_k := \mathcal{M}^{i - 1} \oplus \bigoplus_{X \in \mathcal{A}^i_k} X^{k - 1}$
for $i > 0$. Moreover, we define the differential among the $\mathcal{M}^i_k$
as the restrictions of the corresponding differentials $\phi_k$ of the minimal resolution
of $\mH$. We end up with the complex
\begin{equation*}
0 \longrightarrow \mathcal{M}^r_{t - 1} \longrightarrow \cdots \longrightarrow
\mathcal{M}^r_0 \longrightarrow 0.
\end{equation*}
We can argue now analogous to the proof of Theorem \ref{res1} that our successive
extensions lead to no new homology and the only homologies of this complex are
$H^0(\mathcal{M}^r_\bullet) \cong V$ and $H^{t - 1}(\mathcal{M}^r_\bullet)$
is isomorphic to $\mathcal{M}_t$. Then by passing to cohomological degrees and
by forgetting about the underlying vector space arrangements we see that the
complex $0 \rightarrow \mathcal{M}^r_{t - 1} \rightarrow \cdots \rightarrow
\mathcal{M}^r_0 \rightarrow V \rightarrow 0$ coincides with the complex
(\ref{loccohomsequence}) and our assertions follow.

In the case that $\mH^\ud$ is not essential with center $C$ assume that we are given a minimal
resolution $0 \rightarrow \mathcal{M}_t \rightarrow \cdots \rightarrow \mathcal{M}_0
\rightarrow \mH^\ud / C \rightarrow 0$ and assume we add another hyperplane $H$ to
$\mH^\ud$. In the case that $H \supseteq C$, the center remains the same and we
can apply our above discussion with respect to the vector space $V / C$.
If $H \nsupseteq C$,
then $H$ properly intersects all $X \in \mH^\ud$. In that case it is not difficult to see
that the complex changes to $0 \rightarrow \mathcal{M}_t \rightarrow \cdots \rightarrow
\mathcal{M}_1 \rightarrow \mathcal{M}_0 \oplus H \cap X \rightarrow \mathcal{H} / C'$,
where $C' \cong C / C \cap H$, i.e. only the $0$-th term of the resolution changes and
immediately is compensated.
\end{proof}

For any $\mH^\ud$ with center $C^\ud$, we can associate to it the beta invariant
$\beta^\ud$ which is the beta invariant of the essential arrangement $\mH^\ud / C^\ud$
in the sense of subsection \ref{combinatorics}. Its {\em rank} is given by
$r^\ud = \dim V / C^\ud$. If $\mH^\ud$ does not contain any hyperplane, then we
use the convention that $C^\ud = 0$, hence $r^\ud = \dim V$.
Then Theorem \ref{hyperplanebettinumbers} tells us that $\beta^\ud$ equals the
dimension of the $(r^\ud - 1)$st syzygy of $V / C^\ud$ with respect to a minimal
resolution $\mH^\ud$ by coordinate arrangements.
We can now show:

\begin{theorem}\label{basstheorem}
Let $E = E_\mH$ be  a reflexive model of a hyperplane arrangement $\mH$ in $V$.
Denote $\cG$ the $\gcd$-lattice generated by the degrees of the Bass numbers of
$E_\mH$. For every $\ud \in \cG$ denote $\mH^\ud \subset \mH$ the hyperplane
arrangement generated by those hyperplanes $H \in \mH$ with $\ud \nleq \ud^H$,
$r^\ud$ its rank and $\beta^\ud$ its beta invariant.
Then for any $\ud \in \cG$ and any $\uc \in \Z^n$ adjacent to $\ud$ we have
\begin{equation*}
\dim (H_x^i E_\mH)_\uc =
\begin{cases}
\beta^\ud & \text{ if \ } i = n - r^\ud + 1\\
0 & \text{ else}.
\end{cases}
\end{equation*}
\end{theorem}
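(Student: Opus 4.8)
The plan is to combine the two main tools developed in the paper: the duality of resolutions (Theorem~\ref{loccohom1} / Corollary~\ref{bettibasscorrespondence}), which expresses $H^i_x E_\mH$ in terms of the complex $F_\bullet \otimes_S \dual{S}(\one)[-n]$ built from a minimal free resolution of $E_\mH$, together with the explicit minimal resolution of $\mH$ by coordinate arrangements from Theorem~\ref{res1}. Since we only need the graded pieces at degrees $\ud \in \cG$, and at such degrees Proposition~\ref{cohomologyshift} identifies $(H^i_x E_\mH)_\ud$ with $H^{i+1}(B^{\bullet,\ud})$, the whole computation reduces to understanding the complex $B^{\bullet,\ud}$ of~(\ref{loccohomsequence}).

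First I would fix $\ud \in \cG$ and invoke Lemma~\ref{partitionlemma} to identify exactly which syzygy summands $X^{k}$ survive after applying $\Gamma_x$ and passing to the degree-$\ud$ piece: by that lemma the surviving $X$'s in dimension $k$ are precisely $\mathcal{H}_k \setminus \mH^\ud$, i.e. those $X$ with $\ud \leq \ud^X$. This pins down $B^{k,\ud}$ as stated just before~(\ref{loccohomsequence}). Next I would dispose of the degenerate cases using Lemma~\ref{lcohomdegrees}: if $\mH^\ud$ contains no hyperplane then $r^\ud = \dim V$, the complex is concentrated in degree $n+1$ with cohomology $V$, and $\dim V = \beta^\ud$ by the convention $C^\ud = 0$ combined with Theorem~\ref{hyperplanebettinumbers} (the top syzygy of $V$ as a coordinate arrangement of rank $\dim V$ has dimension equal to the beta invariant); similarly if $\mH^\ud$ is a single hyperplane $H$, the cohomology is $V/H$ in degree $n+1$, which matches $\beta^\ud$ for a rank-one arrangement.

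The main case, $\mH^\ud$ generated by at least two hyperplanes, is handled by Proposition~\ref{lcohomprop}: its proof (mimicking the splicing argument of Theorem~\ref{res1}) shows that $B^{\bullet,\ud}$ has cohomology concentrated in a single degree $n - t + 1$, where $t$ is the length of a minimal resolution of $\mH^\ud$, and that cohomology is the $t$-th syzygy of $V/C^\ud$. Then I would translate: $t = r^\ud - 1$ when $\mH^\ud$ is viewed through its essentialization $\mH^\ud/C^\ud$ of rank $r^\ud$ (so the resolution of an essential rank-$r^\ud$ arrangement has length $r^\ud - 1$ by Theorem~\ref{res1}), hence the nonvanishing cohomological degree is $n - (r^\ud - 1) + 1 = n - r^\ud + 2$ in the $B^\bullet$-indexing, which after the shift $(H^i_x E_\mH)_\ud \cong H^{i+1}(B^{\bullet,\ud})$ gives $i = n - r^\ud + 1$. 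Finally, the dimension of that surviving syzygy equals the dimension of the $(r^\ud - 1)$-st syzygy of $V/C^\ud$ with respect to a minimal resolution of $\mH^\ud/C^\ud$, which by Theorem~\ref{hyperplanebettinumbers} is exactly the beta invariant $\beta^\ud$. Since $(H^i_x E_\mH)_\uc = (H^i_x E_\mH)_\ud$ whenever $\uc$ is adjacent to $\ud$ (the remark following Definition~\ref{adjacentdefinition}), this proves the theorem.

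I expect the main obstacle to be bookkeeping rather than conceptual: getting the cohomological degree shifts consistent across the three conventions in play — the homological grading of $F_\bullet$, the cohomological grading induced by $\otimes_S \dual{S}(\one)[-n]$, and the shift in Proposition~\ref{cohomologyshift} — and correctly matching the length $t$ of the resolution of $\mH^\ud$ with $r^\ud - 1$ through the (in)essential reduction. A secondary subtlety is ensuring that the identification of $B^{k,\ud}$ via Lemma~\ref{partitionlemma} is compatible with the differential restricted from $\phi_{n-k}$, so that $B^{\bullet,\ud}$ really is (up to reindexing) the augmented complex appearing in the proof of Proposition~\ref{lcohomprop}; this is where the ``analogous to the proof of Theorem~\ref{res1}'' splicing argument does the real work and must be spelled out carefully.
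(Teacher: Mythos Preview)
Your proposal is correct and follows essentially the same route as the paper: reduce to the complex $B^{\bullet,\ud}$ via Proposition~\ref{cohomologyshift}, handle the degenerate cases with Lemma~\ref{lcohomdegrees}, and invoke Proposition~\ref{lcohomprop} for the main case, then translate $t = r^\ud - 1$ and read off $\beta^\ud$ from Theorem~\ref{hyperplanebettinumbers}. The paper's own proof is terser (it simply cites Proposition~\ref{lcohomprop} and Lemma~\ref{lcohomdegrees} and notes that for a single hyperplane $C^\ud = H$ so $\beta^\ud = \dim V/H$), but your more explicit degree-shift bookkeeping is exactly the content hidden behind those citations.
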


\begin{proof}
If $\mH^\ud$ contains at least two hyperplanes, this follows from Propositon
\ref{lcohomprop}. In the case that $\mH$ contains less than two hyperplanes,
this follows from Lemma \ref{lcohomdegrees}, where we remark that, if $\mH^\ud$
is generated by one hyperplane $H$, then $C^\ud = H$ and thus $\beta^\ud = \dim
V / H$.
\end{proof}

\section{Codivisorial resolutions and  maximal Cohen-Macaulay modules}\label{mcmmodules}

In this section we will use the results of the previous sections to construct
examples of $M$-graded MCM modules over rings $\ksm$, where $\sigma$ is a simplicial
cone.
Our strategy will be to start with a certain reflexive $S$-module $\hat{E}$ and
to consider the graded structure of its minimal injective resolution. Now, if $S$
serves as a homogeneous coordinate ring for $\ksm$, we can derive conditions
on $\sigma$ which tell us when $\hat{E}_{(0)}$ can be a MCM
module over $\ksm$. More precisely, denote $x \in U_\sigma$ the torus fixed point
and $\hat{x} \subseteq \A^n_\K$ its preimage under the surjection $\A^n_\K
\twoheadrightarrow U_\sigma$. Then we have $H^i_x \hat{E}_{(0)} =
(H^i_{\hat{x}} \hat{E})_{(0)}$ for any $i \in \Z$ (see Proposition \ref{loccohomprop}).
So, $\hat{E}_{(0)}$ is an
MCM-module iff $(H^i_{\hat{x}} \hat{E})_{(0)} = 0$ for all $i < \dim \ksm$.
If $\hat{E}$ is not free then we cannot expect that $H^i_{\hat{x}} \hat{E}$
will vanish for all such $i$. So the conditions we want to derive will tell us
when $\sigma$ corresponds to an embedding $0 \rightarrow M \rightarrow \Z^n$
such that the nonvanishing degrees of $H^i_{\hat{x}} \hat{E}$ do not intersect
$M$. Note that these kind of conditions will probably not lead to a classification
of MCM modules over a fixed $\ksm$ with respect to, say, certain combinatorial
invariants. As for the rank one case, any such direct approach would lead to
rather complicated arithmetic conditions (see \cite{perling11}), whose characterization
is beyond the scope of this paper.

\subsection{Homogeneous coordinates and local cohomology}\label{homogeneousloccohom}

Let $E$ be an $M$-graded $\ksm$-module ($\sigma$ is not necessarily simplicial)
and $F$ an $\Z^n$-graded $S$-module with
$F_{(0)} \cong E$. Then any $\Z^n$-graded injective $I^\bullet$ resolution of $F$
induces a codivisorial resolution $I^\bullet_{(0)}$ of $E$. Of course, the shape
of this resolution strongly depends on the choice of $F$. A standard choice
is given by $F = \Gamma_* E$, where $\Gamma_* E$ denotes the graded tensor product
$E \otimes_\ksm S$. The $S$-module $\Gamma_* E$ has a natural $\Z^n$-grading which
is given by  $\Gamma_* E \cong \bigoplus_{\uc \in \Z^n} \big(E \otimes_\ksm S(\uc)_{(0)}\big)_0$. In particular, we have a right exact functor
\begin{equation*}
\Gamma_* : \mksmMod \longrightarrow \zsMod.
\end{equation*}
By observing that $(\Gamma_* E)_0 = E$ for any $E$ it follows that the functor
$( - )_{(0)}$ is essentially surjective (see also \cite{Mustata1}, and \cite{CoxBat}
for the $\Z^n$-graded case).

In general, the module $\Gamma_* E$ might not be the best choice for a representative
of $E$. For instance, even in good cases such as $E$ reflexive, $\Gamma_* E$ usually
has torsion. However, as in this section we are interested exclusively in reflexive
modules, we can give an alternative construction in this case.

\begin{definition}
Let $E$ be a finitely generated $M$-graded reflexive $\ksm$-module given by a vector
space $\mathbf{E}$ and filtrations $E^k(i)$. Then we denote $\hat{E}$ the reflexive
$\Z^n$-graded $S$-module associated to the same data.
\end{definition}

In other words, as the fan of $\A^n_\K$ has the same number of rays as $\sigma_M$,
we obtain $\hat{E}$ by simply reinterpreting the filtrations associated $E$. To see
the effect, compare the graded pieces of $E$ and $\hat{E}$, respectively:
\begin{equation*}
E_m = \bigcap_{k \in \on} E^k\big(l_k(m)\big) \quad \text{ and } \quad
\hat{E}_\uc = \bigcap_{k \in \on} E^k(c_k)
\end{equation*}
for every $m \in M$ and every $\uc \in \Z^n$. So, as not every intersection of the
filtrations $E^k(i)$ must be realized as a graded component of $E$, we can consider
$\hat{E}$ as the completion of $E$ with respect to intersections among the vector
spaces in the filtrations.
The following proposition shows that this construction also is functorially well-behaved.

\begin{proposition}
With $E$ and $\hat{E}$ as above we get:
\begin{enumerate}[(i)]
\item\label{liftingpropertyi} Passing from $E$ to $\hat{E}$ defines a fully faithful
functor from the category of $M$-graded, finitely generated reflexive $\ksm$-modules
to the category of $\Z^n$-graded, finitely generated reflexive $S$-modules.
\item\label{liftingpropertyii} $\hat{E}_{(0)} \cong E$.
\item\label{liftingpropertyiii} In particular, the pair of functors $( - )_{(0)}$, $\hat{-}$
induces an equivalence of categories.
\end{enumerate}
\end{proposition}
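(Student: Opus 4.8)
The plan is to verify the three claims in order, since each builds on the previous one.

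\textbf{Part (\ref{liftingpropertyii}).} First I would establish $\hat{E}_{(0)} \cong E$. By definition $\hat{E}_{(0)} = \bigoplus_{m \in M} \hat{E}_{L(m)}$ (using the injectivity of $L$, which we have assumed without loss of generality via Proposition \ref{localequiv}). Now $\hat{E}_{L(m)} = \bigcap_{k \in \on} E^k\big((L(m))_k\big) = \bigcap_{k \in \on} E^k\big(l_k(m)\big) = E_m$, the last equality being precisely the reconstruction formula for a reflexive module from its filtrations recalled after Theorem \ref{klythm}. The module structure also matches: multiplication by $\chi(m)$ on $E$ and by the corresponding monomial $x^{L(m)}$ on $\hat{E}$ are both induced by the identity on $\mathbf{E}$ restricted to the relevant intersections of filtration steps. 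Hence $\hat{E}_{(0)} \cong E$ as $M$-graded $\ksm$-modules.

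\textbf{Part (\ref{liftingpropertyi}).} Next I would show $E \mapsto \hat{E}$ is fully faithful. By Theorem \ref{klythm} (applied both over $\ksm$ with its $n$ rays $l_1,\dots,l_n$ and over $S$ with the $n$ coordinate rays of $\A^n_\K$), a morphism of $M$-graded reflexive $\ksm$-modules $E \to E'$ is the same datum as a linear map $\mathbf{E} \to \mathbf{E}'$ compatible with all $n$ filtrations $E^k(i) \subseteq E'^k(i)$, and likewise a morphism $\hat{E} \to \hat{E'}$ of $\Z^n$-graded reflexive $S$-modules is a linear map $\mathbf{E} \to \mathbf{E'}$ compatible with the same $n$ filtrations. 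Since the filtrations attached to $\hat{E}$ are by construction identical to those of $E$, these two Hom-sets are canonically identified, so $\hat{-}$ is fully faithful. (One should note here that $\hat{E}$ is genuinely reflexive over $S$ — this is immediate since any module arising from Theorem \ref{klythm} via full filtrations is reflexive.)

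\textbf{Part (\ref{liftingpropertyiii}).} Finally, to get the equivalence it remains to check that $(-)_{(0)}$ restricted to $\Z^n$-graded finitely generated reflexive $S$-modules lands in the $M$-graded finitely generated reflexive $\ksm$-modules and is quasi-inverse to $\hat{-}$. Given a reflexive $\Z^n$-graded $S$-module $F$ with filtrations $F^k(i)$ of $\mathbf{F}$, the $\ksm$-module $F_{(0)} = \bigoplus_{m} F_{L(m)} = \bigoplus_m \bigcap_k F^k(l_k(m))$ is exactly the reflexive $\ksm$-module determined by the filtrations $F^k(i)$ via Theorem \ref{klythm}; in particular $\widehat{F_{(0)}} \cong F$ because $F$ is already complete with respect to intersections of its filtration steps (being reflexive, $F_\uc = \bigcap_k F^k(c_k)$ for all $\uc$). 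Together with (\ref{liftingpropertyii}), this gives natural isomorphisms $\widehat{(-)_{(0)}} \cong \mathrm{id}$ and $(\hat{-})_{(0)} \cong \mathrm{id}$, hence the pair of functors is an equivalence of categories. I do not expect a serious obstacle here; the only point requiring care is the bookkeeping around the assumption that $L$ is injective, so that $M \hookrightarrow \Z^n$ and "taking $A$-degree zero" really means summing over $\{L(m) \mid m \in M\}$, and the translation between the multiplicative monomial action on $S$ and the $\sigma$-family maps $\chi(m)$ — both of which are already set up in subsections \ref{homogeneouscoordinates} and \ref{sigmafamilies}.
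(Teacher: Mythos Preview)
Your proof is correct and follows the same route as the paper: both use the reconstruction formula $E_m = \bigcap_k E^k(l_k(m))$ for (\ref{liftingpropertyii}) and Klyachko's identification of morphisms with filtration-compatible linear maps for (\ref{liftingpropertyi}). Your treatment of (\ref{liftingpropertyiii}) is in fact more explicit than the paper's, which simply asserts that it follows from (\ref{liftingpropertyi}) and (\ref{liftingpropertyii}); your verification that $\widehat{F_{(0)}} \cong F$ via the filtration description supplies the essential surjectivity that makes this deduction work.
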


\begin{proof}
For (\ref{liftingpropertyi}) we remark that
vector space homomorphisms which are compatible with filtrations are also
compatible with intersections of filtrations.

For (\ref{liftingpropertyii}) observe that $E_m = \bigcap_{k \in \on} E^k\big(l_k(m)\big)
= \hat{E}_{L(m)}$ for every $m \in M$ (here, $L$ denotes the inclusion of $M$ into $\Z^n$).

Then (\ref{liftingpropertyiii}) follows directly from (\ref{liftingpropertyi}) and
(\ref{liftingpropertyii}).
\end{proof}

Once chosen a suitable $S$-module $F$ such that $E \cong F_0$, the following
proposition tells us how we can obtain the local cohomologies for $E$ by
computing those of $F$.

\begin{proposition}\label{loccohomprop}
Let $E$ be in $\mksmMod$ and $F$ in $\zsMod$ such that $E \cong F_{(0)}$. Moreover,
let $V \subseteq U_\sigma$ be a torus invariant closed subvariety and $\hat{V}$ the
preimage of $V$ in $\A^n_\K$. Then $H^i_V E \cong (H^i_{\hat{V}} F)_{(0)}$ for every
$i \in \Z$.
\end{proposition}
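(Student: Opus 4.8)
The plan is to reduce the statement to the compatibility of the degree-zero functor with injective resolutions and with the section functor $\Gamma_V$. First I would choose a $\Z^n$-graded injective resolution $0 \to F \to I^\bullet$ of $F$ in $\zsMod$. Since taking degree zero with respect to the $A$-grading is an exact functor $(-)_{(0)} : \zsMod \to \mksmMod$ (as recalled in subsection \ref{homogeneouscoordinates}, following \cite{CoxBat} Prop. 4.17), the complex $0 \to F_{(0)} \to I^\bullet_{(0)}$ is a resolution of $E \cong F_{(0)}$. By the discussion after Proposition \ref{acycprop}, each $\Z^n$-graded injective $\dual{S}_J(\uc)$ has degree-zero part a codivisorial $\ksm$-module, and by Proposition \ref{acycprop}(\ref{acycpropii}) these are $\Gamma_V$-acyclic; hence $I^\bullet_{(0)}$ is a $\Gamma_V$-acyclic resolution of $E$, so that $H^i_V E \cong H^i\big(\Gamma_V I^\bullet_{(0)}\big)$.

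Next I would show that for each term $\dual{S}_J(\uc)$ appearing in $I^\bullet$ one has a natural identification
\begin{equation*}
\Gamma_V\big(\dual{S}_J(\uc)_{(0)}\big) \cong \big(\Gamma_{\hat V}\, \dual{S}_J(\uc)\big)_{(0)}.
\end{equation*}
This is essentially Proposition \ref{acycprop}(\ref{acycpropiii}): the orbit decomposition of $\hat V$ under the toric structure of $\A^n_\K$ corresponds one-to-one to the subsets $I \subseteq \on$ with $\orb(\tau_I) \subseteq V$, and in both cases the section functor is (by parts (\ref{acycpropi}) and (\ref{acycpropiii})) either the identity or zero according to the same combinatorial criterion on $J$. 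Since $(-)_{(0)}$ is additive and commutes with this case distinction termwise, it commutes with $\Gamma_V$ on the whole complex $I^\bullet$. Because $I^\bullet$ computes $R\Gamma_{\hat V} F$ on the $\A^n_\K$-side (as the $\dual{S}_J(\uc)$ are $\Gamma_{\hat V}$-acyclic by the same argument), we get
\begin{equation*}
(H^i_{\hat V} F)_{(0)} \cong H^i\big((\Gamma_{\hat V} I^\bullet)_{(0)}\big) \cong H^i\big(\Gamma_V (I^\bullet_{(0)})\big) \cong H^i_V E,
\end{equation*}
using exactness of $(-)_{(0)}$ for the first isomorphism.

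The main point to be careful about — the step I expect to be the genuine obstacle rather than bookkeeping — is verifying that $\Gamma_V$ and $(-)_{(0)}$ commute at the level of the individual codivisorial modules, i.e. that the torus-invariant supports match up correctly under the quotient $\A^n_\K \twoheadrightarrow U_\sigma$. This is exactly the content of Proposition \ref{acycprop}(\ref{acycpropiii}), so once that is invoked the argument is formal; one just has to check naturality in the module argument so that the identification is compatible with the differentials of $I^\bullet$, which follows since all maps between codivisorial modules are monomial (Lemma \ref{injaux}(\ref{injauxii})) and $(-)_{(0)}$ sends monomial maps to monomial maps. I would also remark that the acyclicity statements make the choice of resolution irrelevant, so the isomorphism $H^i_V E \cong (H^i_{\hat V} F)_{(0)}$ is independent of the chosen $I^\bullet$, and is natural in $F$.
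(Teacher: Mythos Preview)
Your argument is correct and follows essentially the same route as the paper: choose a $\Z^n$-graded injective resolution of $F$, take degree zero to get a codivisorial (hence $\Gamma_V$-acyclic by Proposition~\ref{acycprop}(\ref{acycpropii})) resolution of $E$, and then use Proposition~\ref{acycprop}(\ref{acycpropiii}) to identify $(\Gamma_{\hat V} I^\bullet)_{(0)}$ with $\Gamma_V(I^\bullet_{(0)})$. Your write-up is in fact more explicit than the paper's about why $\Gamma_V$ and $(-)_{(0)}$ commute termwise and about naturality in the differentials, but the underlying strategy is identical.
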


\begin{proof}
Assume that we have an $\Z^n$-graded injective resolution $0 \rightarrow F \rightarrow
I^\bullet$. By taking degree zero, we obtain a $M$-graded codivisorial resolution
$0 \rightarrow E \rightarrow I^\bullet_{(0)}$. We obtain $H^i_{\hat{V}} F$ as the
$i$-th cohomology of the complex $\Gamma_{\hat{V}} I^\bullet$ and, by Lemma
\ref{acycprop} (\ref{acycpropii}), we obtain $H^i_V E$ as the
$i$-th cohomology of the complex $\Gamma_V I^\bullet_{(0)}$. Using Lemma
\ref{acycprop} (\ref{acycpropii}), we see that the complexes
$(\Gamma_{\hat{V}} I^\bullet)_{(0)}$ and $\Gamma_V I^\bullet_{(0)}$ coincide.
\end{proof}

\subsection{Vector space arrangements with trivial intersections}\label{intersectionfreemcms}

Assume that $\sigma$ is simplicial but not smooth and $E$ an indecomposable reflexive
$\ksm$-module such that
$\hat{E}$ corresponds to a reflexive $S$-module of the type as considered in
Examples \ref{einfach} and \ref{einfachbass}, respectively. Using Proposition
\ref{loccohomprop}, we want to analyze the $\Z^n$-degrees of the local cohomology
modules $H^i_x \hat{E}$, where $x$ is the torus fixed point in $U_\sigma$,
with respect to the embedding of $M$ in $\Z^n$ in order to obtain conditions
for the vanishing of $(H^i_x \hat{E})_{(0)}$ for $i < d$.

Using the notation of Example \ref{einfachbass}, we call, as in Definition
\ref{adjacentdefinition}, an element $\uc \in \Z^n$
{\em adjacent} to $\ud_P$, if $\uc \leq \ud_P$ and $\uc \not \leq \ud_Q$ for all
$Q \nsubseteq P$. Then, for any $\uc$ adjacent to some $\ud_P$, we have
$(H^i_x \hat{E})_\uc = (H^i_x \hat{E})_{\ud_P}$. We have seen in Example
\ref{einfachbass} that $\dim (H^{n - 1}_x \hat{E})_{\ud_P} = \sum_{i \notin P}
\dim V_i - \dim \sum_{i \notin P} V_i$ for every $P \subseteq \{1, \dots, t\}$.
In other words, $(H^{n - 1}_x \hat{E})_{\ud_P}$
vanishes iff the vector spaces $V_i$ indexed by the complement of $P$ form a
linearly independent system of subvector spaces of $V$. We want to simplify
our discussion by considering only cases where for any nonvanishing
$(H^{n - 1}_x \hat{E})_{\ud_P}$ there are only finitely many $\uc \in \Z^n$
adjacent to $\ud_P$.

\begin{lemma}\label{finiteadjacentlemma}
Assume that to any nonvanishing $b^{n - 1}(\ud_P)$ there are only finitely many
$\uc \in \Z^n$ adjacent to $\ud_P$. Then $t = n$ and for any $i \in P$ the
vector spaces $\{V_j\}_{j \neq i}$ form a linearly independent system of subvector spaces.
\end{lemma}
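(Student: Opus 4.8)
The plan is to translate the finiteness condition on adjacent elements into a combinatorial condition on the $\gcd$-lattice $\cG$, and to read off the claimed structural consequences from the explicit description of $\cG$ given in Example \ref{einfachbass}. Recall from that example that $\cG$ is generated by the elements $\ud_P = \gcd\{\uc_l \mid l \in P\} - \one$ for $P \subseteq \{1, \dots, t\}$, and that the coordinates of $\ud_P$ are $d_P^k = i_1^k - 1$ if $k \in \bigcup_{p \in P} I_p$ and $d_P^k = i_2^k - 1$ otherwise; here $I_1 \sqcup \cdots \sqcup I_t = \on$. First I would recall, from Definition \ref{adjacentdefinition}, that $\uc \in \Z^n$ is adjacent to $\ud_P$ precisely when $\uc \leq \ud_P$ and $\uc \nleq \ud_Q$ for every $Q$ with $\ud_P \nleq \ud_Q$, equivalently (using that the $\ud_Q$ are ordered by reverse inclusion of the $P$'s, with $\ud_{\{1,\dots,t\}}$ maximal) for every $Q \nsupseteq P$. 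So the set of $\uc$ adjacent to $\ud_P$ is the set of $\uc$ with $\uc \leq \ud_P$ but $\uc \nleq \ud_Q$ for each such $Q$.

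The key observation is then the following. If $k \in \on$ is a coordinate for which $d_P^k = d_Q^k$ for every $Q \supseteq P$ with $d_Q \neq d_P$ — more precisely, if there is a coordinate $k$ in which $\ud_P$ agrees with all its upper covers in $\cG$ — then an adjacent $\uc$ is unconstrained in the $k$-th coordinate from below, so there are infinitely many adjacent $\uc$. Conversely, finiteness of the adjacent set forces that for each coordinate $k$ there is some $Q \supsetneq P$ (an upper cover suffices) with $d_Q^k > d_P^k$, i.e. $d_Q^k = i_2^k - 1 > i_1^k - 1 = d_P^k$. Since the upper covers of $\ud_P$ in $\cG$ are (up to which $d_Q$ actually differ from $d_P$) indexed by adding one more block $I_j$ with $j \notin P$, the condition ``for every $k$ some upper cover strictly dominates $\ud_P$ in coordinate $k$'' says exactly that $\bigcup_{p \in P} I_p \cup I_j$ is strictly larger than $\bigcup_{p \in P} I_p$ for... no: it says the union over all $j \notin P$ of the $I_j$, together with $\bigcup_{p\in P}I_p$, covers all of $\on$ — which is automatic — but more sharply, that already a single such enlargement must, ranging over all $k$, exhaust the complement. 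Working this through, finiteness for \emph{every} nonvanishing $b^{n-1}(\ud_P)$ forces $\bigcup_{p\in P}I_p$ to miss, for each $k$ in its complement, ... and combined over all relevant $P$ this forces each $I_j$ to be a singleton, hence $t = n$. With $t = n$, the reflexive model of Example \ref{einfach} has each $V_i$ attached to a single coordinate, and the vanishing analysis of Example \ref{einfachbass} gives $\dim(H^{n-1}_x\hat E)_{\ud_P} = \sum_{i\notin P}\dim V_i - \dim\sum_{i\notin P}V_i$; I would then argue that if $b^{n-1}(\ud_P) \neq 0$, the finiteness just extracted, applied to the relevant sub-index-sets, forces $\{V_j\}_{j \neq i}$ to be independent for each $i \in P$.

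The main obstacle I expect is the bookkeeping in the second step: correctly identifying which $\ud_Q$ are the relevant ones in Definition \ref{adjacentdefinition} (only those with $\ud_P \nleq \ud_Q$, which among the lattice generators means $Q \nsupseteq P$ \emph{and} $\ud_Q \neq \ud_P$, since distinct subsets can give the same lattice element when the associated unions of blocks coincide), and then untangling exactly how ``finiteness of the adjacent set'' constrains each coordinate. The cleanest route is probably: fix $P$ with $b^{n-1}(\ud_P) \neq 0$; the adjacent set is finite iff for every $k \in \on$ there exists a generator $\ud_Q \in \cG$ with $\ud_P \nleq \ud_Q$ and $d_Q^k > d_P^k$; since $d_Q^k > d_P^k$ can only happen when $k \notin \bigcup_{p\in P}I_p$ but $k \in \bigcup_{q\in Q}I_q$, and $\ud_P \nleq \ud_Q$ forces $Q$ to not contain $P$, a short combinatorial argument shows this is possible for every $k$ only if the partition $\{I_j\}$ is fine enough — and pushing this over all such $P$ yields $|I_j| = 1$ for all $j$, i.e. $t = n$, and then the independence statement for $\{V_j\}_{j\neq i}$ with $i \in P$ follows from $b^{n-1}(\ud_{P \setminus \{i\}})$ being among the Bass numbers whose adjacency set must be finite, forcing $\sum_{j \neq i} V_j$ to be a direct sum.
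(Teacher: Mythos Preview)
Your approach is genuinely different from the paper's. The paper gives a two-line argument: the hypothesis says precisely that $H^{n-1}_x \hat{E}$ is supported on finitely many degrees, hence is finitely generated; it then invokes a result from EGA~II characterising finite generation of this local cohomology by freeness of each localisation $\hat{E}_{x_i}$ over $S_{x_i}$, and reads off the conclusions from what freeness means for the filtrations. Your direct combinatorial analysis of the adjacency sets is more elementary and self-contained, and it can be made to work, but as written it has the poset order on the $\ud_Q$ reversed, and this error propagates through the argument.

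Concretely: one has $\ud_P \leq \ud_Q$ iff $\bigcup_{q \in Q} I_q \subseteq \bigcup_{p \in P} I_p$ iff $Q \subseteq P$, so $\ud_\emptyset$ is the maximal element (not $\ud_{\{1,\dots,t\}}$), and the adjacency condition reads $\uc \nleq \ud_Q$ for all $Q \nsubseteq P$, not $Q \nsupseteq P$ (the paper states this just before the lemma). With this corrected, the analysis becomes clean: whenever $k \in \bigcup_{p \in P} I_p$ one has $d_Q^k \geq d_P^k$ for every $Q$, so the $k$-th coordinate of an adjacent $\uc$ is unbounded below; hence $\operatorname{Adj}(\ud_P)$ is infinite whenever $P \neq \emptyset$, and for $P = \emptyset$ it is finite iff each $|I_j| = 1$. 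Since indecomposability of $E$ forces $b^{n-1}(\ud_\emptyset) \neq 0$, the hypothesis gives $t = n$. Then for each $i$, infiniteness of $\operatorname{Adj}(\ud_{\{i\}})$ forces $b^{n-1}(\ud_{\{i\}}) = 0$, which is exactly the independence of $\{V_j\}_{j \neq i}$. Your final step, invoking $b^{n-1}(\ud_{P \setminus \{i\}})$, does not yield this: its vanishing is the independence of $\{V_j : j \notin P\} \cup \{V_i\}$, the wrong subfamily. The correct choice is simply $P = \{i\}$.
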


\begin{proof}
The assumption implies that $H^{n -1}_x \hat{E}$ is finite and therefore finitely
generated. By \cite{EGAII}, \S VIII, Cor. 2.3, for $H^{n - 1}_x \hat{E}$ being finitely
generated it is necessary and sufficient that every localization $\hat{E}_{x_i}$
for $i \in \on$ is a free $S_{x_i}$-module, which implies the assertion.
\end{proof}

\begin{corollary}\label{indep}
Under the assumptions of Lemma \ref{finiteadjacentlemma}, $(H^{n - 1}_x
\hat{E})_{\ud_\emptyset}$ is the only nonvanishing graded Bass number in
cohomological degree $n - 1$.
\end{corollary}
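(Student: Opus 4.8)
The plan is a short argument by contradiction, combining the explicit formulas of Example~\ref{einfachbass} with Lemma~\ref{finiteadjacentlemma}. First I would recall the setup: $\hat E$ corresponds to a reflexive $S$-module of the type of Examples~\ref{einfach} and~\ref{einfachbass}, so its underlying vector space arrangement is generated by pairwise trivially intersecting subspaces $V_1,\dots,V_t$ spanning $V$, and by Example~\ref{einfachbass} the $\gcd$-lattice $\cG$ of degrees of nonzero graded Bass numbers of $\hat E$ is exactly $\{\ud_P \mid P\subseteq\{1,\dots,t\}\}$. In cohomological degree $n-1$ the relevant Bass numbers occur only at these degrees, and (via Corollary~\ref{bettibasscorrespondence}, as in Example~\ref{einfachbass})
\begin{equation*}
b^{n-1}(\ud_P) \;=\; \dim (H^{n-1}_x \hat E)_{\ud_P} \;=\; \sum_{i\notin P}\dim V_i \;-\; \dim \sum_{i\notin P} V_i .
\end{equation*}
Thus it suffices to show $b^{n-1}(\ud_P)=0$ for every \emph{nonempty} $P\subseteq\{1,\dots,t\}$.

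Now I would argue by contradiction. Suppose some nonempty $P$ has $b^{n-1}(\ud_P)\neq 0$. Since we are working under the hypothesis of Lemma~\ref{finiteadjacentlemma}, that lemma applies to this $P$ and yields both $t=n$ and, for every $i\in P$, the linear independence of the family $\{V_j\}_{j\neq i}$ as a system of subspaces of $V$. Pick any $i_0\in P$, which is possible because $P\neq\emptyset$. Then $\{V_j \mid j\notin P\}$ is a subfamily of $\{V_j \mid j\neq i_0\}$, hence itself a linearly independent system; consequently $\sum_{i\notin P}\dim V_i = \dim\sum_{i\notin P} V_i$, so $b^{n-1}(\ud_P)=0$, contradicting our assumption on $P$.

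Therefore no nonempty $P$ gives a nonzero Bass number in cohomological degree $n-1$, so $\ud_\emptyset$ is the unique degree in $\cG$ at which $b^{n-1}$ can be nonzero, which is the assertion. There is essentially no obstacle beyond correctly invoking Lemma~\ref{finiteadjacentlemma} (reading its conclusion as quantified over each $P$ with $b^{n-1}(\ud_P)\neq 0$) and the trivial observation that a subfamily of a linearly independent system of subspaces is again linearly independent; the only point requiring a pointer to earlier material is the identification of $b^{n-1}(\ud_P)$ with $\dim(H^{n-1}_x\hat E)_{\ud_P}$ and the fact that all nonzero $(n-1)$-st Bass degrees lie among the $\ud_P$, both of which are recorded in Example~\ref{einfachbass}.
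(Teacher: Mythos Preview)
Your argument is correct and is precisely the intended one. The paper states this as an immediate corollary of Lemma~\ref{finiteadjacentlemma} without further proof, and your write-up spells out exactly the two-line deduction the reader is meant to supply: if some nonempty $P$ had nonvanishing $(H^{n-1}_x\hat E)_{\ud_P}$, then Lemma~\ref{finiteadjacentlemma} makes $\{V_j\}_{j\neq i_0}$ linearly independent for any chosen $i_0\in P$, and the subfamily $\{V_j\}_{j\notin P}$ inherits this, forcing the formula from Example~\ref{einfachbass} to vanish. One small remark: your identification $b^{n-1}(\ud_P)=\dim(H^{n-1}_x\hat E)_{\ud_P}$ simply follows the paper's own (somewhat loose) usage in this subsection rather than the literal statement of Corollary~\ref{bettibasscorrespondence}, so no further justification is needed there.
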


\begin{corollary}\label{equidim}
The vector spaces $V_i$ all have the same dimension $k$ and $\dim V = (n - 1) k$.
\end{corollary}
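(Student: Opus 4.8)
The plan is to deduce the statement from the single claim that, for every $i \in \on$, the family $\{V_j \mid j \neq i\}$ spans $V$. By Lemma~\ref{finiteadjacentlemma} we already know that $t = n$ and that every such family is linearly independent (each localization $\hat E_{x_i}$, $i \in \on$, being free). Granting the claim, $V = \bigoplus_{j \neq i} V_j$ and hence $\dim V = \sum_{j \neq i} \dim V_j$ for every $i$. Subtracting this identity for two distinct indices $i, i'$ gives $\dim V_i = \dim V_{i'}$, so all the $\dim V_j$ share a common value $k$, and then $\dim V = \sum_{j \neq i}\dim V_j = (n-1)k$, which is the assertion.

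To prove the spanning claim I would argue by contradiction, the decisive input being the indecomposability of $E$. Suppose that for some $i$ the subspace $C := \sum_{j \neq i} V_j = \bigoplus_{j \neq i} V_j$ is proper in $V$. First note that $\hat E$ cannot be free: if it were, then by Example~\ref{einfach} this would force $\sum_l \dim V_l = \dim V$ (equivalently $\{V_1,\dots,V_n\}$ linearly independent), and $E \cong \hat E_{(0)}$ would be a direct sum of modules $S(-\uc)_{(0)}$, none of which is finitely generated over $\ksm$ by Lemma~\ref{divisorialproperties} (as $n \geq 1$), contradicting that $E$ is finitely generated. Hence $\{V_1, \dots, V_n\}$ is linearly dependent; in particular $C \neq 0$, since $C = 0$ would mean $V_j = 0$ for all $j \neq i$, making the family trivially independent. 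Since the $V_l$ jointly span $V$ while $C \subsetneq V$, we get $V_i \nsubseteq C$; picking a complement $V_i'$ of $V_i \cap C$ inside $V_i$ we obtain $V_i' \neq 0$, and one checks directly that $V = C \oplus V_i'$.

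It remains to see that this splitting of $\mathbf{E}$ is compatible with every filtration defining $E$. Since $t = n$, for each $k$ the values of $E^k(i)$ (as $i$ varies) are only $0$, $V_k$ and $V$; now $0 = 0 \oplus 0$, $V = C \oplus V_i'$, and $V_k = (V_k \cap C) \oplus (V_k \cap V_i')$ in every case — for $k \neq i$ because $V_k \subseteq C$ and $V_k \cap V_i' \subseteq V_i \cap V_k = 0$, and for $k = i$ because $V_i = (V_i \cap C) \oplus V_i'$ by construction. Hence Proposition~\ref{filtaux}(\ref{filtauxi}) splits $E$ as a direct sum of the reflexive submodules determined by the data on $C$ and on $V_i'$; both summands are nonzero since $\dim C \geq 1$ and $\dim V_i' \geq 1$, contradicting the indecomposability of $E$. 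This proves the claim, and hence the corollary. I expect the only real subtlety to be recognising that the missing hypothesis is indecomposability of $E$ — it is not supplied by the local cohomology computation — and choosing the complement $V_i'$ so that the resulting decomposition respects all of the filtrations; that verification itself is routine.
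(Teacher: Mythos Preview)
Your approach is essentially the paper's: both arguments show that if $V_i \nsubseteq \sum_{j \neq i} V_j$ then one can split $\mathbf{E}$ compatibly with all filtrations, contradicting indecomposability via Proposition~\ref{filtaux}(\ref{filtauxi}), and then read off the dimension formula from $V = \bigoplus_{j \neq i} V_j$. Your verification of filtration compatibility is more explicit than the paper's one-line sketch, but the idea is identical.

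There is, however, a genuine error in your side argument for $C \neq 0$. The modules $S(-\uc)_{(0)}$ are precisely the rank-one reflexive (divisorial) $\ksm$-modules and they \emph{are} finitely generated; your appeal to Lemma~\ref{divisorialproperties} misreads that item (the case $I = \emptyset$ gives $S_{(0)} \cong \ksm$, which is obviously finitely generated). So your chain ``$\hat E$ free $\Rightarrow$ $E$ not finitely generated'' fails. The correct salvage of that step is: if $\hat E$ were free, then $E \cong \hat E_{(0)}$ would split as a direct sum of $\dim V$ rank-one modules, so indecomposability would force $\dim V = 1$, which is incompatible with the setup (the $V_l$ are nonzero proper subspaces of $V$). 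More simply still, the whole detour is unnecessary: in the setting of Example~\ref{einfach} each $V_l$ is a nonzero proper subspace, and since $t = n \geq 2$ any $j_0 \neq i$ already gives $C \supseteq V_{j_0} \neq 0$. The paper silently uses this observation rather than arguing via freeness of $\hat E$.
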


\begin{proof}
First observe that $V_i \subseteq \sum_{j \neq i} V_j$ for every $i \in \on$, as
otherwise we could
split $V_i = V_i' + V_i''$, where $V_i' = V_i \cap \sum_{j \neq i} V_j$ and
this way obtain a splitting of vector space arrangements $V \cong \sum_{j \neq i} V_j
\oplus V_i''$. By Proposition \ref{filtaux} this would contradict the indecomposability
of $\hat{E}$. With this observation, the assertion follows immediately.
\end{proof}

Next we will see that indecomposability induces even stronger conditions.

\begin{proposition}\label{indec}
$\dim V_i = 1$ for every $i \in \on$. In particular, an indecomposable $M$-graded
MCM-module satisfying the conditions of Lemma \ref{finiteadjacentlemma} has
rank $n - 1$.
\end{proposition}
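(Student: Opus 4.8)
The plan is to turn the numerical information already obtained in Corollaries \ref{indep} and \ref{equidim} into a rigid structural description of the arrangement $\{V_1,\dots,V_n\}$, and then to use indecomposability of $\hat E$ to collapse it to lines. Recall that by Corollary \ref{equidim} all $V_i$ share a common dimension $k$ and $\dim V=(n-1)k$ (we may assume $k\ge 1$, since otherwise $V=0$ and there is nothing to prove), while Corollary \ref{indep} together with the formula $\dim(H^{n-1}_x\hat E)_{\ud_P}=\sum_{i\notin P}\dim V_i-\dim\sum_{i\notin P}V_i$ of Example \ref{einfachbass} shows that $\{V_i\}_{i\in Q}$ is a linearly independent system for every proper subset $Q\subsetneq\on$. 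Applying this to $Q=\on\setminus\{j\}$ gives $\dim\sum_{i\neq j}V_i=(n-1)k=\dim V$, so in fact
\[
V=\bigoplus_{i\neq j}V_i\qquad\text{for every }j\in\on.
\]

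Next I would study the relation space $R:=\ker\bigl(\bigoplus_{i=1}^{n}V_i\twoheadrightarrow V\bigr)$ (the map being the sum map, which is onto since the $V_i$ span $V$), which has dimension $nk-(n-1)k=k$. The key point is that for each $i$ the $i$-th coordinate projection $\pi_i\colon R\to V_i$ is an isomorphism: a relation with vanishing $i$-th entry is a relation inside the independent system $\{V_j\}_{j\neq i}$ and hence is zero, so $\pi_i$ is injective, and it is then bijective by equality of dimensions. Writing $\phi_i:=\pi_i\colon R\xrightarrow{\ \sim\ }V_i$, every $r\in R$ satisfies $\sum_{i=1}^{n}\phi_i(r)=0$ in $V$; fixing the splitting $V=\bigoplus_{i=1}^{n-1}V_i$ this says $\phi_n(r)=-\sum_{i=1}^{n-1}\phi_i(r)$.

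Now suppose, for contradiction, that $k=\dim R\ge 2$, and choose a direct sum decomposition $R=R'\oplus R''$ with $R',R''\neq 0$. Put $V_i':=\phi_i(R')$, $V_i'':=\phi_i(R'')$ for all $i$, and $V':=\bigoplus_{i=1}^{n-1}V_i'$, $V'':=\bigoplus_{i=1}^{n-1}V_i''$, so that $V=V'\oplus V''$ and $V_i=V_i'\oplus V_i''$ for $i<n$. Since $\phi_n(r')=-\sum_{i=1}^{n-1}\phi_i(r')\in V'$ for $r'\in R'$, and symmetrically over $R''$, one also gets $V_n=V_n'\oplus V_n''$ with $V_n'\subseteq V'$ and $V_n''\subseteq V''$. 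Consequently, writing $F^k$ for the flag $0\subseteq V_k'\subseteq V'$ and $G^k$ for the flag $0\subseteq V_k''\subseteq V''$ (the filtrations of $\hat E$ from Example \ref{einfach} being $0\subseteq V_k\subseteq V$, since $t=n$), we obtain $\hat E^k(i)=F^k(i)\oplus G^k(i)$ for all $k$ and $i$. By Proposition \ref{filtaux}(\ref{filtauxi}) this yields a decomposition $\hat E\cong F\oplus G$ into nonzero reflexive summands ($\dim\mathbf F=(n-1)\dim R'>0$ and $\dim\mathbf G=(n-1)\dim R''>0$, using $n\ge 2$), contradicting the indecomposability of $\hat E$. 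Hence $k=1$, that is $\dim V_i=1$ for every $i$; then $\rk E=\dim\mathbf E=\dim V=n-1$ by Proposition \ref{directlimitproperties}(\ref{directlimitpropertiesii}).

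I expect the step requiring the most care to be the last one: checking that a decomposition of the relation space $R$ propagates to a decomposition of the \emph{whole} arrangement that is also compatible with the ``dependent'' subspace $V_n$ (which is why $\phi_n(R')\subseteq V'$ must be verified separately), and then packaging the outcome exactly in the form demanded by Proposition \ref{filtaux}(\ref{filtauxi}). Everything else is dimension counting layered on top of Corollaries \ref{indep} and \ref{equidim}.
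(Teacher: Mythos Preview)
Your proof is correct and follows essentially the same strategy as the paper's. The paper fixes the splitting $V=\bigoplus_{i<n}V_i$, observes that the projections $V_n\to V_i$ are isomorphisms, and uses a basis of $V_n$ to cut $V$ into $k$ filtered summands; your relation space $R$ is canonically identified with $V_n$ via $\phi_n$, and under this identification your $\phi_i\circ\phi_n^{-1}$ is (up to sign) exactly the paper's projection $\pi_i$. The only cosmetic differences are that your formulation is symmetric in the indices and produces a binary splitting from any decomposition of $R$, whereas the paper singles out $V_n$ and splits into $k$ one-dimensional pieces at once; both lead to the same contradiction with Proposition~\ref{filtaux}(\ref{filtauxi}).
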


\begin{proof}
By Corollaries \ref{indep} and \ref{equidim}, any choice of basis for $V_1, \dots,
V_{n - 1}$ yields a basis for $V$. Now assume without loss generality that $V_n$
is in general position, i.e. the projection $\pi_i : V_n \rightarrow V_i$ is surjective
for $1 \leq i < n$. So, a choice of basis $v_1, \dots, v_k$ of $V_n$ induces a
basis $\pi_i(v_1), \dots, \pi_k$ of $V_i$ for every $1 \leq i < n$. Hence, for
every $1 \leq j \leq k$, we get $W_j := \sum_{i = 1}^{n - 1} \K \pi_i(v_j)$ with
$\dim W_j = n - 1$ and $v_j \in W_j$. Moreover, every $W_j$ is a filtered vector
space with filtrations given by $\pi_i(v_j)$ for $1 \leq i < n$ and $v_j$, respectively,
such that the decomposition $V \cong \bigoplus_{j = 1}^k W_j$ is a decomposition of
filtered vector spaces. By indecomposability of $E$ it follows that $k = 1$.
\end{proof}

The degrees adjacent to $\ud_\emptyset$ form a cuboid lattice polytope
given by $C = \{\uc \in \Z^n \mid \uc \leq \ud_\emptyset$ and $\uc \nleq \ud_{\{i\}}$
for every $i \in \on\}$. We obtain the following general criterion.

\begin{theorem}\label{cuboidtheorem}
Under the assumptions of Lemma \ref{finiteadjacentlemma}, $E$ is an MCM-module
over $\ksm$ if and only if $C$ is contained in $\Z^n \setminus M$.
\end{theorem}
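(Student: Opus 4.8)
The plan is to translate the MCM condition into a statement about graded local cohomology and then to invoke the results already assembled. Recall that $E$ is MCM over $\ksm$ exactly when $H^i_x E = 0$ for all $i < d = \dim \ksm$. By Proposition \ref{loccohomprop} (applied with $V = \{x\}$ and $\hat V = \hat x$) we have $H^i_x E \cong (H^i_{\hat x}\hat E)_{(0)}$, so the condition becomes: $(H^i_{\hat x}\hat E)_{(0)} = 0$ for all $i < d$. First I would use that $\hat E$ is a reflexive $S$-module of the type of Examples \ref{einfach} and \ref{einfachbass}, so by Example \ref{einfachbass} the only possibly nonzero local cohomology modules of $\hat E$ (with respect to the maximal graded ideal of $S$) sit in cohomological degrees $n-1$ and $n$. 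Since $\sigma$ is simplicial of dimension $d$ and has $n$ rays, we have $d = n$; hence the relevant range $i < d = n$ contains only the degree $i = n-1$. Therefore $E$ is MCM if and only if $(H^{n-1}_{\hat x}\hat E)_{(0)} = 0$.

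Next I would pin down the $\Z^n$-degrees in which $H^{n-1}_{\hat x}\hat E$ is nonzero. Under the assumptions of Lemma \ref{finiteadjacentlemma} we have $t = n$, and by Corollary \ref{indep} the only nonvanishing graded Bass number in cohomological degree $n-1$ is $b^{n-1}(\ud_\emptyset)$; equivalently, the only degree class $\ud \in \cG$ with $(H^{n-1}_{\hat x}\hat E)_\ud \neq 0$ is $\ud = \ud_\emptyset$. By the paragraph preceding the theorem, together with Definition \ref{adjacentdefinition} and the discussion in subsection \ref{loccohomcomp}, the set of $\uc \in \Z^n$ in which $H^{n-1}_{\hat x}\hat E$ is nonzero is exactly the set of $\uc$ adjacent to $\ud_\emptyset$, which is the cuboid lattice polytope
\begin{equation*}
C = \{\uc \in \Z^n \mid \uc \leq \ud_\emptyset \text{ and } \uc \nleq \ud_{\{i\}} \text{ for every } i \in \on\}.
\end{equation*}
So $(H^{n-1}_{\hat x}\hat E)_{\uc} \neq 0$ precisely when $\uc \in C$ (using Proposition \ref{indec} so that this graded piece is one-dimensional, hence genuinely nonzero).

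Finally I would assemble the pieces. Taking degree zero with respect to the $A$-grading means extracting the graded pieces $\hat E_\uc$ with $\uc \in L(M) \subseteq \Z^n$; more precisely, $(H^{n-1}_{\hat x}\hat E)_{(0)} = \bigoplus_{m \in M} (H^{n-1}_{\hat x}\hat E)_{L(m)}$, and since $L$ is injective we may identify $M$ with its image in $\Z^n$. Hence $(H^{n-1}_{\hat x}\hat E)_{(0)} = 0$ if and only if $C \cap M = \emptyset$, i.e. $C \subseteq \Z^n \setminus M$. Combining with the reduction of the first paragraph ($E$ is MCM $\iff (H^{n-1}_{\hat x}\hat E)_{(0)} = 0$) gives exactly the claim. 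The one point requiring care — and the step I expect to be the main obstacle — is the bookkeeping that the full list of nonvanishing degrees of $H^{n-1}_{\hat x}\hat E$ is precisely $C$ and not merely contained in some superset: this needs Corollary \ref{indep} to rule out other $\ud_P$, the adjacency description to see that every $\uc \in C$ (not just $\ud_\emptyset$ itself) carries nonzero cohomology, and Proposition \ref{indec} to guarantee the module is genuinely nonzero there; once these are in hand the equivalence is immediate.
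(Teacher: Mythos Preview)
Your proposal is correct and follows exactly the line the paper takes: the paper does not give a separate proof of Theorem \ref{cuboidtheorem} but presents it as the evident conclusion of the preceding discussion (Proposition \ref{loccohomprop}, Example \ref{einfachbass}, Lemma \ref{finiteadjacentlemma}, Corollary \ref{indep}, and the identification of $C$ with the degrees adjacent to $\ud_\emptyset$). Your write-up just makes these steps explicit; the only minor simplification is that you do not need Proposition \ref{indec} for nonvanishing at $\ud_\emptyset$, since already Corollary \ref{equidim} gives $\dim (H^{n-1}_{\hat x}\hat E)_{\ud_\emptyset} = nk - (n-1)k = k \geq 1$.
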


Theorem \ref{cuboidtheorem} remains somewhat vague as for a given
$\sigma$, we leave open the problem of classifying the admissible cuboid regions
$C$. The following theorem gives a precise statement for the case where
$C$ consists of only one point. Let
$C = \{\ud_\emptyset =: \ud\}$. We consider equivariant isomorphism class of
reflexive $\ksm$-modules up to degree-shift by elements of $M$. In terms of filtrations
this means that we consider
filtrations $E^k(i)$ up to a simultaneous shift $E^k\big(i + l_k(m)\big)$ for
every $k \in \on$ by some $m \in M$. The following theorem classifies all
isomorphism types up to degree-shift in terms of the divisor class
group $A_{n - 1}(U_\sigma)$.

\begin{theorem}\label{oneelement}
Let $\sigma$ be a simplicial cone. Then there are, up degree-shift in $M$,
precisely $| A_{n - 1}(U_\sigma) | - 1$ isomorphism classes of indecomposable
MCM modules satisfying the assumptions of Lemma \ref{finiteadjacentlemma} such
that $C$ consists of one element.
\end{theorem}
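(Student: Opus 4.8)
The plan is to establish a bijection between the modules in the statement, taken up to degree-shift by $M$, and the nonzero elements of the divisor class group $A_{n - 1}(U_\sigma)$, which by sequence (\ref{standardsequence}) is identified with $\Z^n / L(M)$ (a finite group, since $\sigma$ is simplicial, so $L$ is injective with finite cokernel). The argument splits into: a normal form for the combinatorial datum, a rigidity statement that pins down the invariant, and a check of the maximal Cohen--Macaulay condition together with existence.

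\textbf{Normal form of the datum.} First I would unwind what such a module $E$ looks like. By Lemma \ref{finiteadjacentlemma}, Corollary \ref{equidim} and Proposition \ref{indec}, an indecomposable module of the type considered in subsection \ref{intersectionfreemcms} satisfying the hypotheses of Lemma \ref{finiteadjacentlemma} has $t = n$, $\dim V_i = 1$ for every $i$, and $\dim V = n - 1$; thus $\hat{E}$ is given, via Theorem \ref{klythm} and Example \ref{einfach} (with each $I_l = \{l\}$), by $n$ lines $V_1, \dots, V_n$ in $V \cong \K^{n - 1}$ in general position (every $n - 1$ of them form a basis of $V$), together with integers $i_1^k < i_2^k$ for $k \in \on$. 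Using the explicit description of $C$ as the box $\prod_{k \in \on}\{i_1^k, \dots, i_2^k - 1\}$ preceding Theorem \ref{cuboidtheorem}, the condition that $C$ be a single point is equivalent to $i_2^k = i_1^k + 1$ for all $k$, in which case $C = \{\ud_\emptyset\}$ with $\ud_\emptyset = (i_1^1, \dots, i_1^n) =: \underline{i}_1$.

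\textbf{Rigidity and the invariant.} Next I would show the line configuration carries no moduli. Choosing a generator $e_i$ of $V_i$ for $i < n$ yields a basis $e_1, \dots, e_{n - 1}$ of $V$ by general position, and a generator of $V_n$ has all coordinates nonzero; after rescaling the $e_i$ we may assume $V_n = \K(e_1 + \dots + e_{n - 1})$. Hence any two such labelled configurations are isomorphic, and the only automorphisms of $V$ fixing every $V_k$ are the scalars. Since an $M$-graded isomorphism corresponds, via Theorem \ref{klythm}, to a vector space isomorphism compatible with all filtrations $E^k$ (hence matching each intermediate step $V_k$), the module $E$ is determined up to isomorphism by the configuration together with the intrinsic jump vector $\underline{i}_1 \in \Z^n$; a degree-shift by $m \in M$ replaces each $E^k(i)$ by $E^k(i + l_k(m))$ and so translates $\underline{i}_1$ by an element of $L(M)$. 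Conversely, two such modules with $\underline{i}_1 \equiv \underline{i}_1' \bmod L(M)$ become, after an appropriate shift, isomorphic standard configurations with equal jump vectors, hence isomorphic. Therefore isomorphism classes up to degree-shift of modules of the prescribed type are in bijection with $[\underline{i}_1] \in \Z^n / L(M) = A_{n - 1}(U_\sigma)$.

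\textbf{MCM condition and existence.} By Theorem \ref{cuboidtheorem}, $E$ is a maximal Cohen--Macaulay $\ksm$-module if and only if $C = \{\underline{i}_1\} \subseteq \Z^n \setminus M$, i.e. $\underline{i}_1 \notin L(M)$, i.e. $[\underline{i}_1] \neq 0$ in $A_{n - 1}(U_\sigma)$; this excludes exactly one class. For the converse, given $\underline{i}_1$ with $[\underline{i}_1] \neq 0$, I would set $i_2^k := i_1^k + 1$ and take the standard configuration above: the resulting $E$ is of the required type and indecomposable (if $V \cong V' \oplus V''$ splits the filtrations as in Proposition \ref{filtaux}, each line $V_i$ lies in one summand and a dimension count against general position forces one summand to vanish); it satisfies the hypotheses of Lemma \ref{finiteadjacentlemma}, since by the formula of Example \ref{einfachbass} $\dim(H^{n - 1}_x \hat{E})_{\ud_P} = (n - |P|) - \dim \sum_{i \notin P} V_i$ vanishes unless $P = \emptyset$, and the $\uc$ adjacent to $\ud_\emptyset$ form exactly the single-point set $C$; finally $C = \{\underline{i}_1\} \subseteq \Z^n \setminus M$, so $E$ is MCM. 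This produces one module for each of the $|A_{n - 1}(U_\sigma)| - 1$ nonzero classes and shows these exhaust all such modules up to degree-shift, proving the theorem.

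\textbf{Main obstacle.} I expect the crux to be the rigidity of the configuration: one must be certain that $n$ labelled lines in general position in $\K^{n - 1}$ admit no nontrivial moduli and no automorphisms beyond scalars, so that distinct classes $[\underline{i}_1]$ genuinely yield non-isomorphic modules and the count is exact. The remaining points are bookkeeping, combining Example \ref{einfachbass}, Proposition \ref{filtaux}, and Theorem \ref{cuboidtheorem}.
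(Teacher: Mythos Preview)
Your proposal is correct and follows essentially the same approach as the paper's own proof: classify the invariant $\underline{i}_1$ (the paper's $\ud$) by its image in $\Z^n/L(M) \cong A_{n-1}(U_\sigma)$, invoke Theorem \ref{cuboidtheorem} to see that MCM holds iff this class is nonzero, and use the rigidity of $n$ labelled points in general position in $\mathbb{P}^{n-2}$ under $\operatorname{GL}(V)$ to eliminate continuous moduli. Your write-up is more explicit than the paper's terse two paragraphs---in particular you spell out the indecomposability and the verification of the hypotheses of Lemma \ref{finiteadjacentlemma} for the constructed module---but the underlying argument is the same.
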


\begin{proof}
With notation as above, the possible $\ud$ are classified
by the cokernel of the short exact sequence
$0 \rightarrow M \xrightarrow{ L } \Z^n \rightarrow A_{n - 1}(U_\sigma) \rightarrow 0$.
The module $E$ is MCM iff $\ud$ represents a nonzero element in
$A_{n - 1}(U_\sigma)$.

By Proposition \ref{indec}, we can parameterize all modules $E$ satisfying above
conditions by configurations of $n$ points in general position in $\mathbb{P} V
\cong \mathbb{P}^{n - 2}$
up to the action of $\operatorname{GL}_\K(V)$, which leaves us with precisely one
isomorphism class.
\end{proof}

\begin{remark}
Note that our conditions imply that $\dim U_\sigma > 2$.
It is well-known that equivariant reflexive modules over affine toric surfaces
always split into a direct sum of modules of rank one.
\end{remark}

\begin{example}
Consider the cone $\sigma$ in $\Z^3$ spanned by primitive vectors $l_1 = (1, 0, 1)$,
$l_2 = (0, 1, 1)$, and $l_3 = (-1, -1, 1)$. Denote $\hat{E}$ a rank two reflexive $S$-module
given by three lines $V_1, V_2, V_2$ in general position in $\mathbb{E} \cong \K^2$.
We show that, up to degree-shift in $M$, there exist precisely five indecomposable
equivariant isomorphism classes of MCM modules of rank two associated to this data.

Assume that $\ud = (b_1, b_2, b_3)$ and $\ud_1 = (a_1, b_2, b_3)$,
$\ud_2 = (b_1, a_2, b_3)$, $\ud_3 = (b_1, b_2, a_3)$ with $a_k < b_k$ for $1 \leq k \leq 3$.
Its local cohomology is given by the cohomology of the following complex which
is concentrated in cohomological degrees $2$ and $3$:
\begin{equation*}
0 \longrightarrow \dual{S}(-\ud) \longrightarrow
\dual{S}(-\ud_1) \oplus \dual{S}(-\ud_2) \oplus
\dual{S}(-\ud_3) \longrightarrow 0.
\end{equation*}
Up to degree-shift, there are two isomorphism classes of MCM modules with $C = \{\ud\}$
which are classified by
\begin{equation*}
0 \longrightarrow M \xrightarrow{\ L\ } \Z^3 \longrightarrow \Z / 3 \Z \longrightarrow 0.
\end{equation*}
For instance, we can choose $\ud \in \{(1, 1, 0), (2, 2, 0)\}$.

A small computation shows that $C$ cannot have more than two elements. For the
case that $C$ has two elements, we can find three more isomorphism classes,
where $C$ can be represented by pairs $\{(1, 1, 0), (1, 1, 0) - e_k\}$, where for
$1 \leq k \leq 3$, $e_k$ denote the standard basis vectors of $\Z^3$ . Note that,
as argued in the proof of Theorem \ref{oneelement}, for any
given $C$, there are no nontrivial equivariant moduli associated to $E$, we have
precisely one isomorphism class.
\end{example}

\begin{remark}
Assume that we have an MCM-module $E$ as above with $C = \{\ud = (d_1, \dots, d_n)\}$.
Then it follows from the general theory that its canonical dual $F := \Hom(E, \omega)$
(here, $\omega$ denotes the canonical module of $\ksm$, which is isomorphic to
$S(\one)_{(0)}$) is MCM as well.
However, it is interesting to verify this fact explicitly using our framework.
The filtrations associated to $E$ are given for $k \in \on$ by
\begin{equation*}
E^k(i) =
\begin{cases}
0 & \text{ for } i < d_k\\
V_k & \text{ for } i = d_k\\
V & \text{ for } i > d_k.
\end{cases}
\end{equation*}
Now, for its MCM-dual $F := \Hom(E, \omega)$, we obtain the
filtrations
\begin{equation*}
F^k(i) =
\begin{cases}
0 & \text{ for } i < -d_k - 1\\
H_k & \text{ for } i = -d_k - 1\\
V^* & \text{ for } i \geq -d_k,
\end{cases}
\end{equation*}
where $V^* = \Hom(V, \K)$ and $H_k$ denotes the orthogonal complement of $V_k$ in
$V^*$ with respect to the canonical pairing $V^* \times V \rightarrow \K$.
The hyperplanes $H_1, \dots, H_n$ form a hyperplane arrangement $\mH$ in general
position in $V^*$. The minimal resolution of $\hat{F}$ is given by
\begin{equation*}
0 \longrightarrow S(-\uc^V) \longrightarrow \cdots \longrightarrow
\bigoplus_{X \in \mH_k} S(-\uc^X) \longrightarrow \cdots \longrightarrow
\bigoplus_{X \in \mH_1} S(-\uc^X) \longrightarrow \hat{F} \longrightarrow 0,
\end{equation*}
where $\uc^X = (c_1, \dots, c_n)$ with $c_k = -d_k - 1$ whenever $X \subseteq H_k$
and $c_k = -d_k$ else. In particular, $\uc^V = -\ud$.
The local cohomology $H^i_x \hat{F}$ then is the cohomology of the following
complex which is concentrated in cohomological degrees $2$ to $n$:
\begin{equation*}
0 \longrightarrow \dual{S}(-\underline{f}^V) \longrightarrow \cdots \longrightarrow
\bigoplus_{X \in \mH_k} \dual{S}(-\underline{f}^X) \longrightarrow \cdots \longrightarrow
\bigoplus_{X \in \mH_1} \dual{S}(-\underline{f}^X) \longrightarrow 0,
\end{equation*}
where $\underline{f}^X = \uc^X + \one$ for every $X \in \mH$.
By Theorem \ref{basstheorem}, we have $H^i_x \hat{E} = 0$ for $2 < i < n$
and $(H^2_x \hat{E})_{\underline{f}^V} \cong \K$ for $\uc \in C$ and
$(H^2_x \hat{E})_{\uc} = 0$, for $\uc \neq \underline{f}^V$. We can conclude
that $F$ is MCM iff $\ud + \one \in \Z^n \setminus M$.
\end{remark}

\addcontentsline{toc}{section}{References}

\end{document}